\documentclass[12pt,reqno]{amsart}

\usepackage{amsmath,amsthm,amssymb, amscd, amsfonts,enumerate,array,bbm,bm,
geometry}
\usepackage[sorted,compressed-cites,sorted-cites,initials]{amsrefs}
\usepackage[all,cmtip]{xy}
\usepackage{pifont,mathrsfs,mathtools,stmaryrd}
\usepackage[colorlinks]{hyperref}
\hypersetup{
bookmarksnumbered,
pdfstartview={FitH},
breaklinks=true,
linkcolor=blue,
urlcolor=blue,
citecolor=blue,
bookmarksdepth=2
}

\DeclareMathOperator{\can}{can}
\newcommand{\prpref}[2]{\hyperref[#1]{($#2_{\ref*{#1}}$)}}
\newcommand{\fgfrm}[2]{\llparenthesis#1,#2\rrparenthesis}

\allowdisplaybreaks[3]
\makeatletter
\def\mhline{\noalign{\ifnum0=`}\fi\hrule height 4\arrayrulewidth \futurelet
   \@tempa\oxhline}
\def\oxhline{\ifx\@tempa\hline\vskip \doublerulesep\fi
      \ifnum0=`{\fi}}

\makeatother

\numberwithin{equation}{section}

\newtheorem{theorem}{Theorem}[section]
\newtheorem{proposition}[theorem]{Proposition}

\newtheorem{corollary}[theorem]{Corollary}
\newtheorem{lemma}[theorem]{Lemma}

\theoremstyle{definition}

\newtheorem{remark}[theorem]{Remark}

\newtheorem{definition}[theorem]{Definition}

\newcommand{\qbinom}[3][q]{\genfrac[]{0pt}0{#2}{#3}_{#1}}
\def\QBin[#1,#2,#3]{\qbinom[#3]{#1}{#2}}

\newcommand{\lie}[1]{\mathfrak{#1}}

\DeclareMathOperator{\ad}{ad}

\DeclareMathOperator{\sign}{sign}

\newcommand{\bigobj}{\let\objectstyle=\displaystyle}
\newcommand{\smallobj}{\let\objectstyle=\scriptstyle}
\smallobj

\newcommand{\ul}[1]{\underline{#1}}

\newcommand{\tensor}{\otimes}
\renewcommand{\eqref}[1]{{\rm (\ref{#1})}}

\def\ad{\operatorname{ad}}
\DeclareMathOperator{\End}{End}

\def\AA{\mathbb{A}}

\def\ZZ{\mathbb{Z}}

\def\QQ{\mathbb{Q}}

\def\QF(#1){[#1]_q!}

\DeclareMathOperator{\Hom}{Hom}

\def\id{\operatorname{id}}

\def\kk{\Bbbk}

\def\bb{\mathfrak{b}}

\def\ii{\mathbf{i}}

\DeclareMathOperator{\sgn}{sgn}

\newcommand{\vertar}{\ar@{->>}[u]}
\newcommand{\horar}{\ar@{>->}[l]}
\newcommand{\la}{\langle}
\newcommand{\ra}{\rangle}

\DeclareRobustCommand{\SkipTocEntry}[5]{}
\begin{document}
\newgeometry{margin=2cm}

\title{Canonical bases of quantum Schubert cells and their symmetries}

\author[]{Arkady Berenstein}
\address{\noindent Department of Mathematics, University of Oregon,
Eugene, OR 97403, USA} \email{arkadiy@math.uoregon.edu}

\begin{abstract}
The goal of this work is to provide an elementary construction of the canonical basis $\mathbf B(w)$ in each quantum Schubert
cell~$U_q(w)$ and to establish its invariance under modified Lusztig's symmetries. To that effect, we obtain a direct characterization 
of the upper global basis $\mathbf B^{up}$ in terms of a suitable bilinear form and show that $\mathbf B(w)$ is contained 
in $\mathbf B^{up}$ and its large part is preserved by modified Lusztig's symmetries. 
\end{abstract}

\thanks{This work was partially supported by the NSF grant~DMS-1403527 (A.~B.),
by the Simons foundation collaboration grant no.~245735~(J.~G.), and by the ERC grant MODFLAT and the NCCR SwissMAP of the Swiss National Science Foundation
(A.~B. and J.~G.)}

\author[]{Jacob Greenstein}
\address{Department of Mathematics, University of
California, Riverside, CA 92521.} 
 \email{jacob.greenstein@ucr.edu}
\maketitle

\def\bI{{\boldsymbol{I}}}
\def\bB{{\boldsymbol{B}}}
\def\bb{{\boldsymbol{b}}}

\tableofcontents
\newpage
\section{Introduction and main results}
Let $\lie g=\lie b^-\oplus \lie n^+$ be a symmetrizable Kac-Moody Lie algebra.
For any $w$ in its Weyl group~$W$, 
define the algebra $U_q(w)$ by 
\begin{equation}\label{eq:quantum-Schubert-cell}
U_q(w):=T_w(U_q(\lie b^-))\cap U_q(\lie n^+) 
\end{equation}
which we refer to as a quantum Schubert cell (see \S\ref{subs:prelim} for notation). This terminology is justified in Remark~\ref{rem:2}.
The definition~\eqref{eq:quantum-Schubert-cell} for an infinite (affine) type
first appeared in~\cite{BCP}*{Proposition~2.3}. In~\cite{BG-dcb} we conjectured that this definition coincides with Lusztig's one 
which was proved for all Kac-Moody algebras
by Tanisaki in \cite{T}*{Proposition~2.10} and independently by Kimura (\cite{Kim1}*{Theorem~1.3}).

In a remarkable paper~\cite{Kim} Kimura proved that each $U_q(w)$
is compatible with the upper global basis $\mathbf B^{up}$ of~$U_q(\lie n^+)$. The aim of the present 
work is twofold:
\begin{itemize}
 \item to construct the basis $\mathbf B(w)$ of~$U_q(w)$ explicitly using a generalization of Lusztig's Lemma.
 \item to compute the action of Lusztig symmetries on these bases, thus partially verifying Conjecture~1.16 from~\cite{BG-dcb}.
\end{itemize}
To achieve the first goal, first we provide an independent definition (see~\S\ref{subs:defn-basis} and~\S\ref{subs:chos basis}) of the global crystal basis $\mathbf B^{up}$ (which coincides with the dual canonical basis). 
For reader's convenience, we put all necessary definitions and results in Section~\ref{sec:char bas}.

Let $\bar\cdot$ be the anti-linear anti-involution of~$U_q(\lie g)$ which maps $q^{\frac12}$ to~$q^{-\frac12}$ and fixes Chevalley generators. 
It should be noted that we use a slightly different presentation of~$U_q(\lie g)$ (see~\cite{BG-dcb} and \S\ref{subs:prelim}) and accordingly modified~$T_w$ 
so that they commute with $\bar\cdot$ (\cite{BG-dcb}*{}). 

Let $\ii=(i_1,\dots,i_m)\in R(w)$.
Generalizing~\cites{DKP,LS}, we show (see~\S\ref{subs:some prop q S c}) that the $\AA:=\ZZ[q^{\frac12},q^{-\frac12}]$-subalgebra 
of~$U_q(\lie n^+)$ generated by the $X_{\mathbf i,k}:=T_{s_{i_1}\cdots s_{i_{k-1}}}(E_{i_k})$, $1\le k\le m$  of~$U_q(\lie n^+)$
is in fact independent of~$\ii$, hence is denoted $U^\AA(w)$, and by Lemma~\ref{lem:lus PBW elt} has an $\AA$-basis $\{X_\ii^{\mathbf a}\,:\, \mathbf a\in \ZZ_{\ge 0}^m\}$ where 
$X_\ii^{\mathbf a}=q_{\mathbf i,\mathbf a}X_{\ii,1}^{a_1}\cdots X_{\ii,m}^{a_m}$ and $q_{\mathbf i,\mathbf a}\in q^{\frac12\ZZ}$ is defined in~\eqref{eq:q_i a defn}.
The importance of this choice of the $q_{\ii,\mathbf a}$ is highlighted by the following version of Lusztig's Lemma.
\begin{theorem}\label{thm:bas-from-Lusztig-lemma}
Let $w\in W$ and $\ii\in R(w)$.
For every $\mathbf a\in \ZZ_{\ge 0}^m$ there exists a unique $b_{\mathbf a}=b_{\mathbf i,\mathbf a}\in U^\AA(w)$ such that $\overline{b_{\mathbf a}}=b_{\mathbf a}$ and 
$
b_{\mathbf a}-X_\ii^{\mathbf a}\in\sum_{\mathbf a'\not=\mathbf a} q^{-1} \ZZ[q^{-1}] X_\ii^{\mathbf a'}$.
\end{theorem}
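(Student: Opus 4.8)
This is the classical "Lusztig's Lemma" / "base change lemma" argument, and the plan is to deduce it from an abstract statement about a $\ZZ[q^{-1}]$-lattice equipped with a compatible semilinear involution, once we equip the relevant structures with a partial order. First I would fix $\ii\in R(w)$, write $\mathcal L = \bigoplus_{\mathbf a} \ZZ[q^{-1}] X_\ii^{\mathbf a}$, and observe that by Lemma~\ref{lem:lus PBW elt} the $X_\ii^{\mathbf a}$ form an $\AA$-basis of $U^\AA(w)$, so $U^\AA(w) = \AA\otimes_{\ZZ[q^{-1}]}\mathcal L$ and $\mathcal L\cap q\mathcal L = 0$. The key input is that the bar-involution $\bar\cdot$ preserves $U^\AA(w)$ — this should follow from the fact that the $X_{\ii,k}$ are images of Chevalley generators under the modified $T_w$, which by construction commute with $\bar\cdot$, together with the defining relations among the $X_{\ii,k}$ having bar-invariant structure constants — and that the transition matrix from $\{X_\ii^{\mathbf a}\}$ to $\{\overline{X_\ii^{\mathbf a}}\}$ is "unitriangular up to a power of $q$" with respect to a suitable order.

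The heart of the matter, and the step I expect to be the main obstacle, is establishing this (uni)triangularity: one must choose a partial (or total) order $\preceq$ on $\ZZ_{\ge 0}^m$ — presumably refining the dominance-type order coming from the weight $\sum_k a_k\,\beta_k$ where $\beta_k = s_{i_1}\cdots s_{i_{k-1}}(\alpha_{i_k})$ together with a lexicographic tie-break — such that
\[
\overline{X_\ii^{\mathbf a}} \in X_\ii^{\mathbf a} + \sum_{\mathbf a' \prec \mathbf a} \AA\, X_\ii^{\mathbf a'}.
\]
Here the precise choice of the normalizing scalars $q_{\ii,\mathbf a}\in q^{\frac12\ZZ}$ in~\eqref{eq:q_i a defn} is exactly what forces the diagonal entries of the transition matrix to be $1$ rather than a nontrivial power of $q^{1/2}$; this is "the importance of this choice" alluded to just before the theorem. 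The off-diagonal control comes from the Levendorskii–Soibelman straightening rule for the quantum Schubert cell (the $q$-commutation $X_{\ii,k}X_{\ii,j} - q^{(\beta_j,\beta_k)}X_{\ii,j}X_{\ii,k} \in$ span of monomials in the intermediate $X_{\ii,l}$, $j<l<k$), which is bar-invariant and strictly decreases the order; finiteness of $\{\mathbf a' : \mathbf a'\preceq \mathbf a\}$ in each weight makes the order Artinian where needed.

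Granting the unitriangularity, the existence and uniqueness of $b_{\mathbf a}$ is then the standard argument. For uniqueness: if $b, b'$ are both bar-invariant and lie in $X_\ii^{\mathbf a} + \sum_{\mathbf a'\neq\mathbf a} q^{-1}\ZZ[q^{-1}]X_\ii^{\mathbf a'}$, then $b - b'$ is bar-invariant and lies in $\sum q^{-1}\ZZ[q^{-1}]X_\ii^{\mathbf a'}$; writing $b-b' = \sum c_{\mathbf a'}X_\ii^{\mathbf a'}$ with $c_{\mathbf a'}\in q^{-1}\ZZ[q^{-1}]$ and taking the $\preceq$-maximal $\mathbf a'$ with $c_{\mathbf a'}\neq 0$, bar-invariance together with the unitriangularity of $\bar\cdot$ forces $\overline{c_{\mathbf a'}} = c_{\mathbf a'}$, impossible for a nonzero element of $q^{-1}\ZZ[q^{-1}]$; hence $b = b'$. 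For existence: induct on $\mathbf a$ with respect to $\preceq$ (legitimate since each down-set is finite in a fixed weight space). Suppose $b_{\mathbf a'}$ is constructed for all $\mathbf a' \prec \mathbf a$. Write $\overline{X_\ii^{\mathbf a}} - X_\ii^{\mathbf a} = \sum_{\mathbf a'\prec\mathbf a} r_{\mathbf a'} X_\ii^{\mathbf a'}$ with $r_{\mathbf a'}\in\AA$; bar-invariance of the left side gives the identity $\overline{r_{\mathbf a'}} = -r_{\mathbf a'} - (\text{correction from lower terms})$, and a second downward induction on $\mathbf a'$ lets one solve, for each $\mathbf a'$, the equation $p_{\mathbf a'} - \overline{p_{\mathbf a'}} = (\text{known element of }\AA)$ with $p_{\mathbf a'}\in q^{-1}\ZZ[q^{-1}]$ — solvable precisely because any $f\in\AA$ with $\bar f = -f$ can be written $f = p - \bar p$ for some $p\in q^{-1}\ZZ[q^{-1}]$ (expand $f$ in the basis $q^n - q^{-n}$). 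Setting $b_{\mathbf a} := X_\ii^{\mathbf a} + \sum_{\mathbf a'\prec\mathbf a} p_{\mathbf a'} X_\ii^{\mathbf a'}$ gives a bar-invariant element with the required congruence property, and it lies in $U^\AA(w)$ since all the $X_\ii^{\mathbf a'}$ do. This completes the plan; the only genuinely content-bearing ingredient is the bar-triangularity of the PBW-type basis, which is where the careful normalization $q_{\ii,\mathbf a}$ earns its keep.
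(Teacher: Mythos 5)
Your overall route matches the paper's exactly: establish bar-unitriangularity of the PBW monomials $X_\ii^{\mathbf a}$ with respect to a partial order built from the Levendorskii--Soibelman straightening rule, and then invoke the abstract Lusztig Lemma (the paper cites \cite{BZ}*{Theorem~1.1} instead of writing out the uniqueness/existence induction that you spell out). The paper's partial order is defined a bit differently than your ``dominance plus lex tie-break'' --- it declares $\mathbf a'\preceq \mathbf a$ iff $\mathbf a-\mathbf a'$ lies in the pointed monoid $C_\ii$ generated by the vectors $\mathbf e_k+\mathbf e_l-\sum_r a_r\mathbf e_r$ coming from the right sides of the straightening relations~\eqref{eq:straight-rels'} --- but this is only a difference in packaging; both capture that the correction terms in the $q$-commutation strictly decrease the order.

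There is, however, a subtle but consequential slip in the coefficient ring. You state the unitriangularity as $\overline{X_\ii^{\mathbf a}} \in X_\ii^{\mathbf a}+\sum_{\mathbf a'\prec\mathbf a}\AA\,X_\ii^{\mathbf a'}$ and later appeal to the fact that a skew-invariant $f\in\AA$ can be written $f=p-\bar p$ with $p\in q^{-1}\ZZ[q^{-1}]$, ``expanding $f$ in the basis $q^n-q^{-n}$.'' That decomposition is only available for $f\in\AA_0=\ZZ[q,q^{-1}]$; an element such as $q^{1/2}-q^{-1/2}\in\AA$ is skew-invariant but is not of the form $p-\bar p$ with $p\in q^{-1}\ZZ[q^{-1}]$. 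So to land in $q^{-1}\ZZ[q^{-1}]$ as the theorem demands, one needs the stronger unitriangularity with coefficients in $\AA_0$, not merely $\AA$. This is precisely what the paper's Proposition~\ref{prop:initial basis} states and proves, and it is one more place where both the normalization $q_{\ii,\mathbf a}$ and the $\AA_0$-integrality of the straightening coefficients (established in Theorem~\ref{thm:two-forms}) do real work --- not only to make the diagonal entries equal to~$1$, as you point out, but also to keep the off-diagonal transition data in $\ZZ[q,q^{-1}]$. With that strengthening in place, your inductive existence and uniqueness arguments go through as written.
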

We prove Theorem~\ref{thm:bas-from-Lusztig-lemma} in~\S\ref{subs:pf-bas-from-Lus-lemma}.
In particular, elements $b_{\ii,\mathbf a}$, $\mathbf a\in\ZZ_{\ge 0}^m$ form a basis $\mathbf B(\ii)$ of $U^\AA(w)$ which a priori depends on~$\ii$. However,
the following result implies that this is not the case.
\begin{theorem}\label{thm:inclusion of base}
Let $w\in W$ and $\mathbf i\in R(w)$. Then for all $\mathbf a\in \ZZ_{\ge 0}^{\ell(w)}$ we have $b_{\mathbf i,\mathbf a}\in \mathbf B^{up}$.
\end{theorem}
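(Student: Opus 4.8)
The plan is to show that each bar-invariant element $b_{\mathbf i,\mathbf a}$ produced by Theorem~\ref{thm:bas-from-Lusztig-lemma} coincides with an element of the upper global basis $\mathbf B^{up}$ by exploiting the characterization of $\mathbf B^{up}$ promised in Section~\ref{sec:char bas}: namely, that an element $x\in U_q(\lie n^+)_{\AA}$ belongs to $\mathbf B^{up}$ if and only if it is bar-invariant, it is ``almost orthonormal'' with respect to the relevant symmetric bilinear form (i.e. $(x,x)\in 1+q^{-1}\ZZ[[q^{-1}]]$ and it lies in the correct lattice), and it is congruent mod $q^{-1}$ to a crystal basis vector. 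So the first step is to transport all three conditions from the PBW-type basis $\{X_{\mathbf i}^{\mathbf a}\}$ to the elements $b_{\mathbf i,\mathbf a}$.

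First I would analyze the bilinear form on the PBW monomials $X_{\mathbf i}^{\mathbf a}$. Using the fact that the $X_{\mathbf i,k}$ are obtained from Chevalley generators by the (modified, bar-commuting) Lusztig symmetries $T_w$, and that these $T_w$ preserve the form up to a controlled scalar, one shows that the Gram matrix $\big((X_{\mathbf i}^{\mathbf a},X_{\mathbf i}^{\mathbf a'})\big)$ is, after the normalization by the $q_{\mathbf i,\mathbf a}$ chosen in~\eqref{eq:q_i a defn}, upper-triangular with respect to a suitable order on $\ZZ_{\ge0}^m$ with diagonal entries in $1+q^{-1}\ZZ[q^{-1}]$ and off-diagonal entries in $q^{-1}\ZZ[q^{-1}]$ — this is exactly the point of the particular choice of $q_{\mathbf i,\mathbf a}$, and it is the key quantitative input. (For finite type this is essentially the Damiani/Levendorskii--Soibelman orthogonality computation; in general one reduces to rank-two subword computations via the convexity of the PBW order, which holds for all Kac--Moody types after the work of Kimura and Tanisaki cited in the introduction.) Feeding the triangularity of $X_{\mathbf i}^{\mathbf a}\mapsto b_{\mathbf i,\mathbf a}$ (from Theorem~\ref{thm:bas-from-Lusztig-lemma}) into this, the change-of-basis matrix is unipotent with entries in $q^{-1}\ZZ[q^{-1}]$, so $(b_{\mathbf i,\mathbf a},b_{\mathbf i,\mathbf a})\in 1+q^{-1}\ZZ[q^{-1}]$ and the $b_{\mathbf i,\mathbf a}$ still span the $\AA$-lattice $U^{\AA}(w)\subseteq U_q(\lie n^+)_{\AA}$, which lies inside the integral form carrying $\mathbf B^{up}$.

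Next I would check the lattice/crystal condition: that $b_{\mathbf i,\mathbf a}$ lies in Kashiwara's lower-crystal-lattice-dual (the $\ZZ[q^{-1}]$-lattice $\mathcal L$ whose basis mod $q^{-1}$ is the crystal $B(\infty)$), and that its image mod $q^{-1}$ is (up to sign, which bar-invariance and positivity then pin down to $+1$) a single crystal basis element. For this one invokes that $U^{\AA}(w)\subseteq \mathcal L$ — a consequence of Kimura's compatibility result \cite{Kim}, since the $X_{\mathbf i,k}$ are obtained by string-type operators and already sit in the canonical lattice — and that the leading term of $b_{\mathbf i,\mathbf a}$ mod $q^{-1}$ is $X_{\mathbf i}^{\mathbf a}\bmod q^{-1}$, whose class in $\mathcal L/q^{-1}\mathcal L$ is a known crystal basis vector by Kimura's (or Lusztig's PBW) description of $B(\infty)$ in terms of PBW monomials. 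Combining: $b_{\mathbf i,\mathbf a}$ is bar-invariant, lies in the lattice, has self-pairing in $1+q^{-1}\ZZ[q^{-1}]$, and reduces mod $q^{-1}$ to a crystal vector — so by the characterization of $\mathbf B^{up}$ recalled in Section~\ref{sec:char bas}, $b_{\mathbf i,\mathbf a}\in\mathbf B^{up}$.

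The main obstacle is the triangularity and near-orthonormality of the PBW basis $\{X_{\mathbf i}^{\mathbf a}\}$ with respect to the bilinear form, valid uniformly over all symmetrizable Kac--Moody $\lie g$: in infinite type the PBW basis is built from a possibly non-convex-looking word, and one must argue that the order-filtration behaves well and that the relevant structure constants land in $q^{-1}\ZZ[q^{-1}]$ with the chosen normalization $q_{\mathbf i,\mathbf a}$ — i.e. verifying that~\eqref{eq:q_i a defn} is exactly the right power of $q^{1/2}$ to make the Gram matrix look like a Lusztig-lemma input. Once that computation is in place, the rest is the formal ``characterization of $\mathbf B^{up}$'' argument, which is essentially Lusztig's original uniqueness argument for the canonical basis applied to the Schubert-cell subalgebra. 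I would isolate the Gram-matrix estimate as a separate lemma and prove the theorem from it in a few lines.
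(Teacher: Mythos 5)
Your first step---showing $b_{\ii,\mathbf a}$ is bar-fixed, lies in $U^\ZZ(\lie n^+)$, and has $\mu(\gamma)^{-1}\fgfrm{b_{\ii,\mathbf a}}{b_{\ii,\mathbf a}}\in 1+q^{-1}\ZZ[[q^{-1}]]$ via the near-orthonormality in Proposition~\ref{prop:X i^a scalar square} and the unitriangularity of $X_\ii^{\mathbf a}\mapsto b_{\ii,\mathbf a}$---is exactly what the paper does, and it lands you in the \emph{signed} basis $\mathbf B^{\pm up}$. The gap is in how you pass from $\mathbf B^{\pm up}$ to $\mathbf B^{up}$, i.e.\ pin down the sign.

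Your sign argument invokes Kimura's crystal-lattice compatibility (\cite{Kim}) and a vague appeal to ``positivity''. Both are off target here. Using Kimura's compatibility to argue $U^\AA(w)\subseteq\mathcal L$ and that $X_\ii^{\mathbf a}$ reduces to a crystal vector is against the entire design of the paper, which explicitly advertises that these theorems are proved \emph{without} the global crystal basis machinery (see the remark after Corollary~\ref{cor:Kim}) and in fact recovers Kimura's Theorem~4.22 as Corollary~\ref{cor:Kim}; so your route is at best a detour through the heavy theory and at worst logically circular in the paper's intended development. The ``positivity'' you use to rule out $-\mathbf B^{up}$ is not established anywhere in the paper and is a deep theorem in its own right, not available in this generality from elementary means. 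What the paper actually does is characterize $\mathbf B^{up}\subset\mathbf B^{\pm up}$ intrinsically via Proposition~\ref{prop:string char} as those signed-basis elements $b$ with $\partial_\ii^{(top)}(b)=1$ for some word $\ii$, and then runs an induction on $\ell(w)$: writing $X_\ii^{\mathbf a}=\sum_b c_{\mathbf a,b}\,b$, one applies $(\partial_{i_1}^{op})^{(top)}$, observes via Corollary~\ref{cor:top-decomp-B up} that this picks out the terms with $\ell_{i_1}(b^*)=a_1$ and sends them to $\mathbf B^{up}$, then pulls back by $T_{i_1}^{-1}$ using Theorem~\ref{thm:T_i bas} (the key ingredient your proposal has no analogue of) to reduce to $X_{\ii'}^{\mathbf a'}$ with $\ii'$ shorter, where the inductive hypothesis determines the unique coefficient equal to $1$. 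You should replace the crystal/positivity portion with a mechanism of this kind; as it stands, that part of your argument does not go through within the framework the paper sets up.
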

Theorem~\ref{thm:inclusion of base} is proved in~\S\ref{subs:pf inclusion of base}. It implies that for any $\ii,\ii'\in R(w)$ we have $\mathbf B(\ii)=\mathbf B(\ii')$ and thus can introduce $\mathbf B(w)$.
As a consequence, we recover the main result (Theorem~4.22) of~\cite{Kim}.
\begin{corollary}\label{cor:Kim}
$\mathbf B(w)=\mathbf B^{up}\cap U_q(w)$ for all $w\in W$.
\end{corollary}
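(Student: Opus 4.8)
The plan is to deduce Corollary~\ref{cor:Kim} from Theorems~\ref{thm:bas-from-Lusztig-lemma} and~\ref{thm:inclusion of base} together with Kimura's compatibility result, which is already recorded in the excerpt: $U_q(w)$ is compatible with $\mathbf B^{up}$, meaning $\mathbf B^{up}\cap U_q(w)$ is an $\AA$-basis (up to scalars) of $U_q(w)$ — or more precisely of a lattice spanning $U_q(w)$. First I would observe that, by Theorem~\ref{thm:bas-from-Lusztig-lemma}, the elements $\{b_{\ii,\mathbf a}:\mathbf a\in\ZZ_{\ge0}^m\}$ form an $\AA$-basis of $U^\AA(w)$, and that $U^\AA(w)$ is an $\AA$-form of $U_q(w)$, i.e.\ $U^\AA(w)\otimes_\AA\QQ(q^{\frac12})$ is (the relevant part of) $U_q(w)$; this identification is essentially the content of the generalized De Concini--Kac--Procesi / Levendorskii--Soibelman PBW result alluded to in~\S\ref{subs:some prop q S c} together with the coincidence of definitions of $U_q(w)$ recalled in the introduction.

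Next I would use Theorem~\ref{thm:inclusion of base}: every basis element $b_{\ii,\mathbf a}$ lies in $\mathbf B^{up}$, and clearly $b_{\ii,\mathbf a}\in U^\AA(w)\subseteq U_q(w)$, so $\mathbf B(w)\subseteq \mathbf B^{up}\cap U_q(w)$. For the reverse inclusion, I would invoke Kimura's theorem: the set $\mathbf B^{up}\cap U_q(w)$ is an $\AA$-basis of the integral form of $U_q(w)$, hence has exactly as many elements in each weight space as $\mathbf B(w)$ does (both being bases of the same space after base change). Since one basis is contained in the other and they are both part of the globally linearly independent set $\mathbf B^{up}$, in each (finite-dimensional) weight component the inclusion $\mathbf B(w)\subseteq \mathbf B^{up}\cap U_q(w)$ between two sets of equal finite cardinality forces equality.

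The one point requiring a little care — and what I expect to be the main (though modest) obstacle — is making the lattice/integral-form bookkeeping precise: one must check that $U^\AA(w)$ coincides with Kimura's integral form $U_q(w)\cap U^\AA_q(\lie n^+)$ (where $U^\AA_q(\lie n^+)$ is the Kostant--Lusztig $\AA$-form on which $\mathbf B^{up}$ is a basis), rather than merely agreeing after tensoring with $\QQ(q^{\frac12})$. This should follow because $\mathbf B(w)\subseteq\mathbf B^{up}$ already exhibits $U^\AA(w)$ as spanned by genuine dual-canonical-basis elements, so $U^\AA(w)\subseteq U_q(w)\cap U^\AA_q(\lie n^+)$, and the reverse containment is exactly compatibility à la Kimura (the dual canonical basis elements supported in $U_q(w)$ span the integral form). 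Once this identification is in place, the cardinality-matching argument above closes the proof. I would write this up in~\S\ref{subs:pf inclusion of base} immediately after the proof of Theorem~\ref{thm:inclusion of base}, since all the ingredients are then at hand.
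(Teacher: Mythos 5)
Your forward inclusion $\mathbf B(w)\subseteq \mathbf B^{up}\cap U_q(w)$ is exactly right and is immediate from Theorems~\ref{thm:bas-from-Lusztig-lemma} and~\ref{thm:inclusion of base}. The problem is the reverse inclusion: you invoke Kimura's compatibility theorem (``$\mathbf B^{up}\cap U_q(w)$ is a basis of the integral form of $U_q(w)$'') to get a cardinality count. But the corollary \emph{is} Kimura's theorem — the text states explicitly that ``we recover the main result (Theorem~4.22) of~\cite{Kim}'' and the following remark contrasts Kimura's ``rather elaborate theory of global crystal bases'' with the present elementary approach. So your argument is circular in the context of this paper: it assumes as an input exactly the statement it is supposed to re-derive. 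The extra lattice bookkeeping you flag as ``the main obstacle'' is a symptom of having taken the wrong route rather than a genuine issue.

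The reverse inclusion has a much shorter, Kimura-free proof: let $b\in\mathbf B^{up}\cap U_q(w)$. Since $\mathbf B(\ii)=\{b_{\ii,\mathbf a}\}$ is a $\kk$-basis of $U_q(w)$ (after extending scalars from $\AA$), we may write $b=\sum_{\mathbf a}c_{\mathbf a}\,b_{\ii,\mathbf a}$ with $c_{\mathbf a}\in\kk$. By Theorem~\ref{thm:inclusion of base} every $b_{\ii,\mathbf a}$ lies in $\mathbf B^{up}$, and $b$ itself lies in $\mathbf B^{up}$; since $\mathbf B^{up}$ is a $\kk$-basis of $U_q(\lie n^+)$, the expression of $b$ in $\mathbf B^{up}$ is unique and has a single nonzero coefficient equal to $1$. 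Hence $b=b_{\ii,\mathbf a}$ for some $\mathbf a$, i.e.\ $b\in\mathbf B(w)$. No appeal to Kimura, to a cardinality comparison, or to matching integral forms is needed — only the linear independence of $\mathbf B^{up}$ and the fact that $\mathbf B(\ii)$ is a basis of $U_q(w)$, both of which are already established in the text.
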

\begin{remark}
To obtain his result, Kimura used a rather elaborate theory of global crystal bases. By contrast, our proofs of Theorems~\ref{thm:bas-from-Lusztig-lemma} 
and~\ref{thm:inclusion of base} 
are quite elementary and short.
\end{remark}

Now we turn our attention to the second goal, that is, to the action of Lusztig's symmetries on~$U_q(w)$.
\begin{theorem}[\cite{BG-dcb}*{Conjecture~1.17}]\label{thm:T_w(B(w'))}
Let $w,w'\in W$ be such that 
$\ell(ww')=\ell(w)+\ell(w')$. Then 
$$
\mathbf B(w)\subset \mathbf B(ww'),\qquad T_w(\mathbf B(w'))\subset \mathbf B(w w').
$$
\end{theorem}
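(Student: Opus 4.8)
The plan is to prove the two inclusions separately, using the characterization of $\mathbf B(w)$ from Theorems~\ref{thm:bas-from-Lusztig-lemma} and~\ref{thm:inclusion of base} together with the explicit PBW-type description of $U^\AA(w)$. For the first inclusion $\mathbf B(w)\subset \mathbf B(ww')$, the key observation is that a reduced word $\ii\in R(w)$ can be extended to a reduced word $\ii''\in R(ww')$ by appending a reduced word $\ii'\in R(w')$, precisely because $\ell(ww')=\ell(w)+\ell(w')$. Under this extension the PBW monomials $X_{\ii}^{\mathbf a}$ for $\mathbf a\in\ZZ_{\ge0}^{\ell(w)}$ are literally the PBW monomials $X_{\ii''}^{(\mathbf a,0)}$ of $U^\AA(ww')$ supported on the first $\ell(w)$ coordinates, and the normalizing scalars $q_{\ii,\mathbf a}$ and $q_{\ii'',(\mathbf a,0)}$ agree by the defining formula~\eqref{eq:q_i a defn}. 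Hence the bar-invariance condition and the triangularity condition that uniquely characterize $b_{\ii,\mathbf a}$ inside $U^\AA(w)$ are the same as those characterizing $b_{\ii'',(\mathbf a,0)}$ inside $U^\AA(ww')$ — one only needs that the "error term" in $q^{-1}\ZZ[q^{-1}]$ cannot acquire components along monomials $X_{\ii''}^{\mathbf a'}$ with $\mathbf a'$ supported outside the first $\ell(w)$ coordinates, which follows because $U^\AA(w)$ is a subalgebra spanned by exactly those monomials. By uniqueness in Lusztig's Lemma, $b_{\ii,\mathbf a}=b_{\ii'',(\mathbf a,0)}$, giving $\mathbf B(w)\subset\mathbf B(ww')$.

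For the second inclusion $T_w(\mathbf B(w'))\subset \mathbf B(ww')$, I would apply $T_w$ to the characterization of $b_{\ii',\mathbf a}$ for $\ii'\in R(w')$. Since $T_w$ commutes with the bar-involution $\bar\cdot$ (this is exactly the point of the modified presentation emphasized in the introduction), bar-invariance is preserved: $\overline{T_w(b_{\ii',\mathbf a})}=T_w(b_{\ii',\mathbf a})$. Next, writing $\ii''\in R(ww')$ as the concatenation of $\ii\in R(w)$ with $\ii'$, we have $X_{\ii'',\,\ell(w)+k}=T_{s_{i_1}\cdots s_{i_{\ell(w)}}\,s_{i'_1}\cdots s_{i'_{k-1}}}(E_{i'_k})=T_w\!\left(T_{s_{i'_1}\cdots s_{i'_{k-1}}}(E_{i'_k})\right)=T_w(X_{\ii',k})$, so $T_w$ carries the PBW monomial $X_{\ii'}^{\mathbf a}$ of $U^\AA(w')$ to the PBW monomial $X_{\ii''}^{(0,\mathbf a)}$ of $U^\AA(ww')$ supported on the \emph{last} $\ell(w')$ coordinates — here again the normalizing scalars match because formula~\eqref{eq:q_i a defn} depends only on the sub-word $\ii'$ and the exponents $\mathbf a$. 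Therefore $T_w(b_{\ii',\mathbf a})-X_{\ii''}^{(0,\mathbf a)}\in T_w\!\left(\sum_{\mathbf a'\ne\mathbf a}q^{-1}\ZZ[q^{-1}]X_{\ii'}^{\mathbf a'}\right)=\sum_{\mathbf a'\ne\mathbf a}q^{-1}\ZZ[q^{-1}]X_{\ii''}^{(0,\mathbf a')}$, which is contained in $\sum_{\mathbf c\ne(0,\mathbf a)}q^{-1}\ZZ[q^{-1}]X_{\ii''}^{\mathbf c}$. By the uniqueness part of Theorem~\ref{thm:bas-from-Lusztig-lemma} applied in $U^\AA(ww')$ with the word $\ii''$, we conclude $T_w(b_{\ii',\mathbf a})=b_{\ii'',(0,\mathbf a)}\in\mathbf B(ww')$.

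The one genuine subtlety — and what I expect to be the main obstacle — is verifying that $T_w$ actually maps $U^\AA(w')$ \emph{into} $U^\AA(ww')$ and, more precisely, that $T_w(X_{\ii'}^{\mathbf a})$ lies in the $\AA$-span of the $X_{\ii''}^{\mathbf c}$ with no extra terms, i.e. that the image is an $\AA$-integral PBW vector with the correct $q^{\frac12\ZZ}$-normalization rather than merely an element of $U_q(\lie n^+)$ up to a rational function of $q$. This requires the computation $X_{\ii'',\,\ell(w)+k}=T_w(X_{\ii',k})$ above to be exact on the nose, which in turn hinges on the compatibility $T_{uv}=T_uT_v$ for $\ell(uv)=\ell(u)+\ell(v)$ in the chosen normalization of the braid operators, and on $T_w(E_i)\in U_q(\lie n^+)$ whenever $\ell(w s_i)>\ell(w)$ — both standard, but needing the conventions of~\cite{BG-dcb} and \S\ref{subs:prelim}. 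Once this integrality and the scalar bookkeeping in~\eqref{eq:q_i a defn} are pinned down, the rest is a formal application of the uniqueness in Lusztig's Lemma, exactly as in the classical PBW-basis arguments of~\cites{DKP,LS}. Finally, combining both inclusions with Corollary~\ref{cor:Kim} (so that $\mathbf B(ww')$ is genuinely basis-independent) completes the proof.
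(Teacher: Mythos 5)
Your proposal is correct, and it takes a genuinely different route from the paper. The paper's proof is strikingly short: the first inclusion follows immediately from $U_q(w)\subset U_q(ww')$ combined with Corollary~\ref{cor:Kim} (so $\mathbf B(w)=U_q(w)\cap\mathbf B^{up}\subset U_q(ww')\cap\mathbf B^{up}=\mathbf B(ww')$); the second inclusion is reduced to a single simple reflection $w'=s_i$ and handled by Theorem~\ref{thm:T_i bas} (which says $T_{s_i}$ sends $\mathbf B^{up}\cap T_{s_i}^{-1}(U_q(\lie n^+))$ into $\mathbf B^{up}$), again concluding via $T_i(U_q(w))\subset U_q(s_iw)$ and Corollary~\ref{cor:Kim}. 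You instead bypass Theorem~\ref{thm:T_i bas} and argue at the level of the Lusztig-lemma characterization of Theorem~\ref{thm:bas-from-Lusztig-lemma}: you verify $X_\ii^{\mathbf a}=X_{\ii''}^{(\mathbf a,0)}$ and $T_w(X_{\ii'}^{\mathbf a})=X_{\ii''}^{(0,\mathbf a)}$ (both scalar identities do check out, the second using $W$-invariance of $(\cdot,\cdot)$ so that $(\alpha_{\ii''}^{(\ell(w)+k)},\alpha_{\ii''}^{(\ell(w)+l)})=(\alpha_{\ii'}^{(k)},\alpha_{\ii'}^{(l)})$), then use bar-invariance of $T_w$ and uniqueness to identify $b_{\ii,\mathbf a}=b_{\ii'',(\mathbf a,0)}$ and $T_w(b_{\ii',\mathbf a})=b_{\ii'',(0,\mathbf a)}$, finally invoking Theorem~\ref{thm:inclusion of base} so that $\mathbf B(\ii'')=\mathbf B(ww')$ is reduced-word independent. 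What your approach buys is extra precision -- it produces the explicit matching of PBW-labelled basis elements, not merely a set inclusion -- and it makes the mechanism (the bar-equivariance of $T_w$ together with the scalar normalization~\eqref{eq:q_i a defn}) fully visible. What the paper's approach buys is brevity and conceptual clarity, once Theorem~\ref{thm:T_i bas} is in place; note however that you have not genuinely escaped the dependency tree, since Theorem~\ref{thm:inclusion of base}, which you still need for well-definedness of $\mathbf B(ww')$, is itself proved in the paper via Theorem~\ref{thm:T_i bas}. Also, the "main obstacle" you flag near the end is actually not an obstacle: $T_w(X_{\ii',k})=X_{\ii'',\ell(w)+k}$ holds on the nose from the braid relations and $\ell(ww')=\ell(w)+\ell(w')$, and since $T_w$ is an algebra map carrying generators of $U^\AA(w')$ to generators of $U^\AA(ww')$, the inclusion $T_w(U^\AA(w'))\subset U^\AA(ww')$ is automatic.
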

\begin{remark}
\begin{enumerate}[{\rm(a)}]
 \item 
In~\cite{BG-dcb} we constructed a basis $\mathbf B_{\lie g}$ of~$U_q(\lie g)$ containing $\mathbf B^{up}$ and conjectured (\cite{BG-dcb}*{Conjecture~1.16}) 
that $T_w(\mathbf B^{up})\subset \mathbf B_{\lie g}$. Thus, Theorem~\ref{thm:T_w(B(w'))} provides supporting evidence for that conjecture.
\item It would be interesting to compare the symmetries discussed above with the quantum twist computed in~\cite{KimOya}.
\end{enumerate}
\end{remark}

We deduce Theorem~\ref{thm:T_w(B(w'))} from Theorem~\ref{thm:bas-from-Lusztig-lemma} in~\S\ref{subs:pf T_w(B(w'))}.
All these results are obtained using the following striking property of~$\mathbf B^{up}$ which is parallel to a highly non-trivial result  
of Lusztig (\cite{Lus-adv}).
\begin{theorem}\label{thm:T_i bas}
$T_{s_i}(b)\in\mathbf B^{up}$ whenever $b\in \mathbf B^{up}\cap T_{s_i}^{-1}(U_q(\lie n^+))$.
\end{theorem}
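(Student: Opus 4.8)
The plan is to reduce the statement to the characterization of $\mathbf B^{up}$ recorded in Section~\ref{sec:char bas}, namely that an element of $U_q(\lie n^+)$ lies in $\mathbf B^{up}$ if and only if it is bar-invariant (with respect to the relevant bar involution on $U_q(\lie n^+)$) and lies in a suitable $\ZZ[q^{-1}]$-lattice $\mathcal L^{up}$ in such a way that it is congruent to a monomial modulo $q^{-1}\mathcal L^{up}$, the monomial being determined by the crystal datum. So let $b\in\mathbf B^{up}$ with $T_{s_i}(b)\in U_q(\lie n^+)$; the condition $b\in T_{s_i}^{-1}(U_q(\lie n^+))$ means precisely that $b$ lies in the subspace on which $T_{s_i}$ is ``admissible'', i.e. $b\in U_q(\lie n^+)\cap T_{s_i}^{-1}(U_q(\lie n^+))$, which is exactly the quantum Schubert-type piece built from the reduced words not beginning with $i$; concretely, writing $w_0$-type data aside, this is the span of PBW-type monomials in the $X_{\ii,k}$ for a word $\ii$ with $i_1\ne i$. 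The first step is therefore to identify $U_q(\lie n^+)\cap T_{s_i}^{-1}(U_q(\lie n^+))$ with a $U^\AA$-submodule on which we have explicit $\AA$-bases, and to check that $T_{s_i}$ restricted there is an $\AA$-linear bijection onto the analogous submodule $T_{s_i}^{-1}(U_q(\lie n^+))\cap U_q(\lie n^+)$ obtained by relabelling the word $\ii\mapsto s_i\ii$.

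The second step is compatibility of $T_{s_i}$ with the bar involution: since we use the modified Lusztig symmetries chosen in~\cite{BG-dcb} so that $\overline{T_w(x)}=T_w(\bar x)$, the element $T_{s_i}(b)$ is automatically bar-invariant because $b$ is. Thus only the lattice/congruence condition remains. For that, the third and decisive step is to show that $T_{s_i}$ sends the intersection of the integral lattice $\mathcal L^{up}$ with $U_q(\lie n^+)\cap T_{s_i}^{-1}(U_q(\lie n^+))$ into $\mathcal L^{up}$, and that modulo $q^{-1}\mathcal L^{up}$ it induces (up to the crystal relabelling $\tilde f_i$-strings, i.e. Kashiwara's $\sigma_i$ on the level of crystals) a bijection of the relevant pieces of $\mathbf B^{up}$. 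The cleanest route is to use Theorem~\ref{thm:bas-from-Lusztig-lemma}: pick a reduced word $\ii$ for a sufficiently long $w$ with $i_1\ne i$ so that $b\in\mathbf B(w)\subset\mathbf B^{up}$ (possible by Theorem~\ref{thm:inclusion of base} applied after extending $b$'s support to some quantum Schubert cell $U_q(w)$ avoiding the $i$-direction), write $b=b_{\ii,\mathbf a}$, and observe that $T_{s_i}(X_\ii^{\mathbf a})=X_{s_i\ii}^{\mathbf a}$ up to the normalization constants $q_{\ii,\mathbf a}$, so that $T_{s_i}(b_{\ii,\mathbf a})$ is bar-invariant and congruent to $X_{s_i\ii}^{\mathbf a}$ modulo $\sum_{\mathbf a'\ne\mathbf a}q^{-1}\ZZ[q^{-1}]X_{s_i\ii}^{\mathbf a'}$; by the uniqueness clause of Theorem~\ref{thm:bas-from-Lusztig-lemma} for the word $s_i\ii$ this forces $T_{s_i}(b_{\ii,\mathbf a})=b_{s_i\ii,\mathbf a}\in\mathbf B(s_i w)\subset\mathbf B^{up}$.

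The main obstacle is the bookkeeping in the third step: one must verify that $T_{s_i}$ genuinely carries $X_\ii^{\mathbf a}$, with its specific coefficient $q_{\ii,\mathbf a}$ from~\eqref{eq:q_i a defn}, to $X_{s_i\ii}^{\mathbf a}$ with the corresponding coefficient $q_{s_i\ii,\mathbf a}$ exactly — not merely up to a bar-invariant unit — since any discrepancy by a power of $q$ would spoil the congruence modulo $q^{-1}\ZZ[q^{-1}]$-lattice and hence the applicability of the uniqueness in Lusztig's Lemma. This is a direct computation with the definition of the $X_{\ii,k}$ and the braid/commutation relations for the $T_{s_j}$, together with the defining formula for $q_{\ii,\mathbf a}$; the choice of normalization in~\S\ref{subs:some prop q S c} was made precisely so that this works, so the verification should be routine once the conventions are unwound, but it is the step where care is essential. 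A secondary point is ensuring that every $b\in\mathbf B^{up}\cap T_{s_i}^{-1}(U_q(\lie n^+))$ really does lie in some $\mathbf B(w)$ with $i\notin\{i_1\}$ for a suitable reduced word; this follows from Corollary~\ref{cor:Kim} together with the fact that $U_q(\lie n^+)\cap T_{s_i}^{-1}(U_q(\lie n^+))=\bigcup_{w:\,\ell(s_iw)>\ell(w)}U_q(w)$, an exhaustion by quantum Schubert cells compatible with $\mathbf B^{up}$.
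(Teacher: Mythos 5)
Your argument is circular, and this is a fatal gap. You invoke Corollary~\ref{cor:Kim} (resp.\ Theorem~\ref{thm:inclusion of base}) to place $b$ into some $\mathbf B(w)$ with the first letter of the word avoiding $i$, and you then conclude by placing $T_{s_i}(b)$ into $\mathbf B(s_iw)$ and saying $\mathbf B(s_iw)\subset\mathbf B^{up}$. But in this paper the containment $\mathbf B(w)\subset\mathbf B^{up}$ (i.e.\ Theorem~\ref{thm:inclusion of base}, from which Corollary~\ref{cor:Kim} is deduced) is \emph{proved using} Theorem~\ref{thm:T_i bas}: the inductive step in~\S\ref{subs:pf inclusion of base} explicitly applies $T_i^{-1}$ to basis elements of $\mathbf B^{up}$ and cites Theorem~\ref{thm:T_i bas} to keep them in $\mathbf B^{up}$. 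So you cannot use Theorem~\ref{thm:inclusion of base} or Corollary~\ref{cor:Kim} as ingredients here. (Using Kimura's theorem instead would avoid the circle but defeats the stated purpose of giving an elementary, self-contained proof.) A secondary inaccuracy: the characterization of $\mathbf B^{up}$ you appeal to --- ``bar-invariant and congruent to a monomial modulo $q^{-1}\mathcal L^{up}$, the monomial determined by the crystal datum'' --- is not the one established in Section~\ref{sec:char bas}; there is no lattice $\mathcal L^{up}$ or crystal datum in this paper. The characterization actually proved there is different in kind: $\mathbf B^{\pm up}{}_\gamma$ is the set of bar-invariant $b\in U^\ZZ(\lie n^+)_\gamma$ with $\mu(\gamma)^{-1}\fgfrm bb\in 1+q^{-1}\ZZ[[q^{-1}]]$ (Theorem~\ref{thm:sign-bas}), and the sign is fixed by the $\partial^{(top)}$ condition of Proposition~\ref{prop:string char}.

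The paper's own proof of Theorem~\ref{thm:T_i bas} never passes through Schubert-cell bases at all. It checks the three conditions in~\eqref{eq:def-bas} directly for $T_i(b)$: bar-invariance is immediate since $T_i$ commutes with $\bar\cdot$; membership of $T_i(b)$ in $U^\ZZ(\lie n^+)$ is Proposition~\ref{prop:T_i lattice} ($T_i(U^\ZZ_i)={}_i U^\ZZ$, whose proof rests on Lemmata~\ref{lem:Jantzen} and~\ref{lem:action-Jantzen}); and the normalized scalar square is preserved because Lemma~\ref{lem:orthogonality of T_i} together with~\eqref{eq:mu-prop} gives $\mu(s_i\gamma)^{-1}\fgfrm{T_i(b)}{T_i(b)}=\mu(\gamma)^{-1}\fgfrm bb$. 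This yields $T_i(b)\in\mathbf B^{\pm up}$. The correct sign is then fixed by Theorem~\ref{thm:T_i formula} (the identity $\partial_i^{(top)}\circ T_i=(\partial_i^{op})^{(top)}$ on $U_i$) and Proposition~\ref{prop:string char}. All of this uses only the machinery of Sections~\ref{sec:char bas} and the decorated-algebra material, and is logically prior to Theorems~\ref{thm:bas-from-Lusztig-lemma} and~\ref{thm:inclusion of base}. If you want to pursue your route, you would first have to establish $\mathbf B(w)\subset\mathbf B^{up}$ independently of Theorem~\ref{thm:T_i bas}, which is precisely what the paper avoids by the form-theoretic argument.
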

We prove Theorem~\ref{thm:T_i bas} in~\S\ref{subs:proof of thm T_i bas}. Our proof, which is quite elementary and short, relies on the notion of {\em decorated algebras} 
(Definition~\ref{defn:decorated alg}) to which 
we generalize $T_{s_i}$ and obtain an explicit formula (Theorem~\ref{thm:tau-homomorphism}) for it.

We conclude this section with the following curious application of the above constructions. It is well-known 
(see e.g. Remark~\ref{rem: star op partial}) that the natural linear anti-involution ${}^*$ on~$U_q(\lie n^+)$ fixing the Chevalley generators
(see~\S\ref{subs:prelim})
preserves $\mathbf B^{up}$. Since $T_w\circ {}^*={}^*\circ T_{w^{-1}}^{-1}$ (cf.~\cite{BG-dcb}) it follows that 
$$
U_q(w)^*=T_{w^{-1}}^{-1}(U_q(\lie b^-))\cap U_q(\lie n^+)
$$
and Corollary~\ref{cor:Kim} implies that  $U_q(w)^*$
has a basis $\mathbf B(w)^*=U_q(w)^*\cap \mathbf B^{up}$. In particular, one can consider the algebras 
$$
U_q(w,w'):=U_q(w)\cap U_q(w')^*,\qquad w,w'\in W
$$
which is natural to call bi-Schubert algebras. The following is immediate.
\begin{corollary}
For any $w,w'\in W$, the bi-Schubert algebra $U_q(w,w')$ has a basis $\mathbf B(w,w'):=\mathbf B(w)\cap \mathbf B(w')^*=U_q(w,w')\cap \mathbf B^{up}$.
\end{corollary}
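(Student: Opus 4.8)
The plan is to reduce the assertion to the elementary observation that the intersection of two submodules, each spanned by a subset of a fixed basis, is again spanned by the corresponding subset of that basis. The two submodules in question are $U_q(w)$ and $U_q(w')^*$ inside $U_q(\lie n^+)$, and the fixed basis is $\mathbf B^{up}$. First I would record that $U_q(w)$ is $\mathbf B^{up}$-compatible: by Corollary~\ref{cor:Kim} (which in turn rests on Theorems~\ref{thm:bas-from-Lusztig-lemma} and~\ref{thm:inclusion of base}) the set $\mathbf B(w)=\mathbf B^{up}\cap U_q(w)$ is a basis of $U_q(w)$. Then I would transport this through the anti-involution ${}^*$: since ${}^*$ is linear and preserves $\mathbf B^{up}$, applying it to the equality $\mathbf B(w')=\mathbf B^{up}\cap U_q(w')$ shows that $\mathbf B(w')^*=\mathbf B^{up}\cap U_q(w')^*$ is a basis of $U_q(w')^*$; in particular $U_q(w')^*$ is $\mathbf B^{up}$-compatible as well. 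This is exactly the content of the paragraph preceding the statement.

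Next I would invoke the linear-algebra lemma: if $V$ is a module with basis $\mathbf B$ and $V_1,V_2\subseteq V$ satisfy $V_i=\operatorname{span}(\mathbf B\cap V_i)$ for $i=1,2$, then $V_1\cap V_2=\operatorname{span}(\mathbf B\cap V_1\cap V_2)$, and $\mathbf B\cap V_1\cap V_2$ is a basis of $V_1\cap V_2$. The proof is one line: given $v\in V_1\cap V_2$, expand $v$ uniquely in the basis $\mathbf B$; membership of $v$ in $V_1$ forces its support to lie in $\mathbf B\cap V_1$, membership in $V_2$ forces the support into $\mathbf B\cap V_2$, hence into $\mathbf B\cap V_1\cap V_2$, while linear independence is inherited from $\mathbf B$. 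Applying this with $V=U_q(\lie n^+)$, $\mathbf B=\mathbf B^{up}$, $V_1=U_q(w)$ and $V_2=U_q(w')^*$ yields that $\mathbf B^{up}\cap U_q(w,w')$ is a basis of $U_q(w,w')=U_q(w)\cap U_q(w')^*$.

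It then remains only to rewrite the index set: since $\mathbf B^{up}\cap U_q(w)=\mathbf B(w)$ and $\mathbf B^{up}\cap U_q(w')^*=\mathbf B(w')^*$, intersecting these two equalities gives $\mathbf B^{up}\cap U_q(w)\cap U_q(w')^*=\mathbf B(w)\cap\mathbf B(w')^*$, which is the desired description of $\mathbf B(w,w')$. I do not expect a genuine obstacle here: the substance has already been established in Corollary~\ref{cor:Kim} and in the ${}^*$-invariance of $\mathbf B^{up}$, and the only point meriting a word of care is that $\mathbf B^{up}$ must be regarded as a basis over the same coefficient ring over which $U_q(w)$, $U_q(w')^*$ and their intersection are taken, so that ``$\mathbf B^{up}$-compatibility'' genuinely amounts to the support condition exploited in the lemma.
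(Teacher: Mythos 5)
Your argument is correct and is exactly what the paper has in mind: the paper declares the corollary ``immediate'' from the compatibility of $U_q(w)$ and $U_q(w')^*$ with $\mathbf B^{up}$ (Corollary~\ref{cor:Kim} together with the ${}^*$-invariance of $\mathbf B^{up}$), leaving to the reader precisely the elementary support-intersection lemma you spell out. Nothing to add.
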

\begin{remark}
\begin{enumerate}[\rm(a)]
 \item \label{rem:intersect sc.a}
Based on numerous examples (see~\S\ref{subs:bi-schub}) one can conjecture that bi-Schubert algebras are Poin\-ca\-r\'e-Birkhoff-Witt (PBW), i.e. have 
bases consisting of ordered monomials (similarly to Lemma~\ref{lem:lus PBW elt}).
\item\label{rem:intersect sc.b} 
One can also consider intersections $U_q(w)\cap U_q(w')$; however, in this case it appears (and is probably well-known) that the corresponding algebra is 
always $U_q(w'')$ where $w''$ is less than both $w$ and~$w'$ in the weak right Bruhat order and is maximal with that property.
\end{enumerate}\label{rem:intersect sc}
\end{remark}
\addtocontents{toc}{\SkipTocEntry}
\subsection*{Acknowledgements}
The main part of this paper was written while both authors were visiting Universit\'e de Gen\`eve (Geneva, Switzerland). We are happy to use 
this opportunity to thank A.~Alekseev for his hospitality. We also benefited from
the hospitality of Max-Planck-Institut f\"ur Mathematik (Bonn, Germany), which we
gratefully acknowledge.

\section{Definition and characterization of~\texorpdfstring{$\mathbf B^{up}$}{B\^up}}\label{sec:char bas}

\subsection{Preliminaries}\label{subs:prelim}
Let $\lie g$ be a symmetrizable Kac-Moody algebra with the Cartan matrix $A=(a_{ij})_{i,j\in I}$. Let $\{\alpha_i\}_{i\in I}$ be 
the standard basis of~$Q=\ZZ^I$. Fix $d_i\in \ZZ_{>0}$, $i\in I$ such that the matrix $(d_i a_{ij})_{i,j\in I}$ is symmetric and 
define a symmetric bilinear form $(\cdot,\cdot):Q\times Q\to\ZZ$ by $(\alpha_i,\alpha_j)=d_i a_{ij}$; clearly, $(\gamma,\gamma)\in2\ZZ$ for any $\gamma\in Q$.
We will write $(\alpha_i^\vee,\gamma)$, $\gamma\in Q$ as an abbreviation for $(\alpha_i,\gamma)d_i^{-1}$.

The quantized 
enveloping algebra $U_q(\lie g)$ is an associative algebra over $\kk=\mathbb Q(q^{\frac12})$ generated by the $E_i$, $F_i$, $K_i^{\pm 1}$, $i\in I$
subject to the relations
\begin{gather}
\label{eq:commutation g}
[E_i,F_j]=\delta_{ij}(q_i^{-1}-q_i)(K_{i}-K_{i}^{-1})
,\quad K_i E_j =
q_i^{a_{ij}}E_j K_i,\,\, K_i F_j =q_i^{-a_{ij}}F_jK_{i},\,\, K_iK_j=K_jK_i
\\
\label{eq:qserre}
\sum\limits_{r,s\ge 0,\,r+s=1-a_{ij}}\mskip-8mu (-1)^s  E_i^{\la r\ra}E_j E_i^{\la s\ra}= 
\sum\limits_{r,s\ge 0,\,r+s=1-a_{ij}}\mskip-8mu (-1)^s  F_i^{\la r\ra}F_j F_i^{\la s\ra}=0,\quad i\not=j
\end{gather}
for all $i,j\in I$,  where $q_i=q^{d_i}$, 
$X_i^{\la k\ra}:=\big(\prod\limits_{s=1}^k \la s\ra_{q_i}\big)^{-1} X_i^k$ and $\la s\ra_v=v^s-v^{-s}$.
We also set 
$$
(n)_v=\frac{\la n\ra_v}{\la 1\ra_v},\quad  \la n\ra_v!=\prod_{t=1}^n \la t\ra_v,\quad  (n)_v!=\frac{\la n\ra_v!}{\la 1\ra_v^n},\quad 
\binom{n}{k}_v=\frac{\prod_{t=0}^{k-1} (n-t)_v}{(k)_v!}=\frac{\prod_{t=0}^{k-1} \la n-t\ra_v}{\la k\ra_v!}
$$
and $X_i^{(n)}:=X_i^n/(n)_{q_i}!$. 

We denote by $U_q(\lie n^+)$ (respectively, $U_q(\lie n^-)$) the subalgebra of $U_q(\lie g)$ generated by the $E_i$ (respectively, the $F_i$),
$i\in I$. Let $\mathcal K$ be the subalgebra of $U_q(\lie g)$ generated by the $K_i^{\pm 1}$, $i\in I$ and set $U_q(\lie b^\pm)=\mathcal KU_q(\lie n^\pm)$.

It is easy to see from the presentation that $U_q(\lie g)$ admits anti-involutions ${}^t$ and ${}^*$, where ${}^t$ interchanges $E_i$ and $F_i$ for 
each $i\in I$ and preserves the $K_i^{\pm 1}$ while ${}^*$ preserves the $E_i$ and $F_i$ while $K_i^*=K_i^{-1}$. Furthermore, $U_q(\lie g)$
admits an anti-linear anti-involution $\bar\cdot$ which preserves all generators and maps $q^{\frac12}$ to $q^{-\frac12}$.

The algebra $U_q(\lie n^+)$ is naturally graded by $Q^+:=\bigoplus_{i\in I}\ZZ_{\ge 0}\alpha_i$ via $\deg E_i=\alpha_i$. We denote 
the homogeneous component of $U_q(\lie n^+)$ of degree~$\gamma\in Q^+$ by $U_q(\lie n^+)_\gamma$.
This can be extended to a $Q$-grading on~$U_q(\lie g)$ via $\deg F_i=-\alpha_i$, $\deg K_i=0$.

\subsection{Modified Lusztig symmetries}\label{subs:Weyl-Lusztig symm}
Let~$W$ be the Weyl group of~$\lie g$. It is generated by the simple reflections $s_i$, $i\in I$ which act on~$Q$
via $s_i(\alpha_j)=\alpha_j-a_{ij}\alpha_i$. Given $w\in W$, denote $R(w)$ the set of reduced words for $w$, that is,
the set of $\mathbf i=(i_1,\dots,i_m)\in I^m$ of minimal length $m:=\ell(w)$ such that $w=s_{i_1}\cdots s_{i_m}$. It is well-known that the
form $(\cdot,\cdot)$ is $W$-invariant.

The following essentially coincides with Theorem~1.13 from~\cite{BG-dcb}.
\begin{lemma}
\begin{enumerate}[{\rm(a)}]
\item 
For each $i\in I$ there exists a unique automorphism~$T_i$ of $U_q(\lie g)$ which satisfies
$T_i(K_j)=K_jK_i^{-a_{ij}}$
and 
\begin{gather*}
T_i(E_j)=
\begin{cases} q_i^{-1} K_{i}^{-1} F_i,&i=j\\ 
\sum\limits_{r+s=-a_{ij}}\mskip-8mu (-1)^r q_i^{s+\frac12 a_{ij}} E_i^{\la r\ra}E_j E_i^{\la s\ra},  &i\ne j\\
\end{cases}
\\
T_i(F_j)=
\begin{cases} q_i^{-1} K_{i} E_i,& i=j\\ 
\sum\limits_{r+s=-a_{ij}}\mskip-8mu  (-1)^r q_i^{s+\frac12 a_{ij}} F_i^{\la r\ra}F_j F_i^{\la s\ra}, &i\ne j\\
\end{cases}
\end{gather*}
\item For all $x\in U_q(\lie g)$, $\overline{T_i(x)}=T_i(\overline x)$, 
$(T_i(x))^*=T_i^{-1}(x^*)$ and $(T_i(x))^t=T_i^{-1}(x^t)$.
\item The $T_i$, $i\in I$ satisfy the braid relations on $U_q(\lie g)$, that is, they 
define a representation of the Artin braid group $\operatorname{Br}_{\lie g}$ of~$\lie g$ on~$U_q(\lie g)$. 
\end{enumerate}
\label{lem:T_i-defn-prop}
\end{lemma}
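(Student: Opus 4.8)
The plan is to build the automorphism $T_i$ by hand on the rank-one/rank-two pieces and then propagate it, rather than to invoke any existing normalization of Lusztig's symmetries as a black box, since the point is precisely that our presentation of $U_q(\lie g)$ differs from the standard one. First I would record that the algebra generated by $E_i,F_i,K_i^{\pm 1}$ (for the fixed $i$) together with $E_j,F_j,K_j$ for a single $j\ne i$ is presented by the relations \eqref{eq:commutation g}--\eqref{eq:qserre} restricted to $\{i,j\}$; it therefore suffices to check that the formulas displayed in part~(a), together with $T_i(K_j)=K_jK_i^{-a_{ij}}$, respect each of these relations. This is a finite, if tedious, list: the $K$-$K$ commutation is immediate; the $K$-$E$ and $K$-$F$ relations reduce to a weight bookkeeping computation using that the right-hand sides of $T_i(E_j),T_i(F_j)$ are homogeneous of the $s_i$-reflected weight; the $[E,F]$ relation for the pair $(i,i)$ is a direct rank-one $\lie{sl}_2$ computation, and for $(i,j)$ with $i\ne j$ it follows because $T_i(E_j)$ lies in $U_q(\lie n^+)$ and $T_i(F_i)$ in $\mathcal K U_q(\lie n^+)$ so the relevant bracket is forced by degree reasons; the two $q$-Serre relations are the genuinely nontrivial identities and are handled by the standard ``higher $q$-Serre'' manipulation (the $-a_{ij}=0,1,2,3$ cases can be done by hand, the general symmetrizable case by the usual inductive identity for the $E_i^{\la r\ra}$). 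Uniqueness is clear once existence is known, since $T_i$ is determined on generators; that $T_i$ is an \emph{automorphism} (not merely an endomorphism) follows by writing down $T_i^{-1}$ by the same recipe with $q^{\frac12}\mapsto q^{-\frac12}$ conjugation and checking $T_iT_i^{-1}=\id$ on generators. The rank-two verification of the $q$-Serre relations is the main obstacle; everything else is bookkeeping.

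For part~(b), the three assertions $\overline{T_i(x)}=T_i(\overline x)$, $(T_i(x))^*=T_i^{-1}(x^*)$, $(T_i(x))^t=T_i^{-1}(x^t)$ are each identities between (anti-)homomorphisms of $U_q(\lie g)$, so by part~(a) it is enough to check them on the generators $E_j,F_j,K_j^{\pm 1}$. For $\bar\cdot$ one checks that applying $\bar\cdot$ to the displayed formula for $T_i(E_j)$ and then using $\overline{E_i^{\la r\ra}}=E_i^{\la r\ra}$ (here is exactly where our normalization of the divided powers and of the scalar $q_i^{s+\frac12 a_{ij}}$ is engineered to make the two signs and powers of $q^{\frac12}$ match) reproduces the same formula; similarly for $F_j$ and trivially for $K_j$. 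For ${}^*$ and ${}^t$ one computes $T_i^{-1}$ on $E_j^*=E_j$, resp.\ $E_j^t=F_j$, and compares with $(T_i(E_j))^*$, resp.\ $(T_i(E_j))^t$; again this is a short symbolic check on the rank-two generators, using that ${}^*$ and ${}^t$ are anti-automorphisms so they reverse the order of the factors $E_i^{\la r\ra}E_jE_i^{\la s\ra}$, which is precisely compensated by passing from $T_i$ to $T_i^{-1}$.

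For part~(c), the braid relations, the cleanest route is to deduce them from the corresponding statement in the standard Drinfeld--Jimbo normalization. Lusztig's symmetries $T_i''$ in (say) the conventions of Lusztig's book are known to satisfy the braid relations on $U_q(\lie g)$; our $T_i$ differs from $T_i''$ by conjugation by a grading operator (a well-chosen ``$q^{\frac12}$-twist'' supported on the Cartan part, used in \cite{BG-dcb} to make $T_i$ commute with $\bar\cdot$), i.e.\ $T_i=\phi\circ T_i''\circ\phi^{-1}$ for a fixed automorphism $\phi$ independent of $i$. Since conjugation by a fixed automorphism preserves any relation among the $T_i''$, the braid relations transfer verbatim. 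Alternatively, and more in the self-contained spirit of the paper, one verifies the braid relations directly on generators: for $|i-j|$ with $a_{ij}a_{ji}\in\{0,1\}$ this is the classical rank-two check for $A_1\times A_1$ and $A_2$, and the remaining finite rank-two types $B_2,G_2$ are again explicit (if lengthy) computations; the infinite rank-two case does not arise since braid relations only involve pairs with finite $\langle s_i,s_j\rangle$. I would present the conjugation argument as the main proof and remark that a direct check is possible. The only subtlety to flag is making sure the conjugating automorphism $\phi$ is the same for all $i$, so that no spurious scalar obstructs the braid identities; this is guaranteed because $\phi$ depends only on the grading of $U_q(\lie g)$ by the root lattice, not on the choice of $i$.
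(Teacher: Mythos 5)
The paper offers no proof of this lemma at all: immediately before the statement it says ``The following essentially coincides with Theorem~1.13 from~\cite{BG-dcb}'' and simply cites that reference. So any proof you write is, by construction, a different (and more self-contained) route than the paper's. Your overall plan is the standard one and would in principle work: check the rank-two relations for (a), check on generators for (b), reduce (c) to Lusztig's braid relations by conjugation or by a rank-two case analysis.

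There is, however, a concrete error in your argument for part (b), and it is not cosmetic. You invoke ``$\overline{E_i^{\la r\ra}}=E_i^{\la r\ra}$,'' but this is false with the paper's normalization. Here $E_i^{\la r\ra}=E_i^r\big/\prod_{s=1}^r\la s\ra_{q_i}$ with $\la s\ra_{q_i}=q_i^{s}-q_i^{-s}$, and $\overline{\la s\ra_{q_i}}=q_i^{-s}-q_i^{s}=-\la s\ra_{q_i}$, so in fact $\overline{E_i^{\la r\ra}}=(-1)^r E_i^{\la r\ra}$. (The bar-invariant divided power is $E_i^{(r)}=(q_i-q_i^{-1})^r E_i^{\la r\ra}$, not $E_i^{\la r\ra}$; you have conflated the two.) Tracing through the check for $\overline{T_i(E_j)}$ with $i\ne j$: $\bar\cdot$ reverses the three factors and sends $q_i^{s+\frac12 a_{ij}}\mapsto q_i^{-s-\frac12 a_{ij}}$, and after reindexing $r\leftrightarrow s$ (using $r+s=-a_{ij}$, so $-r-\tfrac12 a_{ij}=s+\tfrac12 a_{ij}$ and $(-1)^s=(-1)^{a_{ij}}(-1)^r$) these two effects alone would give $\overline{T_i(E_j)}=(-1)^{a_{ij}}T_i(E_j)$, which is wrong whenever $a_{ij}$ is odd. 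It is exactly the additional factor $(-1)^{r+s}=(-1)^{-a_{ij}}$ coming from $\overline{E_i^{\la r\ra}E_j E_i^{\la s\ra}}=(-1)^{r+s}E_i^{\la s\ra}E_j E_i^{\la r\ra}$ that cancels this stray sign and gives $\overline{T_i(E_j)}=T_i(E_j)$. So your conclusion is true but your justification, as written, is incorrect and would (if followed literally) lead you to believe the identity fails in type $A_2$; you should redo the (b) verification with the correct transformation law $\overline{E_i^{\la r\ra}}=(-1)^r E_i^{\la r\ra}$. For part (c), your conjugation strategy is reasonable, but if you present it as the main proof you must actually exhibit the automorphism $\phi$ (it is a grading-dependent rescaling determined by the $Q$-grading, as used in~\cite{BG-dcb}) and verify $T_i=\phi\circ T_i''\circ\phi^{-1}$ on generators; that step cannot be left implicit.
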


\subsection{Bilinear forms}
Following~\cites{Lus-book,BG-dcb}, we define a symmetric bilinear form $\la\cdot,\cdot\ra$ on $U_q(\lie n^+)$.
Let $V=\bigoplus_{i\in I} \kk E_i$ and let $\la\cdot,\cdot\ra$ be the bilinear form on~$V$ defined by 
$\la E_i,E_j\ra=\delta_{ij}(q_i-q_i^{-1})$. Extend it naturally to $T(V)$ via 
$$
\la v_1\tensor\cdots\tensor v_k,v'_1\tensor\cdots\tensor v'_l\ra' =\delta_{kl}\prod_{r=1}^k \la v_r,v'_r\ra
,\qquad v_r,\,v'_r\in V,\,1\le r\le k.
$$
Define a linear map $\Psi:V\tensor V\to V\tensor V$ by $\Psi(E_i\tensor E_j)=q^{(\alpha_i,\alpha_j)}E_j\tensor E_i$.
Finally, define $\la\cdot,\cdot\ra_\Psi$ via 
$$
\la u,v\ra_\Psi=\delta_{k,l} \la [k]_\Psi!(u),v\ra'=\delta_{k,l}\la u,[k]_\Psi!(v)\ra',\qquad u\in V^{\tensor k},\, v\in V^{\tensor l}
$$
and $[k]_\Psi!\in\End_\kk V^{\tensor k}$ is the standard notation for the braided $k$-factorial (see e.g.~\cite{BG-dcb}*{\S A.1}).
It is well-known (see e.g.~\cite{Lus-book}) that the kernel~$J$ of the canonical map $T(V)\to U_q(\lie n^+)$, $E_i\mapsto E_i$
is the radical of $\la\cdot,\cdot\ra_\Psi$. Thus, we have a well-defined non-degenerate symmetric bilinear form $\la\cdot,\cdot\ra$ on~$U_q(\lie n^+)$
given by $\la u+J,v+J\ra=\la u,v\ra_\Psi$. This form in fact coincides with the form $\la\cdot,\cdot\ra$ we introduced in~\cite{BG-dcb}*{\S A.3} if 
we identify $U_q(\lie n^-)$ with $U_q(\lie n^+)$ via ${}^{*t}$.
We will often use the following obvious
\begin{lemma}\label{lem:form-non-zero-deg}
Let $x,x'\in U_q(\lie n^+)$ be homogeneous. Then $\la x,x'\ra\not=0$ implies that $\deg x=\deg x'$.
\end{lemma}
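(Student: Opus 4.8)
The plan is to unwind the definition of $\la\cdot,\cdot\ra$ back to the tensor algebra, where the claim is visible by inspection. Equip $T(V)=\bigoplus_{k\ge 0}V^{\tensor k}$ with the $Q^+$-grading in which each $E_i$ sits in degree $\alpha_i$; this refines the tensor-length grading, and the canonical surjection $T(V)\to U_q(\lie n^+)$, $E_i\mapsto E_i$, is a morphism of $Q^+$-graded algebras. Since $\la\cdot,\cdot\ra$ is by definition induced from $\la\cdot,\cdot\ra_\Psi$ via this surjection, it suffices to prove the analogous statement on $T(V)$, namely that $\la u,v\ra_\Psi=0$ whenever $u,v$ are homogeneous of distinct $Q^+$-degrees.

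First I would observe that the auxiliary form $\la\cdot,\cdot\ra'$ on $T(V)$ already kills homogeneous components of distinct $Q^+$-degree: on pure tensors one has $\la E_{i_1}\tensor\cdots\tensor E_{i_k},E_{j_1}\tensor\cdots\tensor E_{j_l}\ra'=\delta_{k,l}\prod_{r=1}^k\delta_{i_r,j_r}(q_{i_r}-q_{i_r}^{-1})$, which is nonzero only if the index sequences coincide, and in that case both tensors have degree $\sum_{r}\alpha_{i_r}$. Hence $\la (V^{\tensor k})_\gamma,(V^{\tensor l})_{\gamma'}\ra'=0$ unless $\gamma=\gamma'$.

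Next I would check that the braided factorial $[k]_\Psi!\in\End_\kk V^{\tensor k}$ preserves the $Q^+$-grading. Indeed $\Psi(E_i\tensor E_j)=q^{(\alpha_i,\alpha_j)}E_j\tensor E_i$ merely transposes the two tensor factors up to a scalar, so $\Psi$ fixes the multidegree; consequently any operator assembled from copies of $\Psi$ acting in adjacent pairs of slots, in particular $[k]_\Psi!$, does the same. Therefore, for homogeneous $u\in V^{\tensor k}$ and $v\in V^{\tensor l}$ with $\deg u\ne\deg v$, the element $[k]_\Psi!(u)$ is homogeneous of degree $\deg u$, so $\la u,v\ra_\Psi=\la [k]_\Psi!(u),v\ra'=0$ by the previous step. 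Passing to the quotient $U_q(\lie n^+)$ yields the lemma.

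There is essentially no obstacle here: the statement is immediate once the form is unwound, and the only point that is not purely formal — the grading-invariance of $[k]_\Psi!$ — is itself clear from the definition of $\Psi$. (Alternatively, one can bypass $T(V)$ and argue directly on $U_q(\lie n^+)$ using the adjunction $\la E_i x,y\ra=\la x,r_i(y)\ra$ relating the form to the skew-derivations $r_i$, which lower degree by $\alpha_i$; an induction on $\deg x$ then gives the same conclusion.)
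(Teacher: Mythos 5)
Your proof is correct. The paper offers no argument at all — it simply labels the lemma ``obvious'' and moves on — so there is nothing to compare against; your unwinding through $T(V)$ supplies the details the authors took for granted. The two key observations you make (that $\la\cdot,\cdot\ra'$ vanishes on pure tensors with distinct index sequences, and that $\Psi$ and hence $[k]_\Psi!$ preserve the $Q^+$-multidegree) are exactly what makes the statement immediate; the parenthetical alternative via the degree-lowering adjoints $\partial_i$ (what you call $r_i$, the paper's Lemma~\ref{lem:partial-prop}\eqref{lem:partial-prop.ii}) would work equally well.
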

Define $\fgfrm{\cdot}{\cdot}:U_q(\lie n^+)\tensor U_q(\lie n^+)\to \kk$ %which is 
by 
$$
\fgfrm{x}{y}=
\mu(\gamma)q^{-\frac12(\gamma,\gamma)}
\la x,y\ra,\qquad x,y\in U_q(\lie n^+)_\gamma
$$
where
\begin{equation}\label{eq:defn-mu}
\mu(\gamma)=q^{\frac14(\gamma,\gamma)+\frac12\eta(\gamma)},\qquad \gamma\in Q
\end{equation}
and $\eta\in\Hom_\ZZ(Q,\ZZ)$ is defined by $\eta(\alpha_i)=d_i$. Note the following properties of~$\mu$ which will be often used in the sequel
\begin{equation}\label{eq:mu-prop}
\mu(r\alpha_i)=q_i^{\binom{r+1}2},\quad \mu(s_i\gamma)=\mu(\gamma)q^{-\frac12(\alpha_i,\gamma)},\quad \mu(\gamma+\gamma')=
\mu(\gamma)\mu(\gamma')q^{\frac12(\gamma,\gamma')}
\end{equation}

Define an anti-linear automorphism $\tilde\cdot$ of $U_q(\lie g)$ by
$$
\tilde x=(\sgn\gamma)\bar x{}^*,\qquad x\in U_q(\lie g)_\gamma
$$
where $\sgn:Q\to\{\pm 1\}$ is the homomorphism of abelian groups defined by $\sgn(\alpha_i)=-1$. Then
(cf.~\cite{BG-dcb}*{})
\begin{equation}\label{eq:bar tilde}
\overline{\fgfrm{x}{y}}=\fgfrm{\bar x}{\tilde y}=\fgfrm{\tilde x}{\bar y}.
\end{equation}

\subsection{Lattices  and signed basis in \texorpdfstring{$U_q(\lie n^+)$}{Uq(n+)}}\label{subs:defn-basis}
Let $\AA=\ZZ[q^{\frac12},q^{-\frac12}]$ which is a subring of~$\QQ(q^{\frac12})$. Denote $\AA_0=\ZZ[q,q^{-1}]$ and $\AA_1=q^{\frac12}\AA_0$; clearly,
$\AA=\AA_0\oplus \AA_1$ as an $\AA_0$-module.
Following~\cite{BG-dcb}*{\S3.1}, for any $J\subset I$, let $U_\ZZ(\lie n^+)_J$ (respectively, $U_\ZZ(\lie n^-)_J$) be the $\AA_0$-subalgebra of~$U_q(\lie n^+)$ 
(respectively, $U_q(\lie n^-)$)
generated by the 
$E_i^{\la n\ra}$ (respectively, $F_i^{\la n\ra}$), $i\in J$, $n\in\ZZ_{\ge0}$. We abbreviate $U_\ZZ(\lie n^\pm):=U_\ZZ(\lie n^+)_I$.
Set 
$$
U^\ZZ(\lie n^+)=\{ x\in U_q(\lie n^+)\,:\, \fgfrm{x}{U_\ZZ(\lie n^+)} \subset \AA_0\}.
$$
Clearly, $U^\ZZ(\lie n^+)$ is an $\AA_0$-submodule of~$U_q(\lie n^+)$.
\begin{lemma}\label{lem:prod in U^Z}
We have $q^{\frac12(\gamma,\gamma')}xx'\in U^\ZZ(\lie n^+)$
for all $x\in U^\ZZ(\lie n^+)_\gamma$, $x'\in U^\ZZ(\lie n^+)_{\gamma'}$. In particular,
all powers of a homogeneous element of~$U^\ZZ(\lie n^+)$ are in $U^\ZZ(\lie n^+)$ and
$U^\AA(\lie n^+):=U^\ZZ(\lie n^+)\tensor_{\AA_0}\AA$ is an $\AA$-algebra.
\end{lemma}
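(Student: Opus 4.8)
The plan is to realize $U^\ZZ(\lie n^+)$ as the lattice dual to the divided-power lattice $U_\ZZ(\lie n^+)$ with respect to $\fgfrm{\cdot}{\cdot}$, and then to read off the behaviour of multiplication on it from the adjoint behaviour of comultiplication on $U_\ZZ(\lie n^+)$. First I would observe that $U^\ZZ(\lie n^+)$ is $Q^+$-graded: since $\fgfrm{\cdot}{\cdot}$ kills distinct homogeneous components (\lemref{lem:form-non-zero-deg}) and $U_\ZZ(\lie n^+)$ is generated by the homogeneous elements $E_i^{\la n\ra}$, an element of $U_q(\lie n^+)$ lies in $U^\ZZ(\lie n^+)$ if and only if each of its homogeneous components does; moreover each graded piece $U^\ZZ(\lie n^+)_\gamma$ is, via the nondegenerate restriction of the form, the $\AA_0$-dual of the finitely generated free $\AA_0$-module $U_\ZZ(\lie n^+)_\gamma$, hence is itself free, so $U^\ZZ(\lie n^+)\tensor_{\AA_0}\AA$ embeds as an $\AA$-submodule of $U_q(\lie n^+)$. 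This reduces everything to homogeneous elements.

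Next I would invoke two standard facts about $U_q(\lie n^+)$ (see \cite{Lus-book}, and \cite{BG-dcb} for this normalization): there is a comultiplication $\Delta$ on $U_q(\lie n^+)$, determined by $\Delta(E_i)=E_i\tensor 1+1\tensor E_i$ together with the requirement of being an algebra map into the twisted tensor square, which is adjoint to multiplication with respect to Lusztig's form; and $\Delta$ is integral, in the sense that $\Delta(U_\ZZ(\lie n^+))$ lies in the completed tensor square $U_\ZZ(\lie n^+)\widehat{\tensor}U_\ZZ(\lie n^+)$, because $\Delta(E_i^{\la n\ra})$ is an explicit $\AA_0$-combination of the $E_i^{\la a\ra}\tensor E_i^{\la b\ra}$ and $\Delta$ is multiplicative. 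Translating adjointness through the renormalization $\fgfrm{x}{y}=\mu(\gamma)q^{-\frac12(\gamma,\gamma)}\la x,y\ra$ and using the three identities for $\mu$ in~\eqref{eq:mu-prop}, a short computation yields, for homogeneous $x\in U_q(\lie n^+)_\gamma$, $x'\in U_q(\lie n^+)_{\gamma'}$ and $y\in U_q(\lie n^+)_{\gamma+\gamma'}$,
$$
\fgfrm{q^{\frac12(\gamma,\gamma')}xx'}{y}=\sum_{(y)}\fgfrm{x}{y_{(1)}}\fgfrm{x'}{y_{(2)}},
$$
where $\Delta(y)=\sum_{(y)}y_{(1)}\tensor y_{(2)}$ and, by \lemref{lem:form-non-zero-deg}, only the bidegree-$(\gamma,\gamma')$ terms contribute; the point of the lemma is precisely that the scalar needed to absorb the twist in $\Delta$ is exactly $q^{\frac12(\gamma,\gamma')}$.

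Granting the displayed formula, the first assertion is immediate: if $x\in U^\ZZ(\lie n^+)_\gamma$, $x'\in U^\ZZ(\lie n^+)_{\gamma'}$ and $y\in U_\ZZ(\lie n^+)$, integrality of $\Delta$ puts every $y_{(1)},y_{(2)}$ in $U_\ZZ(\lie n^+)$ up to an $\AA_0$-scalar, so each summand lies in $\AA_0$ and hence $q^{\frac12(\gamma,\gamma')}xx'\in U^\ZZ(\lie n^+)$. For the powers, note $(\gamma,\gamma)\in2\ZZ$, so $q^{\frac12(\gamma,\gamma)}$ is a unit in $\AA_0$ and, $U^\ZZ(\lie n^+)$ being an $\AA_0$-module, $x^2\in U^\ZZ(\lie n^+)$ for homogeneous $x$; iterating (applying the formula to $x$ and $x^{n-1}$, whose joint prefactor $q^{\frac{n-1}2(\gamma,\gamma)}$ is again a unit in $\AA_0$) gives $x^n\in U^\ZZ(\lie n^+)$ for all $n$. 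Finally, over $\AA=\ZZ[q^{\frac12},q^{-\frac12}]$ the factor $q^{\frac12(\gamma,\gamma')}$ is a unit for all $\gamma,\gamma'$, so products of homogeneous elements of $U^\ZZ(\lie n^+)$ lie in $U^\AA(\lie n^+)$; since $U^\ZZ(\lie n^+)$ is graded, arbitrary products of elements of $U^\AA(\lie n^+)$ are $\AA$-linear combinations of such, hence lie in $U^\AA(\lie n^+)$, and $U^\AA(\lie n^+)$ is an $\AA$-algebra.

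The step I expect to require the most care — though it is only bookkeeping — is the passage through the renormalization: the comultiplication and the adjointness relation taken from the literature come with their own conventions (normalization of $\la\cdot,\cdot\ra$, sign of the twist on the tensor square), and one must check that, once rewritten in terms of $\fgfrm{\cdot}{\cdot}$, the compensating scalar is precisely $q^{\frac12(\gamma,\gamma')}$. This is exactly the computation in which the identities \eqref{eq:mu-prop} for $\mu$ are used, and everything else is formal.
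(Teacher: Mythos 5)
Your proposal is correct and takes essentially the same route as the paper: both use the braided comultiplication $\ul\Delta$ with $\ul\Delta(E_i)=E_i\tensor 1+1\tensor E_i$ (an algebra map into the twisted tensor square), the adjointness $\la xx',y\ra=\la x,\ul y_{(1)}\ra\la x',\ul y_{(2)}\ra$, the integrality $\ul\Delta(U_\ZZ(\lie n^+))\subset U_\ZZ(\lie n^+)\tensor_{\AA_0}U_\ZZ(\lie n^+)$ from \cite{Lus-book}*{Lemma~1.4.2}, and the multiplicativity of $\mu$ in \eqref{eq:mu-prop} to identify the compensating prefactor as $q^{\frac12(\gamma,\gamma')}$. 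The deductions about powers (via $(\gamma,\gamma)\in 2\ZZ$) and the $\AA$-algebra structure on $U^\AA(\lie n^+)$ follow exactly as you describe.
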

\begin{proof}
Following~\cite{Lus-book}*{\S1.2}, 
let $\ul\Delta:U_q(\lie n^+)\to U_q(\lie n^+)\ul\tensor U_q(\lie n^+)$ be the braided co-multiplication defined by $\ul\Delta(E_i)=E_i\tensor 1+1\tensor E_i$,
where $U_q(\lie n^+)\ul\tensor U_q(\lie n^+)=U_q(\lie n^+)\tensor U_q(\lie n^+)$ endowed with an algebra structure via $(x\tensor y)(x'\tensor y')=q^{(\gamma,\gamma')}xx'\tensor yy'$
for all $x,y'\in U_q(\lie n^+)$, $y\in U_q(\lie n^+)_\gamma$, $x'\in U_q(\lie n^+)_{\gamma'}$. 
Then $\la xx',y\ra=\la x,\ul y_{(1)}\ra\la x',\ul y_{(2)}\ra$ and so
$$
\fgfrm{xx'}{y}=q^{-\frac12(\gamma,\gamma')}\fgfrm{x}{\ul y_{(1)}}\fgfrm{x'}{\ul y_{(2)}},\qquad x\in U_q(\lie n^+)_\gamma,\, x'\in U_q(\lie n^+)_{\gamma'},
$$
where $\ul\Delta(y)=\ul y_{(1)}\tensor \ul y_{(2)}$ in Sweedler-like notation. It follows from~\cite{Lus-book}*{Lemma~1.4.2} that 
$\ul\Delta(U_\ZZ(\lie n^+))\subset U_\ZZ(\lie n^+)\tensor_{\AA_0}
U_\ZZ(\lie n^+)$, hence we can assume that $\ul y_{(1)},\ul y_{(2)}\in U_\ZZ(\lie n^+)$ provided that $y\in U_\ZZ(\lie n^+)$. All assertions are now immediate.
\end{proof}

Define, for any $\gamma\in Q^+$ 
\begin{equation}\label{eq:def-bas}
\mathbf B^{\pm up}{}_\gamma=\{ b\in U^\ZZ(\lie n^+)_\gamma\,:\,\bar b=b,\, \mu(\gamma)^{-1}\fgfrm{b}{b}\in 1+q^{-1}\ZZ[[q^{-1}]])\}
\end{equation}
and set $\mathbf B^{\pm up}=\bigsqcup_{\gamma\in Q^+} \mathbf B^{\pm up}{}_\gamma$.

\subsection{\texorpdfstring{Signed basis is $(K_-,\mu)$}{(K-,mu)}-orthonormal}\label{subs:char sign bas}
Let $R$ be a commutative unital subring of a field~$\kk$.
Following~\cite{Lus-book}*{\S14.2.1}, a subset $\mathbf B^\pm$ of a free $R$-module~$L$ is a {\em signed basis} of~$L$ 
if $\mathbf B^\pm=\mathbf B\sqcup (-\mathbf B)$
for some basis $\mathbf B$ of~$L$. 

Let $K_-$
be a subring of~$\kk$ not containing~$1$.
We say that $\mathbf B$ is  $(K_-,\mu)$-orthonormal for some $\mu\in R^\times$ with respect to a fixed symmetric bilinear pairing 
$\fgfrm{\cdot}{\cdot}:L\tensor_R L\to \kk$
if
$$
\mu \cdot\fgfrm{b}{b'}\in \delta_{b,b'}+K_-,\qquad b,b'\in\mathbf B.
$$
Accordingly, we say that a signed basis $\mathbf B^\pm$ is $(K_-,\mu)$-orthonormal if it contains a $(K_-,\mu)$-orthonormal basis of~$L$.

The following result is parallel to~\cite{Lus-book}*{Theorem 14.2.3}.
\begin{theorem}\label{thm:sign-bas}
$\mathbf B^{\pm up}$ is a signed basis of~$U^\ZZ(\lie n^+)$ with $R=\AA_0$. 
Moreover, for each $\gamma\in Q^+$, $\mathbf B^{\pm up}{}_\gamma$ is 
a $(K_-,\mu(\gamma)^{-1})$-orthonormal basis where~$\mu$ is defined by~\eqref{eq:defn-mu}
and $K_-=q^{-1}\ZZ[[q^{-1}]]\cap \QQ(q)$.
\end{theorem}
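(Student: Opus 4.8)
The plan is to mimic the strategy of Lusztig's proof of~\cite{Lus-book}*{Theorem 14.2.3}, working degree by degree with the $\AA_0$-lattice $U^\ZZ(\lie n^+)_\gamma$ and the pairing $\fgfrm{\cdot}{\cdot}$. First I would fix $\gamma\in Q^+$ and recall from~\S\ref{subs:defn-basis} that $U^\ZZ(\lie n^+)_\gamma$ is the $\AA_0$-dual lattice of $U_\ZZ(\lie n^+)_\gamma$ with respect to $\fgfrm{\cdot}{\cdot}$; in particular it is a free $\AA_0$-module of finite rank, and $\mu(\gamma)\fgfrm{\cdot}{\cdot}$ takes values in $\AA_0$ on it (after rescaling $\la\cdot,\cdot\ra$ appropriately — here one uses that $\la U_\ZZ(\lie n^+),U_\ZZ(\lie n^+)\ra\subset\AA_0$, a standard fact, together with the definition \eqref{eq:defn-mu} of~$\mu$ and the prefactor $q^{-\frac12(\gamma,\gamma)}$ in $\fgfrm{\cdot}{\cdot}$). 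The key structural input is that $\bar\cdot$ preserves $U^\ZZ(\lie n^+)_\gamma$ (because $\bar\cdot$ preserves $U_\ZZ(\lie n^+)_\gamma$ and is an isometry-up-to-conjugation for the forms, by \eqref{eq:bar tilde} combined with the fact that $\tilde\cdot$ also preserves these lattices), and that $\fgfrm{\cdot}{\cdot}$ is $\bar\cdot$-semilinear in the sense of \eqref{eq:bar tilde}: $\overline{\fgfrm{x}{y}}=\fgfrm{\bar x}{\tilde y}$.

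Next I would run the abstract lemma underlying Lusztig's argument (the analogue of~\cite{Lus-book}*{Lemma 14.2.2}): given a free $R=\AA_0$-module $L$ of finite rank with a symmetric $\kk$-valued pairing, a $\bar\cdot$-stable $R$-form, a semilinear involution~$\bar\cdot$ on $L$ with $\overline{\fgfrm{x}{y}}=\fgfrm{\bar x}{\bar y}$ for which the form is "almost unimodular over $K_-=q^{-1}\ZZ[[q^{-1}]]\cap\QQ(q)$" — i.e. the Gram matrix in any $\bar\cdot$-stable basis is congruent to the identity modulo $K_-$ after scaling by $\mu(\gamma)$ — one produces a signed basis $\mathbf B^\pm$ consisting of $\bar\cdot$-fixed vectors that is $(K_-,\mu(\gamma)^{-1})$-orthonormal and is characterized exactly by the conditions $\bar b=b$ and $\mu(\gamma)^{-1}\fgfrm{b}{b}\in 1+K_-$. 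This is a formal Gram–Schmidt-over-$K_-$ argument: one inducts on a suitable partial order, at each stage correcting a $\bar\cdot$-fixed near-orthonormal vector by a $\bar\cdot$-fixed element of $K_-\cdot(\text{earlier vectors})$; the point that $K_-$ is closed under the relevant operations (it is a subring, closed under $x\mapsto x/(1+x')$ for $x'\in K_-$, and stable under $\bar\cdot$ only in the weak sense that $\overline{K_-}\cap K_-\cdot(\dots)$ still lands where needed) is exactly why Lusztig uses $q^{-1}\ZZ[[q^{-1}]]\cap\QQ(q)$ rather than $q^{-1}\ZZ[q^{-1}]$. I would then identify the set produced by this abstract procedure with $\mathbf B^{\pm up}{}_\gamma$ as defined in~\eqref{eq:def-bas}: the defining conditions match verbatim once one checks $\mu(\gamma)^{-1}\fgfrm{b}{b}\in 1+q^{-1}\ZZ[[q^{-1}]]$ is equivalent to the $(K_-,\mu(\gamma)^{-1})$-normalization given that $b\in U^\ZZ(\lie n^+)_\gamma$ forces $\fgfrm{b}{b}\in\QQ(q)$.

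The main obstacle I anticipate is establishing the "almost unimodularity" input, namely that $\mu(\gamma)\fgfrm{\cdot}{\cdot}$ restricted to $U^\ZZ(\lie n^+)_\gamma$ has Gram determinant in $1+K_-$ with respect to some $\bar\cdot$-stable basis — equivalently, that the dual lattice $U^\ZZ(\lie n^+)_\gamma$ is "self-dual mod $K_-$". In Lusztig's book this rests on the explicit PBW/canonical basis machinery; here one should instead deduce it from the existence of the lower global basis $\mathbf B^{low}$ of $U_\ZZ(\lie n^+)_\gamma$, or — to keep the treatment self-contained as the introduction promises — from an induction on $\gamma$ using the compatibility of $\fgfrm{\cdot}{\cdot}$ with the braided coproduct (Lemma~\ref{lem:prod in U^Z} and its proof) to reduce to rank-one computations $\fgfrm{E_i^{\la n\ra}}{E_i^{\la n\ra}}$, where one checks directly, using \eqref{eq:mu-prop}, that $\mu(n\alpha_i)^{-1}\fgfrm{E_i^{\la n\ra}}{E_i^{\la n\ra}}\in 1+q^{-1}\ZZ[q^{-1}]$. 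Once that normalization is in hand, the rest is the formal orthonormalization argument above and the verification that the resulting basis is independent of choices, which gives both the signed-basis claim and the $(K_-,\mu(\gamma)^{-1})$-orthonormality, completing the proof.
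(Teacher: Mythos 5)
Your proposal identifies the right key input --- Lusztig's \cite{Lus-book}*{Theorem 14.2.3} giving orthonormality of the (signed) canonical basis on $U_\ZZ(\lie n^+)_\gamma$ --- but your route from there is considerably longer than the paper's, and at one point imprecise in a way that would have to be fixed.

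The paper does not rerun an orthonormalization (Gram--Schmidt over $K_-$) on the dual lattice $U^\ZZ(\lie n^+)_\gamma$. Instead it proves a clean \emph{duality lemma} (Proposition~\ref{prop:gen-dual-basis}): if a lattice $L$ carries a $(K_-,\mu)$-orthonormal signed basis consisting of $\varphi'$-fixed elements, then the dual lattice $L^\vee$ carries a $(K_-,\mu^{-1})$-orthonormal signed basis consisting of $\varphi$-fixed elements, and the latter is \emph{characterized} exactly by the two defining conditions in~\eqref{eq:def-bas}. The existence half is obtained at once by inverting the Gram matrix: if $\mu G_{rs}\in\delta_{rs}+K_-$ then $\mu^{-1}(G^{-1})_{rs}\in\delta_{rs}+K_-$, a two-line cofactor computation. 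The uniqueness/characterization half is where the real work sits, and the paper packages it as a valuation argument (Lemma~\ref{lem:key Lusztig's lemma}): with respect to the valuation $\nu$ on $\QQ(v)$ restricted to $R^{\bar\cdot}=\ZZ[q+q^{-1}]$ (one-dimensional leaves), any $\varphi$-fixed $x$ with $\nu(x)>0$ has $\nu(\mu^{-1}\fgfrm{x}{x})>0$, and any $\varphi$-fixed $x$ with $\nu(x)=0$, written in the dual basis with $R_0=\ZZ$ coefficients, has $\mu^{-1}\fgfrm{x}{x}\in\sum_b x_b^2+K_-$; strong integrality of $\ZZ$ then pins $x=\pm\delta_b$. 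There is no need to establish any independent ``almost unimodularity'' of the form on $U^\ZZ$ --- that is inherited automatically from unimodularity on $U_\ZZ$ via the matrix inverse. Your anticipated main obstacle is therefore already dissolved by the dualization, and your fallback plan (induction on $\gamma$ through the braided coproduct reducing to rank-one) is not what the paper does and would face real difficulties, since $\fgfrm{\cdot}{\cdot}$ is not multiplicative and $U^\ZZ$ is not multiplicatively generated in any useful sense.

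One concrete imprecision: you state the abstract input as a single semilinear involution with $\overline{\fgfrm{x}{y}}=\fgfrm{\bar x}{\bar y}$. The actual compatibility from~\eqref{eq:bar tilde} involves \emph{two} different anti-linear involutions, $\overline{\fgfrm{x}{y}}=\fgfrm{\bar x}{\tilde y}$, and the paper's Proposition~\ref{prop:gen-dual-basis} is set up precisely with $\varphi=\bar\cdot$ on $L^\vee$ and $\varphi'=\tilde\cdot$ on $L$. This matters: $\mathbf B^{can}$ is fixed by $\tilde\cdot$ (not $\bar\cdot$), and it is exactly this mismatch that forces the dual basis of $\mathbf B^{can}$ to consist of $\bar\cdot$-fixed elements, matching the condition $\bar b=b$ in~\eqref{eq:def-bas}. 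Collapsing the two involutions to one would obscure why the dual basis lands in the set you want.
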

\begin{proof}
We need the following general setup. 

We say that a domain $R_0$ is {\em strongly integral} if a sum of squares of its non-zero elements is never zero
and if $c_1^2+\cdots+c_n^2=1$, $c_r\in R_0$, implies that for all $1\le i\le n$, $c_i=\pm\delta_{ij}$ for some~$1\le j\le n$.

Let $R$ be a domain with a subdomain $R_0$.
Given a totally ordered additive monoid~$\Gamma$, a map $\nu:R\to \Gamma\sqcup\{-\infty\}$ is called an {\em $R_0$-linear
valuation} if the following hold for all $f,g\in R$
\begin{enumerate}[$(V_1)$]
\item\label{val-prop.1} $\nu(f)=-\infty$ if and only 
if $f=0$
\item\label{val-prop.2} $\nu(R_0\setminus\{0\})=0$, 
\item\label{val-prop.3} $\nu(fg)=\nu(f)+\nu(g)$; 
\item\label{val-prop.4} $\nu(f+g)\le \max(\nu(f),\nu(g))$. 
\end{enumerate}
It follows that 
\begin{equation}\label{eq:diff-valuations}
\nu(f)\not=\nu(g)\implies\nu(f+g)=\max(\nu(f),\nu(g)).
\end{equation}
Furthermore, for each $a\in\Gamma$, set $R_{\le a}=\{ r\in R\,:\, \nu(r)\le a\}$ and $R_{<a}=\{ r\in R\,:\, \nu(r)< a\}$.
Clearly, $R_{\le a}$ and $R_{<a}$ are $R_0$-submodules of $R$ and $R_{<a}\subset R_{\le a}$. In the spirit of~\cite{KaKh}*{\S2.1},
we call the $R_0$-module $R_{\le a}/R_{<a}$ the {\em leaf of $\nu$ at~$a$}; we say that $\nu$ {\em has one-dimensional 
leaves} if for each $a\in\nu(R)$, the leaf of~$\nu$ at~$a$ is a non-zero cyclic $R_0$-module.

Let $M$ be a free $R$-module with a basis~$\mathbf B$. Then we can define $\nu_{\mathbf B}:M\to \Gamma\cup\{-\infty\}$ by 
\begin{equation}\label{eq:pre-val}
\nu_{\mathbf B}(\sum_{b\in\mathbf B}c_b b)=\max_b
\nu(c_b).
\end{equation}
Clearly~\prpref{val-prop.4}{V} holds and we also have $\nu_{\mathbf B}(f x)=\nu(f)+\nu_\mathbf B(x)$, $f\in R$, $x\in M$. We will need the following Lemma.
\begin{lemma}\label{lem:euclidean}
Suppose that $\nu:R\to\Gamma\cup\{-\infty\}$ has one-dimensional leaves and let $M$ be a free $R$ module with a basis~$\mathbf B$. Then every $x\in M$ with 
$\nu_{\mathbf B}(x)>0$ can 
be written as $x=f x_0+x_1$ where $f\in R$ with $\nu(f)=\nu(x)$, $0\not=x_0\in \sum_{b\in \mathbf B} R_0 b$ and $x_1\in M$ satisfies
$\nu_{\mathbf B}(x_1)<\nu_{\mathbf B}(x)$.
\end{lemma}
\begin{proof}
Let $x\in M$ with $a=\nu_{\mathbf B}(x)>0$ and write 
$$
x=\sum_{b\in\mathbf B} x_b b=\sum_{b\in\mathbf B\,:\, \nu(x_b)=a} x_b b+\sum_{b\in\mathbf B\,:\, \nu(x_b)<a}x_b b.
$$
Since $\nu$ has one-dimensional leaves and $R_{\le a}\not=R_{<a}$, 
$R_{\le a}/R_{<a}$ is a non-zero cyclic $R_0$-module. Let $f\in R_{\le a}$ be
any element whose image generates~$R_{\le a}/R_{<a}$ as an $R_0$-module. 
Then 
$\nu(f)=a$ and for every $b\in\mathbf B$ with $\nu(x_b)=a$ there exists $r_b\in R_0$ such that $\nu(x_b-r_b f)<a$. Set
$$
x_0=\sum_{b\in \mathbf B\,:\,\nu(x_b)=a} r_b b,\qquad x_1=x-f x_0.
$$
Clearly, $\nu_{\mathbf B}(x_1)<a$, whence $x_0\not=0$.
\end{proof}

Henceforth 
\begin{itemize}
 \item $R_0$ is a strongly integral domain
 \item $\kk$ is a field containing $R_0$;
 \item $R_0\subset R\subset \kk$ as subrings
 \item $\nu:\kk\to \Gamma\cup\{-\infty\}$ is an $R_0$-linear valuation;
 \item $K_-$ is an $R_0$-subalgebra of $\kk$ such that $\nu(f)<0$ for all $f\in K_-$ (note that this implies that $K_-\cap R_0=\emptyset$)
 and $(1+K_-)^{-1}\subset 1+K_-$.
 \item There is a field involution $\bar\cdot$ of~$\kk$ which restricts to~$R$ and is identity on~$R_0$, while $\overline{K_-}\cap K_-=\emptyset$
 \item $\nu(R^{\bar\cdot}\setminus R_0)>0$ where $R^{\bar\cdot}=\{f\in R\,:\, \bar f=f\}$;
 \item The restriction of $\nu$ to $R^{\bar\cdot}$ is a valuation $\nu:R^{\bar\cdot}\to\Gamma\sqcup\{-\infty\}$ with one-dimensional leaves;
\end{itemize} 

For an $R$-module~$L$, an endomorphism of $\ZZ$-modules $\varphi:L\to L$ is called anti-linear if for all $r\in R$, $x\in L$ 
we have $\overline{r\cdot x}=\overline r\cdot \overline x$. Anti-linear endomorphisms of a $\kk$-vector space~$V$ are defined similarly.

Let $V$ be a $\kk$-vector space with a non-degenerate symmetric bilinear form $\fgfrm\cdot\cdot$. Suppose that 
$\varphi$, $\varphi'$ are anti-linear involutions on~$V$ satisfying $\overline{\fgfrm xy}=\fgfrm{\varphi(x)}{\varphi'(y)}$, $x,y\in V$.
Let $L$ be a free $R$-module such that $V=\kk\tensor_R L$. 
Denote $L^\vee=\{ x\in V\,:\, \fgfrm{x}{L}\subset R\}$. Clearly, $L^\vee$ is a free $R$-module and $V=\kk\tensor_R L^\vee$.

Given $\mu\in R^\times$ define 
\begin{gather*}
\mathbf B^\pm(\mu)=\{ b\in L\,:\,\varphi'(b)=b,\, \mu\cdot \fgfrm{b}{b}\in 1+K_-\}\\
\mathbf B^\vee_\pm(\mu)=\{ b\in L^\vee\,:\,\varphi(b)=b,\, \mu\cdot \fgfrm{b}{b}\in 1+K_-\}.
\end{gather*}
\begin{proposition}\label{prop:gen-dual-basis}
Suppose that $\dim_\kk V<\infty$.
The following are equivalent
\begin{enumerate}[{\rm(a)}]
 \item\label{prop:gen-dual-basis.a} $\mathbf B^\pm(\mu)$ is a $(K_-,\mu)$-orthonormal signed $R$-basis of~$L$. 
 \item\label{prop:gen-dual-basis.b} $\mathbf B^\vee_\pm(\mu^{-1})$ is a $(K_-,\mu^{-1})$-orthonormal signed $R$-basis of~$L^\vee$.
 \end{enumerate}
In that case, $\mathbf B^\pm(\mu)$ and $\mathbf B^\vee_\pm(\mu)$ are dual to each other with respect to $\fgfrm{\cdot}{\cdot}$.
\end{proposition}
\begin{proof}
\eqref{prop:gen-dual-basis.a}$\implies$\eqref{prop:gen-dual-basis.b}
Let $\ul{\mathbf B}^\pm(\mu)$ be any basis of~$L$ contained in~$\mathbf B^\pm(\mu)$.

Since $\fgfrm{\cdot}{\cdot}$ is non-degenerate, for each $b\in \ul{\mathbf B}^\pm(\mu)$ there exists a unique $\delta_b\in L^\vee$ such that 
$\fgfrm{\delta_b}{b'}=\delta_{b,b'}$. Clearly, the set $\ul{\mathbf B}^\pm(\mu)^\vee:=\{ \delta_b\,:\, b\in \ul{\mathbf B}^\pm(\mu)\}$ is a basis of~$L^\vee$.
Note that $\varphi(\delta_b)=\delta_b$. 
\begin{lemma}\label{lem:scal-quad-delta func}
The set $\ul{\mathbf B}^\pm(\mu)^\vee$ 
is $(K_-,\mu^{-1})$-orthonormal basis of $L^\vee$.
In particular, $\nu(\mu^{-1}\fgfrm{\delta_b}{\delta_{b'}})\le 0$ with the equality if and only if~$b=b'$.
\end{lemma}
\begin{proof}
We need the following
\begin{lemma}
Let $G=(G_{r,s})_{1\le r,s\le n}$ be a matrix over $\kk$ such that $\mu G_{rs}\in\delta_{rs}+K_-$. Then $G$ is invertible 
and $M=(M_{rs})_{1\le r,s\le n}=G^{-1}$ satisfies $\mu^{-1} M_{rs}\in\delta_{rs}+K_-$.
\end{lemma}
\begin{proof}
Let $\Delta_{r,s}(G)$ be the minor of~$G$ obtained by removing the $r$th row and the $s$th column. Then it is easy to see 
that $\mu^{n-1}\Delta_{r,s}(G)\in \delta_{rs}+K_-$. Similarly, $\mu^n\det G\in 1+K_-$ hence $G$ is invertible. 
Moreover, $\mu^{-n}(\det G)^{-1}\in 1+K_-$. Since $M_{rs}=(-1)^{r+s}(\det G)^{-1}\Delta_{s,r}(G)$, the assertion follows.
\end{proof}

Since $\ul{\mathbf B}^\pm(\mu)$ is $(K_-,\mu)$-orthogonal, the above Lemma applies to 
the (finite) Gram matrix $G=(\fgfrm{b}{b'})_{b,b'\in \ul{\mathbf B}^\pm(\mu)}$ of $\fgfrm{\cdot}{\cdot}$ with respect to the basis $\ul{\mathbf B}^\pm(\mu)$ and hence 
$\mu^{-1} M_{b,b'}\in\delta_{b,b'}+K_-$ where $M=G^{-1}$. 
Since $\mu^{-1}\delta_b=\sum_{b'\in\ul{\mathbf B}^\pm(\mu)} \mu^{-1} M_{b,b'}b'$, 
we have $\mu^{-1} \delta_b\in b+K_-\cdot \ul{\mathbf B}^\pm(\mu)$. This implies that %for all $b,b'\in \ul{\mathbf B}^\pm(\mu)$ one has
\begin{equation*}
\mu^{-1} \fgfrm{\delta_b}{\delta_{b'}}=\fgfrm{\mu^{-1}\delta_b}{\delta_{b'}}\in \fgfrm{b}{\delta_{b'}}+K_-\fgfrm{\ul{\mathbf B}^\pm(\mu)}{\delta_{b'}}
=\delta_{b,b'}+K_-,\qquad b,b'\in \ul{\mathbf B}^\pm(\mu)
\end{equation*}
This proves Lemma~\ref{lem:scal-quad-delta func}.
\end{proof}

Note that for any $x=\sum_{b} x_b \delta_b$, $y=\sum_{b'} y_{b'}\delta_{b'}$ in~$L^\vee$ we have 
$$
\mu^{-1}\fgfrm{x}{y}=\sum_{b,b'} x_b y_{b'}\mu^{-1}\fgfrm{\delta_b}{\delta_{b'}}.
$$
Define $\nu=\nu_{\ul{\mathbf B}^\pm(\mu)^\vee}:L^\vee\to \Gamma\sqcup\{-\infty\}$ as in~\eqref{eq:pre-val}. 
Since $\nu(\mu^{-1}\fgfrm{\delta_b}{\delta_{b'}})\le 0$
for all $b,b'\in \ul{\mathbf B}^\pm(\mu)$ by Lemma~\ref{lem:scal-quad-delta func}, it follows from \prpref{val-prop.3}{V} and~\prpref{val-prop.4}{V}
that 
\begin{equation}\label{eq:prod-ineq}
\nu(\mu^{-1}\fgfrm{x}{y})\le \nu(x)+\nu(y),\qquad x,y\in L^\vee.
\end{equation}

Clearly, for $x\in L^\vee$ we have 
\begin{equation}\label{eq:invar-form}
\varphi(x)=x\iff x\in \sum_{b\in\ul{\mathbf B}^\pm(\mu)} R^{\bar\cdot}\delta_b.
\end{equation}
Thus, the set $(L^\vee)^\varphi$ of $\varphi$-invariant elements in~$L^\vee$ is a free $R^{\bar\cdot}$-module with a basis $\ul{\mathbf B}^\pm(\mu)^\vee$.

The following Lemma is the crucial point of our argument.
\begin{lemma}\label{lem:key Lusztig's lemma}
Let $x\in L^\vee$ and suppose that $\varphi(x)=x$. 
Then
\begin{enumerate}[{\rm(a)}]
\item\label{lem:key Lusztig's lemma.c} If $\nu(x)=0$, that is, $x=\sum_{b} x_b \delta_b$ with $x_b\in R_0$, then $\mu^{-1}\fgfrm{x}{x}-\sum_{b} x_b^2\in K_-$
and $\nu(\mu^{-1}\fgfrm{x}{x})=0$.

\item\label{lem:key Lusztig's lemma.b} If $\nu(x)>0$ then $\nu(\mu^{-1}\fgfrm{x}{x})>0$
\end{enumerate}
\end{lemma}
\begin{proof}
Write $x=\sum_{b} x_b \delta_b$, $x_b\in R^{\bar\cdot}$.

To prove~\eqref{lem:key Lusztig's lemma.c}, note that $\nu(x)=0$ and $\varphi(x)=x$ imply that $x_b\in R_0$ for all $b\in\ul{\mathbf B}^\pm(\mu)$. 
We have 
$$
\mu^{-1}\fgfrm{x}{x}=\sum_{b} x_b^2 \mu^{-1}\fgfrm{\delta_b}{\delta_b}+\sum_{b\not=b'} x_bx_{b'}\mu^{-1}\fgfrm{\delta_b}{\delta_{b'}}.
$$
By Lemma~\ref{lem:scal-quad-delta func}, the first sum belongs to $\sum_{b}x_b^2+K_-$ while the second sum 
belongs to $K_-$. Since $R_0$ is strongly integral, $\sum_{b} x_b^2\not=0$. Thus,
$\nu(\mu^{-1}\fgfrm{x}{x})=0$.

To prove~\eqref{lem:key Lusztig's lemma.b},
let $a=\nu(x)>0$. Applying Lemma~\ref{lem:euclidean} to $M=(L^\vee)^{\varphi}$ and the ring~$R^{\bar\cdot}$, we can write 
$x=f x_0+x_1$ where $f\in R^{\bar\cdot}$, $\nu(f)=a$, $\varphi(x_0)=x_0$ (and so $\varphi(x_1)=x_1$), $\nu(x_0)=0$ and $\nu(x_1)<a$.
Then 
$$
\nu(\mu^{-1}\fgfrm{x}{x})=\nu(f^2 \mu^{-1}\fgfrm{x_0}{x_0}+2 f\mu^{-1}\fgfrm{x_0}{x_1}+\mu^{-1}\fgfrm{x_1}{x_1})
=\nu(f^2\mu^{-1}\fgfrm{x_0}{x_0})=2a>0
$$
since $\nu(\mu^{-1}\fgfrm{x_1}{x_1}),\nu(f\mu^{-1}\fgfrm{x_0}{x_1})<2 a$ by~\eqref{eq:prod-ineq} and \prpref{val-prop.3}{V}, \prpref{val-prop.4}{V}
while $\nu(\mu^{-1}\fgfrm{x_0}{x_0})=0$ by part~\eqref{lem:key Lusztig's lemma.c}.
This proves~\eqref{lem:key Lusztig's lemma.b}.
\end{proof}
It follows from Lemma~\ref{lem:key Lusztig's lemma}(\ref{lem:key Lusztig's lemma.c},\ref{lem:key Lusztig's lemma.b}) that if $x\in L^\vee$ 
is fixed by~$\varphi$
and $\mu^{-1}\fgfrm{x}{x}\in 1+K_-$ then $x=\pm\delta_b$ for some $b\in \ul{\mathbf B}^\pm(\mu)$ by the strong integrality of~$R_0$. Thus, 
$\mathbf B^\vee_\pm(\mu)=\ul{\mathbf B}^\pm(\mu)^\vee\bigsqcup(-\ul{\mathbf B}^\pm(\mu)^\vee)$. This completes the proof of the implication 
\eqref{prop:gen-dual-basis.a}$\implies$\eqref{prop:gen-dual-basis.b} and the last assertion.  
The opposite implication follows by the symmetry between 
$L$ and $L^\vee$ and $\varphi$ and $\varphi'$.
\end{proof}

We now apply Proposition~\ref{prop:gen-dual-basis} with $L=U_\ZZ(\lie n^+)_\gamma$, $v=q^{\frac12}$, $\varphi=\bar\cdot$, $\varphi'=\tilde\cdot$,
$R=\AA_0=\ZZ[v^2,v^{-2}]$, $\kk=\QQ(v)$
and $K_-=v^{-2}\ZZ[[v^{-2}]]\cap \QQ(v)$. We define $\nu:\QQ(v)\to\ZZ\cup\{-\infty\}$ via 
$$
\nu\Big(c v^n\frac{1+f}{1+g}\Big)=n
$$
where $c\in\QQ^\times$, $n\in\ZZ$ and $f,v\in v^{-1}\ZZ[v^{-1}]$. Note  that $R^{\bar\cdot}=\ZZ[q+q^{-1}]$ and $\nu$ has one-dimensional leaves on~$R^{\bar\cdot}$
since $\nu((v+v^{-1})^n)=n$.
By \cite{Lus-book}*{Theorem~14.2.3}, $\mathbf B^{\pm can}\cap 
U_\ZZ(\lie n^+)$ is a $(K_-,\mu(\gamma))$-orthonormal signed basis of $L$. Since $\mathbf B^{\pm up}{}_\gamma=(\mathbf B^{\can}\cap 
U_\ZZ(\lie n^+))^\vee_\pm$ in the notation of Proposition~\ref{prop:gen-dual-basis}, it is a signed $(K_-,\mu(\gamma)^{-1})$-orthonormal basis 
of $L^\vee=U^\ZZ(\lie n^+)_\gamma$. This completes the proof of Theorem~\ref{thm:sign-bas}.
\end{proof}

\subsection{Choosing~\texorpdfstring{$\mathbf B^{up}$}{Bup} inside the signed basis}\label{subs:chos basis}
It remains to describe a canonical way to choose $\mathbf B^{up}$ inside~$\mathbf B^{\pm up}$. Needless to say, it can be taken as the dual basis 
of $\mathbf B^{can}$ with respect to $\fgfrm{\cdot}{\cdot}$. However, it more instructive to provide an intrinsic definition. 

To that effect, following~\cite{BG-dcb}*{\S3.5} and also~\cite{Lus-book}*{Proposition~3.1.6}, define $\kk$-linear endomorphisms $\partial_i,\partial_i^{op}$, $i\in I$ of~$U_q(\lie n^+)$ by 
\begin{equation}\label{eq:partial-def}
[F_i,x ]=(q_i-q_i^{-1})(
q^{-\frac12(\alpha_i,\gamma-\alpha_i)} K_i\partial_i(x )-q^{\frac12(\alpha_i,\gamma-\alpha_i)} K_i^{-1}\partial_i^{op}(x )),\qquad x \in U_q(\lie n^+)_\gamma.
\end{equation}
We need the following properties of these operators (cf.~\cite{BG-dcb}*{Lemmata~3.18 and~3.20}).
\begin{lemma}\label{lem:partial-prop}
For all $x \in U_q(\lie n^+)_\gamma$ and $i\in I$ we have 
\begin{enumerate}[{\rm(a)}]
 \item \label{lem:partial-prop.i}
$\overline{\partial_i(x )}=\partial_i(\overline{x })$,
$\overline{\partial_i^{op}(x )}=\partial_i^{op}(\overline{ x })$, $\partial_i(x{}^*)^*=\partial_i^{op}(x)$
and $\partial_i\partial_i^{op}(x)=\partial_i^{op}\partial_i(x)$.
\item \label{lem:partial-prop.ii}
for all $y\in U_q(\lie n^+)$, $n\in\ZZ_{\ge 0}$
$$
\fgfrm{x}{yE_i^{\la n\ra}}=\fgfrm{\partial_i^{(n)}(x)}{y},\qquad 
\fgfrm{x}{E_i^{\la n\ra}y}=\fgfrm{(\partial_i^{op})^{(n)}(x )}{y},
$$
where $f_i^{(n)}=(q_i-q_i^{-1})^n f_i^{\la n\ra}$.
\item  \label{lem:partial-prop.iii}
$\partial_i$, $\partial_i^{op}$ are quasi-derivations. Namely, for $x\in U_q(\lie n^+)_\gamma$, $y \in U_q(\lie n^+)_{\gamma'}$ we have
\begin{equation}\label{eq:partial-inv}
\begin{gathered}
\partial_i(x y )=q^{\frac12(\alpha_i,\gamma' )}\partial_i(x )y  +
q^{-\frac12(\alpha_i,\gamma )}x \partial_i(y ),
\\
\partial_i^{op}(x y )=q^{-\frac12(\alpha_i,\gamma' )}\partial_i^{op}(x )y +q^{\frac12(\alpha_i,\gamma )}x  \partial_i^{op}(y ).
\end{gathered}
\end{equation}

\end{enumerate}
\end{lemma}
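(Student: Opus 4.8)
The plan is to derive all of the asserted identities from the defining relation~\eqref{eq:partial-def} together with the known properties of $\bar\cdot$, ${}^*$ and the form $\fgfrm{\cdot}{\cdot}$, handling $\partial_i$ throughout and deducing the $\partial_i^{op}$ statements by symmetry. First I would establish part~\eqref{lem:partial-prop.iii}: apply~\eqref{eq:partial-def} to a product $xy$ with $x\in U_q(\lie n^+)_\gamma$, $y\in U_q(\lie n^+)_{\gamma'}$, expand $[F_i,xy]=[F_i,x]y+x[F_i,y]$, and use the commutation relations~\eqref{eq:commutation g} to move the factors $K_i^{\pm1}$ arising from $[F_i,x]$ past $y$ and from $x[F_i,y]$ past~$x$; this produces exactly the powers of $q$ recorded in~\eqref{eq:partial-inv}, after matching the homogeneous degree $\gamma+\gamma'$ against the prefactor $q^{\mp\frac12(\alpha_i,\gamma+\gamma'-\alpha_i)}$ in~\eqref{eq:partial-def}. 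The bookkeeping is a routine $q$-power computation and the Leibniz-type formula follows by comparing the $K_i$- and $K_i^{-1}$-components separately, which are linearly independent over $U_q(\lie n^+)$.

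Next I would prove~\eqref{lem:partial-prop.i}. For the $\bar\cdot$-compatibility: applying $\bar\cdot$ to~\eqref{eq:partial-def} and using that $\bar\cdot$ is an anti-linear anti-involution fixing $F_i$ and all $K_j$, sending $q^{\frac12}$ to $q^{-\frac12}$, one gets a relation for $[F_i,\bar x]$ with the roles of the two prefactors $q^{\pm\frac12(\alpha_i,\gamma-\alpha_i)}$ swapped and the coefficient $(q_i-q_i^{-1})$ negated; comparing with~\eqref{eq:partial-def} applied to $\bar x$ and again using independence of the $K_i$- and $K_i^{-1}$-parts yields $\overline{\partial_i(x)}=\partial_i(\bar x)$ and $\overline{\partial_i^{op}(x)}=\partial_i^{op}(\bar x)$ simultaneously. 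For $\partial_i(x^*)^*=\partial_i^{op}(x)$: apply the anti-involution ${}^*$ to~\eqref{eq:partial-def} (noting $F_i^*=F_i$, $K_i^*=K_i^{-1}$, $q^{\frac12}$ fixed, and that ${}^*$ reverses products), observe that the two $K_i$-terms interchange, and match against~\eqref{eq:partial-def} for $x^*$. The commutativity $\partial_i\partial_i^{op}=\partial_i^{op}\partial_i$ I would get either from the associated coproduct description of these operators or by a direct double-commutator computation using $[F_i,[F_i,x]]$ and collecting the $K_i^{\pm2}$-, $1$-components; the Jacobi identity forces the mixed term to be symmetric.

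For part~\eqref{lem:partial-prop.ii}, the cleanest route is to invoke the interpretation of $\partial_i$ and $\partial_i^{op}$ as the adjoints, with respect to $\fgfrm{\cdot}{\cdot}$, of right and left multiplication by $E_i$ up to the normalization hidden in $\fgfrm{\cdot}{\cdot}$ versus the pairing of $U_q(\lie n^+)$ with $U_q(\lie n^-)$. Concretely: the standard pairing between $U_q(\lie n^-)$ and $U_q(\lie n^+)$ satisfies $(F_i y', x)=(\,y', \text{``}\partial_i\text{''}x)$, and the relation~\eqref{eq:partial-def} is exactly the one that identifies that raising operator with our~$\partial_i$ after transporting everything through ${}^{*t}$ and inserting the factors $\mu(\gamma)q^{-\frac12(\gamma,\gamma)}$ defining $\fgfrm{\cdot}{\cdot}$; then iterating $n$ times and keeping track of the $q_i$-factorials turns $E_i$ into $E_i^{\la n\ra}$ and $\partial_i$ into $\partial_i^{(n)}$, with $f_i^{(n)}=(q_i-q_i^{-1})^n f_i^{\la n\ra}$ absorbing the discrepancy between $\la n\ra$- and the divided-power normalizations. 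One verifies this for $n=1$ directly and then induces, using the quasi-derivation property~\eqref{lem:partial-prop.iii} already proved and the coproduct formula $\fgfrm{xx'}{y}=q^{-\frac12(\gamma,\gamma')}\fgfrm{x}{\ul y_{(1)}}\fgfrm{x'}{\ul y_{(2)}}$ from the proof of Lemma~\ref{lem:prod in U^Z}.

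The main obstacle is purely the normalization bookkeeping in~\eqref{lem:partial-prop.ii}: reconciling the three different conventions in play — the form $\fgfrm{\cdot}{\cdot}$ with its $\mu(\gamma)q^{-\frac12(\gamma,\gamma)}$ twist, Lusztig's pairing $\la\cdot,\cdot\ra$, and the divided powers $E_i^{\la n\ra}$ versus $E_i^{(n)}$ — so that the factors of $q$ and of $(q_i-q_i^{-1})$ line up exactly as stated. Everything else is a direct unwinding of~\eqref{eq:partial-def} under the three (anti-)involutions. Since the cited companion paper~\cite{BG-dcb} (Lemmata~3.18 and~3.20) contains these computations in a parallel setup, I would present the $n=1$ verification and the inductive step in detail and leave the remaining $q$-power identities to the reader, as the excerpt already signals.
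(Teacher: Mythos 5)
The paper offers no proof of this lemma; it is stated with a pointer to \cite{BG-dcb}*{Lemmata~3.18 and~3.20}, so your task is to supply an actual argument, and the direct unwinding of \eqref{eq:partial-def} under $\bar\cdot$, ${}^*$, and the Leibniz expansion of $[F_i,xy]$ is indeed the natural route. Parts~\eqref{lem:partial-prop.iii} and the $\bar\cdot$- and ${}^*$-compatibilities in~\eqref{lem:partial-prop.i} work exactly as you sketch (one small imprecision: applying $\bar\cdot$ does \emph{not} swap the prefactors $q^{\pm\frac12(\alpha_i,\gamma-\alpha_i)}$ — after commuting $K_i^{\pm1}$ back past $\overline{\partial_i(x)}$, which still has degree $\gamma-\alpha_i$, the exponents return to their original values, and it is ${}^*$ that interchanges the two terms, via $K_i^*=K_i^{-1}$; the conclusions you draw are nevertheless correct).

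The step that would actually fail as written is the double-commutator argument for $\partial_i\partial_i^{op}=\partial_i^{op}\partial_i$. Expanding $[F_i,[F_i,x]]$ and collecting the $K_i^{0}$-component produces only the single combination $q_i\,\partial_i^{op}\partial_i(x)+q_i^{-1}\,\partial_i\partial_i^{op}(x)$ (up to a scalar), which cannot be separated into its two summands; the Jacobi identity with $a=b=F_i$ is vacuous in an associative algebra and supplies no second relation; and there are additional $F_iK_i^{\pm1}$-terms arising from $[F_i,K_i^{\pm1}]$ that carry no information about the double derivatives. Two clean fixes are available: (a) deduce from your part~\eqref{lem:partial-prop.iii} that $[\partial_i,\partial_i^{op}]$ is an \emph{ordinary} (untwisted) derivation — the twisting factors cancel in the two compositions — and observe that it kills each generator $E_j$ because $\partial_i(E_j)=\partial_i^{op}(E_j)=\delta_{ij}\in\kk$, so it vanishes identically; or (b) prove~\eqref{lem:partial-prop.ii} first (for $n=1$) and conclude from $\fgfrm{\partial_i\partial_i^{op}(x)}{y}=\fgfrm{x}{E_iyE_i}=\fgfrm{\partial_i^{op}\partial_i(x)}{y}$ and non-degeneracy of the form. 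Finally, you rightly flag that~\eqref{lem:partial-prop.ii} is where the real content lies: the $q^{\mp\frac12(\alpha_i,\gamma-\alpha_i)}$ prefactors in \eqref{eq:partial-def} are calibrated precisely so that the twist $\mu(\gamma)q^{-\frac12(\gamma,\gamma)}$ between $\fgfrm{\cdot}{\cdot}$ and $\la\cdot,\cdot\ra$ cancels against Lusztig's adjointness for the untwisted form; that verification should be done explicitly rather than deferred, as it is the one place where a sign or half-power could go astray.
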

It is easy to see that
\begin{equation}\label{eq:partial E_i^r}
\partial_i^{(n)}(E_i^r)=\binom{r}{n}_{q_i} E_i^{r-n}=(\partial_i^{op})^{(n)}(E_i^r)
\end{equation}
whence 
\begin{equation}\label{eq:q-der preserve Lusztig lattice}
((q_i-q_i^{-1})\partial_i)^n(E_i^{\la r\ra})=E_i^{\la r-n\ra}=((q_i-q_i^{-1})\partial_i^{op})^n(E_i^{\la r\ra})
\end{equation}
The following is an immediate consequence of this identity and Lemma~\ref{lem:partial-prop}.
\begin{corollary}\label{cor:preserv-lattice}
For all $i\in I$ we have 
\begin{enumerate}[\rm(a)]
 \item\label{cor:preserv-lattice.a}
 if $x\in U_\ZZ(\lie n^+)_\gamma$ then $\la 1\ra_{q_i}\partial_i(x),\la 1\ra_{q_i}\partial_i^{op}(x)\in q^{\frac12(\alpha_i,\gamma)}U_\ZZ(\lie n^+)$;
 \item\label{cor:preserve-lattice.b}
 the
$\partial_i^{(n)}$, $(\partial_i^{op})^{(n)}$, $n\in \ZZ_{\ge0}$
restrict to operators on $U^\ZZ(\lie n^+)$. 
\end{enumerate}
\end{corollary}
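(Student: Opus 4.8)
The plan is to handle the two parts independently. Part~(b) is the quicker one and follows directly from the adjunction formulae in Lemma~\ref{lem:partial-prop}(b). Fix $x\in U^\ZZ(\lie n^+)$ and $n\in\ZZ_{\ge0}$. For an arbitrary $y\in U_\ZZ(\lie n^+)$ we have $\fgfrm{\partial_i^{(n)}(x)}{y}=\fgfrm{x}{yE_i^{\la n\ra}}$, and since $U_\ZZ(\lie n^+)$ is an $\AA_0$-algebra containing all the divided powers $E_i^{\la n\ra}$, the element $yE_i^{\la n\ra}$ lies in $U_\ZZ(\lie n^+)$; hence the right-hand side lies in $\AA_0$ by the very definition of $U^\ZZ(\lie n^+)$. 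Thus $\fgfrm{\partial_i^{(n)}(x)}{U_\ZZ(\lie n^+)}\subset\AA_0$, i.e. $\partial_i^{(n)}(x)\in U^\ZZ(\lie n^+)$; the companion identity $\fgfrm{x}{E_i^{\la n\ra}y}=\fgfrm{(\partial_i^{op})^{(n)}(x)}{y}$ gives the statement for $(\partial_i^{op})^{(n)}$ verbatim.

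For part~(a), since $\partial_i$ and $\partial_i^{op}$ are $\kk$-linear and $q^{\frac12(\alpha_i,\gamma)}U_\ZZ(\lie n^+)$ is an $\AA_0$-submodule of $U_q(\lie n^+)$, it suffices to verify the claim when $x$ runs over the monomials $E_{j_1}^{\la r_1\ra}\cdots E_{j_k}^{\la r_k\ra}$ spanning $U_\ZZ(\lie n^+)_\gamma$, and I would induct on $k$. In the base case $k=1$ there are two possibilities. If $j_1=i$ and $\gamma=r\alpha_i$, then the $n=1$ instance of \eqref{eq:q-der preserve Lusztig lattice} gives $\la 1\ra_{q_i}\partial_i(E_i^{\la r\ra})=E_i^{\la r-1\ra}\in U_\ZZ(\lie n^+)$, and likewise for $\partial_i^{op}$; since $\tfrac12(\alpha_i,r\alpha_i)=rd_i$, the factor $q^{\frac12(\alpha_i,r\alpha_i)}=q_i^r$ is a unit of $\AA_0$, so $U_\ZZ(\lie n^+)=q^{\frac12(\alpha_i,r\alpha_i)}U_\ZZ(\lie n^+)$ and the required inclusion holds. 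If $j_1\ne i$, then $[F_i,E_{j_1}]=0$ in \eqref{eq:partial-def} forces $\partial_i(E_{j_1})=\partial_i^{op}(E_{j_1})=0$, and an easy induction on $r$ through the quasi-derivation rule \eqref{eq:partial-inv} gives $\partial_i(E_{j_1}^{\la r\ra})=\partial_i^{op}(E_{j_1}^{\la r\ra})=0$, so the claim is trivial there.

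For the inductive step, write a length-$k$ monomial as $x=x_1x'$ with $x_1$ a single generator of degree $\gamma_1$ and $x'\in U_\ZZ(\lie n^+)_{\gamma-\gamma_1}$ a monomial of length $k-1$. Applying \eqref{eq:partial-inv} and multiplying by $\la 1\ra_{q_i}$ writes $\la 1\ra_{q_i}\partial_i(x)$ as the sum of $q^{\frac12(\alpha_i,\gamma-\gamma_1)}\bigl(\la 1\ra_{q_i}\partial_i(x_1)\bigr)x'$ and $q^{-\frac12(\alpha_i,\gamma_1)}x_1\bigl(\la 1\ra_{q_i}\partial_i(x')\bigr)$. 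By the base case applied to $x_1$, the first summand lies in $q^{\frac12(\alpha_i,\gamma-\gamma_1)}q^{\frac12(\alpha_i,\gamma_1)}U_\ZZ(\lie n^+)=q^{\frac12(\alpha_i,\gamma)}U_\ZZ(\lie n^+)$; by the inductive hypothesis applied to $x'$, the second summand lies in $q^{-\frac12(\alpha_i,\gamma_1)}q^{\frac12(\alpha_i,\gamma-\gamma_1)}U_\ZZ(\lie n^+)$, and since $(\alpha_i,\gamma_1)\in\ZZ$ the extra scalar $q^{-(\alpha_i,\gamma_1)}$ is a unit of $\AA_0$, so this summand too lies in $q^{\frac12(\alpha_i,\gamma)}U_\ZZ(\lie n^+)$. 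Hence $\la 1\ra_{q_i}\partial_i(x)\in q^{\frac12(\alpha_i,\gamma)}U_\ZZ(\lie n^+)$; running the identical computation with the second line of \eqref{eq:partial-inv} disposes of $\partial_i^{op}$.

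The only point requiring attention is the exponent bookkeeping in the inductive step: one must check that the weight factors $q^{\pm\frac12(\alpha_i,\cdot)}$ supplied by the quasi-derivation rule recombine to exactly $q^{\frac12(\alpha_i,\gamma)}$ up to a unit of $\AA_0$. Once one notes that $q^{(\alpha_i,\gamma_1)}$ is invertible in $\AA_0$ and hence rescales the $\AA_0$-module $U_\ZZ(\lie n^+)$ trivially, this is routine, and the rest of the argument is purely formal.
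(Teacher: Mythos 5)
Your proof is correct and follows exactly the argument the paper has in mind when it calls the corollary an ``immediate consequence'' of \eqref{eq:q-der preserve Lusztig lattice} and Lemma~\ref{lem:partial-prop}: part~(b) by the adjunction formulae of Lemma~\ref{lem:partial-prop}\eqref{lem:partial-prop.ii} together with closure of $U_\ZZ(\lie n^+)$ under right and left multiplication by $E_i^{\la n\ra}$, and part~(a) by induction on Lusztig monomials via the quasi-derivation rule, with the integer-valued pairing $(\alpha_i,\cdot)$ absorbing the exponent mismatch into a unit of $\AA_0$.
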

By degree considerations it is clear that $\partial_i$, $\partial_i^{op}$ are locally nilpotent, that is,
for any $x\in U_q(\lie n^+)$ we have $\partial_i^k(x)=(\partial_i^{op})^k(x)=0$ for $k\gg0$. Thus, for each $x\in U_q(\lie n^+)\setminus\{0\}$
we can define $\ell_i(x)$ as the maximal $k> 0$ such that $\partial_i^k(x)\not=0$. 
Define $\partial_i^{(top)},(\partial_i^{op})^{(top)}:U_q(\lie n^+)\setminus\{0\}\to U_q(\lie n^+)\setminus\{0\}$ by $\partial_i^{(top)}(x)=\partial_i^{(\ell_i(x))}(x)$
and $(\partial_i^{op})^{(top)}(x)=(\partial_i^{(top)}(x^*))^*=(\partial_i^{op})^{(\ell_i(x^*))}(x)$.
Similar notation will be used for other locally nilpotent operators in the sequel.

For any sequence $\mathbf i=(i_1,\dots,i_m)\in I^m$ set $\partial_{\mathbf i}^{(top)}=\partial_{i_m}^{(top)}\cdots \partial_{i_1}^{(top)}$.
\begin{proposition}\label{prop:string char}
For every $b\in \mathbf B^{\pm up}$ there exists $\ii=(i_1,\dots,i_m)$ such that $\partial_\ii^{(top)}(b)\in\{\pm 1\}$.
Moreover, if $\ii'=(i'_1,\dots,i'_{m'})$ also satisfies $\partial_{\ii'}^{(top)}(b)\in\{\pm 1\}$ then 
$\partial_\ii^{(top)}(b)=\partial_{\ii'}^{(top)}(b)\in\{\pm 1\}$.
\end{proposition}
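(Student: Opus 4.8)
The plan is to deduce the whole statement from the single assertion that \emph{$\partial_i^{(top)}(b)\in\mathbf B^{\pm up}$ whenever $b\in\mathbf B^{\pm up}$ and $\partial_i(b)\neq0$}; more precisely, that $\partial_i^{(top)}$ sends $\mathbf B^{\pm up}{}_\gamma$ into $\mathbf B^{\pm up}{}_{\gamma-\ell_i(b)\alpha_i}$ on its domain. Assuming this, the existence part is quick: if $b\in\mathbf B^{\pm up}{}_\gamma$ and $\gamma\neq0$ then $\partial_i(b)\neq0$ for some $i$ — otherwise $\fgfrm{b}{yE_i}=0$ for all $i$ and all $y$ by Lemma~\ref{lem:partial-prop}(b), hence $\fgfrm{b}{U_q(\lie n^+)_\gamma}=0$ since $U_q(\lie n^+)_\gamma=\sum_i U_q(\lie n^+)_{\gamma-\alpha_i}E_i$ for $\gamma\neq0$, contradicting non-degeneracy of $\fgfrm{\cdot}{\cdot}$ and $b\neq0$. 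Thus $\partial_i^{(top)}(b)$ lies in a weight space of strictly smaller height, and after finitely many steps we reach $\mathbf B^{\pm up}{}_0$, which a direct computation identifies with $\{\pm1\}$ (and more generally $\mathbf B^{\pm up}{}_{n\alpha_i}=\{\pm E_i^n\}$ with $\partial_i^{(top)}(\pm E_i^n)=\pm1$ by~\eqref{eq:partial E_i^r}); the sequence of indices used along the way is the required~$\ii$.

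\textbf{The key assertion.} I would verify the three conditions of~\eqref{eq:def-bas} for $b':=\partial_i^{(top)}(b)$ with $\gamma':=\gamma-\ell_i(b)\alpha_i$. Bar-invariance $\overline{b'}=b'$ follows from $\overline{\partial_i(x)}=\partial_i(\overline x)$ (Lemma~\ref{lem:partial-prop}(a)), the bar-invariance of $(\ell_i(b))_{q_i}!$, and $\ell_i(\overline b)=\ell_i(b)$; the lattice membership $b'\in U^\ZZ(\lie n^+)_{\gamma'}$ is Corollary~\ref{cor:preserv-lattice}(b). The real content is $\mu(\gamma')^{-1}\fgfrm{b'}{b'}\in1+q^{-1}\ZZ[[q^{-1}]]$. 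For this I would exploit the ``$i$-th $\mathfrak{sl}_2$'' structure supplied by the quasi-derivation identities~\eqref{eq:partial-inv}: they yield a direct sum decomposition $U_q(\lie n^+)=\bigoplus_{n\geq0}E_i^{n}(\ker\partial_i)$ which, by the adjunction in Lemma~\ref{lem:partial-prop}(b), is orthogonal for $\fgfrm{\cdot}{\cdot}$; writing $b=\sum_{n=0}^{\ell}E_i^{n}c_n$ with $\ell=\ell_i(b)$, $c_n\in\ker\partial_i$, $c_\ell\neq0$, one gets $b'=q^{e}c_\ell$ for an explicit exponent $e=e(\ell,\gamma)$, and $\mu(\gamma)^{-1}\fgfrm{b}{b}=\sum_n\lambda_n\,\mu(\gamma)^{-1}\fgfrm{c_n}{c_n}$ with explicit scalars $\lambda_n$. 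The crux — and the step I expect to be most delicate — is to show that the $n=\ell$ summand has $q$-valuation exactly $0$ with leading coefficient $1$ while all summands with $n<\ell$ have strictly negative valuation; matching the valuation of $\lambda_\ell$ against that of $q^{2e}$ then gives $\mu(\gamma')^{-1}\fgfrm{b'}{b'}\in1+q^{-1}\ZZ[[q^{-1}]]$. This is precisely where the normalization built into $\partial_i^{(top)}$ (and into the $q_{\ii,\mathbf a}$) is used.

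\textbf{Uniqueness.} With the key assertion in hand, the operators $\partial_i^{(top)}$ define a directed graph on $\mathbf B^{\pm up}$ whose only sinks are $+1$ and $-1$ and which is stable under $b\mapsto-b$; the uniqueness statement is equivalent to the claim that this graph has exactly two connected components, exchanged by negation. I would obtain this from the fact that $U_q(\lie n^+)$ is generated by the $E_i$: dually this says that every element of $\mathbf B^{up}$ is reconstructed from $1$ by the partial inverses of the $\partial_i^{(top)}$ (the dual Kashiwara raising operators), so the component of $+1$ is all of $\mathbf B^{up}$ and that of $-1$ is $-\mathbf B^{up}$; alternatively one may quote the connectedness of the crystal $B(\infty)$. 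I expect this to be the main obstacle: one must check that these partial inverses are single-valued — which reduces to injectivity of $\partial_i^{(top)}$ on each level set $\{b\in\mathbf B^{\pm up}:\ell_i(b)=\ell\}$, again a consequence of the $\mathfrak{sl}_2$-decomposition above — and then assemble this into a proof that the two components are genuinely disjoint.
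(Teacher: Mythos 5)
Your high-level skeleton — reduce degree by applying suitable $\partial_i^{(top)}$, show $\partial_i^{(top)}$ maps $\mathbf B^{\pm up}$ into itself, conclude via a connectedness argument — is the right shape and matches the paper's Lemma~\ref{lem:tops}. But the crucial step, which you flag as ``the most delicate,'' does not actually go through with the valuation bookkeeping you describe, and the uniqueness step has a genuine circularity.

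On the key assertion: write $b=\sum_{n\le\ell} E_i^n c_n$ with $c_n\in\ker\partial_i^{op}$, $\ell=\ell_i(b)$. Using the orthogonality in Lemma~\ref{lem:orthogonality of T_i} and the factorization $\mu(\gamma)=\mu(\gamma_n)q_i^{\binom{n+1}{2}}q^{\frac12 n(\alpha_i,\gamma_n)}$ with $\gamma_n=\gamma-n\alpha_i$, one finds
$\mu(\gamma)^{-1}\fgfrm{b}{b}=\sum_n\prod_{k=1}^n(1-q_i^{-2k})\cdot\mu(\gamma_n)^{-1}\fgfrm{c_n}{c_n}$, where each factor $\prod_{k=1}^n(1-q_i^{-2k})\in1+q^{-1}\ZZ[q^{-1}]$. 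The $c_n$ are bar-invariant and (after some work) in the lattice, so by the ``positivity'' mechanism of Lemma~\ref{lem:key Lusztig's lemma} each summand has nonnegative valuation and there is no cancellation at valuation~$0$. Since the total lies in $1+K_-$, strong integrality forces \emph{exactly one} index $n_0$ with $\mu(\gamma_{n_0})^{-1}\fgfrm{c_{n_0}}{c_{n_0}}\in 1+K_-$, the rest having strictly positive valuation or vanishing. But nothing in this argument identifies $n_0$ with $\ell$. It is perfectly consistent, at the level of valuation estimates, that $c_\ell\ne0$ but $\nu(\mu(\gamma_\ell)^{-1}\fgfrm{c_\ell}{c_\ell})>0$, while the norm is carried by some $c_n$ with $n<\ell$; in that event $\partial_i^{(top)}(b)\notin\mathbf B^{\pm up}$. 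Ruling this out is precisely what Lusztig's Theorem~14.3.2 (the statement that $E_i^{\la n\ra}b'\equiv\pi_{i;n}(b')\mod\sum_{r>n}\ZZ[q,q^{-1}]\mathbf B^{\can}_{i;r}$, with $\pi_{i;n}$ a bijection) accomplishes, and this is what the paper's Lemma~\ref{lem:tops} quotes. There is no elementary valuation argument that substitutes for it; this is the deep content of the proposition.

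On uniqueness: even granting the key assertion — i.e.\ $\partial_i^{(top)}$ preserves $\mathbf B^{\pm up}$ — you only know $\partial_\ii^{(top)}(b)\in\{\pm1\}$ for every terminating~$\ii$; you do not get that the sign is the same for all~$\ii$. The graph invariance under $b\mapsto-b$ just says $-C$ is a component whenever $C$ is; it does not preclude a single self-negative component containing both $\pm1$. The paper avoids this trap by working throughout with $\mathbf B'$, the actual dual basis of~$\mathbf B^{\can}$ (which is well-defined, not just up to sign), and showing $\partial_i^{(top)}(\mathbf B')\subset\mathbf B'$; then every $\partial_\ii^{(top)}(b)$ for $b\in\mathbf B'$ stays in $\mathbf B'$, and at degree~$0$ it must be~$1$. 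Your proposed fix (``quote connectedness of $B(\infty)$'' or ``single-valued partial inverses'') is either the heavy crystal machinery that the whole section is meant to bypass, or is circular: showing the inverse operators are well-defined is the same strength as the fact being proved. In short, both load-bearing steps require, and in the paper are supplied by, the structure theory of $\mathbf B^{\can}$ from \cite{Lus-book}*{\S14.3}; your blind proposal reinvents the strategy but does not supply that input.
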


Thus, we can define $\mathbf B^{up}$ to be the set of all $b\in \mathbf B^{\pm up}$ such that $\partial_\ii^{(top)}(b)=1$ for some $\ii=(i_1,\dots,i_m)$.

\begin{proof}
By Proposition~\ref{prop:gen-dual-basis}, $\mathbf B^{\pm up}$ contains the dual basis $\mathbf B'$ of~$\mathbf B^{\can}$. Our goal
is to prove that $\mathbf B^{up}=\mathbf B'$.
We need the following result.
\begin{lemma}\label{lem:tops}
$\partial_i^{(top)}(b)\in\mathbf B'$ for all $b\in \mathbf B'$, $i\in I$. Moreover, if $\partial_i^{(top)}(b)=\partial_i^{(top)}(b')$ and 
$\ell_i(b)=\ell_i(b')$ for some $b'\in\mathbf B'$ then $b=b'$.
\end{lemma}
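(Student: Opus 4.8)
The plan is to obtain Lemma~\ref{lem:tops} by transporting, through the pairing $\fgfrm{\cdot}{\cdot}$, the standard compatibility of Lusztig's canonical basis $\mathbf B^{\can}$ with the $i$-string decomposition of $U_q(\lie n^+)$. Since $\partial_i(E_j)=\delta_{ij}$ and $\partial_i$ is a quasi-derivation (Lemma~\ref{lem:partial-prop}), an $\lie{sl}_2$-type argument gives a direct sum $U_q(\lie n^+)=\bigoplus_{n\ge 0}(\ker\partial_i)E_i^{\la n\ra}$ under which $\partial_i$ carries a nonzero $\beta E_i^{\la r\ra}$ with $\beta\in\ker\partial_i$ to a nonzero multiple of $\beta E_i^{\la r-1\ra}$. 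Consequently, writing $x=\sum_n\beta_nE_i^{\la n\ra}$ with $\beta_n\in\ker\partial_i$, one has $\ell_i(x)=\max\{n:\beta_n\ne0\}$, and $\partial_i^{(top)}(x)$ is a nonzero scalar multiple of the top component $\beta_{\ell_i(x)}$. Moreover, using the adjunction $\fgfrm{x}{yE_i^{\la n\ra}}=\fgfrm{\partial_i^{(n)}(x)}{y}$ of Lemma~\ref{lem:partial-prop} one checks that $(U_q(\lie n^+)E_i)^\perp=\ker\partial_i$ and that the above decomposition is $\fgfrm{\cdot}{\cdot}$-orthogonal. Thus, under the perfect pairing of $\mathbf B^{\can}$ with $\mathbf B'$, the operation $\partial_i^{(top)}$ together with the function $\ell_i$ on the ``upper'' side is dual to the operation ``pass to the top component'' together with $N_i(c):=\max\{n:\beta_n\ne0\}$ on $\mathbf B^{\can}$.

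Now bring in Lusztig's theorem (\cite{Lus-book}; cf.\ also \cite{BG-dcb}): for $c\in\mathbf B^{\can}$ the top component $\beta_{N_i(c)}$ again lies in $\mathbf B^{\can}$ (necessarily in $\mathbf B^{\can}\cap\ker\partial_i$), and $c\mapsto(N_i(c),\beta_{N_i(c)})$ is a bijection $\mathbf B^{\can}\cong\ZZ_{\ge0}\times(\mathbf B^{\can}\cap\ker\partial_i)$. Let $b\in\mathbf B'$ be dual to $c\in\mathbf B^{\can}$. Then $\partial_i^{(top)}(b)\in U^\ZZ(\lie n^+)$ by Corollary~\ref{cor:preserv-lattice}, and it is $\bar\cdot$-invariant since $\bar\cdot$ commutes with $\partial_i$ (Lemma~\ref{lem:partial-prop}). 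Computing, for $d\in\mathbf B^{\can}$,
\[
\fgfrm{\partial_i^{(top)}(b)}{d}=\fgfrm{\partial_i^{(\ell_i(b))}(b)}{d}=\fgfrm{b}{d\,E_i^{\la\ell_i(b)\ra}},
\]
and expanding $d\,E_i^{\la\ell_i(b)\ra}$ in $\mathbf B^{\can}$ via the bijection, one finds $\ell_i(b)=N_i(c)$ and that $\partial_i^{(top)}(b)$ equals $\widehat{\beta_{N_i(c)}}$, the element of $\mathbf B'$ dual to the top component of $c$; in particular $\partial_i^{(top)}(b)\in\mathbf B'$. (Alternatively, $\fgfrm{\cdot}{\cdot}$-orthogonality, the fact that $\mu(\gamma)^{-1}\fgfrm{b}{b}\in1+q^{-1}\ZZ[[q^{-1}]]$ for $\gamma=\deg b$, and Theorem~\ref{thm:sign-bas} already show $\partial_i^{(top)}(b)\in\mathbf B^{\pm up}$, the displayed computation then serving only to fix the sign.) The ``moreover'' follows at once: under $b=\hat c$ the pair $(\ell_i(b),\partial_i^{(top)}(b))$ is $(N_i(c),\widehat{\beta_{N_i(c)}})$, which by the bijection determines $c$ and hence $b$, so $\partial_i^{(top)}(b)=\partial_i^{(top)}(b')$ together with $\ell_i(b)=\ell_i(b')$ forces $b=b'$.

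The main obstacle is the normalization bookkeeping hidden in ``expanding $\dots$ via the bijection'': one must track how the present paper's conventions for $\fgfrm{\cdot}{\cdot}$, for $\partial_i$ versus $\partial_i^{op}$, and for the two divided powers $E_i^{(n)}$ and $E_i^{\la n\ra}$ interact, so that the scalar in $\partial_i^{(top)}(x)=(\text{scalar})\,\beta_{\ell_i(x)}$ is exactly the one that makes $\partial_i^{(top)}(\hat c)$ equal $\widehat{\beta_{N_i(c)}}$ on the nose --- rather than merely a unit multiple of it, which would leave the sign undetermined in $\mathbf B^{\pm up}=\mathbf B'\sqcup(-\mathbf B')$. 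Extracting Lusztig's $i$-string theorem in exactly the form used above (top component in $\mathbf B^{\can}$, and the bijection) also requires a small amount of translation. Once both are in hand, the argument is otherwise formal.
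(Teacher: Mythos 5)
Your strategy is the same as the paper's: dualize Lusztig's $i$-string compatibility of $\mathbf B^{\can}$ through the pairing $\fgfrm{\cdot}{\cdot}$, using the adjunction of $\partial_i$ with right multiplication by $E_i$ from Lemma~\ref{lem:partial-prop}\eqref{lem:partial-prop.ii}. The difference is in how Lusztig's Theorem~14.3.2 is packaged. The paper invokes it in the form actually proved there: with $\mathbf B^{\can}_{i;r}$ the canonical basis elements in $E_i$-degree exactly $r$, one has for each $b'\in\mathbf B^{\can}_{i;0}$ a unique $\pi_{i;n}(b')\in\mathbf B^{\can}_{i;n}$ with $E_i^{\la n\ra}b'-\pi_{i;n}(b')\in\sum_{r>n}\ZZ[q,q^{-1}]\mathbf B^{\can}_{i;r}$, and $\pi_{i;n}$ is a bijection. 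You instead restate it as: the top component $\beta_{N_i(c)}$ of $c\in\mathbf B^{\can}$ in the decomposition $c=\sum_n\beta_nE_i^{\la n\ra}$, $\beta_n\in\ker\partial_i$, again lies in $\mathbf B^{\can}$, with $c\mapsto(N_i(c),\beta_{N_i(c)})$ a bijection. This is a genuine (and correct) reformulation but is not quite what Theorem~14.3.2 says on the page, so if you want to cite it in that form you owe a short translation argument; the paper avoids that by using the product form directly.

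This small repackaging is precisely what produces the ``main obstacle'' you flag regarding scalar normalization. The paper's proof computes $u=\partial_i^{(n)}(\delta_b)$ by expanding against $\mathbf B^{\can}$:
$\fgfrm{u}{b'}=\fgfrm{\delta_b}{b'E_i^{\la n\ra}}=\fgfrm{\delta_b}{\pi_{i;n}(b')}=\delta_{b,\pi_{i;n}(b')}$ (the higher terms in $\mathbf B^{\can}_{i;r}$, $r>n$, pair to zero with $\delta_b$ because $\partial_i^{(r)}(\delta_b)=0$). That display yields $u=\delta_{b'}$ for the unique $b'$ with $\pi_{i;n}(b')=b$, with coefficient exactly $1$ --- no sign ambiguity and no $\mathbf B^{\pm up}$ detour. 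If you run the pairing computation directly this way, the normalization bookkeeping you were worried about evaporates, and the ``alternatively'' parenthetical about Theorem~\ref{thm:sign-bas} becomes unnecessary. Your ``moreover'' argument via the bijectivity of $\pi_{i;n}$ together with the disjoint union $\mathbf B^{\can}=\bigsqcup_r\mathbf B^{\can}_{i;r}$ then matches the paper's.
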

\begin{proof}
Following~\cite{Lus-book}*{\S14.3}, denote $\mathbf B^{\can}_{i;\ge r}=\mathbf B^{\can}\cap E_i^r U_q(\lie n^+)$ and $\mathbf B^{\can}_{i;r}=\mathbf B^{\can}_{i;\ge r}\setminus
\mathbf B^{\can}_{i;\ge r+1}$. It follows from~\cite{Lus-book}*{\S14.3} that for all~$i\in I$, 
\begin{equation}\label{eq:Lus-decomp}
\mathbf B^{\can}=\bigsqcup_{r\ge 0} \mathbf B^{\can}_{i;r}.
\end{equation}
Let $b\in\mathbf B^{\can}$ and let $n=\ell_i(\delta_b)$, $u=\partial_i^{(top)}(\delta_b)=
\partial_i^{(n)}(\delta_b)$, where $\delta_b$ is the element of~$\mathbf B'$ satisfying 
$\fgfrm{\delta_b}{b'}=\delta_{b,b'}$. Then $u\in\ker \partial_i$ which, by
Lemma~\ref{lem:partial-prop}\eqref{lem:partial-prop.iii}, is orthogonal to $\mathbf B^{\can}_{i;s}$, $s>0$. Thus, we can write 
$$
u=\sum_{b'\in \mathbf B^{\can}_{i;0}} \fgfrm{u}{b'} \delta_{b'}=\sum_{b'\in \mathbf B^{\can}_{i;0}} \fgfrm{\delta_b}{E_i^{\la n\ra}b'}\delta_{b'}.
$$
By~\cite{Lus-book}*{Theorem~14.3.2}, for each $b'\in \mathbf B^{\can}_{i;0}$ there exists a unique $\pi_{i,n}(b')\in\mathbf B^{\can}_{i;n}$
such that 
$E_i^{\la n\ra} b'-\pi_{i;n}(b')\in\sum_{r>n} \ZZ[q,q^{-1}]\mathbf B^{\can}_{i;r}$. Using Lemma~\ref{lem:partial-prop}\eqref{lem:partial-prop.iii} again, 
we conclude that for 
any $b''\in\mathbf B^{\can}_{i;r}$ with~$r>n$, 
$\fgfrm{ \delta_b}{b''}\in \fgfrm{\delta_b}{E_i^{\la r\ra}U_q(\lie n^+)}=\fgfrm{\partial_i^{(r)}(\delta_b)}{U_q(\lie n^+)}=0$. Thus,
$$
u=\sum_{b'\in \mathbf B^{\can}_{i;0}} \fgfrm{\delta_b}{\pi_{i;n}(b')}\delta_{b'}.
$$
Note that, since~$u\not=0$, we cannot have $\fgfrm{\delta_b}{\pi_{i;n}(b')}=0$ for all~$b'\in\mathbf B^{\can}_{i;0}$.
Since~$\fgfrm{\delta_b}{b''}=\delta_{b,b''}$, we conclude that there exists a unique $b'\in\mathbf B^{\can}_{i;0}$ such that 
$\pi_{i;n}(b')=b$ and then $u=\partial_i^{(top)}(\delta_b)=\delta_{b'}$. Since $\pi_{i;n}:\mathbf B^{\can}_{i;0}\to\mathbf B^{\can}_{i;n}$
is a bijection by~\cite{Lus-book}*{Theorem~14.3.2}, the first assertion follows. The second assertion is immediate from~\eqref{eq:Lus-decomp}.
\end{proof}
This implies that for every element $b\in \mathbf B'$, there exists $\ii=(i_1,\dots,i_m)$ such that $\partial_\ii^{(top)}(b)=1$.
Since $1$ is the unique element of $\mathbf B'$ of degree~$0$, for any sequence $\ii'$ such that $\partial_{\ii'}^{(top)}(b)=c\in\kk^\times$,
one has $c=1$. This completes the proof of Proposition~\ref{prop:string char}.
\end{proof}
\begin{remark}\label{rem: star op partial}
Since $\mathbf B^{can}$ is preserved by~${}^*$ by~\cite{Lus-book}*{Theorem~14.4.3} and ${}^*$ is self-adjoint with respect 
to $\fgfrm{\cdot}{\cdot}$, it follows that $\mathbf B^{up}$ is preserved by~${}^*$. In particular,
we can replace $\partial_i$ by $\partial_i^{op}$ in Lemma~\ref{lem:tops} and Proposition~\ref{prop:string char}. 
\end{remark}

Note that Lemma~\ref{lem:tops} and Remark~\ref{rem: star op partial} immediately yield the following well-known fact.
\begin{corollary}\label{cor:top-decomp-B up}
Let $x\in U_q(\lie n^+)$ and write $x=\sum_{b\in\mathbf B^{up}} c_b(x) b$. Then $c_b(x)\not=0$ implies that $\ell_i(b)\le \ell_i(x)$ 
and $\ell_i(b^*)\le \ell_i(x^*)$
and 
$$
\partial_i^{(top)}(x)=\sum_{b\in\mathbf B^{up}\,:\,\ell_i(b)=\ell_i(x)} c_b(x) \partial_i^{(top)}(b),
\quad 
(\partial_i^{op})^{(top)}(x)=\sum\limits_{b\in\mathbf B^{up}\,:\,\ell_i(b^*)=\ell_i(x^*)} c_b(x) (\partial_i^{op})^{(top)}(b)
$$
are the decompositions of~$(\partial_i)^{(top)}(x)$ and $(\partial_i^{op})^{(top)}(x)$, respectively,
in the basis~$\mathbf B^{up}$.
\end{corollary}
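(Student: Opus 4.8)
The plan is to deduce the corollary from the two clauses of \lemref{lem:tops} (which, by the proof of \propref{prop:string char}, hold with $\mathbf B'=\mathbf B^{up}$) by tracking which summands survive under a single operator $\partial_i^{(N)}$. For $r\ge0$ set $\mathbf B^{up}_{i;r}=\{b\in\mathbf B^{up}\,:\,\ell_i(b)=r\}$, with the convention $\ell_i(b)=0\iff\partial_i(b)=0$ that is implicit in the definition of $\partial_i^{(top)}$, so that $\mathbf B^{up}=\bigsqcup_{r\ge0}\mathbf B^{up}_{i;r}$. Then \lemref{lem:tops} says that $\partial_i^{(top)}$ sends $\mathbf B^{up}_{i;r}$ into $\mathbf B^{up}_{i;0}$ and that its restriction to each $\mathbf B^{up}_{i;r}$ is injective.

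First I would fix a nonzero $x=\sum_b c_b(x)b$ and put $N=\max\{\ell_i(b)\,:\,c_b(x)\ne0\}$ (a finite maximum, the sum being finite). Applying $\partial_i^{(k)}$ termwise and using $\partial_i^{(k)}(b)=0$ for $k>\ell_i(b)$, one gets $\partial_i^{(k)}(x)=0$ for all $k>N$, so $\ell_i(x)\le N$; and for $k=N$ only the terms with $\ell_i(b)=N$ survive, for which $\partial_i^{(N)}(b)=\partial_i^{(top)}(b)$, so $\partial_i^{(N)}(x)=\sum_{b\in\mathbf B^{up}_{i;N}}c_b(x)\,\partial_i^{(top)}(b)$. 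By the injectivity clause of \lemref{lem:tops} the vectors $\partial_i^{(top)}(b)$, $b\in\mathbf B^{up}_{i;N}$, are pairwise distinct elements of $\mathbf B^{up}$, so this is a nonzero $\AA$-linear combination of distinct basis vectors; hence $\partial_i^{(N)}(x)\ne0$ and $\ell_i(x)\ge N$. Thus $\ell_i(x)=N$, which is the asserted bound $c_b(x)\ne0\Rightarrow\ell_i(b)\le\ell_i(x)$, and since $\partial_i^{(top)}(x)=\partial_i^{(N)}(x)$, the expression just obtained is exactly the $\mathbf B^{up}$-expansion claimed for $\partial_i^{(top)}(x)$.

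For the statements about $(\partial_i^{op})^{(top)}$ I would apply the case just proved to $x^*$: by Remark~\ref{rem: star op partial} the linear anti-involution ${}^*$ permutes $\mathbf B^{up}$, so $x^*=\sum_b c_b(x)\,b^*$ is the $\mathbf B^{up}$-expansion of $x^*$; feeding this into the first part, then applying ${}^*$ and using $(\partial_i^{op})^{(top)}(y)=(\partial_i^{(top)}(y^*))^*$ (with $y=x$, and with $y=b$), one obtains both the bound $c_b(x)\ne0\Rightarrow\ell_i(b^*)\le\ell_i(x^*)$ and the second displayed formula. There is no genuine obstacle here: all the substance is in \lemref{lem:tops}, whose proof in turn rests on Lusztig's decomposition \eqref{eq:Lus-decomp} and \cite{Lus-book}*{Theorem~14.3.2}. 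The only point deserving a moment's care is that in $\partial_i^{(N)}(x)$ the surviving summands neither vanish individually (clear, as $\ell_i(b)=N$ forces $\partial_i^{(N)}(b)=\partial_i^{(top)}(b)\ne0$) nor cancel against one another, and the latter is precisely what the injectivity clause of \lemref{lem:tops} rules out.
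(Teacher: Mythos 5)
Your proof is correct and takes essentially the same route the paper intends: the paper declares the corollary immediate from \lemref{lem:tops} and Remark~\ref{rem: star op partial}, and your argument is precisely the unpacking of that claim, including the key observation that the injectivity clause of \lemref{lem:tops} forbids cancellation among the surviving summands.
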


\section{Decorated algebras and proof of Theorem~\ref{thm:T_i bas}}

\subsection{Decorated algebras}\label{subs:decorated alg}
Let $\mathcal A$ be an associative $\ZZ$-graded algebra over~$\kk=\mathbb Q(v^{\frac12})$. 
Denote the degree of a homogeneous $u\in\mathcal A$ by~$|u|$.

\begin{definition}\label{defn:decorated alg}
We say that $\mathcal A=\mathcal A(E,\ul F_+,\ul F_-)$ is {\em decorated} if it contains an element 
$E$ with $|E|=2$ and 
admits mutually commuting locally nilpotent $\kk$-linear endomorphisms $\ul F_+,\ul F_-:\mathcal A\to \mathcal A$ of degree~$-2$
satisfying $\ul F_\pm(E)=1$ and
\begin{equation}\label{eq:skew Leibnitz rule}
\ul F_\pm(xy)=v^{\pm\frac12 |y|}\ul F_\pm(x)y+v^{\mp\frac12|x|}x\ul F_\pm(y)
\end{equation}
for $x,y\in\mathcal A$ homogeneous. 
\end{definition}

Denote $\mathcal A_\pm=\ker D_\mp$ and set $\mathcal A_0=\mathcal A_+\cap\mathcal A_-$.
Clearly, $\ul F_\pm$ restricts to an endomorphism of $\mathcal A_\pm$ which will also be denoted by $\ul F_\pm$.
Since $F_\pm$ are skew derivations, $\mathcal A_\pm$ are subalgebras of~$\mathcal A$.

The following is a basic example of a decorated algebra. Let $\mathcal F_{m,n}=\kk\la E,x,y\ra$ with the $\ZZ$-grading defined by $|x|=-m$, $|y|=-n$, $|E|=2$.
The following is immediate 
\begin{lemma}\label{lem:free subalgebra}
There exists unique operators $\ul F_\pm\in\End_\kk \mathcal F_{m,n}$ such that $\ul F_\pm(E)=1$, $\ul F_\pm(x)=\ul F_\pm(y)=0$
and~\eqref{eq:skew Leibnitz rule} holds. In particular, $\mathcal F_{m,n}$ is a decorated algebra
and for any decorated algebra $\mathcal A$ and any $x',y'\in\mathcal A_0$ homogeneous
the natural homomorphism of graded associative algebras $\phi_{|x'|,|y'|}:\mathcal F_{|x'|,|y'|}\to \mathcal A$,
$x\mapsto x'$, $y\mapsto y'$
is a homomorphism 
of decorated algebras, that is, it commutes with $\ul F_\pm$.
\end{lemma}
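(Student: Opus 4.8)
The plan is to establish the two assertions of Lemma~\ref{lem:free subalgebra} in sequence: first the existence and uniqueness of the operators $\ul F_\pm$ on the free algebra $\mathcal F_{m,n}=\kk\la E,x,y\ra$, and then the functoriality of $\phi_{|x'|,|y'|}$. For the first part, I would argue that $\mathcal F_{m,n}$, being free on $E,x,y$, has a $\kk$-basis of monomials $w=z_1\cdots z_r$ with each $z_j\in\{E,x,y\}$, and I would \emph{define} $\ul F_\pm$ on such a monomial by the obvious iterated Leibniz expansion: since $\ul F_\pm$ is required to be a $v^{\pm\frac12(\cdot)}$-skew derivation killing $x,y$ and sending $E\mapsto 1$, the value $\ul F_\pm(z_1\cdots z_r)$ is forced to be $\sum_{j\,:\,z_j=E} v^{\pm\frac12(|z_{j+1}|+\cdots+|z_r|)}v^{\mp\frac12(|z_1|+\cdots+|z_{j-1}|)}\,z_1\cdots \widehat{z_j}\cdots z_r$, where the hat denotes omission. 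This formula visibly lies in $\mathcal F_{m,n}$, has degree $-2$ relative to $w$, kills $x$ and $y$, sends $E$ to $1$, and one checks directly on basis monomials that it satisfies~\eqref{eq:skew Leibnitz rule} (both sides, when expanded on a product $w\cdot w'$ of monomials, produce exactly the same sum, split according to whether the distinguished $E$ lies in $w$ or in $w'$, with the prefactor bookkeeping matching because $|ww'|=|w|+|w'|$). Uniqueness is immediate since~\eqref{eq:skew Leibnitz rule} together with the values on generators determines $\ul F_\pm$ on every monomial by induction on length. Local nilpotence is clear because $\ul F_\pm$ strictly decreases the number of occurrences of $E$ in each monomial, and that number is bounded. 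Mutual commutativity of $\ul F_+$ and $\ul F_-$ can be verified on monomials: applying $\ul F_+\ul F_-$ and $\ul F_-\ul F_+$ to $w$ both yield a sum over ordered pairs of distinct positions of $E$'s in $w$, and the two scalar prefactors coincide since the $v^{+\frac12}$ and $v^{-\frac12}$ shifts attached to the two removals commute as exponents add. Hence $\mathcal F_{m,n}$ is a decorated algebra in the sense of Definition~\ref{defn:decorated alg}.

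For the second part, let $\mathcal A=\mathcal A(E,\ul F_+,\ul F_-)$ be an arbitrary decorated algebra and pick homogeneous $x',y'\in\mathcal A_0=\mathcal A_+\cap\mathcal A_-$, i.e.\ $\ul F_+(x')=\ul F_-(x')=\ul F_+(y')=\ul F_-(y')=0$. Because $\mathcal F_{|x'|,|y'|}$ is free on $E,x,y$ and the grading is matched ($|E|=2$ on both sides, $|x|=|x'|$, $|y|=|y'|$), there is a unique homomorphism of graded $\kk$-algebras $\phi:=\phi_{|x'|,|y'|}\colon\mathcal F_{|x'|,|y'|}\to\mathcal A$ with $E\mapsto E$, $x\mapsto x'$, $y\mapsto y'$. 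I must show $\phi\circ\ul F_\pm=\ul F_\pm\circ\phi$. Since both sides are $\kk$-linear and $\mathcal F_{|x'|,|y'|}$ is spanned by monomials, it suffices to check equality on a monomial $w=z_1\cdots z_r$, which I do by induction on $r$. The base cases $w\in\{E,x,y\}$ hold by the defining properties ($\phi\ul F_\pm(E)=\phi(1)=1=\ul F_\pm(E)=\ul F_\pm\phi(E)$, and both sides vanish on $x,y$ because $x',y'\in\mathcal A_0$). For the inductive step write $w=z_1\cdot w''$ with $w''$ shorter; then
$$
\phi(\ul F_\pm(z_1 w''))=\phi\big(v^{\pm\frac12|w''|}\ul F_\pm(z_1)w''+v^{\mp\frac12|z_1|}z_1\ul F_\pm(w'')\big)=v^{\pm\frac12|w''|}\phi(\ul F_\pm(z_1))\phi(w'')+v^{\mp\frac12|z_1|}\phi(z_1)\phi(\ul F_\pm(w'')),
$$
using that $\phi$ is an algebra homomorphism and respects the grading ($|w''|$ is the same computed in either algebra). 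By the base case $\phi(\ul F_\pm(z_1))=\ul F_\pm(\phi(z_1))$ and by the inductive hypothesis $\phi(\ul F_\pm(w''))=\ul F_\pm(\phi(w''))$, so the right-hand side equals $v^{\pm\frac12|\phi(w'')|}\ul F_\pm(\phi(z_1))\phi(w'')+v^{\mp\frac12|\phi(z_1)|}\phi(z_1)\ul F_\pm(\phi(w''))$, which is precisely $\ul F_\pm(\phi(z_1)\phi(w''))=\ul F_\pm(\phi(w))$ by~\eqref{eq:skew Leibnitz rule} in $\mathcal A$. This closes the induction and proves that $\phi_{|x'|,|y'|}$ is a homomorphism of decorated algebras.

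I do not expect any genuine obstacle here: the statement is essentially a universal-property/freeness argument, and the only thing requiring care is the bookkeeping of the $v^{\pm\frac12}$ prefactors in the skew Leibniz rule, which is why I would phrase the monomial induction splitting off one generator at a time rather than trying to guess a closed form and verify it — the one-generator split makes the prefactors transparent and reduces everything to the defining identity~\eqref{eq:skew Leibnitz rule} plus the algebra-homomorphism property of $\phi$. The mildly delicate point in the first part is confirming that the forced formula on monomials genuinely satisfies~\eqref{eq:skew Leibnitz rule} for arbitrary products of monomials (not just when one factor is a generator); but this too follows from associativity of the grading and is a routine, if slightly tedious, check, so I would state it and leave the verification to the reader.
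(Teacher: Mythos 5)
The paper offers no proof of this lemma — it introduces it with "The following is immediate" and leaves the verification entirely to the reader. Your argument correctly supplies that verification: the explicit iterated-Leibniz formula on monomials, uniqueness by induction on word length, local nilpotence from the bounded $E$-count in any given element, commutativity of $\ul F_+$ and $\ul F_-$ (where the key, and only, cancellation is that $|z_j|=|z_k|=2$ because both removed letters are $E$), and the one-letter-split induction for $\phi\circ\ul F_\pm=\ul F_\pm\circ\phi$. All of this is sound and matches the universal-property/freeness reasoning the authors evidently had in mind; there is nothing to reconcile against the paper since the paper proves nothing here.
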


Define $\ul K^{\frac12},\ul E_\pm\in\End_\kk \mathcal A$ by 
$$
\ul K^{\frac12}(x)=v^{\frac12|x|} u,\quad 
\ul E_\pm(x)=\pm\la 1\ra_{v}^{-1} (v^{\pm\frac12 |x|}Ex-v^{\mp\frac12|x|}xE),
$$
for $x\in\mathcal A$ homogeneous. Clearly $\ul E_\pm$ are of degree~$2$.
The following is easily checked.
\begin{lemma}
\begin{enumerate}[{\rm(a)}]
\item\label{lem:algebra A sl2 action.a} For $x,y\in\mathcal A$ homogeneous we have 
\begin{gather}\label{eq:divided power E_i}
\begin{aligned}
&\ul E_+^{(r)}(x)=\sum_{r'+r''=r}(-1)^{r'}v^{\frac12(r+|x|-1)(r'-r'')}E^{\la r'\ra}x E^{\la r''\ra}\\
&\ul E_-^{(r)}(x)=\sum_{r'+r''=r}(-1)^{r''}v^{\frac12(r+|x|-1)(r''-r')}E^{\la r'\ra}x E^{\la r''\ra}
\end{aligned} 
\\
\label{eq:powers E_i action}
\ul E_\pm^{(r)}(xy)=\sum_{r'+r''=r} v^{\pm\frac12(r'|y|-r''|x|)} \ul E_\pm^{(r')}(x)\ul E_\pm^{(r'')}(y).
\end{gather}
and $[\ul E_\pm,\ul F_\pm]=\la 1\ra_v{}^{-1}(\ul K-\ul K^{-1})$.
%\end{gather}
In particular, $\ul E_\pm$, $\ul F_\pm$ and $\ul K$ provide actions of Chevalley generators of $U_v(\lie{sl}_2)$ in its {\em standard} presentation 
on~$\mathcal A$;
\item\label{lem:algebra A sl2 action.b} $\ul E_\pm$ restrict to endomorphisms of $\mathcal A_\pm$. 
In particular, $\mathcal A_\pm$ is a $U_v(\lie{sl}_2)_\pm$-submodule of~$\mathcal A$ and $\mathcal A_0$
is the space of lowest weight vectors for both actions. 
\item\label{lem:algebra A sl2 action.c}
A homomorphism of decorated algebras $\mathcal A\to \mathcal A'$ is a homomorphism 
of $U_v(\lie{sl}_2)_+$- and $U_v(\lie{sl}_2)_-$-modules.
\end{enumerate}
\label{lem:algebra A sl2 action}
\end{lemma}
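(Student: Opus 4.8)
The plan is to verify all three parts directly from Definition~\ref{defn:decorated alg}, using only the skew Leibniz rule~\eqref{eq:skew Leibnitz rule} and the normalization $\ul F_\pm(E)=1$; no further structure of $\mathcal A$ is needed, and Lemma~\ref{lem:free subalgebra} will not be used here, since the identities in question must hold for an arbitrary homogeneous $x$, not merely for $x\in\mathcal A_0$. First I would establish~\eqref{eq:divided power E_i}. The case $r=1$ of the first identity is just the definition of $\ul E_\pm$, and the case $r=1$ of~\eqref{eq:powers E_i action} is the statement that $\ul E_\pm$ obeys the \emph{same} skew Leibniz rule~\eqref{eq:skew Leibnitz rule} as $\ul F_\pm$; this is a one-line check, obtained by writing $\ul E_\pm$ as the twisted commutator with $E$ and telescoping. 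The general $r$ in both identities then follows by a routine induction on $r$, reconciling powers of $v$ via the quantum Pascal identities for $\binom{r}{r'}_v$. This is the only genuinely computational step, and I expect it to be the main obstacle in the sense of being the place where the half-integer exponents of $v$ must be tracked carefully; it is, however, entirely mechanical and parallels the classical quantum Serre computations.

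Given~\eqref{eq:divided power E_i}, the commutator $[\ul E_\pm,\ul F_\pm]$ is a short direct computation: apply $\ul F_\pm$ to $\ul E_\pm(x)=\pm\la1\ra_v^{-1}(v^{\pm\frac12|x|}Ex-v^{\mp\frac12|x|}xE)$ using~\eqref{eq:skew Leibnitz rule} and $\ul F_\pm(E)=1$, apply $\ul E_\pm$ to $\ul F_\pm(x)$ using the skew Leibniz rule for $\ul E_\pm$ just established, and subtract; the terms carrying $E\ul F_\pm(x)$ and $\ul F_\pm(x)E$ cancel, leaving $\la1\ra_v^{-1}(\ul K-\ul K^{-1})(x)$. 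Combined with $\ul K\ul E_\pm\ul K^{-1}=v^2\ul E_\pm$ and $\ul K\ul F_\pm\ul K^{-1}=v^{-2}\ul F_\pm$---immediate, since $\ul E_\pm$, $\ul F_\pm$ are homogeneous of degrees $\pm2$ and $\ul K=(\ul K^{\frac12})^2$ acts by $v^{|\cdot|}$---this produces, for each choice of sign, a homomorphism $U_v(\lie{sl}_2)\to\End_\kk\mathcal A$ realizing the Chevalley generators in the standard presentation (there are no Serre relations in rank one). This proves~(a).

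For~(b), the very computation used for the commutator shows more: $\ul F_\mp(\ul E_\pm(x))$ is a $v$-power linear combination of $E\ul F_\mp(x)$ and $\ul F_\mp(x)E$ only, hence vanishes whenever $x\in\mathcal A_\pm=\ker\ul F_\mp$, so $\ul E_\pm$ restricts to an endomorphism of $\mathcal A_\pm$. Since $\ul F_+$ and $\ul F_-$ commute, $\ul F_\pm$ preserves $\mathcal A_\pm$, and $\ul K$ preserves it because $\mathcal A_\pm$ is a graded subspace; thus $\mathcal A_\pm$ is a $U_v(\lie{sl}_2)_\pm$-submodule of $\mathcal A$. Finally, the homogeneous elements of $\mathcal A$ are exactly the $\ul K$-weight vectors and $\ul F_\pm$ is the lowering operator of the $\pm$-action, so an element is of lowest weight for both actions if and only if it is killed by both $\ul F_+$ and $\ul F_-$, i.e.\ lies in $\mathcal A_0$.

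Part~(c) is then formal: a homomorphism of decorated algebras $f\colon\mathcal A\to\mathcal A'$ is, by definition, a graded algebra homomorphism with $f(E)=E'$ that commutes with $\ul F_+$ and $\ul F_-$; it therefore commutes automatically with $\ul K$ (being graded) and with $\ul E_\pm$ (apply $f$ to the twisted-commutator formula, using $f(E)=E'$ and $|f(x)|=|x|$), so $f$ is a homomorphism for both $U_v(\lie{sl}_2)_\pm$-module structures on $\mathcal A$ and $\mathcal A'$.
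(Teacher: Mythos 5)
Your outline is correct, and since the paper's own "proof" is just the single sentence ``The following is easily checked,'' your plan is essentially the verification that the authors have in mind. The crucial technical observation, which you identify cleanly, is that the skew Leibniz rule~\eqref{eq:skew Leibnitz rule} for $\ul F_\pm$ together with $\ul F_\pm(E)=1$ forces $\ul E_\pm$ to obey the \emph{same} skew Leibniz rule (via the telescoping cancellation of the cross terms), and that the ``constant'' terms (those not carrying a $\ul F$) cancel in $\ul F_\mp\ul E_\pm$ but survive in $\ul F_\pm\ul E_\pm$ — the latter giving the $\ul K-\ul K^{-1}$ term of the $\lie{sl}_2$-relation and the former giving part (b). Part (c) is indeed formal once one records (as you do) that a homomorphism of decorated algebras should be a graded algebra map sending $E$ to $E'$ and intertwining $\ul F_\pm$; this is implicit in the paper's only use of the notion (\lemref{lem:free subalgebra}) but worth stating, since intertwining $\ul E_\pm$ is then forced by the twisted-commutator formula rather than assumed.

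Two small cautions, neither a genuine gap. First, when you say the $r=1$ case of~\eqref{eq:divided power E_i} ``is just the definition of $\ul E_\pm$,'' keep track of signs: with the conventions $\la s\ra_v=v^s-v^{-s}$, $E^{\la 1\ra}=\la 1\ra_v^{-1}E$, the formula as printed differs from the displayed definition of $\ul E_\pm$ by an overall sign (and the definition in turn differs by a sign from $\ul E_i=-\ad(E_i^{\la 1\ra}K_i^{1/2})$ used in~\eqref{eq:ul definition}, which \lemref{lem:prop-ul E_i}\eqref{lem:prop-ul E_i.a'''} identifies with $\ul E_+$); this is almost certainly a typo in the paper rather than in your argument, but you should fix one convention and verify the base case against it explicitly rather than waving at ``just the definition.'' Second, your inductive step does require carrying out the quantum Pascal bookkeeping, which you correctly flag as the one genuinely computational step; it is routine but should not be left entirely implicit in a final write-up.
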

\begin{remark}
Suppose that~$y\in\mathcal A_0$. The following is rather standard an is an obvious consequence of say~\cite{Lus-book}*{Corollary~3.1.9}.
\begin{equation}\label{eq:lowest weight action}
\ul F_\pm^{(a)}\ul E_\pm^{(b)}(y)=\begin{cases}\displaystyle\binom{a-b-|y|}{a}_v \ul E_\pm^{(b-a)}(y),& 0\le a\le b\\
                                   0,& a>b
                                  \end{cases}
\end{equation}
\end{remark}

Suppose that $\ul E_\pm$ are locally nilpotent on~$\mathcal A_\pm$. Then $\mathcal A_\pm$ are direct sums of finite dimensional
$U_v(\lie{sl}_2)_\pm$-modules and if $x\in\mathcal A_0$ is homogeneous then $|x|\le 0$.
We need following
\begin{lemma}
\begin{enumerate}[{\rm(a)}]
 \item\label{lem:sigma eta defn.a}
 There exists unique isomorphisms of~$U_v(\lie{sl}_2)$-modules $\sigma_\pm:\mathcal A_\pm\to \mathcal A_\mp$ such that 
 $\sigma_\pm|_{\mathcal A_0}=\id_{\mathcal A_0}$, where $\mathcal A_\pm$ is regarded as a $U_v(\lie{sl}_2)_\pm$-module. 
 In particular, $\sigma_\pm\circ \sigma_\mp=\id_{\mathcal A_\mp}$.
 \item\label{lem:sigma eta defn.b}
 There exists unique $\kk$-linear involution $\eta_\pm:\mathcal A_\pm\to\mathcal A_\pm$ such that 
 \begin{equation}\label{eq:intertwiner}
 \eta_\pm\circ\ul E_\pm=\ul F_\pm\circ\eta_\pm,\quad \eta_\pm\circ \ul F_\pm=\ul E_\pm\circ\eta_\pm,\quad 
 \eta_\pm\circ \ul K=\ul K^{-1}\circ\eta_\pm
\end{equation}
 and $\eta_\pm(x)=\ul E_\pm^{(top)}(x)=\ul E_\pm^{(-|x|)}(x)$ for $x\in\mathcal A_0$ homogeneous.
\end{enumerate}
\label{lem:sigma eta defn}
\end{lemma}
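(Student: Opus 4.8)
The plan is to construct $\sigma_\pm$ and $\eta_\pm$ by decomposing $\mathcal A_\pm$ into finite-dimensional $U_v(\lie{sl}_2)_\pm$-isotypic pieces and transporting structure across the copies of $U_v(\lie{sl}_2)_+$ and $U_v(\lie{sl}_2)_-$. Since $\ul E_\pm$ are assumed locally nilpotent on $\mathcal A_\pm$ and $\ul F_\pm$ are always locally nilpotent, each $\mathcal A_\pm$ is a direct sum of finite-dimensional simple $U_v(\lie{sl}_2)_\pm$-modules, and $\mathcal A_0 = \mathcal A_+\cap\mathcal A_-$ is exactly the span of lowest weight vectors for both actions by Lemma~\ref{lem:algebra A sl2 action}\eqref{lem:algebra A sl2 action.b}. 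Thus as a $U_v(\lie{sl}_2)_\pm$-module, $\mathcal A_\pm \cong \bigoplus_{x} U_v(\lie{sl}_2)_\pm\cdot x$ over a homogeneous basis $\{x\}$ of $\mathcal A_0$, and the summand generated by $x$ is the simple module of lowest weight $|x|\le 0$ with basis $\{\ul E_\pm^{(b)}(x) : 0\le b\le -|x|\}$. The action of $\ul F_\pm$, $\ul E_\pm$, $\ul K$ on this basis is given by the standard formulas, in particular~\eqref{eq:lowest weight action}.

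For part~\eqref{lem:sigma eta defn.a}, I would define $\sigma_\pm$ on each summand $U_v(\lie{sl}_2)_\pm\cdot x$ (with $x\in\mathcal A_0$ homogeneous) by sending $\ul E_\pm^{(b)}(x)\mapsto \ul E_\mp^{(b)}(x)$ for $0\le b\le -|x|$, then extending $\kk$-linearly; this lands in $\mathcal A_\mp$ because $\ul E_\mp^{(b)}(x)$ is a weight vector in the simple summand $U_v(\lie{sl}_2)_\mp\cdot x$ of $\mathcal A_\mp$, and it is clearly the identity on $\mathcal A_0$. That it is a morphism of $U_v(\lie{sl}_2)$-modules follows from comparing the standard structure constants on both sides: the summands $U_v(\lie{sl}_2)_\pm\cdot x$ and $U_v(\lie{sl}_2)_\mp\cdot x$ are both the $(-|x|+1)$-dimensional simple module in the standard presentation, and the displayed formulas~\eqref{eq:divided power E_i}, \eqref{eq:lowest weight action}, together with the relation $[\ul E_\pm,\ul F_\pm]=\la1\ra_v^{-1}(\ul K-\ul K^{-1})$ from Lemma~\ref{lem:algebra A sl2 action}\eqref{lem:algebra A sl2 action.a}, make the coefficients match. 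Uniqueness is immediate: a $U_v(\lie{sl}_2)$-module map restricting to the identity on the lowest weight subspace is determined on each cyclic summand, and $\sigma_\mp\circ\sigma_\pm=\id$ since it is a $U_v(\lie{sl}_2)$-endomorphism fixing $\mathcal A_0$ pointwise.

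For part~\eqref{lem:sigma eta defn.b}, $\eta_\pm$ is the standard ``bar-type'' involution of a direct sum of simple modules: on the summand $U_v(\lie{sl}_2)_\pm\cdot x$ it must swap the weight-$j$ and weight-$(-j)$ lines, so $\eta_\pm(\ul E_\pm^{(b)}(x))$ is forced to be a scalar multiple of $\ul E_\pm^{(-|x|-b)}(x)$; prescribing $\eta_\pm(x)=\ul E_\pm^{(-|x|)}(x)$ pins the scalar down at the lowest weight, and the intertwining relations~\eqref{eq:intertwiner} propagate it up the module. Concretely, one applies $\ul F_\pm^{(b)}$ to $\eta_\pm(x)=\ul E_\pm^{(-|x|)}(x)$ and uses~\eqref{eq:lowest weight action} to get an explicit formula for $\eta_\pm(\ul E_\pm^{(b)}(x))$, from which both $\eta_\pm^2=\id$ and the three relations in~\eqref{eq:intertwiner} can be read off by the $q$-binomial identities already available (or simply by invoking~\cite{Lus-book}*{Corollary~3.1.9}). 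Uniqueness of $\eta_\pm$ follows because the three relations~\eqref{eq:intertwiner} determine $\eta_\pm$ on all weight spaces once it is fixed on $\mathcal A_0$.

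The main obstacle is purely bookkeeping: checking that the naively defined $\sigma_\pm$ really respects all of $\ul E_\pm, \ul F_\pm, \ul K$ (not just $\ul K$ and one of $\ul E_\pm,\ul F_\pm$), and that it is well-defined independently of the chosen basis of $\mathcal A_0$ — the latter is automatic once one phrases the definition as ``the unique $U_v(\lie{sl}_2)$-module map extending $\id_{\mathcal A_0}$,'' using that $\mathcal A_\pm$ is semisimple with lowest-weight space $\mathcal A_0$. There are no conceptual difficulties beyond translating the decorated-algebra axioms into the standard representation theory of $U_v(\lie{sl}_2)$, which Lemma~\ref{lem:algebra A sl2 action} has already done; I expect the write-up to be short.
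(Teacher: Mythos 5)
Your proposal is correct and follows essentially the same route as the paper: both use the semisimplicity of $\mathcal A_\pm$ as $U_v(\lie{sl}_2)_\pm$-modules with lowest-weight space $\mathcal A_0$, define $\sigma_\pm$ as the unique module map extending $\id_{\mathcal A_0}$ (uniqueness by the fact that a module endomorphism fixing lowest weight vectors is the identity), and construct $\eta_\pm$ simple-summand by simple-summand as the weight-reversing involution pinned down at the lowest weight by the given formula. The only cosmetic difference is that you spell out the formula $\ul E_\pm^{(b)}(x)\mapsto\ul E_\mp^{(b)}(x)$ for $\sigma_\pm$ whereas the paper appeals directly to the abstract structure theory; the substance is identical.
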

\begin{proof}
Part~\eqref{lem:sigma eta defn.a} is immediate from the semi-simplicity of~$\mathcal A_\pm$ as $U_v(\lie{sl}_2)_\pm$-modules and 
the fact that any endomorphism of any lowest weight $U_v(\lie{sl}_2)$-module fixing all lowest weight vectors is identity on that module. 
To prove~\eqref{lem:sigma eta defn.b} recall that every simple finite dimensional $U_v(\lie{sl}_2)$-module $V_\lambda$ of type~$1$ 
has a basis $\{z_k\}_{0\le k\le}$ such that $\ul E(x_k)=(k)_v z_{k-1}$, $\ul F(z_k)=(\lambda-k)_v z_{k+1}$, $\ul K(z_k)=v^{\lambda-2k}z_k$.
Then it is easy to see that $\eta_\lambda\in\End_\kk V_\lambda$ defined by $\eta(z_k)=z_{\lambda-k}$ is the unique linear map 
satisfying~\eqref{eq:intertwiner} and such that $\eta_\lambda(z)=\ul E^{(\lambda)}(z)$ for any lowest weight vector~$z$ of~$V_\lambda$.
It remains to observe that $\eta_\lambda$ can be extended uniquely to any semi-simple $U_v(\lie{sl}_2)$-module.
\end{proof}
\subsection{An isomorphism between \texorpdfstring{$\mathcal A_-$}{A-} and~\texorpdfstring{$\mathcal A_+$}{A+}}
The following is quite surprising.
\begin{theorem}\label{thm:tau-homomorphism}
Let $\mathcal A$ be a decorated algebra such that the operators $\ul E_\pm$ are locally nilpotent on~$\mathcal A_\pm$. 
Then the map $\tau:=\eta_+\circ\sigma_-:\mathcal A_-\to \mathcal A_+$ is an isomorphism of algebras.
\end{theorem}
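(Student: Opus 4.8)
The plan is to reduce the statement to the universal case $\mathcal F_{m,n}$ via Lemma~\ref{lem:free subalgebra}, where everything can be checked by a direct computation on generators. Concretely, $\tau=\eta_+\circ\sigma_-$ is manifestly $\kk$-linear and grading-preserving, so to prove it is an algebra homomorphism it suffices to verify $\tau(xy)=\tau(x)\tau(y)$ for all homogeneous $x,y\in\mathcal A_-$. Writing $x=\ul E_-^{(a)}(x')$ and $y=\ul E_-^{(b)}(y')$ with $x',y'\in\mathcal A_0$ homogeneous (possible since $\ul E_-$ is locally nilpotent and $\mathcal A_-$ is a sum of finite-dimensional $U_v(\lie{sl}_2)_-$-modules with $\mathcal A_0$ the lowest weight space), the product $xy$ expands by~\eqref{eq:powers E_i action} into a $\kk$-combination of terms $\ul E_-^{(a')}(x')\ul E_-^{(a'')}(y')$. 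So it is enough to understand $\tau$ applied to such products, and this is governed entirely by the subalgebra generated by $E$, $x'$ and $y'$, which is a quotient of $\mathcal F_{|x'|,|y'|}$ as a decorated algebra. Since $\sigma_\pm$ and $\eta_\pm$ are defined intrinsically from the $U_v(\lie{sl}_2)_\pm$-module structure and the action on $\mathcal A_0$, they commute with the homomorphism $\phi_{|x'|,|y'|}$ of Lemma~\ref{lem:free subalgebra}\eqref{lem:algebra A sl2 action.c} and Lemma~\ref{lem:sigma eta defn}; hence it suffices to prove the identity in $\mathcal F_{m,n}$ itself, with $x'=x$, $y'=y$ the free generators.

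Next I would make $\tau$ explicit on $\mathcal F_{m,n}$. On a homogeneous $u\in(\mathcal F_{m,n})_-$ with $u=\ul E_-^{(c)}(w)$, $w\in(\mathcal F_{m,n})_0$, we have $\sigma_-(u)=\ul E_+^{(c)}(w)$ (both sides are the image of the lowest weight vector $w$ under the $c$-th divided power of the raising operator in the $\pm$ copies of $\lie{sl}_2$, matched through $\mathcal A_0$), and then by Lemma~\ref{lem:sigma eta defn}\eqref{lem:sigma eta defn.b} together with~\eqref{eq:lowest weight action},
\begin{equation*}
\tau(u)=\eta_+\bigl(\ul E_+^{(c)}(w)\bigr)=\ul F_+^{(c)}\bigl(\ul E_+^{(-|w|)}(w)\bigr)=\binom{-|w|-c}{-|w|-2c}_v^{-1}\cdots
\end{equation*}
— in any case, $\tau(u)=\ul E_+^{(-|w|-c)}(\eta_+(w))$ up to a computable scalar, i.e. $\tau$ sends $\ul E_-^{(c)}(w)\mapsto \ul E_+^{(-|w|-c)}\ul E_+^{(-|w|)}(w)$-type expressions. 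The cleanest route is to observe that on each simple summand $V_\lambda\subset\mathcal F_{m,n}$ (with lowest weight vector $z=w$, $\lambda=-|w|$, basis $z_0=z,\dots,z_\lambda$ as in the proof of Lemma~\ref{lem:sigma eta defn}), $\sigma_-$ is the identification $z_k^{(-)}\leftrightarrow z_k^{(+)}$ reading off the same coordinate, and $\eta_+$ flips $z_k\mapsto z_{\lambda-k}$; so $\tau$ reverses the basis of each simple summand. Then $\tau(xy)=\tau(x)\tau(y)$ becomes an identity of explicit $v$-binomial-coefficient sums obtained by applying this basis reversal to both sides of~\eqref{eq:powers E_i action}, and in the free algebra $\mathcal F_{m,n}$ (where there are no relations among $E,x,y$) this is a finite check.

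The main obstacle is the combinatorial bookkeeping in that final check: one must match the $v$-power $v^{-\frac12(r'|y|-r''|x|)}$ and divided-power coefficients appearing when $\ul E_+^{(r)}$ acts on a product against those produced by first reversing and then using $\ul E_-$; this is essentially a quantum Vandermonde/Chu identity, and getting the half-integer exponents and the $\mathcal A_0$-normalization of $\sigma_\pm$ to line up is where care is needed. A slicker alternative that sidesteps much of this: show directly that $\tau$ intertwines left and right multiplication by $E$ appropriately — i.e. that $\tau\circ\ul E_-=(\text{right mult. by }E\text{, suitably twisted})\circ\tau$ and similarly on the other side — using that $\ul E_-(x)$ is (up to the scalar $\pm\la1\ra_v^{-1}$ and a $\ul K^{\pm1/2}$ twist) a commutator with $E$, and that $\eta_+,\sigma_-$ convert $\ul E_-$ into $\ul F_+$ and $\ul F_+$ is the skew derivation~\eqref{eq:skew Leibnitz rule}; since $\mathcal A_-$ is generated over $\mathcal A_0$ by applying $\ul E_-$, and $\tau|_{\mathcal A_0}$ is multiplicative because $\mathcal A_0$ is a subalgebra on which $\eta_+$ and $\sigma_-$ act compatibly with~\eqref{eq:skew Leibnitz rule}, the homomorphism property propagates. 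Either way one concludes $\tau$ is an algebra map; it is bijective because $\sigma_-$ is an isomorphism by Lemma~\ref{lem:sigma eta defn}\eqref{lem:sigma eta defn.a} and $\eta_+$ is an involution, hence invertible, so $\tau$ is an isomorphism of algebras.
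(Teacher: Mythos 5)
Your reduction to the free decorated algebra $\mathcal F_{m,n}$ via Lemma~\ref{lem:free subalgebra}, and the identification of $\tau$ as basis reversal on each simple $U_v(\lie{sl}_2)$-summand, match the paper's strategy. But you then write that the multiplicativity ``becomes an identity of explicit $v$-binomial-coefficient sums,\ldots essentially a quantum Vandermonde/Chu identity'' and you stop there. That identity \emph{is} the proof. The paper packages it as Lemma~\ref{lem:Clebsch-Gordan}\eqref{lem:Clebsch-Gordan.c}, the symmetry $C_{r;m-t',n-t''}(v)=(-1)^r C_{r;t',t''}(v^{-1})$ of the Clebsch--Gordan coefficients arising in~\eqref{eq:CG-defn}, and proves it by a double induction using the recurrences~\eqref{eq:recurr-relation for CG} and~\eqref{eq:recurr-relation for CG.2} together with the boundary values~\eqref{eq:boundary-cnd}. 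To get from $\ul E_-^{(k)}(x)\ul E_-^{(l)}(y)$ to $\ul E_+^{(m-k)}(x)\ul E_+^{(n-l)}(y)$ one also needs to invert the transition matrix (the $\tilde C_{r;k,l}$ in~\eqref{eq:inverse}) and this is where the lowest-weight vectors $x*_r y$, and the fact that they are common to the $\pm$ structures (Proposition~\ref{prop:common lowest vector}), do real work. None of that is in your writeup; ``this is a finite check'' understates a multi-page combinatorial argument, and no amount of reducing to $\mathcal F_{m,n}$ makes it go away.

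Your ``slicker alternative'' does not repair this. You propose showing that $\tau$ intertwines left and right multiplication by $E$. But $\ul F_\pm(E)=1\ne 0$, so $E\notin\mathcal A_+=\ker\ul F_-$ and $E\notin\mathcal A_-=\ker\ul F_+$; multiplication by $E$ does not even preserve $\mathcal A_\pm$, so the intertwining statement is not well posed as an operation on the domain and codomain of $\tau$. Relatedly, your assertion that ``$\tau|_{\mathcal A_0}$ is multiplicative because $\mathcal A_0$ is a subalgebra on which $\eta_+$ and $\sigma_-$ act compatibly with~\eqref{eq:skew Leibnitz rule}'' is off: for homogeneous $z\in\mathcal A_0$, $\tau(z)=\eta_+(z)=\ul E_+^{(-|z|)}(z)$ has degree $-|z|$, so $\tau$ does not restrict to an endomorphism of $\mathcal A_0$, and the multiplicativity $\ul E_+^{(m+n)}(xy)=\ul E_+^{(m)}(x)\ul E_+^{(n)}(y)$ for $x,y\in\mathcal A_0$ is a genuine (if easy) computation using~\eqref{eq:powers E_i action} and the vanishing $\ul E_+^{(r)}(x)=0$ for $r>m$, not a formal consequence of~\eqref{eq:skew Leibnitz rule}. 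In short: the right strategic frame, but the central combinatorial lemma — the real content of the theorem — is flagged as the obstacle and then left unproved, and the proposed escape route is ill-defined.
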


\begin{proof}
Let $x,y\in\mathcal A_0$ be homogeneous and let $m=-|x|$, $n=-|y|$.  
For $r\ge 0$, define $x*_r y\in\mathcal A$
by 
$$
x*_r y=
\sum_{\substack{r',r''\ge 0\\ r'+r''\le r}} (-1)^{r'+r''} \prod_{t=1}^{r'} (n-r+t)_{v}
\prod_{t=1}^{r''}(m-r+t)_v\prod_{t=r'+r''+1}^{r}\mskip-7mu (m+n-2r+t+1)_v E^{\la r'\ra}x E^{\la r-r'-r''\ra}y E^{\la r''\ra}.
$$

Clearly $x*_r y$ is homogeneous of degree $2r-m-n$.
\begin{proposition}\label{prop:common lowest vector}
Let $\mathcal A$ be a decorated algebra and let $x,y\in\mathcal A_0$ be homogeneous
with $|x|=-m$, $|y|=-n$.
For all $r\ge 0$ we have $x*_r y\in \mathcal A_0$ and 
\begin{equation}\label{eq:two expressions for x*_r y}
\begin{aligned}
x*_r y&=\sum_{t'+t''=r}(-1)^{t''}v^{\frac12(m t'-n t''+(r-1)(t''-t'))}
\frac{ (m-t')_v!(n-t'')_v!}{(n-r)_v!(m-r)_v!}\ul E_+^{(t')}(x)\ul E_+^{(t'')}(y)
\\&=\sum_{t'+t''=r}(-1)^{t'}v^{\frac12(nt''-m t'+(r-1)(t'-t''))}
\frac{ (m-t')_v!(n-t'')_v!}{(n-r)_v!(m-r)_v!}\ul E_-^{(t')}(x)\ul E_-^{(t'')}(y)
\end{aligned}
\end{equation}
\end{proposition}
\begin{proof}
By Lemma~\ref{lem:free subalgebra} it suffices to prove the proposition for the decorated algebra $\mathcal F_{m,n}$.
Let $\mathcal V_{m,n}$ be the subspace of~$\mathcal F_{m,n}$ with the basis $\{ E^{\la a\ra} x E^{\la b\ra} y E^{\la c\ra}\,:\,a,b,c\in\ZZ_{\ge 0}\}$.
Clearly, $\ul E_\pm(\mathcal V_{m,n})$, $\ul F_\pm(\mathcal V_{m,n})\subset\mathcal V_{m,n}$ and $x*_r y\in \mathcal V_{m,n}$.
We need the following
\begin{lemma}\label{lem:free kernels}
$(\mathcal F_{m,n})_0\cap \mathcal V_{m,n}$ is spanned by the $x*_r y$, $r\ge 0$ as a $\kk$-vector space.
\end{lemma}
\begin{proof}
It is easy to check that $x*_r y\in (\mathcal F_{m,n})_0$. Conversely, let $u\in(\mathcal F_{m,n})_0\cap \mathcal V_{m,n}$ be homogeneous
of degree $2r-m-n$ and write
$$
u=\sum_{r',r''\ge 0,r'+r''\le r} c_{r',r''} E^{\la r'\ra}x E^{\la r-r'-r''\ra}y E^{\la r''\ra}.
$$
Then 
\begin{align*}
\la 1\ra_v\ul F_\pm(u)&=\sum_{r'\ge 1,\,r'\ge 0,\,r'+r''\le r}
c_{r',r''} v^{\pm\frac12(|x|+|y|+2(r-r'))} E^{\la r'-1\ra}x E^{\la r-r'-r''\ra}y E^{\la r''\ra}
\\&\quad+\sum_{r',r''\ge0,r'+r''\le r-1}
c_{r',r''}v^{\pm\frac12(|y|-|x|+2(r''-r'))}E^{\la r'\ra}x E^{\la r-r'-r''-1\ra}y E^{\la r''\ra}
\\&\quad+
\sum_{r'\ge0,\,r''\ge 1,\,r'+r''\le r}
c_{r',r''}v^{\mp\frac12(|x|+|y|+2(r-r''))}E^{\la r'\ra}x E^{\la r-r'-r''\ra}y E^{\la r''-1\ra}
\\
&=\sum_{r',r''\ge 0,\,r'+r''\le r-1}
(c_{r'+1,r''}v^{\pm\frac12(|x|+|y|+2(r-r'-1))}+c_{r',r''}v^{\pm\frac12(|y|-|x|+2(r''-r'))}\\&\quad+c_{r',r''+1}v^{\mp\frac12(|x|+|y|+2(r-r''-1))})
E^{\la r'\ra}x E^{\la r-r'-r''\ra}y E^{\la r''\ra}.
\end{align*}
It follows that 
$$
c_{r'+1,r''}\la |x|+|y|+2r-r'-r''-2\ra_v
+c_{r',r''}\la |y|-r'+r-1\ra_v=0
$$
and 
$$
c_{r',r''+1}\la |x|+|y|+2r-r'-r''-2\ra_v
+c_{r',r''}\la |x|-r''+r-1\ra_v=0.
$$
Thus,
\begin{multline*}
c_{r',r''}
=(-1)^{r'} \frac{\prod_{t=1}^{r'} (n-r+t)_v}{\prod_{t=1}^{r'} (m+n-2r+r''+t)_v}
c_{0,r''}\\=(-1)^{r'+r''} \frac{ \prod_{t=1}^{r'}(n-r+t)_v\prod_{t=1}^{r''}(m-r+t)_v}{\prod_{t=1}^{r'+r''} (m+n-2r+t+1)_v} c_{0,0}.
\end{multline*}
Therefore, $u=\prod_{t=1}^r (m+n-2r-t+1)_v{}^{-1} c_{0,0} x*_r y$. 
\end{proof}
Thus, $x*_ry\in \mathcal A_0$. Furthermore, it is easy to check, using~\eqref{eq:skew Leibnitz rule}, that 
right hand sides of~\eqref{eq:two expressions for x*_r y} are in $(\mathcal F_{m,n})_0\cap \mathcal V_{m,n}$ and 
hence proportional to $x*_r y$ by Lemma~\ref{lem:free kernels}. It remains then to compare the coefficient
of $E^{\la r\ra} x y$ in both expressions, which is easily calculated using~\eqref{eq:divided power E_i}.
\end{proof}
\begin{remark}\label{rem:Verma}
Clearly, there exist unique injective homomorphisms $j_{\pm,m}$ and $j_{\pm,n}$ from the lowest weight Verma $U_v(\lie{sl}_2)_\pm$-modules $M^\pm_{-m}$, $M^\pm_{-n}$
of lowest weight $-m$
(respectively, $-n$)
to $\mathcal V_{m,n}$ sending a fixed lowest weight vector to $x$ (respectively, to $y$). This yields natural injective homomorphisms 
of $U_v(\lie{sl}_2)_\pm$-modules  $j_{\pm,m,n}:M^\pm_{-m}\tensor M^\pm_{-n}\to \mathcal V_{m,n}$ where the comultiplication on~$U_v(\lie{sl}_2)_\pm$
is defined by
$\Delta_\pm(\ul E_\pm)=\ul E_\pm\tensor \ul K^{\pm\frac12}+\ul K^{\pm\frac12}\tensor \ul E_\pm$. In particular, Proposition~\ref{prop:common lowest vector}
implies that $j_{\pm,m,n}(M^\pm_{-m}\tensor M^{\pm}_{-n})$ share lowest weight vectors of weight $-m-n+2r$.
\end{remark}
The following Lemma is essentially concerned with quantum Clebsch-Gordan coefficients (also known as $3j$-symbols, see e.g.~\cite{Kassel}*{Chapter VII}).
\begin{lemma}
\begin{enumerate}[{\rm(a)}]
Let $\mathcal A$, $x,y\in\mathcal A_0$ be as in Proposition~\ref{prop:common lowest vector}.
\item For $r\le \min(m,n)$ we have
\begin{equation}\label{eq:CG-defn}
\begin{aligned}
&\ul E_+^{(a)}(x*_r y)=\sum_{t'+t''=a+r} C_{r;t',t''}(v)\ul E_+^{(t')}(x)\ul E_+^{(t'')}(y),\\
&\ul E_-^{(a)}(x*_r y)=\sum_{t'+t''=a+r} (-1)^r C_{r;t',t''}(v^{-1})\ul E_-^{(t')}(x)\ul E_-^{(t'')}(y)
\end{aligned}
\end{equation}
where
$$
C_{r;t',t''}(v)=v^{\frac12(m t''-n t')}
\sum_{k+l=r}
(-1)^l v^{l t'-k t''+\frac12(k-l)(1+m+n-r)}
\frac{ (n-l)_v! (m-k)_v!}{(n-r)_v!(m-r)_v!}\binom{t'}{k}_v\binom{t''}{l}_v\in\ZZ[v,v^{-1}].
$$
\item\label{lem:Clebsch-Gordan.b} The $C_{r;t',t''}(v)$ satisfy the following recurrence relations
\begin{gather}
\label{eq:recurr-relation for CG}
\begin{split}
(m+n-r-t'-t'')_v C_{r;t',t''}(v)&=v^{t''-\frac12 n}(m-t')_v C_{r;t'+1,t''}(v)\\&\qquad\qquad+v^{\frac12 m-t'}(n-t'')_v C_{r;t',t''+1}(v),
\end{split}\\
\label{eq:recurr-relation for CG.2}
(t'+t''-r)_v C_{r;t',t''}(v)=v^{t''-\frac12n}(t')_v C_{r;t'-1,t''}(v)+v^{\frac12 m-t'}(t'')_v C_{r;t',t''-1}(v).
\end{gather}
\item\label{lem:Clebsch-Gordan.c} For all $0\le t'\le m$, $0\le t''\le n$, $0\le r\le \min(m,n)$ we have 
\begin{equation}\label{eq:key-combinatorial-identity}
C_{r;m-t',n-t''}(v)=(-1)^r C_{r;t',t''}(v^{-1}).
\end{equation}
\end{enumerate}
\label{lem:Clebsch-Gordan}
\end{lemma}
\begin{proof}
Let $a=0$. Then 
$$
C_{r;t',t''}(v)=(-1)^{t''}v^{\frac12(m t'-n t''+(r-1)(t''-t'))}
\frac{ (m-t')_v!(n-t'')_v!}{(n-r)_v!(m-r)_v!}
$$
and the assertion follows from~\eqref{eq:two expressions for x*_r y}. The case of arbitrary $a$ is then easily deduced by applying $\ul E_\pm^{(a)}$ 
to~\eqref{eq:two expressions for x*_r y} and using Lemma~\ref{lem:algebra A sl2 action}\eqref{lem:algebra A sl2 action.a}.
To prove~\eqref{eq:recurr-relation for CG} (respectively, \eqref{eq:recurr-relation for CG.2})
it suffices to apply $\ul F$ (respectively, $\ul E$) to both sides of the first identity in~\eqref{eq:CG-defn}. 
We leave the details of these computations as an exercise for the reader.

We now prove part~\eqref{lem:Clebsch-Gordan.c}.
Note first that for all $0\le t'\le m$, $0\le t''\le n$
\begin{equation}\label{eq:boundary-cnd}
\begin{aligned}
&C_{r;t',0}(v)=
v^{\frac12 (r(1+m+n-r)-nt')}
\frac{ (n)_v!}{(n-r)_v!}\binom{t'}{r}_v,
\\
&C_{r;0,t''}(v)=
(-1)^r v^{\frac12(mt''-r(1+m+n-r))}
\frac{ (m)_v!}{(m-r)_v!}\binom{t''}{r}_v.
\end{aligned}
\end{equation}

Using~\eqref{eq:recurr-relation for CG} with $t''=n$ we obtain
$$
C_{r;t'+1,n}(v)=\frac{(m-r-t')_v}{(m-t')_v}\, v^{-\frac12 n} C_{r;t',n}
$$
whence by~\eqref{eq:boundary-cnd}
\begin{multline*}
C_{r;t',n}(v)=v^{-\frac12 t'n}\frac{ (m-r)_v!(m-t')_v!}{(m)_v!(m-r-t')_v!}\, C_{r;0,n}(v)\\=(-1)^r
v^{\frac12(n(m-t')-r(1+m+n-r))}\frac{(n)_v!}{(n-r)_v!}\binom{m-t'}{r}_v=(-1)^r C_{r;m-t',0}(v^{-1}).
\end{multline*}
Thus, \eqref{eq:key-combinatorial-identity} holds for all $0\le t'\le m$ and for $t''=n$.
Suppose that~\eqref{eq:key-combinatorial-identity} was established for all $0\le t'\le m$ and for all $s+1\le t''\le n$.
We have by~\eqref{eq:boundary-cnd}
\begin{multline*}
(-1)^r(n-r-s)_v C_{r;m,s}(v^{-1})=(-1)^r C_{r;m,s+1}(v^{-1})(n-s)_v v^{\frac12 m}=v^{\frac12 m}(n-s)_v C_{r;0,n-s-1}(v)
\\
=(-1)^r v^{\frac12(m(n-s)-r(1+m+n-r))}
\frac{ (m)_v!(n-s)_v}{(m-r)_v!}\binom{n-s-1}{r}_v
=(n-r-s)_v C_{r;0,n-s}(v).
\end{multline*}
Finally, assume that~\eqref{eq:key-combinatorial-identity} is established for $k+1\le t'\le m$ and for $t''=s$. Then using~\eqref{eq:recurr-relation for CG} 
and~\eqref{eq:recurr-relation for CG.2} we obtain
\begin{align*}
(-1)^r(m+&n-r-k-s)_v C_{r;k,s}(v^{-1})\\&=(-1)^r C_{r;k+1,s}(v^{-1})(m-k)_v v^{-s+\frac12 n}+(-1)^r C_{r;k,s+1}(v^{-1})(n-s)_v v^{-\frac12 m+k}
\\&=C_{r;m-k-1,n-s}(v)(m-k)_v v^{-s+\frac12 n}+C_{r;m-k,n-s-1}(n-s)_v v^{-\frac12 m+k}
\\&=(m+n-r-k-s)C_{r;m-k,n-s}(v).
\end{align*}
This proves the inductive step and completes the proof of the Lemma.
\end{proof}

We can now complete the proof of Proposition~\ref{thm:tau-homomorphism}. By construction, $\tau$ is an isomorphism of $U_v(\lie{sl}_2)$-modules. 
Explicitly, if $z\in \mathcal A_0$ then $\tau(\ul E_-^{(r)}(z))=\ul E_+^{(-|z|-r)}(z)$.
It suffices to prove that for any $x,y\in\mathcal A_0$ homogeneous
with $|x|=-m$, $|y|=-n$ we have 
$$
\tau(E_-^{(k)}(x))\tau(E_-^{(l)}(y))=\ul E_+^{(m-k)}(x)\ul E_+^{(n-k)}(y)=\tau(E_-^{(k)}(x)E_-^{(l)}(y)).
$$
It is immediate from the Remark~\ref{rem:Verma} that $C_{r;t',t''}(v)$ (respectively, $(-1)^r C_{r;t',t''}(v^{-1})$)
provide the transition matrix between the two bases of $U_v(\lie{sl}_2)_+$-) (respectively, $U_v(\lie{sl}_2)_-$) modules 
$V_m\tensor V_n=\bigoplus_{0\le k\le\min(m,n)} V_{m+n-2k}$. 
In particular, there exists $\tilde C_{r;k,l}(v)\in\kk$, $0\le k\le m$, $0\le l\le n$, $0\le r\le \min(m,n,k+l)$ such that 
\begin{equation}\label{eq:inverse}
\sum_{r=0}^{\min(m,n,k+l)}(-1)^r \tilde C_{r;k,l}(v) C_{r;t',t''}(v^{-1})=\delta_{k,t'}\delta_{l,t''}.
\end{equation}
Then 
$$
\ul E_-^{(k)}(x)\ul E_-^{(l)}(y)=\sum_{r=0}^{\min(m,n,k+l)} \tilde C_{r;k,l}(v) \ul E_-^{(k+l-r)}(x*_r y)
$$
and so
\begin{align*}
\tau(\ul E_-^{(k)}(x)&\ul E_-^{(l)}(y))=\sum_{r=0}^{\min(m,n,k+l)} \tilde C_{r;k,l}(v)\ul E_+^{(m+n-r-k-l)}(x*_r y)
\\
&=\sum_{r=0}^{\min(m,n,k+l)} \tilde C_{r;k,l}(v)\sum_{s'+s''=m+n-k-l}C_{r;s',s''}(v)\ul E_+^{(s')}(x)\ul E_+^{(s'')}(y)
\\
&=\sum_{t'+t''=k+l}\Big(\sum_{r=0}^{\min(m,n,k+l)} \tilde C_{r;k,l}(v)C_{r;m-t',n-t''}(v)\Big)\ul E_+^{(m-t')}(x)\ul E_+^{(n-t'')}(y)
\\
&=\sum_{t'+t''=k+l}\Big(\sum_{r=0}^{\min(m,n,k+l)} (-1)^r \tilde C_{r;k,l}(v)C_{r;t',t''}(v^{-1})\Big)\ul E_+^{(m-t')}(x)\ul E_+^{(n-t'')}(y)
\\
&=\ul E_+^{(m-k)}(x)\ul E_+^{(n-l)}(y)=\tau(E_-^{(k)}(x))\tau(E_-^{(l)}(y)),
\end{align*}
where we used~\eqref{eq:key-combinatorial-identity} and~\eqref{eq:inverse}.
\end{proof}
Note that for $x\in \mathcal A_-$ homogeneous, $\tau(x)$ can be calculated explicitly in the following way. First, if 
$y\in \mathcal A_0$ is homogeneous and $x=\ul E_-^{(r)}(y)$ then 
\begin{equation}\label{eq:tau expl}\tau(x)=\ul E_+^{(-|y|-r)}(y)
=\ul E_+^{(r-|x|)}(y)=\binom{2r-|x|}{r}_v^{-1}\ul E_+^{(r-|x|)}\ul F_-^{(r)}(x). 
\end{equation}
By linearity, it remains to observe that any homogeneous element of~$\mathcal A_-$
can be written, uniquely, as $x=\sum_{r\ge \max(0,|x|)} \ul E_-^{(r)}(x_r)$ where $x_r\in \mathcal A_0$ and $|x_r|=|x|-2r$.

We will also need the following property of~$\tau$.
\begin{lemma}
$\ul F_+^{(top)}\circ\tau=\ul F_-^{(top)}$.
\label{lem:tau properties}
\end{lemma}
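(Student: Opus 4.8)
The plan is to reduce the statement to the explicit description of $\tau$ on the ``PBW-type'' decomposition $x=\sum_{r\ge\max(0,|x|)}\ul E_-^{(r)}(x_r)$ (with $x_r\in\mathcal A_0$ homogeneous, $|x_r|=|x|-2r$) recorded just above, and then to compute the two relevant tops by applying the lowest weight formula~\eqref{eq:lowest weight action} termwise. It is enough to treat homogeneous $x\in\mathcal A_-\setminus\{0\}$; set $N=\max\{r\,:\,x_r\ne 0\}$ and note that $\ul E_-^{(N)}(x_N)\ne 0$ forces $N\le -|x_N|=2N-|x|$, so $N\ge|x|$.

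First I would combine~\eqref{eq:tau expl}, in the form $\tau(\ul E_-^{(r)}(z))=\ul E_+^{(-|z|-r)}(z)$ for $z\in\mathcal A_0$, with linearity of $\tau$ and the identity $-|x_r|-r=r-|x|$ to get
$$
\tau(x)=\sum_{r\ge\max(0,|x|)}\ul E_+^{(r-|x|)}(x_r),
$$
which is the analogous $\mathcal A_0$-decomposition of $\tau(x)\in\mathcal A_+$ (note $|\tau(x)|=-|x|$). Next I would apply~\eqref{eq:lowest weight action} termwise (with $y=x_r$) to obtain
$$
\ul F_-^{(k)}(x)=\sum_{r\ge k}\binom{k+r-|x|}{k}_v\ul E_-^{(r-k)}(x_r),\qquad \ul F_+^{(k)}(\tau(x))=\sum_{r\ge k+|x|}\binom{k+r}{k}_v\ul E_+^{(r-|x|-k)}(x_r),
$$
where in both sums $r$ also runs over $r\ge\max(0,|x|)$. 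Since $r\ge|x|$ throughout, every Gaussian binomial occurring is a product of nonzero factors; using the uniqueness of the $\mathcal A_0$-decomposition in $\mathcal A_-$ and in $\mathcal A_+$, this shows $\ul F_-^{(k)}(x)\ne 0$ exactly for $k\le N$ and $\ul F_+^{(k)}(\tau(x))\ne 0$ exactly for $k\le N-|x|$. Hence the top exponents are $N$ and $N-|x|$ respectively, and retaining only the surviving $r=N$ term in each sum gives
$$
\ul F_-^{(top)}(x)=\binom{2N-|x|}{N}_v x_N,\qquad \ul F_+^{(top)}(\tau(x))=\binom{2N-|x|}{N-|x|}_v x_N,
$$
which agree by the symmetry $\binom{2N-|x|}{N-|x|}_v=\binom{2N-|x|}{N}_v$ of Gaussian binomials; this would conclude the argument.

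The one point that requires genuine care is the identification of the top exponents: the three operators in play — $\ul F_-$ on $x$, $\ul E_-$ on $x$, and $\ul F_+$ on $\tau(x)$ — have genuinely distinct tops ($N$, $N-|x|$, $N-|x|$), so one cannot merely match ``top to top'' but must compare the resulting values, which comes down precisely to the elementary binomial symmetry above. A more conceptual route, leading to the same computation, would be to first check $\ul F_+\circ\tau=\tau\circ\ul E_-$ (using $\tau=\eta_+\circ\sigma_-$, the relations~\eqref{eq:intertwiner} for $\eta_+$, and the $U_v(\lie{sl}_2)$-equivariance of $\sigma_-$), then deduce $\ul F_+^{(top)}\circ\tau=\tau\circ\ul E_-^{(top)}$ from the injectivity of $\tau$, and finally verify $\tau\circ\ul E_-^{(top)}=\ul F_-^{(top)}$.
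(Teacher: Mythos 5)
Your proof is correct and takes essentially the same approach as the paper: both arguments use the decomposition $x=\sum_{r}\ul E_-^{(r)}(x_r)$ with $x_r\in\mathcal A_0$, pass to $\tau(x)=\sum_r\ul E_+^{(r-|x|)}(x_r)$, and then apply~\eqref{eq:lowest weight action} to isolate the top term $r=r_0$ (your $N$). The only cosmetic difference is that the paper writes both tops as $\binom{2r_0-|x|}{r_0}_v x_{r_0}$, silently invoking the binomial symmetry that you spell out explicitly.
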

\begin{proof}
Given $x\in\mathcal A_-$, write $x=\sum_{r\ge \max(0,|x|)}\ul E_-^{(r)}(x_r)$ where $x_r\in\mathcal A_0$ and $|x_r|=|x|-2r$. 
Then $\tau(x)=\sum_{r\ge \max(0,|x|)}\ul E_+^{(r-|x|)}(x_r)$.
Let 
$r_0=\max\{r\ge \max(0,|x|)\,:\, x_r\not=0\}$. Then by~\eqref{eq:lowest weight action}
$$
\ul F_-^{(top)}(x)=\ul F_-^{(r_0)}(x)=\ul F_-^{(r_0)}\ul E_-^{(r_0)}(x_{r_0})=\binom{2r_0-|x|}{r_0}_v x_{r_0}.
$$
On the other hand,
\begin{equation*}
\ul F_+^{(top)}\tau(x)=\ul F_+^{(r_0-|x|)}\tau(x)=\ul F_+^{(r_0-|x|)}\ul E_+^{(r_0-|x|)}(x_{r_0})=\binom{2r_0-|x|}{r_0}_vx_{r_0}.\qedhere
\end{equation*}
\end{proof}

\subsection{\texorpdfstring{$U_q(\lie n^+)$}{Uq(n+)} as a decorated algebra}
Define a comultiplication $\Delta$ on~$U_q(\lie g)$ by 
$$
\Delta(E_i)=E_i\tensor 1+K_i^{-1}\tensor E_i,\qquad \Delta(F_i)=1\tensor F_i+F_i\tensor K_i,\qquad i\in I.
$$
Then (cf.~\cite{Lus-book}*{\S3.1.5})
\begin{equation}\label{eq:delta-angl powers}
\Delta(E_i^{\la r\ra})=\sum_{r'+r''=r} q_i^{-r'r''}E_i^{\la r'\ra}K_i^{-r''}\tensor E_i^{\la r''\ra},
\quad 
\Delta(F_i^{\la r\ra})=\sum_{r'+r''=r} q_i^{r'r''} F_i^{\la r'\ra}\tensor K_i^{r'}F_i^{\la r''\ra}.
\end{equation}
For any $J,J'\subset I$ denote $U_\ZZ(\lie g)_{J,J'}$ the $\AA_0$-subalgebra of $U_q(\lie g)$ generated by $U_\ZZ(\lie n^-)_J$,
$U_\ZZ(\lie n^+)_{J'}$, the $K_i^{\pm 1}$ and the $\binom{K_i;c}{a}_{q_i}$, $a\in \ZZ_{\ge 0}$, $c\in \ZZ$ and $i\in J\cap J'$,
where 
$$
\binom{K;c}{a}_{v}=\prod_{k=0}^{a-1} \frac{K^{-1}v^{k-c}-K v^{c-k}}{v^{k+1}-v^{-k-1}}.
$$
We also abbreviate $U_\ZZ(\lie g)_J=U_\ZZ(\lie g)_{J,I}$ and $U_\ZZ(\lie g):=U_\ZZ(\lie g)_{I,I}$. 
The corresponding $\kk$-subalgebras of~$U_q(\lie g)$ will be denoted $U_q(\lie g)_{J,J'}$.
It follows from~\eqref{eq:delta-angl powers} that $U_\ZZ(\lie g)$ is a Hopf $\AA_0$-algebra.

Let $\ad$ be the corresponding adjoint action of~$U_q(\lie g)$ on itself. Consider the extension~$\widetilde U_q(\lie g)$
of $U_q(\lie g)$ obtained by adjoining $K_i^{\pm\frac12}$, $i\in I$.
Define operators $\ul E_i$, $\ul F_i$ on~$\widetilde U_q(\lie g)$ via 
\begin{equation}\label{eq:ul definition}
\begin{aligned}
&\ul E_i(x)=-(\ad E_i^{\la 1\ra}K_i^{\frac12})=\frac{E_i K_i^{\frac12} xK_i^{-\frac12}-K_i^{-\frac12}xK_i^{\frac12}E_i}{q_i^{-1}-q_i},\\
&\ul F_i(x)=(\ad K_i^{-\frac12}F_i^{\la 1\ra})(x)=\partial_i(x)-K_i^{-1}\partial_i^{op}(x)K_i^{-1}.
\end{aligned}
\end{equation}
Clearly, $\ul E_i$ and $\ul F_i$ restrict to operators on $U_q(\lie g)$ and we have 
\begin{equation}\label{eq:sl_2-rel}
[\ul E_i,\ul F_i]=-\la 1\ra_{q_i}{}^{-2}\ad([E_i,F_i])=\la 1\ra_{q_i}{}^{-1}(\ul K_i-\ul K_i^{-1}),
\end{equation}
where $\ul K_i(x)=K_i x K_i^{-1}$.
We will also need operators $\ul E_i^{op}$, $\ul F_i^{op}$ defined by
\begin{equation}\label{eq:ul E_i^op}
\ul E_i^{op}(x)=(\ul E_i(x^*))^*,\qquad \ul F_i^{op}(x)=(\ul F_i(x^*))^*.
\end{equation}

We collect some properties of these operators in the following Lemma.
\begin{lemma}
\begin{enumerate}[{\rm(a)}]
\item\label{lem:prop-ul E_i.a'''} $U_q(\lie n^+)$ is a decorated algebra with $E=E_i$, $\ul F_+=\partial_i$,
$\ul F_-=\partial_i^{op}$, $v=q_i$ and $|x|=(\alpha_i^\vee,\gamma)$ for $x\in U_q(\lie n^+)_\gamma$; in particular,
$\ul E_+=\ul E_i$ and $\ul E_-=\ul E_i^{op}$.
\item\label{lem:prop-ul E_i.a'} $\ul E_i$, $\ul F_i$ commute with $\bar\cdot$.
\item\label{lem:prop-ul E_i.a''}
If $x\in U_\ZZ(\lie g)_\gamma$ then 
$\ul E_i^{(r)}(x),\ul F_i^{(r)}(x)\in q^{\frac12 (r\alpha_i,\gamma)}U_\ZZ(\lie g)$ for all $r\in\ZZ_{\ge0}$;
\item\label{lem:prop-ul E_i.b} For all $x\in U_q(\lie n^+)_\gamma$, $y\in U_q(\lie n^+)$  we have 
$$
\fgfrm{\ul E_i^{(r)}(x)}{y}=
\sum_{r'+r''=r}(-1)^{r'}q_i^{-\frac12(r+(\alpha_i^\vee,\gamma)-1)(r''-r')}\fgfrm{x}{\partial_i^{(r'')}
(\partial_i^{op})^{(r')}(y)}\\
$$
\item\label{lem:prop-ul E_i.c}
$T_i\circ\ul E_i^{op}=\ul F_i\circ T_i$, $T_i\circ \ul F_i^{op}=\ul E_i\circ T_i$.

\end{enumerate}\label{lem:prop-ul E_i}
\end{lemma}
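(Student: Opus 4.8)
The five assertions are verifications against the definitions; the plan is to dispatch (a)--(d) briskly and to concentrate on (e). For (a) I would check the axioms of Definition~\ref{defn:decorated alg}: one has $|E_i|=(\alpha_i^\vee,\alpha_i)=2$; the maps $\partial_i,\partial_i^{op}$ are $\kk$-linear, lower the $(\alpha_i^\vee,\cdot)$-grading by~$2$, commute by Lemma~\ref{lem:partial-prop}\eqref{lem:partial-prop.i}, are locally nilpotent (as noted after Corollary~\ref{cor:preserv-lattice}), satisfy $\partial_i(E_i)=\partial_i^{op}(E_i)=1$ by~\eqref{eq:partial E_i^r}, and the quasi-derivation identities~\eqref{eq:partial-inv} become exactly~\eqref{eq:skew Leibnitz rule} once $q^{\pm\frac12(\alpha_i,\gamma)}$ is rewritten as $q_i^{\pm\frac12(\alpha_i^\vee,\gamma)}$. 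The ``in particular'' clause I would obtain by unwinding the decorated-algebra definition of~$\ul E_\pm$: since $K_i^{\pm\frac12}$ acts on $U_q(\lie n^+)_\gamma$ by the scalar $q^{\pm\frac12(\alpha_i,\gamma)}=q_i^{\pm\frac12|x|}$, formula~\eqref{eq:ul definition} specialises $\ul E_i$ to the abstract~$\ul E_+$, while $\ul E_i^{op}$, being the ${}^*$-conjugate of~$\ul E_i$, matches the abstract~$\ul E_-$ because ${}^*$ fixes~$E_i$, preserves the $Q$-grading and reverses products, so~\eqref{eq:divided power E_i} gives $(\ul E_+^{(r)}(x))^*=\ul E_-^{(r)}(x^*)$.

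For (b) I would apply $\bar\cdot$ to the defining formulas, rewritten on homogeneous elements without half-powers of~$K_i$: $\bar\cdot$ is an anti-linear $Q$-graded algebra anti-automorphism fixing $E_i,F_i,K_i$ with $\overline{q^{\frac12}}=q^{-\frac12}$, it commutes with $\partial_i,\partial_i^{op}$ by Lemma~\ref{lem:partial-prop}\eqref{lem:partial-prop.i}, and the sign changes $\overline{\la 1\ra_{q_i}}=-\la 1\ra_{q_i}$, $\overline{q_i^{k}}=q_i^{-k}$ cancel, yielding $\overline{\ul E_i(x)}=\ul E_i(\bar x)$ and $\overline{\ul F_i(x)}=\ul F_i(\bar x)$. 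For (c) I would normal-order inside the adjoint action: from $K_i^{\pm\frac12}E_i=q_i^{\pm1}E_iK_i^{\pm\frac12}$ (and the analogue for~$F_i$) together with $(E_i^{\la 1\ra})^r=(r)_{q_i}!\,E_i^{\la r\ra}$ one obtains that, up to a unit of~$\AA_0$, $\ul E_i^{(r)}$ equals the adjoint action of $E_i^{\la r\ra}K_i^{r/2}$ and $\ul F_i^{(r)}$ that of $K_i^{-r/2}F_i^{\la r\ra}$ (here $\ad$ is an algebra homomorphism for the coproduct~$\Delta$). Since $U_\ZZ(\lie g)$ is a Hopf $\AA_0$-subalgebra of $U_q(\lie g)$ by~\eqref{eq:delta-angl powers}, the adjoint action of $E_i^{\la r\ra},F_i^{\la r\ra}\in U_\ZZ(\lie g)$ preserves $U_\ZZ(\lie g)$, while conjugation by $K_i^{\pm r/2}$ multiplies a homogeneous vector of degree~$\delta$ by $q^{\pm\frac12(r\alpha_i,\delta)}$; as $q^{(r\alpha_i,\gamma)}=q_i^{r(\alpha_i^\vee,\gamma)}\in\AA_0^\times$ the sign of the exponent is irrelevant and the degree shifts introduce only further units of~$\AA_0$, so $\ul E_i^{(r)}(x),\ul F_i^{(r)}(x)\in q^{\frac12(r\alpha_i,\gamma)}U_\ZZ(\lie g)$. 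For (d) I would combine the expansion~\eqref{eq:divided power E_i} of $\ul E_i^{(r)}(x)=\ul E_+^{(r)}(x)$ with Lemma~\ref{lem:partial-prop}\eqref{lem:partial-prop.ii}, applied twice: symmetry of $\fgfrm{\cdot}{\cdot}$ gives $\fgfrm{E_i^{\la r'\ra}xE_i^{\la r''\ra}}{y}=\fgfrm{x}{\partial_i^{(r'')}(\partial_i^{op})^{(r')}(y)}$, and summing over $r'+r''=r$ with the coefficients $(-1)^{r'}q_i^{\frac12(r+(\alpha_i^\vee,\gamma)-1)(r'-r'')}$ is precisely the stated identity.

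The delicate part is (e), since $T_i$ is not a Hopf-algebra map. The plan is to show that $D:=T_i\circ\ul E_i^{op}$ and $D':=\ul F_i\circ T_i$ are $T_i$-twisted skew-derivations of the same type and hence agree once they agree on algebra generators. Indeed $\ul E_i^{op}$ obeys $\ul E_i^{op}(xy)=q_i^{-\frac12|y|}\ul E_i^{op}(x)y+q_i^{\frac12|x|}x\ul E_i^{op}(y)$ (by~\eqref{eq:powers E_i action} and~\eqref{eq:ul E_i^op}), and $\ul F_i$ obeys $\ul F_i(uv)=q_i^{\frac12|v|}\ul F_i(u)v+q_i^{-\frac12|u|}u\ul F_i(v)$ --- the skew-Leibniz rule of $\ul F_+=\partial_i$, which $\ul F_i$ also satisfies because $\ad$ respects~$\Delta$ and $\Delta(K_i^{-\frac12}F_i^{\la 1\ra})=K_i^{-\frac12}\tensor K_i^{-\frac12}F_i^{\la 1\ra}+K_i^{-\frac12}F_i^{\la 1\ra}\tensor K_i^{\frac12}$. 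As $T_i$ is an algebra automorphism with $T_i(U_q(\lie g)_\gamma)=U_q(\lie g)_{s_i\gamma}$, so $|T_i(x)|=-|x|$, both $D$ and $D'$ then satisfy $D(xy)=q_i^{-\frac12|y|}D(x)T_i(y)+q_i^{\frac12|x|}T_i(x)D(y)$; hence $D-D'$ obeys this rule and vanishes on all of $U_q(\lie g)$ as soon as it vanishes on $E_j,F_j,K_j^{\pm1}$, where $T_i(\ul E_i^{op}(g))=\ul F_i(T_i(g))$ is a finite check via Lemma~\ref{lem:T_i-defn-prop} and~\eqref{eq:ul definition} (for $g=E_i$ it reads $T_i(-E_i^2)=\ul F_i(q_i^{-1}K_i^{-1}F_i)$, a short direct computation, and the cases $g=F_i$, $g=K_j^{\pm1}$, and $g=E_j,F_j$ with $j\ne i$ are entirely analogous). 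The second identity $T_i\circ\ul F_i^{op}=\ul E_i\circ T_i$ then follows from the first by applying ${}^*$ and using $(T_i(x))^*=T_i^{-1}(x^*)$ (Lemma~\ref{lem:T_i-defn-prop}), which interchanges $\ul E_i\leftrightarrow\ul E_i^{op}$, $\ul F_i\leftrightarrow\ul F_i^{op}$ and $T_i\leftrightarrow T_i^{-1}$. I expect the reconciliation of the Lusztig symmetry with the adjoint-action operators in (e) to be the only real obstacle; the twisted-Leibniz reduction to generators is what keeps the argument short.
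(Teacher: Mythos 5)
Your proposal matches the paper's proof, which simply marks parts~(a), (b), (d) as obvious/immediate, gives the $\ad$-formula $\ul E_i^{(r)} =(-1)^r q_i^{\binom{r}2}\ad(E_i^{\la r\ra}K_i^{\frac r2})$, $\ul F_i^{(r)}=q_i^{\binom r2}\ad(K_i^{-\frac r2}F_i^{\la r\ra})$ for~(c), and says~(e) is ``easy to check using Lemma~\ref{lem:T_i-defn-prop}''. The additional detail you supply for (e) --- that both $T_i\circ\ul E_i^{op}$ and $\ul F_i\circ T_i$ obey the same $T_i$-twisted skew-Leibniz rule (so equality reduces to a finite check on the Chevalley generators), and that the second identity then follows from the first by conjugating with~${}^*$ --- is exactly what ``easy to check'' is leaving implicit, and is carried out correctly.
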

\begin{proof}
Parts~\eqref{lem:prop-ul E_i.a'''} and~\eqref{lem:prop-ul E_i.a'} are obvious from the definitions. 
Since $U_\ZZ(\lie g)$ is a Hopf $\AA_0$-algebra, the first assertion in~\eqref{lem:prop-ul E_i.a''} follows from
$$
\ul E_i^{(r)}
=(-1)^r q_i^{\binom{r}2}\ad(E_i^{\la r\ra}K_i^{\frac r2}),\quad \ul F_i^{(r)}=q_i^{\binom r2}(\ad K_i^{-\frac r2}F_i^{\la r\ra}),
$$
while the second is immediate from the above formulae and~\eqref{eq:delta-angl powers}.
Part~\eqref{lem:prop-ul E_i.b} is immediate from part~\eqref{lem:prop-ul E_i.a'''}, \eqref{eq:divided power E_i} and 
Lemma~\ref{lem:partial-prop}\eqref{lem:partial-prop.ii}. Part~\eqref{lem:prop-ul E_i.c} is easy to check using Lemma~\ref{lem:T_i-defn-prop}.
\end{proof}
\subsection{A new formula for~\texorpdfstring{$T_i$}{Ti}}\label{subs:formula T_i}
Using the notation from~\cite{BG-dcb}, denote by $U_i:=T_i^{-1}(U_q(\lie n^+))\cap U_q(\lie n^+)$
and ${}_iU:=T_i(U_q(\lie n^+))\cap U_q(\lie n^+)$. It follows from~\cite{Lus-book}*{Proposition~38.1.6} that 
$U_i=\ker\partial_i$ and ${}_i U=\ker\partial_i^{op}$.
Let $U^\ZZ_i=U_i\cap U^\ZZ(\lie n^+)$ and ${}_iU^\ZZ={}_i U\cap U^\ZZ(\lie n^+)$.

\begin{lemma}
Let $i\in I$.
\label{lem:ul E_i adjoint}
\begin{enumerate}[{\rm(a)}]
\item\label{lem:ul E_i adjoint.a} 
For all $x\in U_q(\lie n^+)_\gamma$, $y\in {}_i U$, $z\in U_i$ and $r\ge 0$ we have 
$$
\fgfrm{\ul E_i^{(r)}(x)}{y}=q_i^{-\frac12(r+(\alpha_i^\vee,\gamma)-1)r}\fgfrm{x}{\ul F_i^{(r)}(y)}, \quad
\fgfrm{(\ul E_i^{op})^{(r)}(x)}{z}=q_i^{-\frac12(r+(\alpha_i^\vee,\gamma)-1)r}\fgfrm{x}{(\ul F_i^{op})^{(r)}(z)}.
$$

\item\label{lem:ul E_i adjoint.b}
$\ul E_i$, $\ul F_i$ (respectively, $\ul E_i^{op}$, $\ul F_i^{op}$) restrict to locally nilpotent operators on~${}_i U$ (respectively, on~$U_i$).
\item\label{lem:ul E_i adjoint.c}
$\ul E_i^{(n)}({}_i U^\ZZ),\ul F_i^{(n)}({}_i U^\ZZ)\subset {}_i U^\ZZ$ (respectively, 
$(\ul E_i^{op})^{(n)}(U^\ZZ_i), (\ul F_i^{op})^{(n)}(U^\ZZ_i)\subset U^\ZZ_i$) for all $n\ge 0$.
\end{enumerate}
\end{lemma}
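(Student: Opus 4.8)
The plan is to establish the three parts in order, deducing in each the assertion about $\ul E_i^{op},\ul F_i^{op}$ on $U_i$ from the one about $\ul E_i,\ul F_i$ on ${}_iU$ by conjugation with the anti-involution ${}^*$; this is legitimate because ${}^*$ is self-adjoint for $\fgfrm{\cdot}{\cdot}$ (Remark~\ref{rem: star op partial}), $U_i=({}_iU)^*$ (as $U_i=\ker\partial_i$, ${}_iU=\ker\partial_i^{op}$ and $\partial_i(x^*)^*=\partial_i^{op}(x)$ by Lemma~\ref{lem:partial-prop}\eqref{lem:partial-prop.i}), $U^\ZZ(\lie n^+)$ is ${}^*$-stable (as $U_\ZZ(\lie n^+)$ is and the form is self-adjoint), and $(\ul E_i^{op})^{(r)},(\ul F_i^{op})^{(r)}$ are the ${}^*$-conjugates of $\ul E_i^{(r)},\ul F_i^{(r)}$ by~\eqref{eq:ul E_i^op}. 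For part~\eqref{lem:ul E_i adjoint.a} I would start from the general adjointness of Lemma~\ref{lem:prop-ul E_i}\eqref{lem:prop-ul E_i.b}: for $y\in{}_iU=\ker\partial_i^{op}$ every summand on the right with $r'\ge 1$ vanishes, leaving $\fgfrm{\ul E_i^{(r)}(x)}{y}=q_i^{-\frac12(r+(\alpha_i^\vee,\gamma)-1)r}\fgfrm{x}{\partial_i^{(r)}(y)}$; then one observes that $\partial_i^{(r)}=\ul F_i^{(r)}$ on ${}_iU$, because $\ul F_i=\partial_i-K_i^{-1}\partial_i^{op}K_i^{-1}$ restricts to $\partial_i$ on $\ker\partial_i^{op}$, which is $\partial_i$-stable since $\partial_i$ and $\partial_i^{op}$ commute (Lemma~\ref{lem:partial-prop}\eqref{lem:partial-prop.i}). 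Applying ${}^*$ gives the second identity.

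For part~\eqref{lem:ul E_i adjoint.b}: $\ul F_i$ agrees with $\partial_i$ on ${}_iU$, which it preserves (same commutativity) and on which it is locally nilpotent by the degree considerations already used for $\partial_i$ on $U_q(\lie n^+)$; and $\ul E_i=\ul E_+$ preserves ${}_iU=\mathcal A_+$ by the decorated algebra structure of Lemma~\ref{lem:prop-ul E_i}\eqref{lem:prop-ul E_i.a'''} together with Lemma~\ref{lem:algebra A sl2 action}\eqref{lem:algebra A sl2 action.b}. The one non-formal point is the local nilpotency of $\ul E_i$ on ${}_iU$---note that $\ul E_i$ is very far from locally nilpotent on all of $U_q(\lie n^+)$, already in the $U_q(\lie{sl}_2)$ case. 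Here I would use the intertwiner $T_i\circ\ul F_i^{op}=\ul E_i\circ T_i$ of Lemma~\ref{lem:prop-ul E_i}\eqref{lem:prop-ul E_i.c}: since $T_i(U_i)=U_q(\lie n^+)\cap T_i(U_q(\lie n^+))={}_iU$, the automorphism $T_i$ restricts to an isomorphism $U_i\xrightarrow{\sim}{}_iU$ conjugating $\ul E_i|_{{}_iU}$ into $\ul F_i^{op}|_{U_i}=\partial_i^{op}|_{U_i}$ (again $\ul F_i^{op}=\partial_i^{op}$ on $U_i=\ker\partial_i$, by commutativity and Lemma~\ref{lem:partial-prop}\eqref{lem:partial-prop.i}), and $\partial_i^{op}$ is locally nilpotent by degree; passing through ${}^*$ yields local nilpotency of $\ul E_i^{op},\ul F_i^{op}$ on $U_i$. (Incidentally this supplies the hypothesis under which Theorem~\ref{thm:tau-homomorphism} applies to $\mathcal A=U_q(\lie n^+)$.)

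For part~\eqref{lem:ul E_i adjoint.c}: on ${}_iU$ one has $\ul F_i^{(n)}=\partial_i^{(n)}$, and $\partial_i^{(n)}$ preserves both ${}_iU$ (commutativity) and $U^\ZZ(\lie n^+)$ (Corollary~\ref{cor:preserv-lattice}\eqref{cor:preserve-lattice.b}), hence ${}_iU^\ZZ$. For $\ul E_i^{(n)}$, take $x\in{}_iU^\ZZ$; part~\eqref{lem:ul E_i adjoint.b} already gives $\ul E_i^{(n)}(x)\in{}_iU$, so it remains to see $\fgfrm{\ul E_i^{(n)}(x)}{w}\in\AA_0$ for every $w\in U_\ZZ(\lie n^+)$. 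Using ${}_iU=(E_iU_q(\lie n^+))^\perp$ (Lemma~\ref{lem:partial-prop}\eqref{lem:partial-prop.ii} with $n=1$ and non-degeneracy of the form) together with the standard direct-sum decomposition $U_q(\lie n^+)=\bigoplus_{m\ge 0}E_i^{\la m\ra}\,{}_iU$ and its compatibility with the integral form (cf.~\cite{Lus-book}), one may replace $w$ by its $m=0$ component $w_0\in{}_iU\cap U_\ZZ(\lie n^+)$ without changing the pairing; then part~\eqref{lem:ul E_i adjoint.a} turns $\fgfrm{\ul E_i^{(n)}(x)}{w_0}$ into $q_i^{-\frac12(n+(\alpha_i^\vee,\gamma)-1)n}\fgfrm{x}{\partial_i^{(n)}(w_0)}$, and since $\partial_i^{(n)}(w_0)=\ul F_i^{(n)}(w_0)\in q^{\frac12(n\alpha_i,\gamma+n\alpha_i)}U_\ZZ(\lie n^+)$ by Lemma~\ref{lem:prop-ul E_i}\eqref{lem:prop-ul E_i.a''} while $x\in U^\ZZ(\lie n^+)$, the product lies in $q_i^{\binom{n+1}2}\AA_0=\mu(n\alpha_i)\AA_0\subset\AA_0$, the two half-integral powers of $q$ combining into $q_i^{\binom{n+1}2}$ via~\eqref{eq:mu-prop}. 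Hence $\ul E_i^{(n)}(x)\in{}_iU^\ZZ$, and the statement for $U_i$ follows by ${}^*$.

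The hard parts, as I see them, are two. First, the local nilpotency of $\ul E_i$ on ${}_iU$ in part~\eqref{lem:ul E_i adjoint.b}: this is not a formal consequence of the definitions (it is simply false on $U_q(\lie n^+)$), and its proof genuinely uses the realization $U_i=T_i^{-1}(U_q(\lie n^+))\cap U_q(\lie n^+)$ and the intertwining of $T_i$ with $\ul E_i,\ul F_i^{op}$. Second, in part~\eqref{lem:ul E_i adjoint.c}, keeping the bookkeeping inside the $\AA_0$-lattice rather than the larger $\AA$-lattice: this forces one to check that the half-integral powers of $q$ produced by the adjointness constant in~\eqref{lem:ul E_i adjoint.a} and by Lemma~\ref{lem:prop-ul E_i}\eqref{lem:prop-ul E_i.a''} cancel, and to invoke the $\ZZ$-compatibility of the $E_i$-decomposition of $U_\ZZ(\lie n^+)$.
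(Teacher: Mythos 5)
Your proof is correct and tracks the paper's argument closely. Parts \eqref{lem:ul E_i adjoint.a} and \eqref{lem:ul E_i adjoint.c} proceed essentially as the paper does: \eqref{lem:ul E_i adjoint.a} falls out of Lemma~\ref{lem:prop-ul E_i}\eqref{lem:prop-ul E_i.b} once the summands with $r'\ge 1$ are killed by $y\in\ker\partial_i^{op}$ and one identifies $\partial_i^{(r)}=\ul F_i^{(r)}$ on ${}_iU$; \eqref{lem:ul E_i adjoint.c} reduces to pairing against the ${}_iU$-component of the Lusztig lattice (the paper uses the coarser splitting $U_\ZZ(\lie n^+)={}_iU_\ZZ\oplus(E_iU_q(\lie n^+)\cap U_\ZZ(\lie n^+))$; you invoke the finer $E_i$-string decomposition, which is equivalent for this purpose), and the $q$-powers from the adjointness constant in \eqref{lem:ul E_i adjoint.a} and from Lemma~\ref{lem:prop-ul E_i}\eqref{lem:prop-ul E_i.a''} do fuse to $q_i^{\binom{n+1}{2}}=\mu(n\alpha_i)\in\AA_0$ as you compute. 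The only genuine divergence is in \eqref{lem:ul E_i adjoint.b} for the local nilpotency of $\ul E_i$ on ${}_iU$: you conjugate $\ul E_i|_{{}_iU}$ into $\ul F_i^{op}|_{U_i}=\partial_i^{op}|_{U_i}$ via $T_i$ and the intertwining relation of Lemma~\ref{lem:prop-ul E_i}\eqref{lem:prop-ul E_i.c}, then invoke degree-nilpotency of $\partial_i^{op}$, whereas the paper argues directly that if $\ul E_i^{(n)}(x)\ne 0$ for all $n$ then $T_i^{-1}(\ul E_i^{(n)}(x))\in U_q(\lie n^+)$ has degree $s_i\gamma-n\alpha_i\notin Q^+$ for $n\gg 0$, a contradiction. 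The two arguments rest on the same fact---$T_i^{-1}$ carries ${}_iU$ into $U_q(\lie n^+)$ where the $Q^+$-grading forbids raising without bound---but yours makes the transported operator explicit, which is arguably cleaner and anticipates the identification $\tau=T_i$ proved in Theorem~\ref{thm:T_i formula}.
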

\begin{proof}
We only prove the assertion for $\ul E_i$ and~$\ul F_i$. The assertion for $\ul E_i^{op}$ and~$\ul F_i^{op}$ is proved similarly 
using~\eqref{eq:ul E_i^op} and the fact that ${}^*$ is self-adjoint with respect to $\fgfrm{\cdot}{\cdot}$.
Since by~\eqref{eq:ul definition} $\ul F_i|_{{}_i U}=\partial_i|_{{}_i U}$, in particular, 
$\ul F_i$ is a locally nilpotent operator on~${}_i U$. Part~\eqref{lem:ul E_i adjoint.a} is now immediate from Lemma~\ref{lem:prop-ul E_i}\eqref{lem:prop-ul E_i.b}.

Suppose that $x\in U_q(\lie n^+)_\gamma$ and 
$\ul E_i^{(n)}(x)\not=0$ for all $n\ge 0$.
Then $T_i(\ul E_i^{(n)}(x))$ is homogeneous of degree~$s_i(\gamma+n\alpha_i)=\gamma-((\alpha_i^\vee,\gamma)+n)\alpha_i\notin Q^+$ for 
$n\gg 0$. Since $T_i({}_i U)\subset U_q(\lie n^+)$, this is a contradiction. This proves~\eqref{lem:ul E_i adjoint.b}.

To prove~\eqref{lem:ul E_i adjoint.c}, note that the assertion for $\ul F_i$ follows from Corollary~\ref{cor:preserv-lattice}. Since 
$U_\ZZ(\lie n^+)={}_i U_\ZZ\oplus (E_i U_q(\lie n^+)\cap U_\ZZ(\lie n^+))$, it suffices to prove that for $x\in {}_i U^\ZZ\cap U_q(\lie n^+)_\gamma$,
$y\in U_i^\ZZ\cap U_q(\lie n^+)_{\gamma+n\alpha_i}$ we have 
$\fgfrm{\ul E_i^{(n)}(x)}{y}\in \AA_0$. 
But for such $y$ we have by part~\eqref{lem:ul E_i adjoint.a} and~\eqref{eq:mu-prop}
$$
\fgfrm{\ul E_i^{(n)}(x)}{y}=q_i^{-\frac12 n(n+(\alpha_i^\vee,\gamma)-1)}\fgfrm{x}{\ul F_i^{(n)}(y)}=q_i^{\binom{n}2}\fgfrm{x}
{q_i^{-\frac12 n(\alpha_i^\vee,\gamma+n\alpha_i)}\ul F_i^{(n)}(y)}\in \AA_0
$$
and $q_i^{-\frac12 n(\alpha_i^\vee,\gamma+n\alpha_i)}\ul F_i^{(n)}(y)\in U_\ZZ(\lie n^+)$ by Lemma~\ref{lem:prop-ul E_i}\eqref{lem:prop-ul E_i.a''}.
\end{proof}
Thus, given $x\in U_i\cap U_q(\lie n^+)_\gamma$, $y\in {}_i U\cap U_q(\lie n^+)_\gamma$ we can write uniquely 
\begin{equation}\label{eq:sl_2 decomp}
x=\sum_{r\ge \max(0,(\alpha_i^\vee,\gamma))} (\ul E_i^{op})^{(r)}(x_r),\quad 
y=\sum_{r\ge \max(0,(\alpha_i^\vee,\gamma))} \ul E_i^{(r)}(y_r),\quad x_r,y_r\in {}_i U\cap U_i\cap U_q(\lie n^+)_{\gamma-r\alpha_i},
\end{equation}
and $x_r,y_r=0$ for $r\gg 0$.

\begin{corollary}\label{cor:form sl2 decomp}
Let $x,x'\in U_i\cap U_q(\lie n^+)_\gamma$, $y,y'\in {}_i U\cap U_q(\lie n^+)_\gamma$
and write $x$, $x'$ and $y$, $y'$ as in~\eqref{eq:sl_2 decomp}. Then 
\begin{equation}\label{eq:form sl2 decomp}
\begin{aligned}
\fgfrm{x}{x'}=\sum_{r\ge \max(0,(\alpha_i^\vee,\gamma))} q_i^{\frac12 r(r+1-(\alpha_i^\vee,\gamma))}\binom{2r-(\alpha_i^\vee,\gamma)}{r}_{q_i}\fgfrm{x_r}{x'_r},
\\
\fgfrm{y}{y'}=\sum_{r\ge \max(0,(\alpha_i^\vee,\gamma))} q_i^{\frac12 r(r+1-(\alpha_i^\vee,\gamma))}\binom{2r-(\alpha_i^\vee,\gamma)}{r}_{q_i}\fgfrm{y_r}{y'_r}.
\end{aligned}
\end{equation}
\end{corollary}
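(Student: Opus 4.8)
The plan is to expand $\fgfrm{x}{x'}$ bilinearly over the decomposition~\eqref{eq:sl_2 decomp}, to transfer one copy of $\ul E_i^{op}$ onto the other argument via the adjunction of Lemma~\ref{lem:ul E_i adjoint}\eqref{lem:ul E_i adjoint.a}, and then to collapse the resulting double sum by means of the lowest weight action formula~\eqref{eq:lowest weight action}. The latter applies because $U_q(\lie n^+)$ is a decorated algebra (Lemma~\ref{lem:prop-ul E_i}\eqref{lem:prop-ul E_i.a'''}, with $v=q_i$ and $|z|=(\alpha_i^\vee,\beta)$ for $z\in U_q(\lie n^+)_\beta$), in which $\ul E_i^{op}$ acts locally nilpotently on $\mathcal A_-=U_i$ by Lemma~\ref{lem:ul E_i adjoint}\eqref{lem:ul E_i adjoint.b}, and the $x_r,x'_r$ of~\eqref{eq:sl_2 decomp} are common lowest weight vectors, i.e.\ lie in $\mathcal A_0={}_iU\cap U_i$. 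Throughout I write $c=(\alpha_i^\vee,\gamma)$.

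First I would observe that, by Lemma~\ref{lem:algebra A sl2 action}\eqref{lem:algebra A sl2 action.b}, $(\ul E_i^{op})^{(s)}(x'_s)$ again lies in $U_i$ and has $Q$-degree~$\gamma$, so the second identity of Lemma~\ref{lem:ul E_i adjoint}\eqref{lem:ul E_i adjoint.a} applies with first argument $x_r\in U_q(\lie n^+)_{\gamma-r\alpha_i}$ — for which $(\alpha_i^\vee,\gamma-r\alpha_i)=c-2r$ — and second argument $z=(\ul E_i^{op})^{(s)}(x'_s)$. After simplifying the exponent $-\frac12(r+(c-2r)-1)r=\frac12 r(r+1-c)$ this gives
$$
\fgfrm{(\ul E_i^{op})^{(r)}(x_r)}{(\ul E_i^{op})^{(s)}(x'_s)}=q_i^{\frac12 r(r+1-c)}\,\fgfrm{x_r}{(\ul F_i^{op})^{(r)}(\ul E_i^{op})^{(s)}(x'_s)}.
$$
Applying~\eqref{eq:lowest weight action} in $\mathcal A_-=U_i$ to the lowest weight vector $x'_s$ of weight $c-2s$, with $a=r$, $b=s$, yields $(\ul F_i^{op})^{(r)}(\ul E_i^{op})^{(s)}(x'_s)=\binom{r+s-c}{r}_{q_i}(\ul E_i^{op})^{(s-r)}(x'_s)$ when $r\le s$ and $0$ when $r>s$. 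Hence the $(r,s)$-term of $\fgfrm{x}{x'}$ vanishes when $r>s$; by symmetry of $\fgfrm{\cdot}{\cdot}$ it vanishes also when $r<s$, so only the diagonal terms survive, and for $r=s$, since $(\ul E_i^{op})^{(0)}=\id$, that term equals $q_i^{\frac12 r(r+1-c)}\binom{2r-c}{r}_{q_i}\fgfrm{x_r}{x'_r}$. Summing over $r\ge\max(0,c)$ produces the first line of~\eqref{eq:form sl2 decomp}.

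For the second line I would either repeat the argument word for word using instead the first identity of Lemma~\ref{lem:ul E_i adjoint}\eqref{lem:ul E_i adjoint.a} together with the action of $\ul E_i$, $\ul F_i$ on $\mathcal A_+={}_iU$, or, more economically, apply ${}^*$: it is self-adjoint for $\fgfrm{\cdot}{\cdot}$ (Remark~\ref{rem: star op partial}), interchanges ${}_iU$ with $U_i$ and $\ul E_i$ with $\ul E_i^{op}$, preserves $\mathcal A_0$ and its $Q$-grading, and carries $y=\sum_r\ul E_i^{(r)}(y_r)$ to $y^*=\sum_r(\ul E_i^{op})^{(r)}(y_r^*)$, which is precisely the decomposition~\eqref{eq:sl_2 decomp} of $y^*\in U_i$; thus the $y$-formula is the ${}^*$-image of the one just proved.

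I do not expect a genuinely hard step here: the argument is a bookkeeping exercise sitting on top of the adjunction and the $\lie{sl}_2$-computation, both already at hand. The one place that needs attention is keeping the two gradings straight — the $Q$-grading of $U_q(\lie n^+)$ versus the decorated-algebra grading $|x_r|=(\alpha_i^\vee,\gamma-r\alpha_i)=c-2r$ — so that the exponent of $q_i$ and the top entry $2r-c$ of the $q_i$-binomial come out exactly as in~\eqref{eq:form sl2 decomp}, together with the harmless check that on the summation range $r\ge\max(0,c)$ one has $2r-c\ge r\ge 0$, so that every binomial $\binom{2r-c}{r}_{q_i}$ appearing is an honest Gaussian binomial.
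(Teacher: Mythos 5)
Your proof is correct and takes essentially the same route as the paper: it reduces to the adjunction of Lemma~\ref{lem:ul E_i adjoint}\eqref{lem:ul E_i adjoint.a} together with the lowest-weight identity~\eqref{eq:lowest weight action}, which kills the off-diagonal terms and produces the stated coefficient on the diagonal. The only difference is cosmetic: you compute the $U_i$ (i.e.\ $\ul E_i^{op}$) case directly and deduce the ${}_iU$ case via ${}^*$, whereas the paper does the ${}_iU$ (i.e.\ $\ul E_i$) case first and then applies ${}^*$ — a purely symmetric choice.
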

\begin{proof}
Let $z,z'\in {}_i U\cap U_i$, $z'\in U_q(\lie n^+)_{\gamma'}$. Let $a\ge b\ge 0$. Then we have by Lemma~\ref{lem:ul E_i adjoint}\eqref{lem:ul E_i adjoint.a}
and~\eqref{eq:lowest weight action}
\begin{multline*}
\fgfrm{\ul E_i^{(a)}(z)}{\ul E_i^{(b)}(z')}=q_i^{-\frac12 b(b+(\alpha_i^\vee,\gamma')-1)}\fgfrm{\ul F_i^{(b)}\ul E_i^{(a)}(z)}{z'}\\
=q_i^{-\frac12 b(b+(\alpha_i^\vee,\gamma')-1)}\binom{b-a-(\alpha_i^\vee,\gamma')}{b}_{q_i}\fgfrm{\ul E_i^{(a-b)}(z)}{z'}
=\delta_{a,b}q_i^{-\frac12a(a+(\alpha_i^\vee,\gamma')-1)}\binom{-(\alpha_i^\vee,\gamma')}{a}_{q_i}\fgfrm{z}{z'}.
\end{multline*}
This yields the second identity in~\eqref{eq:form sl2 decomp}. The first follows from the second one by applying~${}^*$.
\end{proof}

We now establish a formula for the action of $T_i$ on $U_i$ in terms of $\ul E_i^{op}$ and $\ul E_i$.
\begin{theorem}\label{thm:T_i formula}
Write $x\in U_i\cap U_q(\lie n^+)_\gamma$ as in~\eqref{eq:sl_2 decomp}. Then 
$$
T_i(x)=\sum_{r\ge \max(0,(\alpha_i^\vee,\gamma))} \ul E_i^{(r-(\alpha_i^\vee,\gamma))}(x_r).
$$
In particular, $\partial_i^{(top)}T_i(x)=(\partial_i^{op})^{(top)}(x)$.
\end{theorem}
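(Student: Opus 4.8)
The plan is to identify $U_q(\lie n^+)$, via Lemma~\ref{lem:prop-ul E_i}\eqref{lem:prop-ul E_i.a'''}, with a decorated algebra (with $v=q_i$, $E=E_i$, $\ul F_+=\partial_i$, $\ul F_-=\partial_i^{op}$), and to check that the present hypotheses match those of Theorem~\ref{thm:tau-homomorphism}. Since by Lemma~\ref{lem:ul E_i adjoint}\eqref{lem:ul E_i adjoint.b} the operators $\ul E_i=\ul E_+$ and $\ul E_i^{op}=\ul E_-$ are locally nilpotent on ${}_iU=\mathcal A_+$ and $U_i=\mathcal A_-$ respectively, Theorem~\ref{thm:tau-homomorphism} applies to the decorated subalgebra structure carried by $U_i\cup {}_iU$ (note $\mathcal A_0={}_iU\cap U_i=\ker\partial_i\cap\ker\partial_i^{op}$), and produces an algebra isomorphism $\tau=\eta_+\circ\sigma_-\colon U_i\to {}_iU$. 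The heart of the argument is then to show that this $\tau$ coincides with the restriction of $T_i$ to~$U_i$.

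The key steps, in order, are: (1) spell out that the decomposition \eqref{eq:sl_2 decomp} of $x\in U_i\cap U_q(\lie n^+)_\gamma$ is exactly the decomposition $x=\sum_r \ul E_-^{(r)}(x_r)$ with $x_r\in\mathcal A_0$ used right after the proof of Theorem~\ref{thm:tau-homomorphism}, where the lower bound $r\ge\max(0,(\alpha_i^\vee,\gamma))$ reflects $|x_r|=|x|-2r=(\alpha_i^\vee,\gamma)-2r\le 0$; (2) invoke the explicit formula~\eqref{eq:tau expl}, which gives $\tau(\ul E_-^{(r)}(x_r))=\ul E_+^{(-|x_r|-r)}(x_r)=\ul E_i^{(r-(\alpha_i^\vee,\gamma))}(x_r)$, so that by $\kk$-linearity $\tau(x)=\sum_{r}\ul E_i^{(r-(\alpha_i^\vee,\gamma))}(x_r)$ — this is precisely the asserted formula once $\tau=T_i|_{U_i}$ is known; (3) prove $T_i|_{U_i}=\tau$ by a characterization argument: both are algebra homomorphisms $U_i\to {}_iU$ (for $T_i$ this is Lemma~\ref{lem:T_i-defn-prop}, and $T_i(U_i)\subset {}_iU$ by definition of the two spaces), and both intertwine the two $U_{q_i}(\lie{sl}_2)$-actions in the way dictated by Lemma~\ref{lem:prop-ul E_i}\eqref{lem:prop-ul E_i.c}, i.e. $T_i\circ\ul E_i^{op}=\ul F_i\circ T_i$ and $T_i\circ\ul F_i^{op}=\ul E_i\circ T_i$ — exactly the identities $\sigma_-$ and then $\eta_+$ are built to satisfy. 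Since on $\mathcal A_0={}_iU\cap U_i$ one checks directly from Lemma~\ref{lem:T_i-defn-prop} that $T_i$ acts as $\ul E_i^{(-|x|)}=\eta_+|_{\mathcal A_0}$ (the $\mathfrak{sl}_2$-Verma/top computation), and any two maps agreeing on lowest-weight vectors and intertwining the actions coincide on the whole module, we get $T_i|_{U_i}=\eta_+\circ\sigma_-=\tau$.

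Finally, the ``in particular'' clause $\partial_i^{(top)}T_i(x)=(\partial_i^{op})^{(top)}(x)$ is immediate from Lemma~\ref{lem:tau properties}: under the decorated-algebra dictionary, $\ul F_+^{(top)}=\partial_i^{(top)}$ and $\ul F_-^{(top)}=(\partial_i^{op})^{(top)}$, and that lemma says exactly $\ul F_+^{(top)}\circ\tau=\ul F_-^{(top)}$.

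The main obstacle I anticipate is step~(3): one must verify carefully that $T_i$, as defined in Lemma~\ref{lem:T_i-defn-prop}, really does restrict to a map $U_i\to {}_iU$ that fixes $\mathcal A_0$ in the sense of acting by $\ul E_i^{(-|x|)}$ there, and that the intertwining relations of Lemma~\ref{lem:prop-ul E_i}\eqref{lem:prop-ul E_i.c} are precisely the ones that pin $\tau$ down uniquely among $\mathfrak{sl}_2$-module maps — in other words, that the uniqueness in Lemma~\ref{lem:sigma eta defn} can be applied with $T_i|_{U_i}$ in the role of $\eta_+\circ\sigma_-$. Everything else is either a direct unwinding of definitions or a citation of results already proved above.
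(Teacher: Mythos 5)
Your overall plan matches the paper's: identify $U_q(\lie n^+)$ as a decorated algebra, check that Theorem~\ref{thm:tau-homomorphism} applies, and then argue $T_i|_{U_i}=\tau$, after which the displayed formula is~\eqref{eq:tau expl} and the ``in particular'' clause is Lemma~\ref{lem:tau properties}. However, there is a real gap in your step~(3). You propose to conclude $T_i|_{U_i}=\tau$ by verifying the two maps agree on $\mathcal A_0={}_iU\cap U_i$ and then invoking the intertwining relations and semisimplicity. But the claim that ``one checks directly from Lemma~\ref{lem:T_i-defn-prop} that $T_i$ acts as $\ul E_i^{(-|x|)}$ on $\mathcal A_0$'' is not a direct verification: for a general homogeneous $x\in\mathcal A_0$ this identity is precisely the $r=0$ case of the theorem, and it is as hard as the theorem itself. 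The only element of $\mathcal A_0$ on which this is an honest unwinding of Lemma~\ref{lem:T_i-defn-prop} is $E_j$ ($j\ne i$), because only there do you have an explicit formula for $T_i$ on a generator. In general $\mathcal A_0$ contains many elements that are not polynomials in the $E_j$, and for those there is no ``direct'' computation available; establishing the identity on all of $\mathcal A_0$ requires the Clebsch--Gordan argument underlying Theorem~\ref{thm:tau-homomorphism} (or an equivalent), which you would thereby be re-proving.

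The paper avoids this by changing what is checked: instead of all of $\mathcal A_0$, it verifies $T_i=\tau$ on the \emph{algebra generators} $E_{ji^l}$, $j\ne i$, $0\le l\le -a_{ij}$, of $U_i$ (Lemma~\ref{lem:gen-U_i}). These lie in a single $\lie{sl}_2$-string through $E_j$, so the intertwining relation of Lemma~\ref{lem:prop-ul E_i}\eqref{lem:prop-ul E_i.c} reduces the check to the one genuinely direct computation $T_i(E_j)=E_{i^{-a_{ij}}j}=\ul E_i^{(-a_{ij})}(E_j)=\tau(E_j)$. Then, crucially, the algebra-homomorphism property of \emph{both} $T_i$ and $\tau$ (the latter being exactly the content of Theorem~\ref{thm:tau-homomorphism}, proved via the identity~\eqref{eq:key-combinatorial-identity}) extends the agreement from generators to all of $U_i$. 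You do mention that both maps are algebra homomorphisms, but your concluding ``we get'' relies only on the module-theoretic uniqueness, not on the algebra structure; to close the gap you should replace the appeal to agreement on all of $\mathcal A_0$ by agreement on the $E_{ji^l}$, using the algebra-homomorphism property to finish, which is exactly the paper's route.
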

\begin{proof}
We apply Proposition~\ref{thm:tau-homomorphism} to~$\mathcal A=U_q(\lie n^+)$ which is a decorated algebra by Lemma~\ref{lem:prop-ul E_i}\eqref{lem:prop-ul E_i.a'''}
with locally nilpotent $\ul E_\pm$ on~$\mathcal A_\pm$ by Lemma~\ref{lem:ul E_i adjoint}\eqref{lem:ul E_i adjoint.b}.
We claim that $T_i=\tau$.
Since both $\tau$ and~$T_i$ are isomorphisms of algebras $U_i\to {}_i U$, it is enough to check that they coincide on generators of~$U_i$.

Let $j\not=i$ and define for all $0\le l\le -a_{ij}$
\begin{equation}\label{eq:E_ji^l}
E_{ji^{l}}=\binom{-a_{ij}}{l}_{q_i}^{-1}(\ul E_i^{op})^{(l)}(E_j),\qquad E_{i^lj}=(E_{ji^l})^*=\binom{-a_{ij}}{l}_{q_i}^{-1}\ul E_i^{(l)}(E_j).
\end{equation}
Clearly, $\overline{E_{ji^l}}=E_{ji^l}$ and 
it is immediate from~\eqref{eq:divided power E_i} that 
\begin{equation}\label{eq:E_ji^m explicit}
E_{ji^l}=\binom{-a_{ij}}{l}_{q_i}^{-1}\sum_{r+s=l} (-1)^r q_i^{\frac12(r-s)(l+a_{ij}-1)} E_i^{\la s\ra}E_j E_i^{\la r\ra}.
\end{equation}

\begin{lemma}\label{lem:gen-U_i}
Let $i\not=j\in I$, $0\le m\le -a_{ij}$. Then 
\begin{enumerate}[{\rm(a)}]
\item\label{lem:gen-U_i.b}
$T_i(E_{ji^m})=E_{i^{-a_{ij}-m}j}=\tau(E_{ji^m})$
\item\label{lem:gen-U_i.c}
The elements $E_{ji^l}$ (respectively, $E_{i^lj}$), $j\not=i$, $0\le l\le -a_{ij}$ generate the algebra 
$U_i$ (respectively ${}_i U$).
\end{enumerate}
\end{lemma}
\begin{proof}
To prove~\eqref{lem:gen-U_i.b}, note that by Lemma~\ref{lem:T_i-defn-prop} and~\eqref{eq:E_ji^m explicit} we have $T_i(E_j)=E_{i^{-a_{ij}}j}$. 
On the other hand, $\tau(E_j)=\ul E_i^{(-a_{ij})}(E_j)=E_{i^{-a_{ij}}j}$.
Then by Lemma~\ref{lem:prop-ul E_i}\eqref{lem:prop-ul E_i.c} and~\eqref{eq:lowest weight action}
$$
T_i(E_{ji^l})=
\binom{-a_{ij}}{l}_{q_i}^{-1} \ul F_i^{(l)}(E_{i^{-a_{ij}}j})
=\binom{-a_{ij}}{l}_{q_i}^{-1} \ul F_i^{(l)}\ul E_i^{(-a_{ij})}(E_j)=E_{i^{-a_{ij}-l}j}.
$$
Since by construction $\tau$ also satisfies Lemma~\ref{lem:prop-ul E_i}\eqref{lem:prop-ul E_i.c}, it follows that $\tau(E_{ji^l})=T_i(E_{ji^l})$.

Part~\eqref{lem:gen-U_i.c} can be easily deduced from~\cite{Lus-book}*{\S38.1.1}.
\end{proof}
\noindent
This implies that $\tau=T_i$ on~$U_i$. The second assertion of Theorem~\ref{thm:T_i formula} follows from Lemma~\ref{lem:tau properties}.
\end{proof}
We now prove the following
\begin{proposition}\label{prop:T_i lattice}
For all $i\in I$, $T_i(U^\ZZ_i)={}_i U^\ZZ$.
\end{proposition}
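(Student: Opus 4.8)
The plan is to derive the statement from Theorem~\ref{thm:T_i formula}, Lemma~\ref{lem:ul E_i adjoint} and Lemma~\ref{lem:prop-ul E_i}, using the pairing $\fgfrm{\cdot}{\cdot}$ for bookkeeping. First I would reduce to a single inclusion. By Lemma~\ref{lem:T_i-defn-prop} one has $T_i^{-1}={}^*\circ T_i\circ{}^*$; moreover ${}^*$ preserves $U^\ZZ(\lie n^+)$ (by Remark~\ref{rem: star op partial} and Theorem~\ref{thm:sign-bas}, since $\mathbf B^{up}$ is an $\AA_0$-basis of $U^\ZZ(\lie n^+)$) and, because $\partial_i(x^*)^*=\partial_i^{op}(x)$, it interchanges $U_i=\ker\partial_i$ with ${}_iU=\ker\partial_i^{op}$ and hence $U^\ZZ_i$ with ${}_iU^\ZZ$. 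Thus $T_i(U^\ZZ_i)\subseteq{}_iU^\ZZ$ is equivalent to $T_i^{-1}({}_iU^\ZZ)\subseteq U^\ZZ_i$, and the two together with the bijectivity of $T_i$ give the asserted equality. So it is enough to prove $T_i(U^\ZZ_i)\subseteq{}_iU^\ZZ$, and, ${}_iU^\ZZ$ being an $\AA_0$-module, we may take $x\in U^\ZZ_i\cap U_q(\lie n^+)_\gamma$ homogeneous.

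Next I would establish an ``almost isometry'' identity for $T_i$. Writing $x=\sum_r(\ul E_i^{op})^{(r)}(x_r)$ as in~\eqref{eq:sl_2 decomp}, Theorem~\ref{thm:T_i formula} gives $T_i(x)=\sum_r\ul E_i^{(r-n)}(x_r)$ with $n=(\alpha_i^\vee,\gamma)$, and this is the decomposition~\eqref{eq:sl_2 decomp} of $T_i(x)$; feeding both decompositions into Corollary~\ref{cor:form sl2 decomp} and comparing the resulting powers of $q$ term by term, one obtains
\[
\fgfrm{T_i(x)}{T_i(x')}=q^{-\frac12(\alpha_i,\gamma)}\fgfrm{x}{x'}\qquad\text{for all }x,x'\in U_i\cap U_q(\lie n^+)_\gamma .
\]
Since $T_i(x)\in{}_iU=\ker\partial_i^{op}$ is orthogonal to $E_iU_q(\lie n^+)$, and $U_\ZZ(\lie n^+)=\big({}_iU\cap U_\ZZ(\lie n^+)\big)\oplus\big(E_iU_q(\lie n^+)\cap U_\ZZ(\lie n^+)\big)$ (as used in the proof of Lemma~\ref{lem:ul E_i adjoint}), the condition $T_i(x)\in U^\ZZ(\lie n^+)$ reduces to $\fgfrm{T_i(x)}{{}_iU\cap U_\ZZ(\lie n^+)}\subseteq\AA_0$. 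Writing a weight-$s_i\gamma$ element of ${}_iU\cap U_\ZZ(\lie n^+)$ as $T_i(y')$ with $y'\in U_i\cap U_q(\lie n^+)_\gamma$ and using the displayed identity, this becomes $q^{-\frac12(\alpha_i,\gamma)}\fgfrm{x}{y'}\in\AA_0$; as $x\in U^\ZZ(\lie n^+)$ pairs into $\AA_0$ with $U_\ZZ(\lie n^+)$, it suffices to know $y'=T_i^{-1}(y)\in q^{\frac12(\alpha_i,\gamma)}U_\ZZ(\lie n^+)$, equivalently (applying ${}^*$ once more) that
\[
T_i\big(U_i\cap U_\ZZ(\lie n^+)_\gamma\big)\subseteq q^{-\frac12(\alpha_i,\gamma)}U_\ZZ(\lie n^+).
\]

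It then remains to prove this last inclusion — that $T_i$ preserves the Lusztig $\AA$-form up to the explicit twist $q^{-\frac12(\alpha_i,\gamma)}$. For $u\in\mathcal A_0\cap U_\ZZ(\lie n^+)_\gamma$ (so $u$ is a common lowest weight vector and $n\le 0$) it follows from Theorem~\ref{thm:T_i formula} and Lemma~\ref{lem:prop-ul E_i}\eqref{lem:prop-ul E_i.a''}: $T_i(u)=\ul E_i^{(-n)}(u)\in q^{-\frac n2(\alpha_i,\gamma)}U_\ZZ(\lie n^+)$, and $q^{-\frac n2(\alpha_i,\gamma)}U_\ZZ(\lie n^+)=q^{-\frac12(\alpha_i,\gamma)}U_\ZZ(\lie n^+)$ because $\tfrac12 d_i\,n(n-1)\in\ZZ$, $n(n-1)$ being a product of consecutive integers. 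For general $u$ I would induct on the $\partial_i^{op}$-string length of $u$, using Theorem~\ref{thm:T_i formula} to pass to the lower components $u_r\in\mathcal A_0$ in~\eqref{eq:sl_2 decomp}; here one uses that, by Lemma~\ref{lem:ul E_i adjoint}\eqref{lem:ul E_i adjoint.c}, $U^\ZZ_i$ and ${}_iU^\ZZ$ are stable under the divided powers of $\ul E_i^{op},\ul F_i^{op}$ and of $\ul E_i,\ul F_i$ respectively, hence compatible with the $\lie{sl}_2$-isotypic decompositions of $U_i$ and ${}_iU$, while $T_i=\eta_+\circ\sigma_-$ (Theorem~\ref{thm:T_i formula}, Lemma~\ref{lem:sigma eta defn}) respects these decompositions, so that one may work one isotypic component at a time.

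The hard part will be exactly this last step: verifying that passing from $u$ to its components $u_r$ — i.e.\ applying the $\lie{sl}_2$-isotypic projections, whose naive formulas involve inverse $q$-binomial coefficients — spoils integrality only by controlled powers of $q^{\frac12}$; equivalently, that within each $U_{q_i}(\lie{sl}_2)$-module the flip $\eta_+$ carries the relevant $\AA_0$-lattice to the right one. This is essentially the preservation of the Lusztig $\AA$-form by $T_i$ up to the stated $q^{\frac12}$-power (compare Lemma~\ref{lem:prop-ul E_i}\eqref{lem:prop-ul E_i.a''}), and everything else is a routine verification once it is in place.
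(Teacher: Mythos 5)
Your reductions are sound: the $*$-symmetry reduction to one inclusion, the observation that $\fgfrm{T_i(x)}{T_i(x')}=q^{-\frac12(\alpha_i,\gamma)}\fgfrm{x}{x'}$ on $U_i\cap U_q(\lie n^+)_\gamma$ (which is indeed the $a=a'=0$ case of Lemma~\ref{lem:orthogonality of T_i}, itself derived from Corollary~\ref{cor:form sl2 decomp}), and the reduction of ``$T_i(x)\in U^\ZZ(\lie n^+)$'' to pairing against ${}_iU\cap U_\ZZ(\lie n^+)$ via the orthogonal decomposition of $U_\ZZ(\lie n^+)$ are all correct. But you have correctly diagnosed, and then failed to close, the real gap: everything is funnelled into the claim $T_i\bigl(U_i\cap U_\ZZ(\lie n^+)_\gamma\bigr)\subseteq q^{-\frac12(\alpha_i,\gamma)}U_\ZZ(\lie n^+)$, and your proposed route to it -- induction on the $\partial_i^{op}$-string length, working one $\lie{sl}_2$-isotypic piece at a time via $T_i=\eta_+\circ\sigma_-$ -- founders exactly where you say it will: the isotypic projections are \emph{not} integral, so knowing $u\in U_i\cap U_\ZZ(\lie n^+)$ does not give control of the components $u_r\in{}_iU\cap U_i$ in~\eqref{eq:sl_2 decomp}, and hence no induction can start. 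As written, this is a proof with an unfilled hole, not a proof.

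The paper closes precisely this hole with two devices that your sketch does not supply. First, Lemma~\ref{lem:Jantzen} produces an \emph{integral} two-sided decomposition $x=\sum_{r,s}E_i^{\la r\ra}x_{rs}E_i^{\la s\ra}$ with $x_{rs}\in{}_iU\cap U_i\cap U_\ZZ(\lie n^+)$ for every $x\in U_\ZZ(\lie n^+)$; this is proved by a delicate induction on the height in $\alpha_i$, using the quasi-derivation formulas of Lemma~\ref{lem:partial-prop}\eqref{lem:partial-prop.iii} together with Lemma~\ref{lem:elem-lin.alg}, and it is exactly the replacement for the non-integral isotypic projections you were reaching for. Second, Lemma~\ref{lem:action-Jantzen} controls the denominators on the source side: for $x\in U_i^\ZZ$ written as $\sum_r(\ul E_i^{op})^{(r)}(x_r)$, each $\binom{2r-(\alpha_i^\vee,\gamma)}{r}_{q_i}x_r$ already lies in $U_i^\ZZ$ (proved by descending induction on $\ell_i(x^*)$ using~\eqref{eq:lowest weight action}). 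With these, the paper computes $\fgfrm{T_i(x)}{y}$ for $y\in U_\ZZ(\lie n^+)$ directly from the Jantzen decomposition of $y$ and Lemma~\ref{lem:partial-prop}\eqref{lem:partial-prop.ii}, never needing the intermediate (and \emph{a priori} comparably hard) assertion that $T_i$ twist-preserves the Lusztig form. If you want to salvage your plan, the minimum you must add is an integral decomposition of $U_i\cap U_\ZZ(\lie n^+)$ along $\lie{sl}_2$-strings with bounded denominators -- that is, a statement of the same strength as Lemmas~\ref{lem:Jantzen} and~\ref{lem:action-Jantzen}.
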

\begin{proof}
We need the following
\begin{lemma}\label{lem:Jantzen}
Any element $x\in U_\ZZ(\lie n^+)$ can be written as $x=\sum_{r,s\ge 0} E_i^{\la r\ra} x_{rs} E_i^{\la s\ra}$, where 
$x_{rs}\in {}_i U\cap U_i\cap U_\ZZ(\lie n^+)$ and only finitely many of them are non-zero.
\end{lemma}
\begin{proof}
We need the following elementary fact.
\begin{lemma}\label{lem:elem-lin.alg}
Let $V$ be a finite dimensional $\kk$-vector space with a non-degenerate bilinear form $\fgfrm{\cdot}{\cdot}:V\tensor V\to \kk$. 
Assume that we have two orthogonal direct sum decompositions 
$V=V_1\oplus W_1=V_2\oplus W_2$ with respect to that form. Then $V=(V_1\cap V_2)\oplus (W_1+W_2)=(W_1\cap W_2)\oplus (V_1+V_2)$
(orthogonal direct sum decompositions).
\end{lemma}
\begin{proof}
Clearly $(V_1\cap V_2)$ is orthogonal to $W_1+W_2$ and $(W_1\cap W_2)$ is orthogonal to $V_1+V_2$.
Note that for any $v\in V_i$, $\fgfrm{v}{v}=0$ if and only if $v=0$. This implies that the sums $U_1=(V_1\cap V_2)+(W_1+W_2)$, $U_2=(W_1\cap W_2)+(V_1+V_2)$ are direct.
It remains to prove that $\dim U_1=\dim U_2=\dim V$. Since 
\begin{multline*}
\dim U_1+\dim U_2=(\dim W_1+\dim W_2-\dim W_1\cap W_2)+\dim V_1\cap V_2\\+(\dim V_1+\dim V_2-\dim V_1\cap V_2)+\dim W_1\cap W_2=2\dim V,
\end{multline*}
and $\dim U_1,\dim U_2\le \dim V$ the assertion follows.
\end{proof}
Given $\gamma\in Q^+$, let $n_i(\gamma)$ be the coefficient of~$\alpha_i$ in~$\gamma$.
For any $\gamma\in Q^+$ we have two orthogonal direct sum decompositions $U_q(\lie n^+)_\gamma=(\ker\partial_i|_{U_q(\lie n^+)_\gamma}\oplus 
U_q(\lie n^+)_{\gamma-\alpha_i}E_i)=(\ker\partial_i^{op}|_{U_q(\lie n^+)_\gamma}\oplus E_i U_q(\lie n^+)_{\gamma-\alpha_i})$.
Since $U_q(\lie n^+)_\gamma$ is finite dimensional, it follows from Lemma~\ref{lem:elem-lin.alg} that $U_q(\lie n^+)_\gamma=({}_i U\cap U_i\cap U_q(\lie n^+)_\gamma)
\oplus (U_q(\lie n^+)_{\gamma-\alpha_i}E_i+E_i U_q(\lie n^+)_{\gamma-\alpha_i})$. Then an obvious induction on~$n_i(\gamma)$ implies that 
every $x\in U_q(\lie n^+)$ can be written in $x=\sum_{r,s\ge 0} E_i^{\la r\ra} x_{rs} E_i^{\la s\ra}$, where 
$x_{rs}\in {}_i U\cap U_i$ and only finitely many of the $x_{rs}$ are non-zero.

We now prove by induction on~$n_i(\gamma)$ that if $x\in U_\ZZ(\lie n^+)_\gamma$ then $x_{rs}\in U_\ZZ(\lie n^+)\cap {}_i U\cap U_i$. 
If $n_i(\gamma)=0$ then $x=x_{00}$ and there is nothing to prove. For the inductive step, we have 
$$
\sum_{r,s\ge 0} E_i^{\la r\ra}(q_i^{-r-1} x_{r+1,s}+q_i^{-(\alpha_i^\vee,\gamma)+s+1} x_{r,s+1})E_i^{\la s\ra}
=q_i^{-\frac12(\alpha_i^\vee,\gamma)}\la 1\ra_{q_i}\partial_i(x)\in U_\ZZ(\lie n^+),
$$
where we used Lemma~\ref{lem:partial-prop}\eqref{lem:partial-prop.iii} and Corollary~\ref{cor:preserv-lattice}\eqref{cor:preserv-lattice.a}.
Then $x_{r+1,s}+q_i^{-(\alpha_i^\vee,\gamma)+r+s+2} x_{r,s+1}\in
U_\ZZ(\lie n^+)$ by the induction hypothesis. Let $s_0$ be such that $x_{rs}=0$ for all $r$ and for all $s>s_0$. It follows then that $x_{r,s_0}\in U_\ZZ(\lie n^+)$ for all $r\ge 0$.
Suppose now that $x_{rt}\in U_\ZZ(\lie n^+)$ for all $r$ and for all $s+1\le t\le s_0$. Since $x_{r,s}=-q_i^{-(\alpha_i^\vee,\gamma)+s+r+1} x_{r-1,s+1}$ it follows 
that $x_{r,s}\in U_\ZZ(\lie n^+)$ for all $r,s\ge 0$ with $r+s>0$. It remains to observe that $x_{00}=x-\sum_{r,s\ge 0,r+s>0} E_i^{\la r\ra} x_{rs} E_i^{\la s\ra}$.
\end{proof}

\begin{lemma}\label{lem:action-Jantzen}
Let $x\in U_i^\ZZ\cap U_q(\lie n^+)_\gamma$ and write $x=\sum_{r\ge \max(0,(\alpha_i^\vee,\gamma))} 
(\ul E_i^{op})^{(r)}(x_r)$ where $x_r\in {}_i U\cap U_i\cap U_q(\lie n^+)_{\gamma-r\alpha_i}$. 
Then $\binom{-(\alpha_i^\vee,\gamma-r\alpha_i)}{r}_{q_i}x_r\in U_i^\ZZ$.
\end{lemma}
\begin{proof}
The argument is by induction on~$\ell_i(x^*)$. If $\ell_i(x^*)=0$, that is $\partial_i^{op}(x)=0$, then $x=x_0$ and there is nothing to do.
If~$\ell_i(x^*)=n$ then $x_r=0$ for all $r>n$.
We have $(\partial_i^{op})^{(top)}(x)
=(\partial_i^{op})^{(n)}(x)\in U_i^\ZZ$ by Corollary~\ref{cor:preserv-lattice}\eqref{cor:preserve-lattice.b}. On the other hand,
$
(\partial_i^{op})^{(n)}(x)=(\partial_i^{op})^{(n)}(\ul E_i^{op})^{(n)}(x_{n})=\binom{2n-(\alpha_i^\vee,\gamma)}{n}_{q_i}x_{n}
$
by~\eqref{eq:lowest weight action}.
Thus, $\binom{2n-(\alpha_i^\vee,\gamma)}{n}_{q_i}x_n\in U_i^\ZZ\cap {}_i U^\ZZ$. It remains to observe that the induction hypothesis applies to $x-(\ul E_i^{op})^{(n)}(x_n)$.
\end{proof}
Thus, is suffices to consider $x=(\ul E_i^{op})^{(r)}(z)\in U_i^\ZZ$ where $z\in {}_i U\cap U_i\cap U_q(\lie n^+)_\gamma$. 
We claim that $T_i(x)=\ul E_i^{(-(\alpha_i^\vee,\gamma)-r)}(z)\in {}_i U^\ZZ$. 
Given $y\in U_\ZZ(\lie n^+)_{\gamma+(-(\alpha_i^\vee,\gamma)-r)\alpha_i}$, use Lemma~\ref{lem:Jantzen}
to write $y=\sum_{s,s'\ge 0} E_i^{\la s'\ra}y_{s's} E_i^{\la s\ra}$ with $y_{s's}\in {}_i U\cap U_i\cap U_\ZZ(\lie n^+)$. Then
by Lemma~\ref{lem:partial-prop}\eqref{lem:partial-prop.ii} and~\eqref{eq:lowest weight action}
\begin{multline*}
\fgfrm{\ul E_i^{(-(\alpha_i^\vee,\gamma)-r)}(z)}{y}=\sum_{s',s\ge 0} \fgfrm{\ul E_i^{(-(\alpha_i^\vee,\gamma)-r)}(z)}{E_i^{\la s'\ra}y_{s's} E_i^{\la s\ra}}
=\sum_{s\ge 0} \fgfrm{\partial_i^{(s)}\ul E_i^{(-(\alpha_i^\vee,\gamma)-r)}(z)}{y_{0s}}\\
=\sum_{s\ge 0} \fgfrm{\ul F_i^{(s)}\ul E_i^{(-(\alpha_i^\vee,\gamma)-r)}(z)}{y_{0s}}
=\sum_{s=0}^{-(\alpha_i^\vee,\gamma)-r} \binom{s+r}{r}_{q_i} \fgfrm{\ul E_i^{(-(\alpha_i^\vee,\gamma)-r-s)}(z)}{y_{0s}}
\\=\binom{-(\alpha_i^\vee,\gamma)}{r}_{q_i}\fgfrm{z}{y_{0,-(\alpha_i^\vee,\gamma)-r}},
\end{multline*}
since $\fgfrm{{}_i U}{E_i U_q(\lie n^+)}=0$ and
$\fgfrm{\ul E_i^{(a)}(z)}{y_{0s}}=0$ if $a>0$ by Lemma~\ref{lem:prop-ul E_i}\eqref{lem:prop-ul E_i.b}.
Since $\binom{-(\alpha_i^\vee,\gamma)}{r}_{q_i}z\in U^\ZZ(\lie n^+)$ by Lemma~\ref{lem:action-Jantzen}, it follows that $\fgfrm{\ul E_i^{(-(\alpha_i^\vee,\gamma)-r)}(z)}{y}\in\AA_0$.
\end{proof}

\subsection{Proof of Theorem~\ref{thm:T_i bas}}\label{subs:proof of thm T_i bas}
We need the following result which can also be deduced from~\cite{Lus-book}*{Proposition 38.2.1}.
However, our argument is much shorter.
\begin{lemma}\label{lem:orthogonality of T_i}
For all $x,x'\in U_q(\lie n^+)_\gamma\cap U_i$, $a,a'\in \ZZ_{\ge 0}$ we have 
$$
\fgfrm{E_i^{a}T_i(x)}{E_i^{a'}T_i(x')}=q^{\frac12(a-1)(\alpha_i,\gamma)}\delta_{a,a'}\la a\ra_{q_i}!
\fgfrm{x}{x'}
=q^{\frac12(a-1)(\alpha_i,\gamma)}\delta_{a,a'}\mu(a\alpha_i)\prod_{t=1}^a(1-q_i^{-2t})
\fgfrm{x}{x'},
$$
where~$\mu$ is defined as in Theorem~\ref{thm:sign-bas}.
\end{lemma}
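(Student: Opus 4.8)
The plan is to reduce, by a degree count together with the adjunction formula of Lemma~\ref{lem:partial-prop}, to the evaluation of $\fgfrm{T_i(x)}{T_i(x')}$, and then to compute the latter from Theorem~\ref{thm:T_i formula} and Corollary~\ref{cor:form sl2 decomp}.

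First, since $x,x'\in U_q(\lie n^+)_\gamma$ and $T_i$ is graded of degree $s_i$, the elements $E_i^aT_i(x)$ and $E_i^{a'}T_i(x')$ are homogeneous of degrees $a\alpha_i+s_i\gamma$ and $a'\alpha_i+s_i\gamma$; hence by Lemma~\ref{lem:form-non-zero-deg} the form vanishes unless $a=a'$, which accounts for the factor $\delta_{a,a'}$, and from now on I assume $a=a'$. Writing $E_i^a=\la a\ra_{q_i}!\,E_i^{\la a\ra}$ and applying Lemma~\ref{lem:partial-prop} (in the form $\fgfrm{u}{E_i^{\la a\ra}v}=\fgfrm{(\partial_i^{op})^{(a)}(u)}{v}$) to the second argument yields
$$
\fgfrm{E_i^aT_i(x)}{E_i^aT_i(x')}=(\la a\ra_{q_i}!)^2\,\fgfrm{(\partial_i^{op})^{(a)}\big(E_i^{\la a\ra}T_i(x)\big)}{T_i(x')}.
$$
Now $x\in U_i$ forces $T_i(x)\in{}_i U=\ker\partial_i^{op}$, so in the skew-Leibniz rule~\eqref{eq:partial-inv} for $\partial_i^{op}$ applied to the product $E_i^{\la a\ra}\cdot T_i(x)$ only the term differentiating the first factor survives. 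Iterating $a$ times, using $(\partial_i^{op})^{(a)}(E_i^{\la a\ra})=(\la a\ra_{q_i}!)^{-1}$ (which follows from~\eqref{eq:q-der preserve Lusztig lattice}) and the $W$-invariance identity $(\alpha_i,s_i\gamma)=-(\alpha_i,\gamma)$ to collect the $q$-twists, one obtains
$$
(\partial_i^{op})^{(a)}\big(E_i^{\la a\ra}T_i(x)\big)=q^{\frac12 a(\alpha_i,\gamma)}(\la a\ra_{q_i}!)^{-1}\,T_i(x),
$$
so that $\fgfrm{E_i^aT_i(x)}{E_i^aT_i(x')}=q^{\frac12 a(\alpha_i,\gamma)}\,\la a\ra_{q_i}!\,\fgfrm{T_i(x)}{T_i(x')}$.

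It remains to prove $\fgfrm{T_i(x)}{T_i(x')}=q^{-\frac12(\alpha_i,\gamma)}\fgfrm{x}{x'}$. Writing $x=\sum_{r\ge\max(0,(\alpha_i^\vee,\gamma))}(\ul E_i^{op})^{(r)}(x_r)$ as in~\eqref{eq:sl_2 decomp} (and similarly for $x'$), Theorem~\ref{thm:T_i formula} identifies $T_i(x)=\sum_r\ul E_i^{(r-(\alpha_i^\vee,\gamma))}(x_r)$ as precisely the decomposition~\eqref{eq:sl_2 decomp} of $T_i(x)\in{}_i U$, under the index shift $r\mapsto r-(\alpha_i^\vee,\gamma)$ (note $(\alpha_i^\vee,s_i\gamma)=-(\alpha_i^\vee,\gamma)$), and likewise for $x'$. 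I would then apply Corollary~\ref{cor:form sl2 decomp}, its first formula to $\fgfrm{x}{x'}$ and its second to $\fgfrm{T_i(x)}{T_i(x')}$, and compare the two expansions term by term in $\fgfrm{x_r}{x'_r}$: the $q_i$-binomial coefficients agree after the symmetry $\binom{2r-(\alpha_i^\vee,\gamma)}{r-(\alpha_i^\vee,\gamma)}_{q_i}=\binom{2r-(\alpha_i^\vee,\gamma)}{r}_{q_i}$, while the powers of $q_i$ differ by the $r$-independent exponent $-\frac12(\alpha_i^\vee,\gamma)$, giving $\fgfrm{T_i(x)}{T_i(x')}=q_i^{-\frac12(\alpha_i^\vee,\gamma)}\fgfrm{x}{x'}=q^{-\frac12(\alpha_i,\gamma)}\fgfrm{x}{x'}$. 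Substituting back gives the first equality of the Lemma, and the second follows from $\mu(a\alpha_i)=q_i^{\binom{a+1}2}$ (see~\eqref{eq:mu-prop}) and $\prod_{t=1}^a(1-q_i^{-2t})=q_i^{-\binom{a+1}2}\la a\ra_{q_i}!$. The one delicate step is the Leibniz computation of $(\partial_i^{op})^{(a)}(E_i^{\la a\ra}T_i(x))$, where one must track the degree-dependent $q$-factors carefully; once $T_i(x)\in\ker\partial_i^{op}$ is invoked only a single term survives and the remainder is routine bookkeeping.
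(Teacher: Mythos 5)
Your proof is correct and follows essentially the same route as the paper: both first peel off the factors $E_i^a$ via the adjunction $\fgfrm{\cdot}{E_i^{\la a\ra}\cdot}=\fgfrm{(\partial_i^{op})^{(a)}(\cdot)}{\cdot}$ and the skew-Leibniz rule (exploiting $T_i(x)\in\ker\partial_i^{op}$), reducing to $\fgfrm{T_i(x)}{T_i(x')}=q^{-\frac12(\alpha_i,\gamma)}\fgfrm{x}{x'}$, and then both establish this equality by comparing the two expansions from Corollary~\ref{cor:form sl2 decomp} under the re-indexing furnished by Theorem~\ref{thm:T_i formula}. The paper states the $E_i$-stripping step slightly more compactly but the computations are the same.
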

\begin{proof}
It follows immediately from Lemma~\ref{lem:partial-prop}\eqref{lem:partial-prop.ii} and~\eqref{eq:partial E_i^r} that if $y,y'\in\ker\partial_i^{op}$, 
$y\in U_q(\lie n^+)_{\gamma'}$ and 
$a,a'\in\ZZ_{\ge0}$
then 
$$
\fgfrm{E_i^a y}{E_i^{a'}y'}=\delta_{a,a'}\la a\ra_{q_i}!q^{-\frac12 a(\alpha_i,\gamma')}
\fgfrm{y}{y'}=\delta_{a,a'}q_i^{\binom{a+1}2-\frac12 a(\alpha_i^\vee,\gamma')}\prod_{t=1}^a (1-q_i^{-2t})\fgfrm{y}{y'}.
$$
Let $\gamma_i=(\alpha_i^\vee,\gamma)$. Since $T_i(x),T_i(x')\in \ker\partial_i^{op}\cap U_q(\lie n^+)_{s_i\gamma}$, it remains 
to prove 
the assertion for $a=a'=0$, $x=(\ul E_i^{op})^{(a)}(z)$ and $x'=(\ul E_i^{op})^{(b)}(z')$ where $z,z'\in {}_i U\cap U_i$ and $a\ge b\ge \max(0,\gamma_i)$.
Since $z\in U_q(\lie n^+)_{\gamma-a\alpha_i}$, $z'\in U_q(\lie n^+)_{\gamma-b\alpha_i}$ 
we have, by Corollary~\ref{cor:form sl2 decomp}
$$
\fgfrm{x}{x'}=\delta_{a,b}q_i^{\frac12 a(1+a-\gamma_i)}\binom{2a-\gamma_i}{a}_{q_i}\fgfrm{z}{z'}.
$$
On the other hand, using Theorem~\ref{thm:T_i formula} 
and Corollary~\ref{cor:form sl2 decomp} we obtain
\begin{equation*}
\begin{split}
\fgfrm{T_i(x)}{T_i(y)}=\fgfrm{\ul E_i^{(a-\gamma_i)}(z)}{\ul E_i^{(b-\gamma_i)}(z')}
=q_i^{\frac12(a-\gamma_i)(a+1)}\delta_{a,b}\binom{2a-\gamma_i}{a-\gamma_i}_{q_i}\fgfrm{z}{z'}=q_i^{-\frac12\gamma_i}\fgfrm{x}{y}.\qedhere
\end{split}
\end{equation*}
\end{proof}

Let $b\in \mathbf B^{up}{}_\gamma\cap T_i^{-1}(U_q(\lie n^+))$. Since $T_i$ commutes with $\bar\cdot$
we have $\overline{T_i(b)}=T_i(\overline b)=T_i(b)$. By Proposition~\ref{prop:T_i lattice}
we have $T_i(b)\in U^\ZZ(\lie n^+)$.
Furthermore, by Lemma~\ref{lem:orthogonality of T_i} and~\eqref{eq:mu-prop}
$$
\mu(s_i\gamma)^{-1}\fgfrm{T_i(b)}{T_i(b)}=\mu(\gamma)^{-1}q^{\frac12(\alpha_i,\gamma)}\fgfrm{T_i(b)}{T_i(b)}=\mu(\gamma)^{-1}\fgfrm{b}{b}\in 1+K_-.
$$
Thus, $T_i(b)\in\mathbf B^{\pm up}$ by~\eqref{eq:def-bas}.

It remains to prove that $T_i(b)\in \mathbf B^{up}$. 
Since $(\partial_i^{op})^{(top)}(b)\in\mathbf B^{up}$ by Remark~\ref{rem: star op partial}, there exists
a sequence $\ii'=(i_1,\dots,i_{m})\in I^{m}$ such that $\partial_{\ii'}^{(top)}((\partial_i^{op})^{(top)}(b))=1$. Let 
$\ii=(i,i_1,\dots,i_m)$. Then 
$\partial_\ii^{(top)}(T_i(b))=\partial_{\ii'}^{(top)}\partial_i^{(top)}T_i(b)
=\partial_{\ii'}^{(top)}((\partial_i^{op})^{(top)}(b))
=1$ by Theorem~\ref{thm:T_i formula}. Thus, $T_i(b)\in\mathbf B^{up}$.\qed
\section{Proofs of mains results}

\subsection{Properties of quantum Schubert cells}\label{subs:some prop q S c}
Let $w\in W$ and $\mathbf i=(i_1,\dots,i_m)\in R(w)$. Set $X_{\mathbf i,k}=T_{i_1}\cdots T_{i_{k-1}}(E_{i_k})$, $1\le k\le m$,
and let $U_q(\ii)$ be the subalgebra of $U_q(\lie n^+)$
generated by the $X_{\mathbf i,k}$, $1\le k\le m$ and set $U^\ZZ(\ii)=U_q(\ii)\cap U^\ZZ(\lie n^+)$, $U_\ZZ(\ii)=U_q(\ii)\cap U_\ZZ(\lie n^+)$.
The following is well-known.
\begin{lemma}[\cite{Lus-book}*{Propositions~40.2.1 and~41.1.4}]\label{lem:lus PBW elt}
The elements $X_\ii^{\la \mathbf a\ra}:=X_{\ii,1}^{\la a_1\ra}\cdots X_{\ii,m}^{\la a_m\ra}$, $\mathbf a\in \ZZ_{\ge 0}^m$
form an $\AA_0$-basis of $U_\ZZ(\ii)$
and a $\kk$-basis of~$U_q(\ii)$. 
\end{lemma}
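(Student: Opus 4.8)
The plan is to induct on $m=\ell(w)$, peeling off the first letter of $\ii$. Put $w'=s_{i_1}w$ and $\ii'=(i_2,\dots,i_m)\in R(w')$; since the $T_{i_j}$ are algebra automorphisms, $X_{\ii,1}=E_{i_1}$ and $X_{\ii,k}=T_{i_1}(X_{\ii',k-1})$ for $2\le k\le m$, so writing $\mathbf a=(a_1,\mathbf a')$ with $\mathbf a'=(a_2,\dots,a_m)$ we have
\begin{equation*}
X_\ii^{\la\mathbf a\ra}=E_{i_1}^{\la a_1\ra}\,T_{i_1}\!\big(X_{\ii'}^{\la\mathbf a'\ra}\big).
\end{equation*}
Two preliminary remarks. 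First, $U_q(\ii')\subseteq U_{i_1}$: each generator $X_{\ii',j}$ lies in $U_q(\lie n^+)$ and $T_{i_1}(X_{\ii',j})=X_{\ii,j+1}\in U_q(\lie n^+)$, hence $X_{\ii',j}\in T_{i_1}^{-1}(U_q(\lie n^+))\cap U_q(\lie n^+)=U_{i_1}$, a subalgebra; in particular $X_{\ii'}^{\la\mathbf a'\ra}\in U_{i_1}$ and $T_{i_1}(X_{\ii'}^{\la\mathbf a'\ra})\in{}_{i_1}U=\ker\partial_{i_1}^{op}$. Second, $X_\ii^{\la\mathbf a\ra}\in U_\ZZ(\ii)$: it lies in $U_q(\ii)$ by construction, and in $U_\ZZ(\lie n^+)$ by the compatibility of the $T_{i_j}$ with the integral structure (cf. \cite{Lus-book}*{\S41} and Proposition~\ref{prop:T_i lattice}), $U_\ZZ(\lie n^+)$ being a subalgebra containing $E_{i_1}^{\la a_1\ra}$.

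\emph{Linear independence over $\kk$.} I would show the Gram matrix of $\{X_\ii^{\la\mathbf a\ra}\}$ with respect to $\fgfrm{\cdot}{\cdot}$ is diagonal with non-zero diagonal. Pairings of monomials of different degree vanish by Lemma~\ref{lem:form-non-zero-deg}, so fix a degree. If $a_1\ne b_1$, say $a_1<b_1$, put $\xi=T_{i_1}(X_{\ii'}^{\la\mathbf a'\ra})$, $\xi'=T_{i_1}(X_{\ii'}^{\la\mathbf b'\ra})\in\ker\partial_{i_1}^{op}$; then by Lemma~\ref{lem:partial-prop}\eqref{lem:partial-prop.ii}, \eqref{lem:partial-prop.iii} and \eqref{eq:partial E_i^r},
\begin{equation*}
\fgfrm{X_\ii^{\la\mathbf a\ra}}{X_\ii^{\la\mathbf b\ra}}=\fgfrm{(\partial_{i_1}^{op})^{(b_1)}(E_{i_1}^{\la a_1\ra}\xi)}{\xi'}=(\text{scalar})\cdot\binom{a_1}{b_1}_{q_{i_1}}\fgfrm{E_{i_1}^{\la a_1-b_1\ra}\xi}{\xi'}=0.
\end{equation*}
If $a_1=b_1$, then $\deg X_{\ii'}^{\la\mathbf a'\ra}=\deg X_{\ii'}^{\la\mathbf b'\ra}$, and since $X_{\ii'}^{\la\mathbf a'\ra},X_{\ii'}^{\la\mathbf b'\ra}\in U_{i_1}$, Lemma~\ref{lem:orthogonality of T_i} gives $\fgfrm{X_\ii^{\la\mathbf a\ra}}{X_\ii^{\la\mathbf b\ra}}$ equal to a non-zero scalar times $\fgfrm{X_{\ii'}^{\la\mathbf a'\ra}}{X_{\ii'}^{\la\mathbf b'\ra}}$, which by the inductive hypothesis is a non-zero scalar times $\delta_{\mathbf a',\mathbf b'}$. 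The case $m=0$ is trivial, so the $\{X_\ii^{\la\mathbf a\ra}\}$ are linearly independent.

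\emph{Spanning and the $\AA_0$-statement.} I claim $U_q(\ii)=\sum_{a\ge0}E_{i_1}^{\la a\ra}T_{i_1}(U_q(\ii'))$. The inclusion $\supseteq$ is clear, and $\subseteq$ reduces to the right-hand side being closed under multiplication, i.e. to commuting $E_{i_1}$ past an element of $T_{i_1}(U_q(\ii'))\subseteq{}_{i_1}U$; this is the Levendorskii--Soibelman straightening relation
\begin{equation*}
X_{\ii,l}X_{\ii,k}=q^{-(\beta_k,\beta_l)}X_{\ii,k}X_{\ii,l}+\sum_{\mathbf c}c_{\mathbf c}\,X_\ii^{\la\mathbf c\ra}\qquad(k<l),\quad c_{\mathbf c}\in\AA_0,\ \supp\mathbf c\subseteq\{k+1,\dots,l-1\},
\end{equation*}
with $\beta_k=\deg X_{\ii,k}$, which I would quote from \cite{Lus-book}*{\S\S38--40} (a self-contained proof inducts on $l-k$, applying $T_{i_1}$ to reduce to $k=1$, where the relation is governed by the decorated-algebra structure of~\S\ref{subs:decorated alg}). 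Granting the claim, the inductive $\kk$-basis of $U_q(\ii')$ and the opening identity show the $X_\ii^{\la\mathbf a\ra}$ span $U_q(\ii)$, hence form a $\kk$-basis. For the $\AA_0$-statement take $x\in U_\ZZ(\ii)$ and write $x=\sum_a E_{i_1}^{\la a\ra}w_a$ with $w_a\in T_{i_1}(U_q(\ii'))\subseteq\ker\partial_{i_1}^{op}$; this is the unique such decomposition, since $U_q(\lie n^+)=\bigoplus_a E_{i_1}^{\la a\ra}\ker\partial_{i_1}^{op}$, and its integral refinement $U_\ZZ(\lie n^+)=\bigoplus_a E_{i_1}^{\la a\ra}\big(\ker\partial_{i_1}^{op}\cap U_\ZZ(\lie n^+)\big)$ (a standard fact, cf. \cite{Lus-book}*{\S41}) forces $w_a\in U_\ZZ(\lie n^+)$. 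Hence $T_{i_1}^{-1}(w_a)\in U_q(\ii')\cap U_\ZZ(\lie n^+)=U_\ZZ(\ii')$, which by induction is an $\AA_0$-combination of the $X_{\ii'}^{\la\mathbf a'\ra}$, so $x$ is an $\AA_0$-combination of the $X_\ii^{\la\mathbf a\ra}$.

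The main obstacle is the Levendorskii--Soibelman commutation relations (equivalently, that $\sum_a E_{i_1}^{\la a\ra}T_{i_1}(U_q(\ii'))$ is a subalgebra of $U_q(\lie n^+)$): the remaining ingredients---membership in the integral form, linear independence, and the descent from $\kk$ to $\AA_0$---are formal once one has the orthogonality relations of Lemma~\ref{lem:orthogonality of T_i}, the $T_i$-compatibility with the integral forms, and the inductive removal of the first reflection.
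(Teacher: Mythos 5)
The paper offers no internal proof of this lemma; it is a direct citation to Lusztig's Propositions~40.2.1 and~41.1.4. Your proposal therefore does not differ from or coincide with ``the paper's proof'' --- rather, it reconstructs Lusztig's own argument, and it does so essentially correctly: the induction peels off the first simple reflection via $X_\ii^{\la\mathbf a\ra}=E_{i_1}^{\la a_1\ra}T_{i_1}(X_{\ii'}^{\la\mathbf a'\ra})$, linear independence is obtained from diagonality of the Gram matrix (using Lemma~\ref{lem:partial-prop}\eqref{lem:partial-prop.ii}, \eqref{eq:partial E_i^r}, and Lemma~\ref{lem:orthogonality of T_i}, whose proofs are independent of the lemma in question, so there is no circularity), and spanning is reduced to the Levendorskii--Soibelman straightening relations, which you honestly flag as quoted from Lusztig rather than rederived. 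The $\AA_0$-descent via the integral refinement of $\bigoplus_a E_{i_1}^{\la a\ra}\ker\partial_{i_1}^{op}$ is also the standard step.

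One small inaccuracy worth flagging: in establishing $X_\ii^{\la\mathbf a\ra}\in U_\ZZ(\lie n^+)$ you cite ``Proposition~\ref{prop:T_i lattice}'' alongside Lusztig~\S41. That proposition says $T_i(U^\ZZ_i)={}_iU^\ZZ$ and concerns the \emph{dual} lattice $U^\ZZ(\lie n^+)$, not the Lusztig integral form $U_\ZZ(\lie n^+)$; it does not directly yield membership in $U_\ZZ$. The correct source is only the Lusztig reference (\S41.1.2--41.1.3), which you also give, so nothing breaks --- but the cross-reference conflates the two lattices and should be dropped. One could also note that, as you acknowledge, the entire load-bearing content of the argument (the straightening relation, and $T_i$-compatibility with $U_\ZZ$) is imported from Lusztig, so this is less a proof than an organized map from the lemma's statement back to the cited propositions --- which is, in fairness, also the paper's tacit position.
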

Set $\alpha^{(k)}=\alpha^{(k)}_\ii:=s_{i_1}\cdots s_{i_{k-1}}(\alpha_{i_k})=\deg X_{\mathbf i,k}$ and given $\mathbf a=(a_1,\dots,a_m)\in\ZZ^m$
denote $|\mathbf a|=|\mathbf a|_\ii=\sum_{k=1}^m a_k\deg X_{\ii,k}=\sum_{k=1}^m a_k\alpha^{(k)}_\ii$.
Define 
$$
X_\ii^{\mathbf a}=q_{\mathbf i,\mathbf a} X_{\ii,1}^{a_1}\cdots X_{\ii,m}^{a_m},\qquad \mathbf a=(a_1,\dots,a_m)\in\ZZ_{\ge 0}^m,
$$
where 
\begin{equation}\label{eq:q_i a defn}
q_{\mathbf i,\mathbf a}=q^{\frac12 \sum_{1\le k<l\le m}(\alpha_\ii^{(k)},\alpha_\ii^{(l)})a_ka_l}.
\end{equation}
This choice is justified by the following 
\begin{proposition}\label{prop:X i^a scalar square}
For all $\ii\in R(w)$, $\mathbf a,\mathbf a'\in \ZZ_{\ge 0}^m$ we have $X_{\ii}^{\mathbf a}\in U^\ZZ(\lie n^+)$ and 
\begin{equation}\label{eq:X i^a scalar square}
\mu(|\mathbf a|)^{-1}\fgfrm{X_\ii^{\mathbf a}}{X_\ii^{\mathbf a'}}=\delta_{\mathbf a,\mathbf a'}\prod_{r=1}^m\prod_{t=1}^{a_m}(1-q_{i_r}^{-2t}).
\end{equation}
Thus, the set $\{ X_\ii^{\mathbf a}\,:\,|\mathbf a|_\ii=\gamma\}$ is a $(K_-,\mu(\gamma)^{-1})$-orthonormal basis of~$U^\ZZ(\ii)_\gamma$
and
\begin{equation}\label{eq:dual PBW}
\fgfrm{X_\ii^{\mathbf a}}{X_\ii^{\la \mathbf a'\ra}}=\delta_{\mathbf a,\mathbf a'},\qquad \mathbf a,\mathbf a'\in\ZZ_{\ge 0}^m.
\end{equation}
\end{proposition}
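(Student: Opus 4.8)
The plan is to induct on $m=\ell(w)$, stripping off the first letter of $\mathbf i$. The base cases $m\le 1$ are direct: for $m=0$ everything is trivial, while for $\mathbf i=(i)$ one has $X_\mathbf i^{(n)}=E_i^n$ and $\fgfrm{E_i^n}{E_i^{\la n\ra}}=1$ by \lemref{lem:partial-prop}\eqref{lem:partial-prop.ii} and \eqref{eq:partial E_i^r}, whence $E_i^n\in U^\ZZ(\lie n^+)$ and $\fgfrm{E_i^n}{E_i^n}=\la n\ra_{q_i}!=\mu(n\alpha_i)\prod_{t=1}^n(1-q_i^{-2t})$ by \eqref{eq:mu-prop}. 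For the step, write $\mathbf i=(i_1,\mathbf i')$ with $\mathbf i'=(i_2,\dots,i_m)\in R(w')$, $w=s_{i_1}w'$, $\ell(w')=m-1$. Since $X_{\mathbf i,1}=E_{i_1}$ and $X_{\mathbf i,k}=T_{i_1}(X_{\mathbf i',k-1})$ for $k\ge 2$, writing $\mathbf a=(a_1,\mathbf a'')$ we get $X_{\mathbf i,1}^{a_1}\cdots X_{\mathbf i,m}^{a_m}=E_{i_1}^{a_1}T_{i_1}(X_{\mathbf i',1}^{a_2}\cdots X_{\mathbf i',m-1}^{a_m})$. The one computation to carry out by hand is the $q$-power identity $q_{\mathbf i,\mathbf a}=q^{-\frac12 a_1(\alpha_{i_1},\gamma')}q_{\mathbf i',\mathbf a''}$, where $\gamma':=|\mathbf a''|_{\mathbf i'}$; it is immediate from \eqref{eq:q_i a defn}, the $W$-invariance of the form, and $\alpha^{(l)}_{\mathbf i}=s_{i_1}(\alpha^{(l-1)}_{\mathbf i'})$ for $l\ge 2$. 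Hence
\[
X_\mathbf i^{\mathbf a}=q^{-\frac12 a_1(\alpha_{i_1},\gamma')}\,E_{i_1}^{a_1}\,T_{i_1}\bigl(X_{\mathbf i'}^{\mathbf a''}\bigr),
\]
and this identity drives the rest.

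For membership, observe that $T_{i_1}$ carries each generator $X_{\mathbf i',j}$ of $U_q(\mathbf i')$ to $X_{\mathbf i,j+1}\in U_q(\lie n^+)$, so $U_q(\mathbf i')\subseteq U_{i_1}=\ker\partial_{i_1}$; thus by the induction hypothesis $X_{\mathbf i'}^{\mathbf a''}\in U^\ZZ(\lie n^+)\cap U_{i_1}=U^\ZZ_{i_1}$, so $T_{i_1}(X_{\mathbf i'}^{\mathbf a''})\in{}_{i_1}U^\ZZ$ by \propref{prop:T_i lattice}. As $E_{i_1}^{a_1}\in U^\ZZ(\lie n^+)_{a_1\alpha_{i_1}}$ (the base case) and $(a_1\alpha_{i_1},s_{i_1}\gamma')=-a_1(\alpha_{i_1},\gamma')$, \lemref{lem:prod in U^Z} gives exactly $X_\mathbf i^{\mathbf a}\in U^\ZZ(\lie n^+)$. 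For the pairing \eqref{eq:X i^a scalar square}: \lemref{lem:form-non-zero-deg} lets us assume $|\mathbf a|_\mathbf i=|\mathbf a'|_\mathbf i$; since $T_{i_1}(X_{\mathbf i'}^{\mathbf a''})\in\ker\partial_{i_1}^{op}$ and $\fgfrm{\ker\partial_{i_1}^{op}}{E_{i_1}U_q(\lie n^+)}=0$ (a short computation from \lemref{lem:partial-prop}\eqref{lem:partial-prop.ii} and \eqref{eq:partial E_i^r}), the pairing vanishes unless $a_1=a_1'$ and $\gamma'=\delta':=|\mathbf b''|_{\mathbf i'}$. In that case \lemref{lem:orthogonality of T_i}, applied with $x=X_{\mathbf i'}^{\mathbf a''}$, $x'=X_{\mathbf i'}^{\mathbf b''}\in U_q(\lie n^+)_{\gamma'}\cap U_{i_1}$ and $a=a'=a_1$, gives
\[
\fgfrm{X_\mathbf i^{\mathbf a}}{X_\mathbf i^{\mathbf a'}}=q^{-\frac12(a_1+1)(\alpha_{i_1},\gamma')}\mu(a_1\alpha_{i_1})\Bigl(\textstyle\prod_{t=1}^{a_1}(1-q_{i_1}^{-2t})\Bigr)\fgfrm{X_{\mathbf i'}^{\mathbf a''}}{X_{\mathbf i'}^{\mathbf b''}};
\]
dividing by $\mu(|\mathbf a|_\mathbf i)=\mu(a_1\alpha_{i_1})\mu(\gamma')q^{-\frac12(a_1+1)(\alpha_{i_1},\gamma')}$ (an iteration of \eqref{eq:mu-prop} using $|\mathbf a|_\mathbf i=a_1\alpha_{i_1}+s_{i_1}\gamma'$) and invoking the induction hypothesis for $\mathbf i'$ yields \eqref{eq:X i^a scalar square}.

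For the remaining claims, each $X_\mathbf i^{\mathbf a}$ is a nonzero scalar multiple of the Lusztig element $X_\mathbf i^{\la\mathbf a\ra}$, precisely $X_\mathbf i^{\mathbf a}=q_{\mathbf i,\mathbf a}\prod_k\la a_k\ra_{q_{i_k}}!\cdot X_\mathbf i^{\la\mathbf a\ra}$, so by \lemref{lem:lus PBW elt} the set $\{X_\mathbf i^{\mathbf a}:|\mathbf a|_\mathbf i=\gamma\}$ is a $\kk$-basis of $U_q(\mathbf i)_\gamma$ contained in $U^\ZZ(\mathbf i)_\gamma$; it is $(K_-,\mu(\gamma)^{-1})$-orthonormal by \eqref{eq:X i^a scalar square} together with $\prod_r\prod_t(1-q_{i_r}^{-2t})\in 1+K_-$. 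Identity \eqref{eq:dual PBW} follows by substituting \eqref{eq:X i^a scalar square} into $\fgfrm{X_\mathbf i^{\mathbf a}}{X_\mathbf i^{\la\mathbf a'\ra}}=\bigl(q_{\mathbf i,\mathbf a'}\prod_k\la a_k'\ra_{q_{i_k}}!\bigr)^{-1}\fgfrm{X_\mathbf i^{\mathbf a}}{X_\mathbf i^{\mathbf a'}}$ and simplifying, using $\la n\ra_v!=v^{\binom{n+1}2}\prod_{t=1}^n(1-v^{-2t})$ and $\mu(|\mathbf a|_\mathbf i)=q_{\mathbf i,\mathbf a}\prod_k\mu(a_k\alpha^{(k)})$; and \eqref{eq:dual PBW} in turn shows $\{X_\mathbf i^{\mathbf a}\}$ is an $\AA_0$-basis of $U^\ZZ(\mathbf i)_\gamma$, since any $x\in U^\ZZ(\mathbf i)_\gamma$ expanded as $\sum_{\mathbf a}c_{\mathbf a}X_\mathbf i^{\mathbf a}$ has $c_{\mathbf a}=\fgfrm{x}{X_\mathbf i^{\la\mathbf a\ra}}\in\AA_0$. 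The main obstacle is purely the bookkeeping of the half-integer powers of $q$: one must check that $q_{\mathbf i,\mathbf a}$, the factor $\mu$, and the $s_{i_1}$-twist produced by $T_{i_1}$ cancel exactly, which is precisely what the definition \eqref{eq:q_i a defn} of $q_{\mathbf i,\mathbf a}$ was designed to accomplish.
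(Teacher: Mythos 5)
Your proof follows essentially the same route as the paper: both argue by induction on $\ell(w)$, strip off the first letter via the identity $X_\ii^{\mathbf a}=q^{-\frac12 a_1(\alpha_{i_1},|\mathbf a''|_{\ii'})}E_{i_1}^{a_1}T_{i_1}(X_{\ii'}^{\mathbf a''})$, and invoke Lemma~\ref{lem:orthogonality of T_i} to carry out the inductive step on the pairing. The one genuine (if minor) difference is in establishing $X_\ii^{\mathbf a}\in U^\ZZ(\lie n^+)$: the paper isolates an auxiliary Lemma~\ref{lem:T_w(E_j)} (asserting $T_w(E_j)\in U^\ZZ(\lie n^+)$ whenever $\ell(ws_j)=\ell(w)+1$, proved by its own induction) to get each generator $X_{\ii,k}$ into $U^\ZZ(\lie n^+)$ and then multiplies them together via Lemma~\ref{lem:prod in U^Z}; you instead absorb membership into the same induction, observing $U_q(\ii')\subseteq\ker\partial_{i_1}$ and applying Proposition~\ref{prop:T_i lattice} directly to the whole monomial $X_{\ii'}^{\mathbf a''}$ before one application of Lemma~\ref{lem:prod in U^Z}. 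This is a small streamlining: it avoids stating the separate lemma, at the cost of not recording the (reusable) fact that each individual root vector lies in $U^\ZZ(\lie n^+)$, which the paper later cites. Your treatment of \eqref{eq:dual PBW} is also essentially the paper's (``immediate from \eqref{eq:X i^a scalar square} and \eqref{eq:mu-prop}''); note, however, that the simplification you sketch ultimately reduces to $\mu(a_k\alpha_\ii^{(k)})=q_{i_k}^{\binom{a_k+1}{2}}$, and since \eqref{eq:mu-prop} only records $\mu(r\alpha_i)=q_i^{\binom{r+1}{2}}$ for \emph{simple} roots while $\eta$ is not $W$-invariant, this last step is worth spelling out rather than waving at, exactly where the paper is also terse.
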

\begin{proof}
We need the following 
\begin{lemma}\label{lem:T_w(E_j)}
For all $w\in W$, $j\in I$ such that $\ell(ws_j)=\ell(w)+1$ we have $T_w(E_j)\in U^\ZZ(\lie n^+)$. 
\end{lemma}
\begin{proof}
The argument is by induction on~$\ell(w)$.
If $\ell(w)=0$ there is nothing to prove. Suppose that $w=s_iw'$ with $\ell(w)=\ell(w')+1$. Clearly, $\ell(w's_j)=\ell(w')+1$.  Then 
$T_{w'}(E_j)\in \ker\partial_i$ by~\cite{Lus-book}*{Lemma~40.1.2} and also $T_{w'}(E_j)\in U^\ZZ(\lie n^+)$ by the induction hypothesis. Then by Proposition~\ref{prop:T_i lattice},
$T_w(E_j)=T_i(T_{w'}(E_j))\in U^\ZZ(\lie n^+)$.
\end{proof}
This implies that $X_{\mathbf i,k}\in U^\ZZ(\lie n^+)$ and hence $X_{\ii}^{\mathbf a}\in U^\ZZ(\lie n^+)$ by Lemma~\ref{lem:prod in U^Z}.

To prove~\eqref{eq:X i^a scalar square} we use induction on~$\ell(w)$. The case~$\ell(w)=0$ is trivial.
For the inductive step, assume that $\ell(s_iw)=\ell(w)+1$ and note that we have 
$$
X_{(i,\ii)}^{(a,\mathbf a)}=q^{\frac12 a(\alpha_i,s_i|\mathbf a|_\ii)}E_{i}^{a} T_{i}(X_{\ii}^{\mathbf a})=q^{-\frac12 a(\alpha_i,|\mathbf a|_\ii)}
E_{i}^{a} T_{i}(X_{\ii}^{\mathbf a}).
$$
Since $T_i(X_{\ii}^{\mathbf a})\in {}_i U$, we have by
Lemmata~\ref{lem:partial-prop}, \ref{lem:orthogonality of T_i} and~\eqref{eq:mu-prop}
\begin{align*}
\fgfrm{X_{(i,\ii)}^{(a,\mathbf a)}}{&X_{(i,\ii)}^{(a',\mathbf a')}}
=q^{-\frac12(a(\alpha_i,|\mathbf a|_\ii)+a'(\alpha_i,|\mathbf a'|_\ii)}\fgfrm{E_{i}^{a}T_i(X_{\ii}^{\mathbf a})}{E_{i}^{a'}T_i(X_{\ii}^{\mathbf a'})}\\
&=\delta_{a,a'}\mu(a\alpha_i) q^{-\frac12(a+1)(\alpha_i,|\mathbf a|_\ii)}\prod_{t=1}^a (1-q_i^{-2t})\fgfrm{X_{\ii}^{{\mathbf a}}}{X_{\ii}^{{\mathbf a}'}}\\
&=\delta_{a,a'}\delta_{\mathbf a,\mathbf a'}\mu(|\mathbf a|)\mu(a\alpha_i)q^{-\frac12(a+1)(\alpha_i^\vee,|\mathbf a|_\ii)}\prod_{t=1}^{a}(1-q_i^{-2t})
\prod_{r=1}^m\prod_{t=1}^{a_r}(1-q_{i_r}^{-2r})
\\&=\delta_{a,a'}\delta_{\mathbf a,\mathbf a'}\mu(|(a,\mathbf a)|_{(i,\ii)})\prod_{t=1}^{a}(1-q_i^{-2t})
\prod_{r=1}^m\prod_{t=1}^{a_r}(1-q_{i_r}^{-2r}),
\end{align*}
since $|(a,\mathbf a)|_{(i,\ii)}=a\alpha_i+s_i(|\mathbf a|_\ii)$. Finally, \eqref{eq:dual PBW} is immediate from~\eqref{eq:X i^a scalar square} and 
\eqref{eq:mu-prop}.
\end{proof}

Set 
$U^\ZZ(w)=U^\ZZ(\lie n^+)\cap U_q(w)$ where $U_q(w)$ is defined by~\eqref{eq:quantum-Schubert-cell}. 
\begin{proposition}\label{prop:U(w)-U(ii)}
For each $\ii\in R(w)$, $\{X_\ii^{\mathbf a}\}_{\mathbf a\in\ZZ_{\ge 0}^m}$ is an $\AA_0$-basis of~$U^\ZZ(w)$.
In particular, 
$U^\ZZ(w)=U^\ZZ(\ii)$ for all $w\in W$, $\ii\in R(w)$. 
\end{proposition}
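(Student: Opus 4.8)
The plan is to deduce the statement from Proposition~\ref{prop:X i^a scalar square} by a short duality argument, exploiting that $U^\ZZ(\lie n^+)$ is by definition the $\AA_0$-dual of the integral form $U_\ZZ(\lie n^+)$ and that, by~\eqref{eq:dual PBW}, the families $\{X_\ii^{\mathbf a}\}$ and $\{X_\ii^{\la\mathbf a\ra}\}$ are dual to each other with respect to $\fgfrm{\cdot}{\cdot}$. First I would recall that $U_q(w)=U_q(\ii)$ for every $\ii\in R(w)$ --- this is the coincidence of the definition~\eqref{eq:quantum-Schubert-cell} with Lusztig's, established in~\cite{T}*{Proposition~2.10} and~\cite{Kim1}*{Theorem~1.3} --- so that already $U^\ZZ(w)=U^\ZZ(\lie n^+)\cap U_q(\ii)=U^\ZZ(\ii)$, which proves the ``in particular'' part; it then remains to identify the basis. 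By Lemma~\ref{lem:lus PBW elt} the elements $X_\ii^{\la\mathbf a\ra}$, $\mathbf a\in\ZZ_{\ge0}^m$, form a $\kk$-basis of $U_q(\ii)$ and an $\AA_0$-basis of $U_\ZZ(\ii)\subset U_\ZZ(\lie n^+)$; since $X_\ii^{\mathbf a}$ is a non-zero scalar multiple of $X_\ii^{\la\mathbf a\ra}$, the $X_\ii^{\mathbf a}$ also form a $\kk$-basis of $U_q(\ii)$.

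For the inclusion $\sum_{\mathbf a}\AA_0 X_\ii^{\mathbf a}\subseteq U^\ZZ(w)$, note that $X_\ii^{\mathbf a}\in U^\ZZ(\lie n^+)$ by Proposition~\ref{prop:X i^a scalar square} while $X_\ii^{\mathbf a}\in U_q(\ii)=U_q(w)$ by construction. Conversely, let $x\in U^\ZZ(w)\subset U_q(\ii)$ and write $x=\sum_{\mathbf a}c_{\mathbf a}X_\ii^{\mathbf a}$ with $c_{\mathbf a}\in\kk$; this sum is finite because $U_q(\lie n^+)$ is $Q^+$-graded with finite dimensional graded components and $\deg X_\ii^{\mathbf a}=|\mathbf a|_\ii$. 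Pairing with $X_\ii^{\la\mathbf a'\ra}$ and using~\eqref{eq:dual PBW} gives $c_{\mathbf a'}=\fgfrm{x}{X_\ii^{\la\mathbf a'\ra}}$, which lies in $\AA_0$ since $x\in U^\ZZ(\lie n^+)$ and $X_\ii^{\la\mathbf a'\ra}\in U_\ZZ(\lie n^+)$. Hence $x\in\sum_{\mathbf a}\AA_0 X_\ii^{\mathbf a}$, and together with the $\kk$-linear independence of the $X_\ii^{\mathbf a}$ (immediate from~\eqref{eq:dual PBW}) this shows that $\{X_\ii^{\mathbf a}\}_{\mathbf a\in\ZZ_{\ge0}^m}$ is an $\AA_0$-basis of $U^\ZZ(w)$.

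I do not expect a serious obstacle: once Proposition~\ref{prop:X i^a scalar square} is available the argument is essentially formal. The two points requiring a little care are the finiteness of the expansion $x=\sum_{\mathbf a}c_{\mathbf a}X_\ii^{\mathbf a}$, which is what lets the bilinear form extract the coefficients one at a time, and the observation that $X_\ii^{\la\mathbf a\ra}\in U_\ZZ(\lie n^+)$, which is what makes the defining property of the dual lattice $U^\ZZ(\lie n^+)$ applicable. If one prefers not to invoke the Tanisaki--Kimura identification $U_q(w)=U_q(\ii)$, the containment $X_{\ii,k}\in U_q(w)$ can be checked directly from $T_w^{-1}(X_{\ii,k})=T_{i_m}^{-1}\cdots T_{i_k}^{-1}(E_{i_k})\in U_q(\lie b^-)$ together with Lusztig's result that each $X_{\ii,k}$ lies in $U_q(\lie n^+)$; but the opposite inclusion $U^\ZZ(w)\subset\sum_{\mathbf a}\AA_0X_\ii^{\mathbf a}$ still requires that the $X_\ii^{\mathbf a}$ span $U_q(w)$ over $\kk$, so quoting the identification is the most economical route.
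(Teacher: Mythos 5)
Your proposal is correct and takes essentially the same route as the paper's own proof: invoke the Tanisaki--Kimura identification $U_q(w)=U_q(\ii)$, expand $x\in U^\ZZ(w)$ in the $X_\ii^{\mathbf a}$, and read off the coefficients via $\fgfrm{\cdot}{X_\ii^{\la\mathbf a'\ra}}$ using~\eqref{eq:dual PBW}. You are in fact slightly more careful than the paper, which writes $\fgfrm{x}{X_\ii^{\mathbf a}}\in\AA_0$ where it should (as you do) pair against $X_\ii^{\la\mathbf a\ra}\in U_\ZZ(\lie n^+)$, the only element for which the defining property of $U^\ZZ(\lie n^+)$ is directly available.
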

\begin{proof}
Since $U_q(w)=U_q(\ii)$ by \cite{T}*{Proposition~2.10}, for any $x\in U^\ZZ(w)$ we can write
$x=\sum_{\mathbf a'} c_{\mathbf a'} X_\ii^{\mathbf a'}$ where $c_{\mathbf a'}\in\kk$. 
Since $\fgfrm{x}{X_\ii^{\mathbf a}}\in \AA_0$, it follows from~\eqref{eq:dual PBW} that $c_{\mathbf a}\in \AA_0$
for all $\mathbf a\in\ZZ_{\ge 0}^m$. Thus, the $X_\ii^{\mathbf a}$, $\mathbf a\in\ZZ_{\ge 0}^m$ generate 
$U^\ZZ(w)$ as an $\AA_0$-module. Since they are already linearly independent over~$\kk$, they form its $\AA_0$-basis.
\end{proof}

\begin{theorem} \label{thm:two-forms}
Let $w\in W$, $\ii=(i_1,\dots,i_m)\in R(w)$. Then the algebra $U^\AA(w)=U^\ZZ(w)\tensor_{\AA_0}\AA$ 
has the following presentation 
\begin{equation}\label{eq:straight-rels'}
q^{-\frac12(\alpha_\ii^{(k)},\alpha_\ii^{(l)})}X_{\mathbf i,l}X_{\mathbf i,k}-
q^{\frac12(\alpha_\ii^{(k)},\alpha_\ii^{(l)})} X_{\mathbf i,k}X_{\mathbf i,l}\in (q_{i_k}-q_{i_k}^{-1})\sum_{\mathbf a
=(0,\dots,0,a_{k+1},\dots,a_{l-1},0,\dots,0)\in\ZZ_{\ge 0}^m}\mskip-10mu \AA_0 X_{\ii}^{\mathbf a},
\end{equation}
for all $1\le k<l\le m$
\end{theorem}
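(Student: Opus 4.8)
The plan is to establish~\eqref{eq:straight-rels'} as a presentation of $U^\AA(w)$ in two stages: first that the relations hold, then that they are complete. Write $Y^\ii_{k,l}=q^{-\frac12(\alpha_\ii^{(k)},\alpha_\ii^{(l)})}X_{\ii,l}X_{\ii,k}-q^{\frac12(\alpha_\ii^{(k)},\alpha_\ii^{(l)})}X_{\ii,k}X_{\ii,l}$ for the expression appearing on the left. By Lemma~\ref{lem:prod in U^Z} both summands lie in $U^\ZZ(\lie n^+)$, and since $X_{\ii,k},X_{\ii,l}\in U_q(w)=U_q(\ii)$ we get $Y^\ii_{k,l}\in U^\ZZ(w)=\bigoplus_{\mathbf a}\AA_0X_\ii^{\mathbf a}$ by Proposition~\ref{prop:U(w)-U(ii)}; thus $Y^\ii_{k,l}=\sum_{\mathbf a}c_{\mathbf a}X_\ii^{\mathbf a}$ with $c_{\mathbf a}\in\AA_0$, and the two things to prove are (i) $c_{\mathbf a}=0$ unless $\mathbf a$ is supported on $\{k+1,\dots,l-1\}$, and (ii) $q_{i_k}-q_{i_k}^{-1}$ divides each $c_{\mathbf a}$ in $\AA_0$.

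I would prove both by induction on $m=\ell(w)$, writing $\ii=(i_1,\ii')$ with $\ii'=(i_2,\dots,i_m)\in R(w')$, $w'=s_{i_2}\cdots s_{i_m}$. If $k\ge2$, then $X_{\ii,k}=T_{i_1}(X_{\ii',k-1})$ and $X_{\ii,l}=T_{i_1}(X_{\ii',l-1})$ lie in the subalgebra ${}_{i_1}U=T_{i_1}(U_q(\lie n^+))\cap U_q(\lie n^+)$, hence so does $Y^\ii_{k,l}$; since $T_{i_1}$ is an algebra automorphism and $(\cdot,\cdot)$ is $W$-invariant, $T_{i_1}^{-1}(Y^\ii_{k,l})=Y^{\ii'}_{k-1,l-1}$, to which the inductive hypothesis applies. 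Applying $T_{i_1}$ and using the identity $T_{i_1}(X_{\ii'}^{\mathbf b})=X_\ii^{(0,\mathbf b)}$ — the $a=0$ case of the formula $X_{(i,\ii)}^{(a,\mathbf a)}=q^{\frac12 a(\alpha_i,s_i|\mathbf a|_\ii)}E_i^aT_i(X_\ii^{\mathbf a})$ from the proof of Proposition~\ref{prop:X i^a scalar square}, together with the routine $W$-invariance check $q_{\ii,(0,\mathbf b)}=q_{\ii',\mathbf b}$ — translates (i) and (ii) for $\ii'$ (support on $\{k,\dots,l-2\}$, divisibility by $q_{i_k}-q_{i_k}^{-1}$) into (i) and (ii) for $\ii$, the support shifting by one. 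If $k=1$, then $X_{\ii,1}=E_{i_1}$ and, since $(\alpha_{i_1},\alpha_\ii^{(l)})=d_{i_1}|X_{\ii,l}|$, a direct computation from the definition of $\ul E_{i_1}=\ul E_+$ (Lemma~\ref{lem:prop-ul E_i}\eqref{lem:prop-ul E_i.a'''}) gives $Y^\ii_{1,l}=-(q_{i_1}-q_{i_1}^{-1})\,\ul E_{i_1}(X_{\ii,l})$. Here $X_{\ii,l}=T_{i_1}(X_{\ii',l-1})\in{}_{i_1}U\cap U^\ZZ(\lie n^+)={}_{i_1}U^\ZZ$, so $\ul E_{i_1}(X_{\ii,l})\in{}_{i_1}U^\ZZ\subset U^\ZZ(\lie n^+)$ by Lemma~\ref{lem:ul E_i adjoint}\eqref{lem:ul E_i adjoint.c}, and it also lies in $U_q(w)$; hence $\ul E_{i_1}(X_{\ii,l})\in U^\ZZ(w)=\bigoplus_{\mathbf a}\AA_0X_\ii^{\mathbf a}$, which gives (ii) in the base case. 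For (i) in this case one uses that $\ul E_{i_1}(X_{\ii,l})\in{}_{i_1}U$ (whose lattice part is spanned by the $X_\ii^{\mathbf a}$ with $a_1=0$, as one sees from $\partial_{i_1}^{op}$ being a quasi-derivation annihilating $X_{\ii,j}$, $j\ge2$, and lowering the power of $E_{i_1}$) together with the degree identity $\deg\ul E_{i_1}(X_{\ii,l})=\alpha_{i_1}+\alpha_\ii^{(l)}$ and the convexity of the root sequence $\alpha_\ii^{(1)},\dots,\alpha_\ii^{(m)}$; alternatively (i) is just the support statement of the classical Levendorskii--Soibelman relations over $\kk$ (\cite{LS}) intersected with $U^\ZZ(w)$.

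For completeness, let $\mathcal U$ be the associative $\AA$-algebra on generators $\xi_1,\dots,\xi_m$ modulo the relations obtained from~\eqref{eq:straight-rels'} under the substitutions $X_{\ii,j}\mapsto\xi_j$, $X_\ii^{\mathbf a}\mapsto q_{\ii,\mathbf a}\xi_1^{a_1}\cdots\xi_m^{a_m}$; by the first stage there is a surjective $\AA$-algebra homomorphism $\pi\colon\mathcal U\to U^\AA(w)$ with $\pi(\xi_j)=X_{\ii,j}$. Each relation rewrites an out-of-order product $\xi_l\xi_k$ ($l>k$) as an $\AA$-combination of $\xi_k\xi_l$ and ordered monomials in $\xi_{k+1},\dots,\xi_{l-1}$ only, so by the Levendorskii--Soibelman straightening procedure (\cite{LS}; a Diamond-Lemma reduction whose termination rests precisely on the correction terms involving only strictly intermediate generators) the ordered monomials $\xi_1^{a_1}\cdots\xi_m^{a_m}$, $\mathbf a\in\ZZ_{\ge0}^m$, span $\mathcal U$ over $\AA$. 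Under $\pi$ they map to $\{q_{\ii,\mathbf a}^{-1}X_\ii^{\mathbf a}\}$, an $\AA$-basis of $U^\AA(w)$ by Proposition~\ref{prop:U(w)-U(ii)} after base change. Thus the $\AA$-linear map $\phi\colon U^\AA(w)\to\mathcal U$, $X_\ii^{\mathbf a}\mapsto q_{\ii,\mathbf a}\xi_1^{a_1}\cdots\xi_m^{a_m}$, satisfies $\pi\circ\phi=\id$, so $\mathcal U=\operatorname{im}\phi\oplus\ker\pi$; since $\operatorname{im}\phi$ is spanned by the ordered monomials, $\operatorname{im}\phi=\mathcal U$, whence $\ker\pi=0$ and $\pi$ is an isomorphism, i.e.~\eqref{eq:straight-rels'} is a presentation of $U^\AA(w)$.

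The main obstacle is keeping the normalizations consistent through the induction of the first stage: one must check that conjugating by $T_{i_1}$ carries $X_{\ii'}^{\mathbf b}$ \emph{exactly} to $X_\ii^{(0,\mathbf b)}$ (not merely up to a power of $q^{\frac12}$), so that the precise factor $q_{i_k}-q_{i_k}^{-1}$ and the exact support $\{k+1,\dots,l-1\}$ propagate, and that $\ul E_{i_1}(X_{\ii,l})$ genuinely lands in the integral form $U^\ZZ(w)$ rather than merely in $U_q(w)$ — this last point is what Lemma~\ref{lem:ul E_i adjoint}\eqref{lem:ul E_i adjoint.c} is needed for. Once these are in place, the support statement (i) reduces to classical convexity and the completeness step is formal modulo the classical Levendorskii--Soibelman straightening lemma.
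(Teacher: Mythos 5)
Your proof is correct, but it takes a genuinely different route from the paper's, and I want to flag the one place where it leans on an external fact that the paper's argument avoids.

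The paper proves an internal Lemma~\ref{lem:commutator}: $K_i[F_i,T_{w'}(E_j)]\in\la 1\ra_{q_i}U^\AA(w')$ whenever $\ell(s_iw's_j)=\ell(w')+2$. The integral divisibility there comes from the $\partial_i^{op}$-expression of $K_i[F_i,\cdot]$; the containment in $U^\AA(w')$ --- which is precisely what encodes the support on strictly intermediate generators --- comes from the observation that $T_{w'}^{-1}\bigl(K_i[F_i,T_{w'}(E_j)]\bigr)=T_{w'}^{-1}(K_i)\,[T_{w'}^{-1}(F_i),E_j]\in U_q(\lie b^-)$, so the commutator lies in $T_{w'}(U_q(\lie b^-))\cap U^\AA(\lie n^+)=U^\AA(w')$. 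Applying $T_{i_1}\cdots T_{i_{k-1}}$ with $w'=s_{i_{k+1}}\cdots s_{i_{l-1}}$ and using Proposition~\ref{prop:U(w)-U(ii)} for that inner word then yields~\eqref{eq:straight-rels'} with both the divisibility and the support in one stroke. You instead strip $T_{i_1}$ off the left inductively, reducing to $k=1$, where your identity $Y^\ii_{1,l}=-(q_{i_1}-q_{i_1}^{-1})\,\ul E_{i_1}(X_{\ii,l})$ is a nice observation --- it makes the factor $\la 1\ra_{q_{i_1}}$ visible directly from the definition of $\ul E_+$ --- and you get integrality from Lemma~\ref{lem:ul E_i adjoint}\eqref{lem:ul E_i adjoint.c}. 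The completeness steps are essentially the same in both treatments.

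The substantive difference concerns the support condition (i) at $k=1$: you invoke convexity of the root sequence $\alpha_\ii^{(1)},\dots,\alpha_\ii^{(m)}$, or alternatively ``classical Levendorskii--Soibelman.'' The paper deliberately avoids this input: the $T_{w'}(U_q(\lie b^-))$-intersection argument gives the support condition for free, which matters here because the LS reference is finite type, while Theorem~\ref{thm:two-forms} is claimed for all symmetrizable Kac--Moody algebras. The convexity statement you need --- if $\alpha_\ii^{(1)}+\alpha_\ii^{(l)}=\sum_{t\ge 2}a_t\alpha_\ii^{(t)}$ with $a_t\ge 0$ then $a_t=0$ for all $t\ge l$ --- does hold in the Kac--Moody setting (it follows from biconvexity of the inversion set of $s_{i_1}\cdots s_{i_l}$), but it is not proved in the paper and deserves an explicit citation or a short proof; the paper's route renders that unnecessary. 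Also note your formula for $Y^\ii_{1,l}$ implicitly uses $\alpha_\ii^{(1)}=\alpha_{i_1}$, $X_{\ii,1}=E_{i_1}$ and the sign conventions in Lemma~\ref{lem:prop-ul E_i}\eqref{lem:prop-ul E_i.a'''}; these check out.
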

\begin{proof}
We need the following
\begin{lemma}\label{lem:commutator}
Let $w'\in W$ and $i,j\in I$ be such that $\ell(s_iw's_j)=\ell(w')+2$. 
Then
\begin{equation}\label{eq:q-comm-E_i}
T_{s_iw'}(E_j)E_i-q^{-(\alpha_i,w'\alpha_j)}E_i T_{s_iw'}(E_j)\in (q_i-q_i^{-1}) T_i(U^\AA(w')).
\end{equation}
\end{lemma}
\begin{proof}
First we prove that 
\begin{equation}\label{eq:comm-F_i}
K_i[F_i,T_{w'}(E_j)]\in \la 1\ra_{q_i} U^\AA(w').
\end{equation}
Our assumption implies that $\ell(s_iw')=\ell(w')+1$, $\ell(w's_j)=\ell(w')+1$, whence $T_{w'}(E_j),T_{s_iw'}(E_j)\in U_q(\lie n^+)$
by~\cite{Lus-book}*{Lemma~40.1.2} and 
so $T_{w'}(E_j)\in\ker\partial_i$ by~\cite{Lus-book}*{Proposition~38.1.6}. Moreover, by Proposition~\ref{prop:X i^a scalar square} 
we have $T_{w'}(E_j),T_{s_iw'}(E_j)\in U^\AA(\lie n^+)$.
Then
$$
K_i[F_i,T_{w'}(E_j)]=-(1-q_i^{-2})q^{-\frac12(\alpha_i,w'\alpha_j)}\partial_i^{op}(T_{w'}(E_j))\in \la 1\ra_{q_i} U^\AA(\lie n^+),
$$
where we used \eqref{eq:partial-def}, Lemma~\ref{lem:T_w(E_j)}, Proposition~\ref{prop:T_i lattice} and Corollary~\ref{cor:preserv-lattice}\eqref{cor:preserve-lattice.b}.
On the other hand, $T_{w'}^{-1}(F_i)\in U_q(\lie n^-)$, whence $[T_{w'}^{-1}(F_i),E_j]\in U_q(\lie b^-)$. Therefore, 
$$
T_{w'}^{-1}(K_i[F_i,T_{w'}(E_j)])=T_{w'}^{-1}(K_i)[T_{w'}^{-1}(F_i),E_j]\in U_q(\lie b^-).
$$
Thus,
\begin{equation*}
K_i[F_i,T_{w'}(E_j)]\in \la 1\ra_{q_i} T_{w'}(U_q(\lie b^-))\cap U^\AA(\lie n^+)=\la 1\ra_{q_i} U^\AA(w').
\end{equation*}
This proves~\eqref{eq:comm-F_i}. Since $T_i(K_i F_i)=q_i^{-1}E_i$
sand $K_iT_{s_iw'}(E_j)K_i^{-1}=q^{-(\alpha_i,w'\alpha_j)}T_{s_iw'}(E_j)$, \eqref{eq:q-comm-E_i} follows
by applying $T_i$ to both sides of~\eqref{eq:comm-F_i}.
\end{proof}
Now we use induction on~$\ell(w)$, the induction base being trivial.
Applying $T_{i_1}\cdots T_{i_{k-1}}$ to~\eqref{eq:q-comm-E_i} with $w'=s_{i_{k+1}}\cdots s_{i_{l-1}}$, $i=i_k$, $j=i_l$ we obtain 
$$
X_lX_k-q^{-(\alpha_{i_k},s_{i_{k+1}}\cdots s_{i_{l-1}}\alpha_{i_l})}X_kX_l\in \la 1\ra_{q_{i_k}} T_{i_1}\cdots T_{i_{k}}(U^\AA(w')).
$$
By Proposition~\ref{prop:U(w)-U(ii)}, 
$U^\AA(w')$ has an $\AA$-basis 
$\{X_{\mathbf i',1}^{a_{k+1}}\dots X_{\mathbf i',l-1}^{a_{l-1}}\,:\,a_{k+1},\dots,a_{l-1}\in \ZZ_{\ge 0}\}$
where $\ii'=(i_{k+1},\dots,i_{l-1})$.
Applying $T_{i_1}\cdots T_{i_k}$
we conclude that 
 $\{X_{\mathbf i,k+1}^{a_{k+1}}\dots X_{\mathbf i,l-1}^{a_{l-1}}\,:\,a_{k+1},\dots,a_{l-1}\in \ZZ_{\ge 0}\}$ is an $\AA$-basis of $T_{i_1}\cdots T_{i_k}(U^\AA(w'))$.
Note that $(\alpha_{i_k},s_{i_{k+1}}\cdots s_{i_{l-1}}\alpha_{i_l})=-(\alpha_\ii^{(k)},\alpha_\ii^{(l)})$ and so we can write
$$
q^{-\frac12(\alpha_\ii^{(k)},\alpha_\ii^{(l)})}X_{\mathbf i,l}X_{\mathbf i,k}-
q^{\frac12(\alpha_\ii^{(k)},\alpha_\ii^{(l)})} X_{\mathbf i,k}X_{\mathbf i,l}\in \sum_{\mathbf a
=(0,\dots,0,a_{k+1},\dots,a_{l-1},0,\dots,0)\in\ZZ_{\ge 0}^m} \la 1\ra_{q_{i_k}}c_{\mathbf a} X_{\ii}^{\mathbf a},
$$
where $c_{\mathbf a}\in \AA$. Repeating the argument from the proof of Proposition~\ref{prop:U(w)-U(ii)}
we conclude that 
$\la 1\ra_{q_{i_k}}c_{\mathbf a}\in \AA_0$. Thus, $c_{\mathbf a}\in \AA\cap (\la 1\ra_{q_{i_k}})^{-1}\AA_0=\AA_0$.
Since relations~\eqref{eq:straight-rels'} imply that $U^\AA(w)$ is generated, as an $\AA$-module, by the $X_\ii^{\mathbf a}$, $\mathbf a\in\ZZ_{\ge 0}^m$,
it follows that~\eqref{eq:straight-rels'} is a presentation.
\end{proof}
\begin{remark}\label{rem:2}
 Let $A(w)$ be the $\ZZ$-algebra defined by $A(w)=U^\ZZ(w)/(q-1)U^\ZZ(w)$. Clearly, $A(w)$ is commutative and identifies with the coordinate 
 algebra $\ZZ[U(w)]$, where $U(w)=U\cap w(U^-)w^{-1}$ is the Schubert cell in the maximal unipotent subgroup~$U$ of the Kac-Moody group~$G$ corresponding 
 to~$\lie g$. This justifies~\eqref{eq:quantum-Schubert-cell} and the name quantum Schubert cell used for~$U_q(w)$.
\end{remark}

\subsection{Lusztig's Lemma and proof of Theorem~\ref{thm:bas-from-Lusztig-lemma}}\label{subs:pf-bas-from-Lus-lemma}
Let $w\in W$, $\ii=(i_1,\dots,i_m)\in R(w)$ and let $\mathbf e_1,\dots,\mathbf e_m$ be the standard basis of~$\ZZ^m$.
For each pair $1\le k<l\le m$ with $k+1\le l-1$ define $\mathcal A_{k,l}=\mathcal A_{k,l}(\ii)$ to be the finite set of all tuples
$(a_{k+1},\dots,a_{l-1})$ such that $X_{\mathbf i,k+1}^{a_{k+1}}\cdots X_{\mathbf i,l-1}^{a_{l-1}}$ occurs in the right hand side 
of~\eqref{eq:straight-rels'} with a non-zero coefficient.
Let 
$C_{\mathbf i}$ be the submonoid of $\ZZ^m$ generated by elements 
$$
\mathbf e_k+\mathbf e_{l}-\sum_{r=k+1}^{l-1} a_r \mathbf e_r
$$
for all $1\le k,l\le m$ with $k+1\le l-1$ such that $\mathcal A_{kl}\not=\emptyset$ and for all $(a_{k+1},\dots,a_{l-1})\in \mathcal A_{kl}$.
\begin{proposition}\label{prop:order}
$C_{\mathbf i}$ is pointed, that is, if $\mathbf x,-\mathbf x\in C_{\mathbf i}$ then $\mathbf x=0$. In particular,
the relation $\prec$ on $\mathbb Z^m_{\ge 0}$ defined by 
$$
\mathbf a\preceq \mathbf a'\iff \mathbf a'-\mathbf a\in C_{\mathbf i}
$$
is a partial order.
\end{proposition}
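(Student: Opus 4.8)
The plan is to attach to every nonzero element of $C_{\mathbf i}$ one coordinate that is forced to be strictly positive; pointedness then follows in a line, and the partial-order statement is formal.

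The first step is to read off the shape of the generators. A generator of $C_{\mathbf i}$ is a vector $g_{k,l,\mathbf a}=\mathbf e_k+\mathbf e_l-\sum_{r=k+1}^{l-1}a_r\mathbf e_r$, where $k+1\le l-1$ and $(a_{k+1},\dots,a_{l-1})\in\mathcal A_{kl}$. Because every index $r$ occurring in the subtracted sum satisfies $k<r<l$, it is distinct from both $k$ and $l$; hence the support of $g_{k,l,\mathbf a}$ is contained in $\{k,k+1,\dots,l\}$ and the coefficient of $\mathbf e_l$ in it is exactly $+1$. I will write $t(\mathbf x)=\max\{\,j:x_j\ne 0\,\}$ for a nonzero $\mathbf x\in\ZZ^m$.

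The second step is the key estimate. Let $0\ne\mathbf x\in C_{\mathbf i}$ and write $\mathbf x=\sum_\alpha n_\alpha\,g_{k_\alpha,l_\alpha,\mathbf a_\alpha}$ with all $n_\alpha\in\ZZ_{\ge 0}$, not all zero, and put $L=\max\{\,l_\alpha:n_\alpha>0\,\}$. A generator appearing with $l_\alpha<L$ is supported in $\{1,\dots,L-1\}$ and so contributes $0$ to the $L$-th coordinate of $\mathbf x$, whereas each generator with $l_\alpha=L$ contributes $+1$; therefore $x_L=\sum_{\alpha:\,l_\alpha=L}n_\alpha>0$. Thus the top coordinate of any nonzero element of $C_{\mathbf i}$ is strictly positive. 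Consequently, if $\mathbf x\ne 0$ satisfies $\mathbf x,-\mathbf x\in C_{\mathbf i}$, then putting $L=t(\mathbf x)=t(-\mathbf x)$ gives $x_L>0$ and $-x_L>0$, which is absurd; hence $C_{\mathbf i}$ is pointed. The last step is purely formal: $\preceq$ is reflexive since $0\in C_{\mathbf i}$, transitive since $C_{\mathbf i}$ is closed under addition, and antisymmetric precisely by pointedness.

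I do not anticipate a real obstacle. The only point needing care is the combinatorial claim that, for each generator $g_{k,l,\mathbf a}$, the index $l$ genuinely realizes the maximum of its support and occurs with coefficient $+1$; this is exactly what the constraint $k+1\le l-1$ built into the definition of $\mathcal A_{kl}$ guarantees, together with the fact recorded in Theorem~\ref{thm:two-forms} that the monomials $X_\ii^{\mathbf a}$ on the right-hand side of the straightening relations involve only the strictly intermediate factors $X_{\mathbf i,k+1},\dots,X_{\mathbf i,l-1}$.
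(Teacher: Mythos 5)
Your proof is correct and is essentially the same argument as the paper's, with a left--right reflection: the paper takes the minimal leading index $k$ of a generator occurring in a presentation and observes that the $k$-th coordinate is then strictly positive, whereas you take the maximal trailing index $L$ and observe that the $L$-th coordinate is strictly positive. Either observation immediately gives pointedness, and your derivation of the partial order from pointedness is the same formal argument.
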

\begin{proof}
The first assertion is a special case of the following
\begin{lemma}
For each $k<l$ fix $\mathcal A_{k,l}\subset \Big(\bigoplus\limits_{i=k+1}^{l-1} \ZZ_{\ge 0}\mathbf e_i\Big)\setminus\{0\}$. Let $\Gamma$
be the submonoid of $\ZZ^m$ generated by all elements of the form $\mathbf e_k+\mathbf e_l-\mathbf a$,
$\mathbf a\in\mathcal A_{k,l}$ for all $k<l$ such that $\mathcal A_{k,l}\not=\emptyset$.
Then $\Gamma$ is pointed.
\end{lemma}
\begin{proof}
Let $\mathbf y=\sum_{k<l}\sum_{\mathbf a\in\mathcal A_{k,l}} n_{k,l,\mathbf a_{k,l}}(\mathbf e_k+\mathbf e_l-\mathbf a_{k,l})$
where $n_{k,l,\mathbf a_{k,l}}\in\ZZ_{\ge 0}$ and are not all zero. Let $k$ be minimal such that $n_{k,l,\mathbf a}\not=0$
for some $l>k$, $\mathbf a\in\mathcal A_{k,l}$. Then the coefficient of~$\mathbf e_k$ in~$\mathbf y$ is positive. This immediately
implies that $0$ admits a unique presentation in~$\Gamma$.
\end{proof}
To prove the second assertion, note that the relation~$\prec$ is clearly transitive. Furthermore, if $\mathbf a'\prec \mathbf a$
and $\mathbf a\prec\mathbf a'$ then $\mathbf a'-\mathbf a,\mathbf a-\mathbf a'\in C_{\mathbf i}$ which implies that $\mathbf a=\mathbf a'$.
\end{proof}

Since $T_w$ commutes with $\bar\cdot$-anti-involution, $\overline{U_q(w)}=U_q(w)$ and $\overline{X_{\ii,k}}=X_{\ii,k}$. Since also $\overline{U^\ZZ(\lie n^+)}=U^\ZZ(\lie n^+)$,
it follows that $\overline{U^\ZZ(w)}=U^\ZZ(w)$. Thus, the restriction of $\bar\cdot$ to $U^\ZZ(\ii)$ is the unique anti-linear anti-involution of that algebra 
fixing its generators $X_{\ii,k}$.

Note that for each $\gamma\in Q^+$, the set $\{ \mathbf a\in\ZZ_{\ge 0}^m\,:\, |\mathbf a|_\ii=\gamma\}$ is finite.
The following result is crucial for the proof of Theorem~\ref{thm:bas-from-Lusztig-lemma}.
\begin{proposition}\label{prop:initial basis}
For all $\mathbf a\in\ZZ_{\ge 0}^m$ we have 
$
\overline{X_\ii^{\mathbf a}}-X_\ii^{\mathbf a}\in\sum_{\mathbf a'\prec\mathbf a} \AA_0 X_\ii^{\mathbf a'}$.
\end{proposition}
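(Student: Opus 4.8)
The plan is to rewrite $\overline{X_\ii^{\mathbf a}}$ in the PBW basis $\{X_\ii^{\mathbf a'}\}$ of $U^\AA(w)$ (Lemma~\ref{lem:lus PBW elt}, Proposition~\ref{prop:U(w)-U(ii)}) by means of the straightening relations \eqref{eq:straight-rels'} of Theorem~\ref{thm:two-forms} and the partial order $\preceq$ of Proposition~\ref{prop:order} (I write $\mathbf a'\prec\mathbf a$ for $\mathbf a'\preceq\mathbf a$, $\mathbf a'\ne\mathbf a$). Since $\bar\cdot$ is anti-linear and anti-multiplicative, fixes each $X_{\ii,k}$ (as $T_i$ commutes with $\bar\cdot$, by Lemma~\ref{lem:T_i-defn-prop}) and sends $q^{\frac12}$ to $q^{-\frac12}$, one gets at once $\overline{X_\ii^{\mathbf a}}=q_{\ii,\mathbf a}^{-1}\,X_{\ii,m}^{a_m}\cdots X_{\ii,1}^{a_1}$, so everything reduces to expanding the \emph{reversed} ordered product, and more generally an arbitrary word, in the PBW basis. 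For a word $\bom=(m_1,\dots,m_N)$ with entries in $\{1,\dots,m\}$ let $\mathbf a(\bom)\in\ZZ_{\ge0}^m$ be its content ($a_k=\#\{j:m_j=k\}$); I will prove the claim that $X_{\ii,m_1}\cdots X_{\ii,m_N}\in\sum_{\mathbf a'\preceq\mathbf a(\bom)}\kk\,X_\ii^{\mathbf a'}$, and then apply it to the word of content $\mathbf a$ underlying $X_{\ii,m}^{a_m}\cdots X_{\ii,1}^{a_1}$ to obtain the support part of the statement over $\kk$.

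The claim is proved by induction on the pair $(\mathbf a(\bom),\operatorname{inv}(\bom))$, where $\operatorname{inv}(\bom)=\#\{s<t:m_s>m_t\}$, comparing first contents with respect to $\prec$ and, among words of equal content, inversion numbers; this order is well-founded since $Q^+$-homogeneity forces all contents arising to lie in the finite set $\{\mathbf a'\in\ZZ_{\ge0}^m:|\mathbf a'|_\ii=|\mathbf a(\bom)|_\ii\}$. If $\operatorname{inv}(\bom)=0$ the word is already increasing and $X_{\ii,m_1}\cdots X_{\ii,m_N}=q_{\ii,\mathbf a}^{-1}X_\ii^{\mathbf a}$. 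Otherwise pick an adjacent descent $m_s=l>k=m_{s+1}$ and apply \eqref{eq:straight-rels'} to $X_{\ii,l}X_{\ii,k}$: this yields $q^{(\alpha_\ii^{(k)},\alpha_\ii^{(l)})}$ times the word with $l$ and $k$ transposed (same content, one fewer inversion), plus an $\AA_0$-combination of words in which $X_{\ii,l}X_{\ii,k}$ is replaced by $X_{\ii,k+1}^{a_{k+1}}\cdots X_{\ii,l-1}^{a_{l-1}}$ for $(a_{k+1},\dots,a_{l-1})\in\mathcal A_{kl}$; each such word has content $\mathbf a(\bom)-\bigl(\mathbf e_k+\mathbf e_l-\sum_r a_r\mathbf e_r\bigr)$, which again lies in $\ZZ_{\ge0}^m$ and is $\prec\mathbf a(\bom)$ because $\mathbf e_k+\mathbf e_l-\sum_r a_r\mathbf e_r$ is one of the generators of $C_\ii$. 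The induction hypothesis applies to every word produced, and transitivity of $\preceq$ gives the claim.

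To identify the coefficient of $X_\ii^{\mathbf a}$ in $\overline{X_\ii^{\mathbf a}}$ as exactly $1$, I would pass to the associated graded of the filtration $U^{\preceq\mathbf a}:=\sum_{\mathbf a'\preceq\mathbf a}\AA\,X_\ii^{\mathbf a'}$ of $U^\AA(w)$. The claim together with the pointedness of $C_\ii$ (Proposition~\ref{prop:order}) shows that the $U^{\preceq\mathbf a}$ form an $\AA$-algebra filtration with free rank-one subquotients $U^{\preceq\mathbf a}/U^{\prec\mathbf a}$ generated by the class of $X_\ii^{\mathbf a}$, and by \eqref{eq:straight-rels'} the correction terms die in the associated graded, so the images $\xi_k$ of the $X_{\ii,k}$ there satisfy $\xi_l\xi_k=q^{(\alpha_\ii^{(k)},\alpha_\ii^{(l)})}\xi_k\xi_l$ for $k<l$, i.e.\ the associated graded is a skew polynomial ring. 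Moving each $\xi_l$ rightward past the $\xi_k$ with $k<l$ gives $\xi_m^{a_m}\cdots\xi_1^{a_1}=q^{\sum_{k<l}(\alpha_\ii^{(k)},\alpha_\ii^{(l)})a_ka_l}\,\xi_1^{a_1}\cdots\xi_m^{a_m}=q_{\ii,\mathbf a}^{2}\,\xi_1^{a_1}\cdots\xi_m^{a_m}$, which is $q_{\ii,\mathbf a}$ times the class of $X_\ii^{\mathbf a}$; hence $X_{\ii,m}^{a_m}\cdots X_{\ii,1}^{a_1}\in q_{\ii,\mathbf a}X_\ii^{\mathbf a}+U^{\prec\mathbf a}$, and dividing by $q_{\ii,\mathbf a}$ yields $\overline{X_\ii^{\mathbf a}}-X_\ii^{\mathbf a}\in U^{\prec\mathbf a}=\sum_{\mathbf a'\prec\mathbf a}\kk\,X_\ii^{\mathbf a'}$.

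Finally, since $\bar\cdot$ preserves $U^\ZZ(w)$ and $X_\ii^{\mathbf a}\in U^\ZZ(w)$ (Proposition~\ref{prop:X i^a scalar square}), the difference $\overline{X_\ii^{\mathbf a}}-X_\ii^{\mathbf a}$ lies in $U^\ZZ(w)_\gamma$ with $\gamma=|\mathbf a|_\ii$, whose $\AA_0$-basis is $\{X_\ii^{\mathbf a'}:|\mathbf a'|_\ii=\gamma\}$ by Proposition~\ref{prop:U(w)-U(ii)}; so the coefficients of $\overline{X_\ii^{\mathbf a}}-X_\ii^{\mathbf a}$ in that basis lie automatically in $\AA_0$, and combining this with the previous paragraph gives $\overline{X_\ii^{\mathbf a}}-X_\ii^{\mathbf a}\in\sum_{\mathbf a'\prec\mathbf a}\AA_0\,X_\ii^{\mathbf a'}$, as required. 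The step I expect to be the main obstacle is organizing the induction in the claim so that it is genuinely well-founded: the correction terms coming from \eqref{eq:straight-rels'} may carry many more inversions than the word one started with, so one must make content (with respect to $\prec$) the primary induction parameter and exploit finiteness of the weight fibers of $|\cdot|_\ii$; once that is in place, the associated-graded bookkeeping pinning the leading coefficient to $1$ is routine.
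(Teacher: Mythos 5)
Your proof is correct, and it takes a genuinely different route from the paper's. The core induction is shared: both reduce an arbitrary word in the $X_{\ii,k}$ to the PBW basis by repeatedly applying \eqref{eq:straight-rels'} at adjacent descents, with content in the poset $(\ZZ_{\ge0}^m,\prec)$ as the primary induction parameter and inversion number as the secondary one; you correctly identify and resolve the well-foundedness issue (the correction terms may raise the inversion count, but they strictly decrease content, and $Q^+$-homogeneity confines everything to a finite weight fiber). Where you diverge is in the bookkeeping of the leading coefficient and integrality. The paper attaches to every word $\mathbf k=(k_1,\dots,k_N)\in[1,m]^N$ a normalized monomial $X(\mathbf k)=q^{\frac12\sum_{r<s}\sign(k_s-k_r)\,(\alpha_\ii^{(k_r)},\alpha_\ii^{(k_s)})}X_{\ii,k_1}\cdots X_{\ii,k_N}$; the $\sign$-twist makes $\overline{X(\mathbf k)}=X(\mathbf k^{op})$ hold on the nose, and the half-power of $q$ in \eqref{eq:straight-rels'} is exactly the factor picked up by an adjacent transposition, so $X(\sigma(\mathbf k))-X(\mathbf k)$ lies in the lower-order span with $\AA_0$ coefficients and \emph{no} distortion of the leading term. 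The leading-coefficient-equals-$1$ fact and $\AA_0$-integrality are thereby absorbed into the choice of $X(\mathbf k)$, and the proposition drops out of the identity $\overline{X(\mathbf k)}=X(\mathbf k^{op})$ once one also checks $\mathcal U'_{\prec\mathbf a}=\mathcal U_{\prec\mathbf a}$. You instead split into three stages: (a) prove the support statement over $\kk$, (b) pass to the associated graded of the filtration $U^{\preceq\mathbf a}$, where the straightening corrections vanish and the $\xi_k$ generate a quantum torus so that $\xi_m^{a_m}\cdots\xi_1^{a_1}=q_{\ii,\mathbf a}^2\,\xi_1^{a_1}\cdots\xi_m^{a_m}$ pins the leading coefficient to $1$ after dividing by $q_{\ii,\mathbf a}$, and (c) appeal to $\overline{X_\ii^{\mathbf a}}\in U^\ZZ(w)$ together with Proposition~\ref{prop:U(w)-U(ii)} to upgrade $\kk$-coefficients to $\AA_0$. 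All three steps are sound. Your decomposition trades the paper's one delicate normalization for three routine arguments at the cost of a somewhat longer proof, and it also makes explicit (via the associated-graded model) why the exponent $q_{\ii,\mathbf a}$ in \eqref{eq:q_i a defn} is the unique right choice.
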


\begin{proof}   
We need some notation. Let 
$\mathcal U=U^\ZZ(\ii)$ and let $\mathcal I=[1,m]$. Let $\mathcal B$ be the set of all finite non-decreasing sequences in~$\mathcal I$. 
Given a sequence ${\mathbf k}=(k_1,\dots,k_N)\in \mathcal I^N$, let $\mathbf e_{\mathbf k}=\sum_{r=1}^N \mathbf e_{k_r}$
and define 
$$ 
X({\mathbf k})=q^{\frac12\sum_{1\le r<s\le N} \limits\sign (k_s-k_r)(\alpha^{(k_r)},\alpha^{(k_s)})} X_{\ii,k_1}\cdots X_{\ii,k_N}.
$$
In particular, if ${\mathbf k}=(k_1,\dots,k_N)\in\mathcal B$ and $a_k=\#\{ 1\le r\le N\,:\, k_r=k\}$ then 
$X({\mathbf k})=X_\ii^{\mathbf a}$. 
Given $\mathbf a\in\ZZ_{\ge 0}^m$, set 
$$
\mathcal U_{\prec \mathbf a}=\sum_{\mathbf a'\prec\mathbf a}\AA_0 \cdot X_\ii^{\mathbf a'}=\sum_{{\mathbf k}\in\mathcal B\,:\, \mathbf e_{\mathbf k}\prec \mathbf a} \AA_0
\cdot X({\mathbf k}),
\qquad 
\mathcal U'_{\prec \mathbf a}:=\sum_{N\ge 0,\,{\mathbf k}\in \mathcal I^N:\mathbf e_{\mathbf k}\prec\mathbf a} \AA_0\cdot X({\mathbf k})
$$
with the convention that $\mathcal U_{\prec\mathbf a}={\mathcal U}'_{\prec \mathbf a}=\{0\}$ if $\mathbf a$ is minimal with respect to $\prec$.
Clearly, both are increasing filtration on $\mathcal U$.
Note following immediate 
\begin{lemma}\label{lem:finite poset}
If $\mathbf a'\prec \mathbf a$, $\mathbf a,\mathbf a'\in\ZZ_{\ge 0}^m$ then $|\mathbf a'|_\ii=|\mathbf a|_\ii$. In particular,
$\mathcal U_{\prec \mathbf a}$, $\mathcal U'_{\prec \mathbf a}$ are finite dimensional.
\end{lemma}

\begin{lemma}
\label{lem:recursive base transition}  For any  sequence ${\mathbf k}=(k_1,\ldots,k_N) \in \mathcal I^N$, $N\ge 0$
and for any $\sigma\in S_N$ we have 
\begin{equation}
\label{eq:monomial permutation}
X(\sigma({\mathbf k}))-X({\mathbf k})\in  {\mathcal U}'_{\prec \mathbf e_{\mathbf k}}, 
\end{equation}
where $\sigma({\mathbf k})=(k_{\sigma(1)},\dots,k_{\sigma(N)})$.
\end{lemma}

\begin{proof} Clearly, it suffices to prove the assertion for a transposition~$\sigma=(r,r+1)$. 
Without loss of generality we may assume that $k_r<k_{r+1}$. Let ${\mathbf k}_r^-=(k_1,\ldots,k_{r-1})$, ${\mathbf k}_r^+=(k_{r+2},\ldots,k_N)$. 
Then the relation \eqref{eq:straight-rels'} taken with $k=k_r$, $l=k_{r+1}$ implies
\begin{equation}
\label{eq:transposition monomial}
X_{\sigma({\mathbf k})}=X_{{({\mathbf k}_r^-,k_{r+1},k_r,{\mathbf k}_r^+)}}=
X_{\mathbf k}+\sum_{{\mathbf k}'\in {\mathcal B}\,:\,\mathbf e_{{\mathbf k}'}\prec \mathbf e_{i_r}+\mathbf e_{i_{r+1}}} 
c_{{\mathbf k}'} X_{({\mathbf k}_r^-,{\mathbf k}',{\mathbf k}_r^+)},\qquad c_{{\mathbf k}'}\in \AA_0.
\end{equation}
Clearly, $\mathbf e_{({\mathbf k}_r^-,{\mathbf k}',{\mathbf k}_r^+)}=\mathbf e_{{\mathbf k}_r^-}+\mathbf e_{{\mathbf k}'}+\mathbf e_{{\mathbf k}_r^+}\prec 
\mathbf e_{\mathbf k}$ for all ${\mathbf k}'\in\mathcal B$ such that $\mathbf e_{{\mathbf k}'}\prec \mathbf e_{k_r}+
\mathbf e_{k_{r+1}}$. 
This implies that each $X_{({\mathbf k}_r^-,{\mathbf k}',{\mathbf k}_r^+)}$ in the right hand side of 
\eqref{eq:transposition monomial} belongs to ${\mathcal U}'_{\prec e_{\mathbf k}}$ and we obtain \eqref{eq:monomial permutation} for $\sigma=(r,r+1)$.
\end{proof}

\begin{lemma} \label{lem:decresing filtration A strict}
${\mathcal U}_{\prec \mathbf a}={\mathcal U}'_{\prec\mathbf a}$ for all $\mathbf a\in\ZZ_{\ge 0}^m$.
\end{lemma}

\begin{proof}
The inclusion ${\mathcal U}_{\prec \mathbf a}\subseteq {\mathcal U}'_{\prec \mathbf a}$ is obvious. To prove the opposite inclusion,
we use induction on the partial order $\prec$ which is applicable since 
$\{\mathbf a'\in\ZZ_{\ge 0}^m\,:\,\mathbf a'\prec\mathbf a\}$ is finite for all~$\mathbf a\in\ZZ_{\ge 0}^m$. 

If $\mathbf a\in \ZZ_{\ge 0}^m$ is minimal with respect to $\prec$, then ${\mathcal U}_{\prec \mathbf a}=\{0\}$ 
and we have nothing to prove. 
Assume now that $\mathbf a$ is not minimal. Then for each ${\mathbf k}\in \mathcal I^N$, $N\ge 0$ such that $\mathbf  e_{\mathbf k}\prec \mathbf a$ we have 
${\mathcal U}'_{\prec \mathbf e_{\mathbf k}}={\mathcal U}_{\prec \mathbf e_{\mathbf k}}$
by the induction hypothesis.

Using this and Lemma \ref{lem:recursive base transition}, we conclude that for any $\sigma\in S_N$
$$X({\mathbf k})-X(\sigma({\mathbf k}))\in  {\mathcal U}_{\prec \mathbf e_{{\mathbf k}}}.$$

Taking $\sigma$ such that $\sigma({\mathbf k})\in {\mathcal B}$, that is, is non-decreasing, implies that 
$X({\mathbf k})\in {\mathcal U}_{\prec \mathbf a}$. 
\end{proof}

Combining Lemmata~\ref{lem:recursive base transition} and \ref{lem:decresing filtration A strict} we obtain the following obvious corollary:
\begin{corollary}\label{cor:perm-bas}
For any ${\mathbf k}\in {\mathcal B}$ and any $\sigma\in S_N$, we have $
X(\sigma({\mathbf k}))-X({\mathbf k})\in {\mathcal U}_{\prec \mathbf e_{\mathbf k}}$.
\end{corollary}
Note that $\overline{X(\mathbf k)}=X(\mathbf k^{op})$ for any $\mathbf k\in\mathcal I^N$ where $\mathbf k^{op}$ is~$\mathbf k$ written in the reverse order,
and $X(\mathbf k)=X_\ii^{\mathbf e_\mathbf k}$ for $\mathbf k\in\mathcal B$. Since for any $\mathbf a\in\ZZ_{\ge 0}$ there exists 
a unique $\mathbf k\in\mathcal B$ such that $\mathbf e_\mathbf k=\mathbf a$, these observations together with
the above Corollary complete the proof Proposition~\ref{prop:initial basis}.
\end{proof}

Proposition~\ref{prop:initial basis} implies that for each $\gamma\in Q^+$
the assumptions of~\cite{BZ}*{Theorem~1.1} with ${(L,\prec)}=(\{\mathbf a\in\ZZ_{\ge 0}^{m}\,:\,|\mathbf a|_\ii=\gamma\},\prec)$ and $v=q^{-1}$ are satisfied. The assertion of Theorem~\ref{thm:bas-from-Lusztig-lemma}
now follows.
\qed

Note the following useful fact, which is immediate from the proof of Proposition~\ref{prop:initial basis}.
\begin{corollary}\label{cor:skew-symm form}
Define $\Lambda=\Lambda_\ii:\ZZ^m\tensor_\ZZ \ZZ^m\to\ZZ$ by $\Lambda(\mathbf e_k,\mathbf e_l)=\sign(l-k)(\alpha_\ii^{(k)},\alpha_\ii^{(l)})$. Then 
for all $\mathbf a,\mathbf a'\in\ZZ_{\ge 0}^m$
\begin{gather*}
X_\ii^{\mathbf a}X_\ii^{\mathbf b}-q^{-\frac12\Lambda(\mathbf a,\mathbf b)}X_\ii^{\mathbf a+\mathbf b}\in
q^{-\frac12\Lambda(\mathbf a,\mathbf b)}\sum_{\mathbf a'\prec\mathbf a+\mathbf b} \AA_0 X_\ii^{\mathbf a'}
\\
X_\ii^{\mathbf b}X_\ii^{\mathbf a}-q^{\Lambda(\mathbf a,\mathbf b)}X_\ii^{\mathbf a}X_\ii^{\mathbf b}\in 
q^{\frac12\Lambda(\mathbf a,\mathbf b)}\sum_{\mathbf a'\prec\mathbf a+\mathbf b} \AA_0 X_\ii^{\mathbf a'}.
\end{gather*}
\end{corollary}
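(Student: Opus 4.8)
The plan is to extract the statement from the machinery already assembled for Proposition~\ref{prop:initial basis}, in particular from Corollary~\ref{cor:perm-bas}, using two elementary observations: the ordered monomials $X_\ii^{\mathbf a}$ are precisely the elements $X(\mathbf k)$ attached to non-decreasing sequences $\mathbf k\in\mathcal B$ (as recorded in the proof of Proposition~\ref{prop:initial basis}), and the product of two such monomials is, up to an explicit power of $q$, the element $X(\mathbf l)$ attached to the concatenated sequence.

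First I would fix $\mathbf a,\mathbf b\in\ZZ_{\ge0}^m$, take $\mathbf k^{(\mathbf a)},\mathbf k^{(\mathbf b)}\in\mathcal B$ the unique non-decreasing sequences with $\mathbf e_{\mathbf k^{(\mathbf a)}}=\mathbf a$ and $\mathbf e_{\mathbf k^{(\mathbf b)}}=\mathbf b$, so that $X(\mathbf k^{(\mathbf a)})=X_\ii^{\mathbf a}$ and $X(\mathbf k^{(\mathbf b)})=X_\ii^{\mathbf b}$, and form the concatenation $\mathbf l=(\mathbf k^{(\mathbf a)},\mathbf k^{(\mathbf b)})\in\mathcal I^N$ with $N=\sum_k(a_k+b_k)$. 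Then $X_{\ii,l_1}\cdots X_{\ii,l_N}$ is literally the product $(X_{\ii,1}^{a_1}\cdots X_{\ii,m}^{a_m})(X_{\ii,1}^{b_1}\cdots X_{\ii,m}^{b_m})$, so the only computation needed is to compare the prefactor of $X(\mathbf l)$, namely $q^{\frac12\sum_{1\le r<s\le N}\sign(l_s-l_r)(\alpha_\ii^{(l_r)},\alpha_\ii^{(l_s)})}$, with $q_{\ii,\mathbf a}q_{\ii,\mathbf b}$. Splitting the sum over pairs $(r,s)$ according to whether both indices fall in the first block, both in the second, or one in each, the first two pieces reproduce exactly the exponents defining $q_{\ii,\mathbf a}$ and $q_{\ii,\mathbf b}$ in~\eqref{eq:q_i a defn} (here one uses that a repeated index inside a non-decreasing block contributes $\sign 0=0$, matching the strict inequality $k<l$ in that definition), while the cross pairs contribute precisely $\sum_{k,l}a_kb_l\sign(l-k)(\alpha_\ii^{(k)},\alpha_\ii^{(l)})=\Lambda(\mathbf a,\mathbf b)$. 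This gives $X_\ii^{\mathbf a}X_\ii^{\mathbf b}=q^{-\frac12\Lambda(\mathbf a,\mathbf b)}X(\mathbf l)$.

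Now $\mathbf l$ is a rearrangement of the non-decreasing sequence $\mathbf k$ with $\mathbf e_{\mathbf k}=\mathbf a+\mathbf b$, for which $X(\mathbf k)=X_\ii^{\mathbf a+\mathbf b}$; hence Corollary~\ref{cor:perm-bas} yields $X(\mathbf l)-X_\ii^{\mathbf a+\mathbf b}\in\mathcal U_{\prec\mathbf a+\mathbf b}=\sum_{\mathbf a'\prec\mathbf a+\mathbf b}\AA_0 X_\ii^{\mathbf a'}$, and multiplying by $q^{-\frac12\Lambda(\mathbf a,\mathbf b)}$ gives the first displayed inclusion. For the second, the same computation applied to the opposite concatenation $\mathbf l'=(\mathbf k^{(\mathbf b)},\mathbf k^{(\mathbf a)})$, together with the skew-symmetry $\Lambda(\mathbf b,\mathbf a)=-\Lambda(\mathbf a,\mathbf b)$ (immediate from the symmetry of $(\cdot,\cdot)$), gives $X_\ii^{\mathbf b}X_\ii^{\mathbf a}=q^{\frac12\Lambda(\mathbf a,\mathbf b)}X(\mathbf l')$ with again $X(\mathbf l')-X_\ii^{\mathbf a+\mathbf b}\in\mathcal U_{\prec\mathbf a+\mathbf b}$; subtracting $q^{\Lambda(\mathbf a,\mathbf b)}$ times the first relation cancels the $X_\ii^{\mathbf a+\mathbf b}$-terms and leaves $X_\ii^{\mathbf b}X_\ii^{\mathbf a}-q^{\Lambda(\mathbf a,\mathbf b)}X_\ii^{\mathbf a}X_\ii^{\mathbf b}=q^{\frac12\Lambda(\mathbf a,\mathbf b)}\bigl(X(\mathbf l')-X(\mathbf l)\bigr)\in q^{\frac12\Lambda(\mathbf a,\mathbf b)}\sum_{\mathbf a'\prec\mathbf a+\mathbf b}\AA_0 X_\ii^{\mathbf a'}$, as claimed. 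There is no serious obstacle here beyond bookkeeping: the one step that requires attention is the three-way splitting of the double sum in the exponent, i.e.\ checking that it equals $\frac12$ times the $\mathbf a$-part plus $\frac12$ times the $\mathbf b$-part plus $\Lambda(\mathbf a,\mathbf b)$, which is exactly where the sign conventions in the definitions of $\Lambda$ and of $q_{\ii,\mathbf a}$ must be lined up; everything else is a direct appeal to Corollary~\ref{cor:perm-bas} and the definitions of $X_\ii^{\mathbf a}$ and $\mathcal U_{\prec\mathbf a}$.
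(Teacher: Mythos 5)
Your proof is correct, and it is the argument the authors have in mind when they say the corollary is "immediate from the proof of Proposition~\ref{prop:initial basis}": you pass through the auxiliary monomials $X(\mathbf k)$, identify $X_\ii^{\mathbf a}X_\ii^{\mathbf b}$ with $q^{-\frac12\Lambda(\mathbf a,\mathbf b)}X(\mathbf l)$ for the concatenation $\mathbf l$ via the three-way split of the exponent (diagonal pairs contributing $\sign 0=0$), and then invoke Corollary~\ref{cor:perm-bas} with the permutation sorting $\mathbf l$. The only nit is the phrasing in your final sentence, where the cross-term in the exponent should read $\frac12\Lambda(\mathbf a,\mathbf b)$ rather than $\Lambda(\mathbf a,\mathbf b)$; your actual computation two sentences earlier has the factor $\frac12$ correctly, so the conclusion $X_\ii^{\mathbf a}X_\ii^{\mathbf b}=q^{-\frac12\Lambda(\mathbf a,\mathbf b)}X(\mathbf l)$ and everything that follows is right.
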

We note an obvious property of~$\Lambda$ which will be used in the sequel.
\begin{lemma}\label{lem:Lambda pairing}
For any $1\le k\le m$, $\mathbf a=(a_1,\dots,a_m)\in\ZZ^m$ 
we have 
$
\Lambda_\ii(\mathbf e_k,\mathbf a)=(\alpha_\ii^{(k)},|\mathbf a_{>k}|_\ii-|\mathbf a_{<k}|_\ii)
$,
where $\mathbf a_{<k}=\sum_{t=1}^{k-1} a_t \mathbf e_t$, $\mathbf a_{>k}=\sum_{t=k+1}^{m} a_t \mathbf e_t$.
\end{lemma}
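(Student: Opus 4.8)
The plan is to prove this by a direct expansion of the definition of~$\Lambda_\ii$, using only its bilinearity together with the bilinearity of the form~$(\cdot,\cdot)$. Nothing deeper is needed; the statement is essentially a bookkeeping identity, so the "obstacle" is merely keeping track of the sign function $\sign(l-k)$ and the vanishing of the diagonal term.

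Concretely, I would start from
$$
\Lambda_\ii(\mathbf e_k,\mathbf a)=\Lambda_\ii\Big(\mathbf e_k,\sum_{l=1}^m a_l\mathbf e_l\Big)=\sum_{l=1}^m a_l\,\sign(l-k)\,(\alpha_\ii^{(k)},\alpha_\ii^{(l)}),
$$
where the first equality is $\ZZ$-bilinearity of~$\Lambda_\ii$ and the second is its defining formula on the basis vectors. Since $\sign(l-k)=1$ for $l>k$, equals $-1$ for $l<k$, and equals $0$ for $l=k$, the $l=k$ summand drops out and the sum splits as
$$
\Lambda_\ii(\mathbf e_k,\mathbf a)=\sum_{l>k} a_l\,(\alpha_\ii^{(k)},\alpha_\ii^{(l)})-\sum_{l<k} a_l\,(\alpha_\ii^{(k)},\alpha_\ii^{(l)}).
$$

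Finally I would pull $\alpha_\ii^{(k)}$ out of each sum using bilinearity of~$(\cdot,\cdot)$ in the second argument and recognize the two remaining sums: $\sum_{l>k}a_l\alpha_\ii^{(l)}=|\mathbf a_{>k}|_\ii$ and $\sum_{l<k}a_l\alpha_\ii^{(l)}=|\mathbf a_{<k}|_\ii$ by the definition of $|\cdot|_\ii$ and of $\mathbf a_{>k}$, $\mathbf a_{<k}$. This yields
$$
\Lambda_\ii(\mathbf e_k,\mathbf a)=(\alpha_\ii^{(k)},|\mathbf a_{>k}|_\ii)-(\alpha_\ii^{(k)},|\mathbf a_{<k}|_\ii)=(\alpha_\ii^{(k)},|\mathbf a_{>k}|_\ii-|\mathbf a_{<k}|_\ii),
$$
which is the claim. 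The only point requiring (trivial) care is that the formula is asserted for all $\mathbf a\in\ZZ^m$, not just $\ZZ_{\ge 0}^m$, but every step above is valid over~$\ZZ$, so there is nothing extra to check.
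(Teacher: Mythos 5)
Your proof is correct, and it is exactly the routine bilinearity computation one would expect; note that the paper itself supplies no proof at all (it introduces the lemma with the phrase ``We note an obvious property of~$\Lambda$'' and states it without a proof environment), so there is no competing argument to compare against. Your careful tracking of $\sign(l-k)$ and the remark that everything works over~$\ZZ$, not just $\ZZ_{\ge 0}$, are both accurate.
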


\subsection{Containment of \texorpdfstring{$\mathbf B(\ii)$}{B(i)} in~\texorpdfstring{$\mathbf B^{up}$}{Bup} and proof of Theorem~\ref{thm:inclusion of base}}
\label{subs:pf inclusion of base}
Let $w\in W$, $\mathbf i=(i_1,\dots,i_m)\in R(w)$.
Let $\gamma=|\mathbf a|_\ii$.
Since $b_{\mathbf i,\mathbf a}\in X_{\mathbf i}^{\mathbf a}+\sum\limits_{\mathbf a'\not=\mathbf a,\,|\mathbf a'|_\ii=\gamma}K_- X_{\ii}^{\mathbf a'}$,
it follows from~\eqref{eq:X i^a scalar square} that 
$$
\mu(\gamma)^{-1}\fgfrm{b_{\ii,\mathbf a}}{b_{\ii,\mathbf a}}\in \mu(\gamma)^{-1}\fgfrm{X_{\ii}^{\mathbf a}}{X_{\ii}^{\mathbf a}}
+\sum_{\mathbf a'\in\ZZ_{\ge 0}^m\setminus\{\mathbf a\}\,:\,|\mathbf a'|=\gamma} K_-\mu(\gamma)^{-1}\fgfrm{X_{\ii}^{\mathbf a'}}{X_{\ii}^{\mathbf a'}}
\in 1+K_-.
$$
Since $b_{\mathbf i,\mathbf a}\in U^\ZZ(\lie n^+)$ and 
$\overline{b_{\mathbf i,\mathbf a}}=b_{\mathbf i,\mathbf a}$, it follows from~\eqref{eq:def-bas} that $b_{\ii,\mathbf a}\in \mathbf B^{\pm up}$.

To prove that~$b_{\ii,\mathbf a}\in\mathbf B^{up}$, we use induction on~$m$. The induction base is trivial. 
For the inductive step, write $X_\ii^{\mathbf a}=\sum_{b\in \mathbf B^{up}} c_{\mathbf a,b} b$.
Since $\pm b_{\ii,\mathbf a}\in \mathbf B^{up}$, it follows that $c_{\mathbf a,b}\in K_-$ for all $b\not=b_0=\pm b_{\ii,\mathbf a}$
and $c_{\mathbf a,b_0}=\pm 1$. Thus, we only need to prove that $c_{\mathbf a,b_0}=1$ for some $b_0\in \mathbf B^{up}$.

Let $i=i_1$ and $a=a_1$. 
Since 
$
X_\ii^{\mathbf a}=
q^{-\frac12a(\alpha_{i},|\mathbf a'|_{\ii'})}E_{i}^{a}T_{i}(X_{\ii'}^{\mathbf a'})
$
where $\ii'=(i_2,\dots,i_m)$, $\mathbf a'=(a_2,\dots,a_m)$, $T_{i}(X_{\ii'}^{\mathbf a'})\in\ker\partial_i^{op}$ and $(\partial_i^{op})^{(top)}(E^a)
=(\partial_i^{op})^{(a)}(E^a)=1$, we have 
$$
(\partial_i^{op})^{(top)}(X_\ii^{\mathbf a})=(\partial_i^{op})^{(a)}(X_\ii^{\mathbf a})=T_i(X_{\ii'}^{\mathbf a'})
=\sum_{b\in \mathbf B^{up}\,:\,\ell_i(b^*)=a} c_{\mathbf a,b} (\partial_i^{op})^{(top)}(b),
$$
where we used Corollary~\ref{cor:top-decomp-B up}.
Since $T_i^{-1}((\partial_i^{op})^{(top)}(b))\in\mathbf B^{up}$ for any $b\in\mathbf B^{up}$ by Theorem~\ref{thm:T_i bas}, we obtain 
from the above that
$$
X_{\ii'}^{\mathbf a'}=T_i^{-1}((\partial_i^{op})^{(top)}(X_\ii^{\mathbf a}))
=\sum_{b\in \mathbf B^{up}\,:\, \ell_i(b^*)=a } c_{\mathbf a,b} T_i^{-1}((\partial_i^{op})^{(top)}(b))
$$
is the decomposition of~$X_{\ii'}^{\mathbf a'}$ with respect to~$\mathbf B^{up}$. By 
the induction hypothesis, $b_{\ii',\mathbf a''}\in\mathbf B^{up}$ for all $\mathbf a''\in\ZZ_{\ge0}^{m-1}$ and 
therefore precisely one of the $c_{\mathbf a,b}$, $\ell_{i}(b^*)=a$ is not in~$K_-$ and is equal to~$1$. 
\qed
\begin{remark}
Note that for any $w\in W$, $\ii\in R(w)$, $1\le k\le \ell(w)$ and $a\ge 0$ we have $X_{\ii,k}^a\in\mathbf B^{up}$.
\end{remark}

\subsection{Embeddings of bases and proof of Theorem~\ref{thm:T_w(B(w'))}}\label{subs:pf T_w(B(w'))}
Note that $U_q(w)\subset U_q(ww')$. Since $\mathbf B(w)=U_q(w)\cap \mathbf B^{up}$ and $\mathbf B(ww')=U_q(ww')\cap \mathbf B^{up}$,
the first assertion follows.
To establish the second assertion, it suffices to prove that for $i\in I$ such that $\ell(s_iw)=\ell(w)+1$ we have $T_i(\mathbf B(w))\subset\mathbf B(s_iw)$. The 
assumption implies that $T_i(\mathbf B(w))\subset U_q(\lie n^+)$ and therefore is contained in~$\mathbf B^{up}$ by Theorem~\ref{thm:T_i bas}.
Since $T_i(U_q(w))\subset U_q(s_i w)$, it follows that $T_i(\mathbf B(w))\subset U_q(s_i w)\cap \mathbf B^{up}=\mathbf B(s_i w)$.\qed

\section{Examples}\label{sec:examples}
In this section we compute bases $\mathbf B(w)$ for various Schubert cells $U_q(w)$. 
We denote by $E_{i_1^{a_1}\cdots i_r^{a_r}}$ the unique element~$b$ of~$\mathbf B^{up}$ for which 
$\partial_{\ii}^{(top)}(b)=\partial_{i_r}^{(a_r)}\cdots\partial_{i_1}^{(a_1)}(b)=1$ where $\ii=(i_1,\dots,i_r)$.
Note that this element also satisfies $(\partial_{\ii^{op}}^{op})^{(top)}(b)=(\partial_{i_1}^{op})^{(a_1)}\cdots (\partial_{i_r}^{op})^{(a_r)}(b)=1$.
We use the notation from~\S\ref{subs:pf-bas-from-Lus-lemma}.

\subsection{Bases for repetition free elements}\label{subs:rep-free}
We say that $w\in W$ is repetition-free if $w=s_{i_1}\dots s_{i_m}$ where $\ii=(i_1,\dots,i_m)\in R(w)$ is repetition free. Clearly,
if $w$ is repetition free then so is each $\ii\in R(w)$.
Such an 
element is called a {\em Coxeter element} if $\ell(w)=|I|$, that is, any $\ii\in R(w)$ is an ordering of~$I$.
\begin{lemma}\label{lem:Coxeter}
Let $w\in W$ be repetition free and let $\ii \in R(w)$. Then in the notation of~\S\ref{subs:some prop q S c}:
\begin{enumerate}[{\rm(a)}]
 \item\label{lem:Coxeter.a} $U_q(w)$ is a quantum plane of rank~$\ell(w)$ 
 with presentation 
\begin{equation}\label{eq:Coxeter.pres}
q^{-\frac12(\alpha_\ii^{(k)},\alpha_\ii^{(l)})}X_{\ii,l}X_{\ii,k}=q^{\frac12(\alpha_\ii^{(k)},\alpha_\ii^{(l)})}X_{\ii,k}X_{\ii,l},\qquad 1\le k<l\le \ell(w).
\end{equation}
\item\label{lem:Coxeter.b} $\mathbf B(w)=\{ X_\ii^{\mathbf a}\,:\,\mathbf a\in\ZZ_{\ge 0}^{\ell(w)}\}$.
\item\label{lem:Coxeter.c}
$X_{\ii,k}=E_{i_1^{m_{k1}}\cdots i_{k-1}^{m_{k,k-1}}i_k}=\ul E_{i_1}^{(m_{k1})}\cdots \ul E_{i_{k-1}}^{(m_{k,k-1})}(E_{i_k})$ where $m_{kr}=
-(\alpha_{i_r}^\vee,s_{i_{r+1}}\cdots s_{i_{k-1}}(\alpha_{i_k}))=d_{i_r}^{-1}(\alpha_\ii^{(k)},\alpha_\ii^{(r)})$.
\end{enumerate}
\end{lemma}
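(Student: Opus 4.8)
The plan is to prove the three parts of Lemma~\ref{lem:Coxeter} essentially in the order stated, deducing each from the general results of \S\ref{subs:some prop q S c}.

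For part~\eqref{lem:Coxeter.a}, I would start from the presentation of $U^\AA(w)$ given in Theorem~\ref{thm:two-forms}. The point is that when $\ii$ is repetition-free, for every pair $1\le k<l\le m$ with $k+1\le l-1$ the index $i_k$ does \emph{not} reappear among $i_{k+1},\dots,i_{l-1}$, so $s_{i_k}$ commutes past $s_{i_{k+1}}\cdots s_{i_{l-1}}$ acting on $\alpha_{i_l}$ in a controlled way; more to the point, I claim the right-hand side of~\eqref{eq:straight-rels'} vanishes. The cleanest way to see this is a degree/parity argument using Lemma~\ref{lem:Lambda pairing} together with the observation that $\ell_i(X_{\ii,k})\le 1$ for a repetition-free word, equivalently that $T_{i_1}\cdots T_{i_{k-1}}(E_{i_k})$, being a canonical basis element $E_{i_1^{m_{k1}}\cdots i_{k-1}^{m_{k,k-1}}i_k}$ of the form appearing in part~\eqref{lem:Coxeter.c}, cannot be divisible by $E_{i_l}$ on the right for $l$ outside the support — I would actually run this through the commutator Lemma~\ref{lem:commutator}: the term $T_{i_1}\cdots T_{i_k}(U^\AA(w'))$ with $w'=s_{i_{k+1}}\cdots s_{i_{l-1}}$ has top degree in $\alpha_{i_k}$ equal to zero since $i_k\notin\{i_{k+1},\dots,i_{l-1}\}$, which forces the $\la 1\ra_{q_{i_k}}$-multiple of basis monomials on the right of~\eqref{eq:straight-rels'} to be zero. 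Hence the defining relations reduce to~\eqref{eq:Coxeter.pres}, so $U_q(w)$ is a quantum affine space (``quantum plane of rank $\ell(w)$'') on the generators $X_{\ii,1},\dots,X_{\ii,\ell(w)}$.

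Part~\eqref{lem:Coxeter.b} then follows quickly. By Theorem~\ref{thm:bas-from-Lusztig-lemma}, $\mathbf B(\ii)=\{b_{\ii,\mathbf a}\}$ is characterised by bar-invariance and the unitriangularity $b_{\mathbf a}-X_\ii^{\mathbf a}\in\sum_{\mathbf a'\ne\mathbf a}q^{-1}\ZZ[q^{-1}]X_\ii^{\mathbf a'}$. But when the relations are the straightening relations~\eqref{eq:Coxeter.pres} with \emph{no} lower-order terms, Corollary~\ref{cor:skew-symm form} shows $X_\ii^{\mathbf a}X_\ii^{\mathbf b}=q^{-\frac12\Lambda(\mathbf a,\mathbf b)}X_\ii^{\mathbf a+\mathbf b}$ exactly, so $\overline{X_\ii^{\mathbf a}}=X_\ii^{\mathbf a}$ for every $\mathbf a$ (the $q_{\ii,\mathbf a}$ were chosen precisely so the monomials $X_{\ii,1}^{a_1}\cdots X_{\ii,m}^{a_m}$ become bar-invariant; alternatively invoke Proposition~\ref{prop:initial basis} with the now-empty poset below each $\mathbf a$). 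Thus $X_\ii^{\mathbf a}$ itself satisfies the defining conditions, so $b_{\ii,\mathbf a}=X_\ii^{\mathbf a}$ and $\mathbf B(w)=\mathbf B(\ii)=\{X_\ii^{\mathbf a}:\mathbf a\in\ZZ_{\ge0}^{\ell(w)}\}$, using Theorem~\ref{thm:inclusion of base} to know this is independent of $\ii$ and lies in $\mathbf B^{up}$.

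For part~\eqref{lem:Coxeter.c} I would argue by induction on $k$. We have $X_{\ii,k}=T_{i_1}\cdots T_{i_{k-1}}(E_{i_k})$, and I want to identify this with an iterated $\ul E$-operator applied to $E_{i_k}$. The identity $T_i=\tau$ on $U_i$ from Theorem~\ref{thm:T_i formula}, or more directly the computation $T_i(E_j)=E_{i^{-a_{ij}}j}=\ul E_i^{(-a_{ij})}(E_j)$ from Lemma~\ref{lem:gen-U_i}\eqref{lem:gen-U_i.b} together with Lemma~\ref{lem:prop-ul E_i}\eqref{lem:prop-ul E_i.c}, lets me peel off the leftmost $T_{i_1}$: since $i_1\notin\{i_2,\dots,i_k\}$ by repetition-freeness, $T_{i_2}\cdots T_{i_{k-1}}(E_{i_k})$ lies in $U_{i_1}=\ker\partial_{i_1}$ and is a sum of $\ul E_{i_1}$-weight vectors, and applying $T_{i_1}$ via Theorem~\ref{thm:T_i formula} replaces the leading role by $\ul E_{i_1}^{(m_{k1})}$ with $m_{k1}=-(\alpha_{i_1}^\vee,\deg(T_{i_2}\cdots T_{i_{k-1}}(E_{i_k})))=-(\alpha_{i_1}^\vee,s_{i_2}\cdots s_{i_{k-1}}\alpha_{i_k})$. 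Iterating gives $X_{\ii,k}=\ul E_{i_1}^{(m_{k1})}\cdots\ul E_{i_{k-1}}^{(m_{k,k-1})}(E_{i_k})$, and the equality $m_{kr}=d_{i_r}^{-1}(\alpha_\ii^{(k)},\alpha_\ii^{(r)})$ is the identity $(\alpha_{i_r},s_{i_{r+1}}\cdots s_{i_{k-1}}\alpha_{i_k})=-(\alpha_\ii^{(k)},\alpha_\ii^{(r)})$ already noted in the proof of Theorem~\ref{thm:two-forms} (apply $s_{i_1}\cdots s_{i_{r-1}}$ and use $W$-invariance of the form). That $\ul E_{i_1}^{(m_{k1})}\cdots\ul E_{i_{k-1}}^{(m_{k,k-1})}(E_{i_k})$ equals $E_{i_1^{m_{k1}}\cdots i_{k-1}^{m_{k,k-1}}i_k}$ in the notation of \S\ref{sec:examples} is then a matter of checking the $\partial^{(top)}$-characterisation, which follows from the ``in particular'' clause of Theorem~\ref{thm:T_i formula} ($\partial_i^{(top)}T_i(x)=(\partial_i^{op})^{(top)}(x)$) applied repeatedly, since $E_{i_k}$ is annihilated by the relevant $\partial$'s.

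The main obstacle is part~\eqref{lem:Coxeter.a}: proving that the structure constants on the right of~\eqref{eq:straight-rels'} genuinely vanish for a repetition-free word. Everything else is bookkeeping on top of Theorems~\ref{thm:bas-from-Lusztig-lemma}, \ref{thm:inclusion of base}, \ref{thm:T_i formula} and Lemma~\ref{lem:commutator}, but the vanishing requires pinning down exactly why no lower-order monomials can appear, and the slick argument is the top-$\alpha_{i_k}$-degree observation inside Lemma~\ref{lem:commutator}: the commutator lands in $\la 1\ra_{q_{i_k}}T_{i_1}\cdots T_{i_k}(U^\AA(w'))$ with $w'$ supported away from $i_k$, hence in $\ker\partial_{i_k}\cap\ker\partial_{i_k}^{op}$ up to the explicit $X_{\ii,k}$-factor, and comparing degrees of $\alpha_{i_k}$ on both sides of the would-be relation forces the correction term to vanish.
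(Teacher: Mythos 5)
Your proof follows the paper's approach in all three parts: (a) a degree argument forcing the correction terms in \eqref{eq:straight-rels'} to vanish, (b) observing that \eqref{eq:Coxeter.pres} implies $\overline{X_\ii^{\mathbf a}}=X_\ii^{\mathbf a}$ so that $b_{\ii,\mathbf a}=X_\ii^{\mathbf a}$, and applying Theorems~\ref{thm:bas-from-Lusztig-lemma} and~\ref{thm:inclusion of base}, and (c) iterated application of Theorem~\ref{thm:T_i formula}. Two details deserve sharpening. In (a), the claim that $T_{i_1}\cdots T_{i_k}(U^\AA(w'))$ has zero degree in $\alpha_{i_k}$ is not quite right as stated: for $\ii=(2,1,3)$ in type $A_3$ one has $\alpha_\ii^{(2)}=\alpha_1+\alpha_2$, whose $\alpha_{i_1}=\alpha_2$-coefficient is $1$. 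What forces the right-hand side of \eqref{eq:straight-rels'} to vanish is that for repetition-free $\ii$ the roots $\alpha_\ii^{(1)},\dots,\alpha_\ii^{(m)}$ are $\ZZ$-linearly independent (their matrix in the basis $\alpha_{i_1},\dots,\alpha_{i_m}$ is unitriangular with $1$'s on the diagonal), so $\alpha_\ii^{(k)}+\alpha_\ii^{(l)}$ cannot lie in the submonoid generated by the $\alpha_\ii^{(r)}$ with $k<r<l$; equivalently, compare $\alpha_{i_l}$-coefficients, which are $1$ on the left and $0$ on the right. In (c), to deduce $T_{i_r}(u_r)=\ul E_{i_r}^{(m_{kr})}(u_r)$ from Theorem~\ref{thm:T_i formula} you need $u_r\in{}_{i_r}U\cap U_{i_r}$, i.e.\ $\partial_{i_r}^{op}(u_r)=0$ and not merely $u_r\in U_{i_r}$; your phrase ``is a sum of $\ul E_{i_1}$-weight vectors'' does not yield this, whereas the needed fact follows because $\deg u_r=s_{i_{r+1}}\cdots s_{i_{k-1}}(\alpha_{i_k})$ has zero $\alpha_{i_r}$-coefficient for repetition-free $\ii$, and that observation is the crux of the induction.
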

\begin{proof}
Note that the coefficient of $\alpha_{i_k}$ in every
element of the submonoid of $Q^+$ generated by $\alpha_\ii^{(r)}$, $k<r<l$ is zero. Since the algebra $U_q(w)$ is $Q^+$-graded,
it follows that the right hand side of~\eqref{eq:straight-rels'} is zero. This proves part~\eqref{lem:Coxeter.a}.
In particular, it follows that $\overline{X_\ii^{\mathbf a}}=X_{\ii}^{\mathbf a}$ for all $\mathbf a\in\ZZ_{\ge 0}^{\ell(w)}$,
hence $b_{\ii,\mathbf a}=X_{\ii}^{\mathbf a}$. To prove~\eqref{lem:Coxeter.b} it remains to apply Theorems~\ref{thm:bas-from-Lusztig-lemma} and~\ref{thm:inclusion of base}.
To prove part~\eqref{lem:Coxeter.c}, let $u_r=T_{i_{r+1}}\cdots T_{i_{k-1}}(E_{i_k})$ and observe that 
the coefficient of~$\alpha_{i_r}$ in~$\deg u_r=s_{i_{r+1}}\cdots s_{i_{k-1}}(\alpha_{i_k})$ is zero if~$\ii$ is 
repetition free. Therefore, $u_r\in {}_{i_r} U\cap U_{i_r}$, $T_{i_r}(u_r)=\ul E_i^{(-(\alpha_{i_r}^\vee,\deg u_r))}(u_r)$ by Theorem~\ref{thm:T_i formula}
and so $\ell_i(T_{i_r}(u_r))=-(\alpha_{i_r}^\vee,\deg u_r)$. The assertion 
now follows by induction on $k-r$.
\end{proof}
\begin{remark}
The assertion of Lemma~\ref{lem:Coxeter}\eqref{lem:Coxeter.a} holds for any $w\in W$, $\ii\in R(w)$ and $1\le k<l\le \ell(w)$ such that 
the subsequence $(i_k,\dots,i_l)$ is repetition free.
\end{remark}

\subsection{Bases for elements with a single repetition}\label{subs:single rep}
We say that $w\in W$ is an element with a single repetition if there exists 
$\ii=(i_1,\dots,i_m)\in R(w)$ with $i_k\not=i_l$, $k<l$ unless $k=r$ and $l=r'$ for some $1\le r<r'\le m$. 
Clearly, all $\ii'\in R(w)$ have that property.
\begin{proposition}\label{prop:single repetition presentation}
Let $w\in W$ be an element with a single repetition and let $\ii=(i_1,\dots,i_m)\in R(w)$,
where the $i_k$, $k\not=r,r'$, $1\le k\le m$ are distinct and $i_r=i_{r'}=i$, $1\le r<r'\le m$. Then 
$U_q(w)$ is generated by the $X_{\ii,k}$, $1\le k\le m$ where 
\begin{equation}\label{eq:string-nomenclature-single rep}
X_{\ii,k}=\begin{cases}
           E_{i_1^{m_{k,1}}\cdots i_{k-1}^{m_{k,k-1}}i_k},&k\not=r'\\
           E_{i_1^{m_{r',1}}\cdots i_{r-1}^{m_{r',r-1}}i_r^{1+m_{r',r}}\cdots i_{r'-1}^{m_{r',r'-1}}},&k=r'
          \end{cases}
\end{equation}
with 
$m_{kl}=
-(\alpha_{i_l}^\vee,s_{i_{l+1}}\cdots s_{i_{l-1}}(\alpha_{i_l}))=d_{i_l}^{-1}(\alpha_\ii^{(k)},\alpha_\ii^{(l)})$,
subject to the relations 
\begin{equation}\label{eq:single repetition presentation}
\begin{aligned}
&q^{-\frac12(\alpha_\ii^{(k)},\alpha_\ii^{(l)})}X_{\ii,l}X_{\ii,k}=q^{\frac12(\alpha_\ii^{(k)},\alpha_\ii^{(l)})}X_{\ii,k}X_{\ii,l},\qquad 1\le k<l\le \ell(w),
\, k\not=r,\,l\not=r'
\\
&q^{-\frac12(\alpha_\ii^{(r')},\alpha_\ii^{(r)})}X_{\mathbf i,r'}X_{\mathbf i,r}=
q^{\frac12(\alpha_\ii^{(r)},\alpha_\ii^{(r')})} X_{\mathbf i,r}X_{\mathbf i,r'}+(q_{i}-q_{i}^{-1})X_\ii^{\mathbf n(r,r')},
\quad \mathbf n(r,r')=-\sum_{k=r+1}^{r'-1} a_{i_ki}\mathbf e_k.
\end{aligned}
\end{equation}
\end{proposition}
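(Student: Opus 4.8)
The plan is to obtain Proposition~\ref{prop:single repetition presentation} by specializing the presentation of Theorem~\ref{thm:two-forms} to a single-repetition word and combining it with the repetition-free computations of Lemma~\ref{lem:Coxeter} and the commutator identity of Lemma~\ref{lem:commutator}. That $U_q(w)$ is generated by the $X_{\ii,k}$ is immediate from Lemma~\ref{lem:lus PBW elt}/Proposition~\ref{prop:U(w)-U(ii)}, and since $U_q(w)$ is the base change to $\kk$ of the $\AA$-algebra $U^\AA(w)$ presented by \eqref{eq:straight-rels'}, it remains to check three things: (i) for every pair $(k,l)\ne(r,r')$ the right-hand side of \eqref{eq:straight-rels'} vanishes, so the relation takes the Coxeter form; (ii) for $(k,l)=(r,r')$ the right-hand side reduces to $(q_i-q_i^{-1})X_\ii^{\mathbf n(r,r')}$; (iii) the formulas \eqref{eq:string-nomenclature-single rep}. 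I would treat (i) first: if the subword $(i_k,\dots,i_l)$ is repetition-free — equivalently one does not have simultaneously $k\le r$ and $r'\le l$ — then by the $Q^+$-grading argument already used in Lemma~\ref{lem:Coxeter}\eqref{lem:Coxeter.a} no monomial $X_\ii^{\mathbf a}$ supported on $\{k+1,\dots,l-1\}$ has degree $\alpha_\ii^{(k)}+\alpha_\ii^{(l)}$, so the right-hand side is $0$. Otherwise $(i_k,\dots,i_l)$ contains the repetition but $(k,l)\ne(r,r')$, so $l>r'$ or $k<r$. When $l>r'$ the letter $i_l$ occurs in $\ii$ only at position $l$, hence $\alpha_{i_l}$ has coefficient $0$ in $\alpha_\ii^{(t)}$ for $t<l$ but coefficient $1$ in $\alpha_\ii^{(l)}$, so again no $X_\ii^{\mathbf a}$ supported on $\{k+1,\dots,l-1\}$ can have that degree and the right-hand side vanishes. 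The case $k<r$, $l=r'$ reduces to this one via the anti-automorphism ${}^*$: since ${}^*$ fixes the $E_i$ and $(T_i(x))^*=T_i^{-1}(x^*)$, it carries $U_q(w)$ to a single-repetition Schubert cell for $w^{-1}$ in which this pair acquires an exposed right end; alternatively one repeats the grading argument after reversing $\ii$.

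For (ii) I would apply $T_{i_1}\cdots T_{i_{r-1}}$ to Lemma~\ref{lem:commutator} with $w'=s_{i_{r+1}}\cdots s_{i_{r'-1}}$ and both indices equal to $i$, which is legitimate because $s_iw's_i=s_{i_r}\cdots s_{i_{r'}}$ is a reduced subword of $\ii$, so $\ell(s_iw's_i)=\ell(w')+2$. Using $(\alpha_i,w'\alpha_i)=-(\alpha_\ii^{(r)},\alpha_\ii^{(r')})$ and $i_r=i$ this gives
\[
q^{-\frac12(\alpha_\ii^{(r)},\alpha_\ii^{(r')})}X_{\ii,r'}X_{\ii,r}-q^{\frac12(\alpha_\ii^{(r)},\alpha_\ii^{(r')})}X_{\ii,r}X_{\ii,r'}\in (q_i-q_i^{-1})\,T_{i_1}\cdots T_{i_r}\bigl(U^\AA(w')\bigr).
\]
Since $w'$ is repetition-free, Proposition~\ref{prop:U(w)-U(ii)} (with Lemma~\ref{lem:lus PBW elt}) identifies $T_{i_1}\cdots T_{i_r}(U^\AA(w'))$ with the $\AA$-span of $\{X_\ii^{\mathbf a}:\supp\mathbf a\subseteq\{r+1,\dots,r'-1\}\}$, and because the letters $i_{r+1},\dots,i_{r'-1}$ are pairwise distinct and occur once in $\ii$, the map $\mathbf a\mapsto|\mathbf a|_\ii$ is injective on $\ZZ_{\ge0}^{\{r+1,\dots,r'-1\}}$. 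A telescoping computation gives $w'(\alpha_i)-\alpha_i=\sum_{k=r+1}^{r'-1}(-a_{i_k i})\,s_{i_{r+1}}\cdots s_{i_{k-1}}(\alpha_{i_k})$, whence $\alpha_\ii^{(r)}+\alpha_\ii^{(r')}=|\mathbf n(r,r')|_\ii$; so the only monomial that can occur is $X_\ii^{\mathbf n(r,r')}$, with coefficient in $(q_i-q_i^{-1})\AA_0$. That this coefficient equals exactly $q_i-q_i^{-1}$, i.e.\ the normalization in \eqref{eq:single repetition presentation}, I would extract by the direct evaluation of the commutator via \eqref{eq:partial-def} used in the proof of Theorem~\ref{thm:two-forms} — namely $K_i[F_i,T_{w'}(E_i)]=-(1-q_i^{-2})q^{-\frac12(\alpha_i,w'\alpha_i)}\partial_i^{op}(T_{w'}(E_i))$ — together with the normalization \eqref{eq:q_i a defn} and the duality \eqref{eq:dual PBW}.

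For (iii): when $k\ne r'$ the prefix $(i_1,\dots,i_k)$ is repetition-free, so Lemma~\ref{lem:Coxeter}\eqref{lem:Coxeter.c} and the remark following it apply and give $X_{\ii,k}=\ul E_{i_1}^{(m_{k,1})}\cdots\ul E_{i_{k-1}}^{(m_{k,k-1})}(E_{i_k})=E_{i_1^{m_{k,1}}\cdots i_{k-1}^{m_{k,k-1}}i_k}$ with $m_{kl}=d_{i_l}^{-1}(\alpha_\ii^{(k)},\alpha_\ii^{(l)})$. For $k=r'$ one has $X_{\ii,r'}=T_{i_1}\cdots T_{i_{r'-1}}(E_i)\in\mathbf B^{up}$ (the Remark at the end of \S\ref{subs:pf inclusion of base}), and I would run the same inductive procedure as in the proof of Lemma~\ref{lem:Coxeter}\eqref{lem:Coxeter.c}: strip off $T_{i_t}$ one step at a time, using Theorem~\ref{thm:T_i formula} in the forms $\partial_{i_t}^{(top)}T_{i_t}=(\partial_{i_t}^{op})^{(top)}$ and $T_i(x)=\ul E_i^{(-(\alpha_i^\vee,\gamma))}(x)$ for $x\in{}_iU\cap U_i\cap U_q(\lie n^+)_\gamma$, to see that the top power of $\partial_{i_t}$ applied at step $t$ equals $-(\alpha_{i_t}^\vee,\deg T_{i_{t+1}}\cdots T_{i_{r'-1}}(E_i))$. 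Since this intermediate degree $s_{i_{t+1}}\cdots s_{i_{r'-1}}(\alpha_i)$ already carries one copy of $\alpha_i$ for every $t<r$ (because $i_r=i$), the resulting exponent is $m_{r',t}$ for $t\ne r$ and $1+m_{r',r}$ for $t=r$, which is exactly \eqref{eq:string-nomenclature-single rep}.

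The main obstacle I expect is the bookkeeping in step (iii) for $k=r'$: one must verify that each intermediate element $T_{i_{t+1}}\cdots T_{i_{r'-1}}(E_i)$ lies in the kernels needed to invoke Theorem~\ref{thm:T_i formula} and that the $\partial$-tops compose correctly along the whole string down to degree~$0$. This is the exact analogue of the slightly delicate argument in Lemma~\ref{lem:Coxeter}\eqref{lem:Coxeter.c}, now without the simplification that the repeated index is "fresh" at position~$r$. A secondary point is pinning down the precise scalar in step~(ii), which requires the explicit commutator evaluation rather than only the structural conclusion of Theorem~\ref{thm:two-forms}.
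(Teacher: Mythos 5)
Your plan follows the paper's route: reduce via Theorem~\ref{thm:two-forms}, derive \eqref{eq:string-nomenclature-single rep} by stripping off $T_{i_t}$ using Lemma~\ref{lem:Coxeter}\eqref{lem:Coxeter.c} and Theorem~\ref{thm:T_i formula}, and obtain the $(r,r')$-relation from Lemma~\ref{lem:commutator} together with a degree count. However, one substep in (i) fails as stated. For $k<r$, $l=r'$, neither of your suggestions closes the gap: since $(T_i(x))^*=T_i^{-1}(x^*)$, the anti-involution ${}^*$ sends $U_q(w)$ to $T_{w^{-1}}^{-1}(U_q(\lie b^-))\cap U_q(\lie n^+)$, which is in general \emph{not} of the form $U_q(w')$ (this is exactly why the bi-Schubert algebras of Remark~\ref{rem:intersect sc} are a nontrivial notion), and reversing $\ii$ gives a reduced word for $w^{-1}$ whose PBW roots form a different list, so the $\alpha_{i_l}$-coefficient argument does not transport. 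What actually works — uniformly for every pair $(k,l)\ne(r,r')$ — is the one-dimensionality of the space of $\ZZ$-linear dependencies among $\alpha_\ii^{(1)},\dots,\alpha_\ii^{(m)}$: the set $\{\alpha_\ii^{(t)}\}_{t\ne r'}$ consists of $m-1$ roots in the $(m-1)$-dimensional span of $\{\alpha_{i_1},\dots,\alpha_{i_m}\}$ and is a basis, because for $t\ne r'$ the letter $i_t$ occurs for the first time at position $t$, so $\alpha_\ii^{(t)}=\alpha_{i_t}+(\text{terms in }\alpha_{i_s},\, s<t)$ is triangular. Hence the unique dependency up to scaling is \eqref{eq:deg-eq}, any putative relation $\alpha_\ii^{(k)}+\alpha_\ii^{(l)}=\sum_{k<t<l}a_t\alpha_\ii^{(t)}$ with $a_t\ge0$ is a nonzero multiple of it, and comparing the coefficients of $\alpha_\ii^{(r)}$ and $\alpha_\ii^{(r')}$ forces $(k,l)=(r,r')$. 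This is exactly the content supplied by the second assertion of Lemma~\ref{lem:root-comb}.

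Two smaller comparisons. For pinning the scalar in step (ii), your ``direct evaluation'' of the commutator via \eqref{eq:partial-def} and the $q_{\ii,\mathbf a}$-normalization is workable but would require tracking normalizations through each $T_{i_t}$; the paper instead normalizes $r=1$, applies $\partial_i^{(\ell_i(X_\ii^{\mathbf n(r,r')}))}$ to both sides, and uses that $\partial_i^{(top)}$ sends $\mathbf B^{up}$ into $\mathbf B^{up}$ (Lemma~\ref{lem:tops}) while $X_{\ii,r'}$ and $X_\ii^{\mathbf n(r,r')}$ already lie in $\mathbf B^{up}$ by Theorem~\ref{thm:inclusion of base}, so the undetermined scalar compares two basis elements and equals $1$. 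Finally, the bookkeeping difficulty you flag in step (iii) at $t=r$ is the genuinely delicate point: $T_{i_{r+1}}\cdots T_{i_{r'-1}}(E_i)$ lies in $U_i$ but not in ${}_iU\cap U_i$ (its $\ul E_i^{op}$-string length is $1$), so the simplified formula $T_i(x)=\ul E_i^{(-(\alpha_i^\vee,\gamma))}(x)$ does not apply; the paper writes it as $(2+m_{r',r})_{q_i}^{-1}\ul E_i^{op}(E_{i_{r+1}^{m_{r',r+1}}\cdots i_{r'-1}^{m_{r',r'-1}}})+x_0$ with $x_0\in{}_iU\cap U_i$, using that $(\partial_i^{op})^2$ kills it, and then invokes Theorem~\ref{thm:T_i formula} in its full form.
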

\begin{proof}
Clearly, the sequences $(i_1,\dots,i_{r'-1})$ and $(i_{r+1},\dots,i_m)$ are repetition free. 
In particular, for $1\le k\le r'-1$ we have 
$X_{\ii,k}=E_{i_1^{m_{k1}}\cdots i_{k-1}^{m_{k,k-1}}i_k}$ by Lemma~\ref{lem:Coxeter}\eqref{lem:Coxeter.c}.
Furthermore,
$$
X_{\ii,r'}=T_{i_1}\cdots T_{i_{r-1}}T_iT_{i_{r+1}}\cdots T_{i_{r'-1}}(E_i)=T_{i_1}\cdots T_{i_{r-1}}T_i( E_{i_{r+1}^{m_{r',r+1}}\cdots i_{r'-1}^{m_{r',r'-1}}i})
$$
where $i=i_r=i_{r'}$. Clearly, $(\partial_i^{op})^2(E_{i_{r+1}^{m_{r',r+1}}\cdots i_{r'-1}^{m_{r',r'-1}}i})=0$, hence 
$$
E_{i_{r+1}^{m_{r',r+1}}\cdots i_{r'-1}^{m_{r',r'-1}}i}=(2+m_{r',r})_{q_i}{}^{-1}\ul E_i^{op}(E_{i_{r+1}^{m_{r',r+1}}\cdots i_{r'-1}^{m_{r',r'-1}}})+
x_0$$ 
where $x_0\in {}_i U\cap U_i$. This implies that 
$$
T_i(E_{i_{r+1}^{m_{r',r+1}}\cdots i_{r'-1}^{m_{r',r'-1}}i})=(2+m_{r',r})_{q_i}^{-1}
\ul E_i^{(1+m_{r',r})}(E_{i_{r+1}^{m_{r',r+1}}\cdots i_{r'-1}^{m_{r',r'-1}}})
+\ul E_i^{(m_{r',r})}(x_0),
$$
and so $T_i(E_{i_{r+1}^{m_{r',r+1}}\cdots i_{r'-1}^{m_{r',r'-1}}i})=E_{i^{1+m_{r',r}} i_{r+1}^{m_{r',r+1}}\cdots i_{r'-1}^{m_{r',r'-1}}}$, whence 
$$
X_{\ii,r'}=E_{i_1^{m_{r',1}}\cdots i_{r-1}^{m_{r',r-1}}i^{1+m_{r',r}} i_{r+1}^{m_{r',r+1}}\cdots i_{r'-1}^{m_{r',r'-1}}}.
$$
Since
the sequence $(i_{r+1},\dots,i_k)$, $r'+1\le k\le m$ is repetition free, we have
$T_{i_{r+1}}\cdots T_{i_{k-1}}(E_{i_k})=E_{i_{r+1}^{m_{k,r+1}}\cdots i_{k-1}^{m_{k,k-1}}i_k}$
by Lemma~\ref{lem:Coxeter}\eqref{lem:Coxeter.c} and hence is in~${}_i U\cap U_i$. Then 
$X_{\ii,k}=E_{i_1^{m_{k,1}}\cdots i_{k-1}^{m_{k,k-1}}i_k}$ by Theorem~\ref{thm:T_i formula}.
This proves~\eqref{eq:string-nomenclature-single rep}. The first identity in~\eqref{eq:single repetition presentation} 
is proved similarly to~\eqref{eq:Coxeter.pres}. To prove the second, we need the following combinatorial fact similar to~\cite{BR}*{Lemma~4.8}.
\begin{lemma}\label{lem:root-comb}
Let $w\in W$ and suppose that $\ii=(i_1,\dots,i_m)\in R(w)$ has a single repetition $i_r=i_{r'}=i$. Then   
\begin{equation}\label{eq:deg-eq}
\alpha_\ii^{(r)}+\alpha_\ii^{(r')}=-\sum_{k=r+1}^{r'-1} a_{i_k i}\alpha_\ii^{(k)}
\end{equation}
and any proper subset of $\{\alpha_\ii^{(k)}\}_{r\le k\le r'}$ is linearly independent.
\end{lemma}
\begin{proof}
Fix $r<k\le r'$. Then 
\begin{multline}\label{eq:telescope.a}
-\sum_{t=r+1}^{k-1} a_{i_t,i}\alpha_\ii^{(t)}=-\sum_{t=r+1}^{k-1} (\alpha_{i_t}^\vee,\alpha_i)s_{i_1}\cdots s_{i_{t-1}}(\alpha_{i_t})
=\sum_{t=r+1}^{k-1} s_{i_1}\cdots s_{i_{t-1}}(s_{i_t}(\alpha_i)-\alpha_i)
\\
=s_{i_1}\cdots s_{i_{k-1}}(\alpha_i)-s_{i_1}\cdots s_{i_r}(\alpha_i)
=s_{i_1}\cdots s_{i_{k-1}}(\alpha_i)+\alpha_\ii^{(r)}.
\end{multline}
The first assertion of the Lemma is now immediate. To prove the second, suppose that 
$\sum_{t=r}^{r'} c_t \alpha_\ii^{(t)}=0$. Using~\eqref{eq:deg-eq} we may assume that~$c_{r'}=0$ and let $r< k<r'$ be maximal 
such that $c_k\not=0$. Then $\alpha_{i_k}$ occurs with coefficient~$1$
in~$\alpha_\ii^{(k)}$ and does not occur in~$\alpha_\ii^{(t)}$ with~$t<k$, whence $c_k=0$ which contradicts with the choice of~$k$. 
\end{proof}
It follows from~\eqref{eq:straight-rels'} and Lemma~\ref{lem:root-comb} that 
\begin{equation}\label{eq:tmp-determ-c}
q^{-\frac12(\alpha_\ii^{(r')},\alpha_\ii^{(r)})}X_{\mathbf i,r'}X_{\mathbf i,r}-
q^{\frac12(\alpha_\ii^{(r)},\alpha_\ii^{(r')})} X_{\mathbf i,r}X_{\mathbf i,r'}=(q_{i}-q_{i}^{-1})c X_\ii^{\mathbf n(r,r')},
\end{equation}
for some
$c\in \AA_0$. We may assume, without loss of 
generality, that $r=1$. Then $\ell_i(X_{\ii,k})=m_{k,1}$, $2\le k\le r'-1$,
hence by~\eqref{eq:deg-eq}
$$
\ell_i(X_{\ii}^{\mathbf n(r,r')})=-\sum_{k=2}^{r'-1} m_{k,1}a_{i_k,i}=-\sum_{k=2}^{r'-1} (\alpha_i^\vee,\alpha_\ii^{(k)})a_{i_k,i}
=(\alpha_i^\vee,\alpha_i+\alpha_\ii^{(r')})=
m_{r',1}+2.
$$
Applying $\partial_i^{(m_{r',1}+2)}$ to both sides of~\eqref{eq:tmp-determ-c} and taking into account that $\ell_i(X_{\ii,r'})=m_{r',1}+1$ we obtain 
$$
\partial_i^{(top)}X_{\mathbf i,r'}=c\partial_i^{(top)}X_\ii^{\mathbf n(r,r')}.
$$
Since $X_{\ii,r'}$ and $X_\ii^{\mathbf n(r,r')}$ are in~$\mathbf B^{up}$ this implies that~$c=1$.
\end{proof}
\begin{theorem}\label{thm:single rep basis}
Let $w\in W$ and suppose that $\ii=(i_1,\dots,i_m)\in R(w)$ has a single repetition $i_r=i_{r'}=i$. Then 
$$
\mathbf B(w)=\{ q^{\frac12 a\Lambda(\mathbf a,\mathbf e_r+\mathbf e_{r'})}X_\ii^{\mathbf a}Y_\ii^a 
\,:\, \mathbf a=(a_1,\dots,a_m)\in\ZZ_{\ge 0}^{m},\,\min(a_r,a_{r'})=0,\, a\in\ZZ_{\ge 0}\}
$$
where $\Lambda=\Lambda_\ii$ is defined as in Corollary~\ref{cor:skew-symm form} and 
\begin{equation}\label{eq:Y-defn}
Y_\ii =q^{\frac12(\alpha_\ii^{(r)},\alpha_\ii^{(r')})} X_{\mathbf i,r}X_{\mathbf i,r'}-q_{i}^{-1} X_\ii^{\mathbf n(r,r')}
=E_{i_1^{m_{r',1}}\cdots i_{r-1}^{m_{r',r-1}}i_r^{1+m_{r',r}}\cdots i_{r'-1}^{m_{r',r'-1}}i_1^{m_{r,1}}\cdots i_{r-1}^{m_{r,r-1}}i_r}.
\end{equation}
\end{theorem}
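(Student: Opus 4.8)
The plan is to identify the listed elements with the bar-invariant elements $b_{\ii,\mathbf b}$ of Theorem~\ref{thm:bas-from-Lusztig-lemma} and then invoke Theorem~\ref{thm:inclusion of base}. Write $\mathbf n=\mathbf n(r,r')$ and recall from~\eqref{eq:q_i a defn} that $X_\ii^{\mathbf e_r+\mathbf e_{r'}}=q^{\frac12(\alpha_\ii^{(r)},\alpha_\ii^{(r')})}X_{\ii,r}X_{\ii,r'}$, so that~\eqref{eq:Y-defn} reads $Y_\ii=X_\ii^{\mathbf e_r+\mathbf e_{r'}}-q_i^{-1}X_\ii^{\mathbf n}$. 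First I would dispose of $Y_\ii$ itself. Since $\mathcal A_{r,r'}$ is the only nonempty $\mathcal A_{k,l}$, the vector $\mathbf n$ is $\prec$-minimal and $\mathbf n\prec\mathbf e_r+\mathbf e_{r'}$; thus $\overline{X_\ii^{\mathbf n}}=X_\ii^{\mathbf n}$ by Proposition~\ref{prop:initial basis}, and combining $\overline{X_{\ii,r}X_{\ii,r'}}=X_{\ii,r'}X_{\ii,r}$ with the straightening relation in~\eqref{eq:single repetition presentation} gives $\overline{Y_\ii}=Y_\ii$. As $Y_\ii-X_\ii^{\mathbf e_r+\mathbf e_{r'}}=-q_i^{-1}X_\ii^{\mathbf n}\in q^{-1}\ZZ[q^{-1}]X_\ii^{\mathbf n}$, Theorem~\ref{thm:bas-from-Lusztig-lemma} yields $Y_\ii=b_{\ii,\mathbf e_r+\mathbf e_{r'}}$ and Theorem~\ref{thm:inclusion of base} yields $Y_\ii\in\mathbf B^{up}$; the explicit string datum in~\eqref{eq:Y-defn} then follows by evaluating $\partial^{(top)}$ along the indicated string, using Corollary~\ref{cor:top-decomp-B up}, the formula~\eqref{eq:string-nomenclature-single rep} for $X_{\ii,r'}$, and $\partial_i^{(top)}(X_{\ii,r})=1$.

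The core of the argument is a commutation lemma: for every $1\le k\le m$ one has $X_{\ii,k}Y_\ii=q^{-\Lambda_\ii(\mathbf e_k,\,\mathbf e_r+\mathbf e_{r'})}Y_\ii X_{\ii,k}$. For $k\notin\{r,r'\}$ this is checked by commuting $X_{\ii,k}$ through $X_{\ii,r}X_{\ii,r'}$ and through $X_\ii^{\mathbf n}$ using only quasi-commutations (no instance of the straightening pair arises) and matching the resulting $q$-exponents with the help of the telescoping identity~\eqref{eq:telescope.a}, Lemma~\ref{lem:Lambda pairing}, and $W$-invariance of $(\cdot,\cdot)$; for $k\in\{r,r'\}$ one substitutes the straightening relation once and observes that the $X_\ii^{\mathbf n}$-corrections cancel. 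Two by-products are needed later: $\Lambda_\ii(\mathbf e_r+\mathbf e_{r'},\mathbf n)=0$ (so $X_\ii^{\mathbf e_r+\mathbf e_{r'}}$ and $X_\ii^{\mathbf n}$ commute), and, since $\mathbf d\preceq\mathbf c$ forces $\mathbf c-\mathbf d\in\ZZ_{\ge0}(\mathbf e_r+\mathbf e_{r'}-\mathbf n)$ while $\Lambda_\ii(\mathbf e_r+\mathbf e_{r'}-\mathbf n,\,\mathbf e_r+\mathbf e_{r'})=0$, the integer $\Lambda_\ii(\mathbf d,\,\mathbf e_r+\mathbf e_{r'})$ is constant on each $\prec$-interval; consequently $Y_\ii$ quasi-commutes with every $b_{\ii,\mathbf c}$, namely $Y_\ii b_{\ii,\mathbf c}=q^{\Lambda_\ii(\mathbf c,\,\mathbf e_r+\mathbf e_{r'})}b_{\ii,\mathbf c}Y_\ii$.

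Now set $v_{\mathbf a,a}=q^{\frac12 a\Lambda_\ii(\mathbf a,\,\mathbf e_r+\mathbf e_{r'})}X_\ii^{\mathbf a}Y_\ii^{a}$ for $\mathbf a\in\ZZ_{\ge0}^m$ with $\min(a_r,a_{r'})=0$ and $a\ge0$. Such $\mathbf a$ are $\prec$-minimal, so $\overline{X_\ii^{\mathbf a}}=X_\ii^{\mathbf a}$; together with $\overline{Y_\ii^a}=Y_\ii^a$ and the commutation lemma (which gives $Y_\ii^a X_\ii^{\mathbf a}=q^{a\Lambda_\ii(\mathbf a,\,\mathbf e_r+\mathbf e_{r'})}X_\ii^{\mathbf a}Y_\ii^a$) this shows $\overline{v_{\mathbf a,a}}=v_{\mathbf a,a}$. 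Moreover $(\mathbf a,a)\mapsto\mathbf b:=\mathbf a+a(\mathbf e_r+\mathbf e_{r'})$ is a bijection onto $\ZZ_{\ge0}^m$. By Theorems~\ref{thm:bas-from-Lusztig-lemma} and~\ref{thm:inclusion of base} it therefore suffices to prove $v_{\mathbf a,a}-X_\ii^{\mathbf b}\in\sum_{\mathbf b'\prec\mathbf b}q^{-1}\ZZ[q^{-1}]X_\ii^{\mathbf b'}$, for then $v_{\mathbf a,a}=b_{\ii,\mathbf b}$ and $\{v_{\mathbf a,a}\}=\{b_{\ii,\mathbf b}\}_{\mathbf b\in\ZZ_{\ge0}^m}=\mathbf B(w)$. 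I would argue by induction on $a$: for $a=0$ the monomial $X_\ii^{\mathbf a}$ is $\prec$-minimal and bar-invariant, hence equals $b_{\ii,\mathbf a}$; for the step, quasi-centrality of $Y_\ii$ together with the inductive hypothesis $v_{\mathbf a,a}=b_{\ii,\mathbf c}$ (with $\mathbf c=\mathbf a+a(\mathbf e_r+\mathbf e_{r'})$) reduces the claim to the multiplication formula $q^{\frac12\Lambda_\ii(\mathbf c,\,\mathbf e_r+\mathbf e_{r'})}b_{\ii,\mathbf c}Y_\ii=b_{\ii,\mathbf c+\mathbf e_r+\mathbf e_{r'}}$. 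Expanding $b_{\ii,\mathbf c}=\sum_{\mathbf d\preceq\mathbf c}e_{\mathbf d}X_\ii^{\mathbf d}$ (with $e_{\mathbf c}=1$, $e_{\mathbf d}\in q^{-1}\ZZ[q^{-1}]$ for $\mathbf d\prec\mathbf c$), multiplying by $Y_\ii=X_\ii^{\mathbf e_r+\mathbf e_{r'}}-q_i^{-1}X_\ii^{\mathbf n}$, and reordering the products via Corollary~\ref{cor:skew-symm form}, one sees that $q^{\frac12\Lambda_\ii(\mathbf c,\,\mathbf e_r+\mathbf e_{r'})}b_{\ii,\mathbf c}Y_\ii$ is bar-invariant, has $\prec$-leading term $X_\ii^{\mathbf c+\mathbf e_r+\mathbf e_{r'}}$ with coefficient $1$, and is otherwise supported on $\{\mathbf b'\prec\mathbf c+\mathbf e_r+\mathbf e_{r'}\}$; here the vanishing $\Lambda_\ii$-identities make the various $q$-powers collapse, so that in particular the entire $-q_i^{-1}X_\ii^{\mathbf n}$-contribution carries coefficients in $q^{-1}\ZZ[q^{-1}]$.

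The remaining — and main — difficulty is to show that the coefficients of the $X_\ii^{\mathbf e_r+\mathbf e_{r'}}$-contribution that come from straightenings (i.e.\ from Corollary~\ref{cor:skew-symm form} applied to non-minimal $\mathbf d\preceq\mathbf c$) also land in $q^{-1}\ZZ[q^{-1}]$; bar-invariance alone only pins them down in $\ZZ[q^{\frac12}+q^{-\frac12}]$, which does not by itself force them to vanish. I would control them by a $q$-degree estimate, tracking that each application of the straightening relation in~\eqref{eq:single repetition presentation} contributes exactly one factor $q^{\frac12(\alpha_\ii^{(r)},\alpha_\ii^{(r')})}(q_i-q_i^{-1})$ and lowers the $\prec$-degree by a multiple of $\mathbf e_r+\mathbf e_{r'}-\mathbf n$, the accompanying $q$-power being fixed by the constancy of $\Lambda_\ii(\cdot,\,\mathbf e_r+\mathbf e_{r'})$ on $\prec$-intervals; alternatively one may bypass this by verifying condition~\eqref{eq:def-bas} for $v_{\mathbf a,a}$ directly, computing $\mu(|\mathbf b|_\ii)^{-1}\fgfrm{v_{\mathbf a,a}}{v_{\mathbf a,a}}$ from the co-multiplication formula of Lemma~\ref{lem:prod in U^Z}, the orthonormality~\eqref{eq:X i^a scalar square}, and the value of $\fgfrm{Y_\ii}{Y_\ii}$. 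I expect this bookkeeping, rather than any conceptual point, to be the bulk of the work; everything else is formal.
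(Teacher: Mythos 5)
Your opening moves match the paper's: showing $Y_\ii=b_{\ii,\mathbf e_r+\mathbf e_{r'}}\in\mathbf B(w)$ by bar-invariance plus Theorems~\ref{thm:bas-from-Lusztig-lemma} and~\ref{thm:inclusion of base}, identifying the string datum via $\partial^{(top)}$, proving the quasi-commutation $X_{\ii,k}Y_\ii=q^{-\Lambda(\mathbf e_k,\mathbf e_r+\mathbf e_{r'})}Y_\ii X_{\ii,k}$ (the paper's Lemma~\ref{lem:comm-rels Y}), and noting that $C_\ii=\ZZ_{\ge 0}(\mathbf e_r+\mathbf e_{r'}-\mathbf n)$ so that $\Lambda(\cdot,\mathbf e_r+\mathbf e_{r'})$ is constant along $\prec$-intervals. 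These are all correct and coincide with the paper's preliminary steps.

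However, the reduction you then attempt — induction on $a$ via the multiplicative identity $q^{\frac12\Lambda(\mathbf c,\mathbf e_r+\mathbf e_{r'})}b_{\ii,\mathbf c}Y_\ii=b_{\ii,\mathbf c+\mathbf e_r+\mathbf e_{r'}}$ — hinges entirely on the unitriangularity claim that you explicitly flag as unresolved, and neither of your two suggested repairs closes it. The vague ``$q$-degree estimate'' is not sufficient: each invocation of the straightening relation introduces a factor $q_i-q_i^{-1}$, which has \emph{positive} valuation in the sense of~\S\ref{subs:char sign bas}, so a naive term-by-term bound on the coefficients of $X_\ii^{\mathbf c_j}Y_\ii$ runs in the wrong direction. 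What actually saves the day is the delicate cancellation hidden in the sum over paths of straightenings, which organizes into the Gaussian binomial $q_i^{-k^2}\qbinom[q_i^{-2}]{m}{k}$; for $k\ge 1$ this is in $q^{-2}\ZZ[q^{-2}]$, but seeing that requires the closed-form recursion, not a degree bound on individual summands. The paper supplies exactly this via Lemma~\ref{lem:special case trans matrix} (the identity $X_\ii^{m(\mathbf e_r+\mathbf e_{r'})}=\sum_{k+l=m}q_i^{-k^2}\qbinom[q_i^{-2}]{m}{k}X_\ii^{k\mathbf n}Y_\ii^l$, proved by an induction with the $q$-Pascal rule) and then the full transition matrix in Proposition~\ref{prop:trans-matrix}, from which the desired triangularity of the inverse is automatic. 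Your ``alternative via $\fgfrm{\cdot}{\cdot}$'' could in principle establish $v_{\mathbf a,a}\in\mathbf B^{\pm up}$, but you would still need to compute $\fgfrm{Y_\ii}{Y_\ii}$ and the cross-terms that $\ul\Delta$ produces, and then fix the sign — none of which you carry out and none of which is obviously lighter than the explicit Gaussian-binomial recursion. So the gap you acknowledge is genuine, and it is precisely the main technical content of the paper's proof; the rest, as you say, is formal.
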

\begin{proof}
By~\eqref{eq:deg-eq} $Y_\ii \in U_q(\lie n^+)_{\alpha_\ii^{(r)}+\alpha_\ii^{(r')}}$. It is immediate from~\eqref{eq:single repetition presentation} that 
$\overline{Y_\ii }=Y_\ii $, whence $Y_\ii \in\mathbf B(w)$ by Theorems~\ref{thm:bas-from-Lusztig-lemma} and~\ref{thm:inclusion of base}.
It is easy to see that $(\partial_{i_1}^{op})^{(m_{r,1})}\cdots(\partial_{i_{r-1}}^{op})^{(m_{r,r-1})}\partial_{i_r}^{op}(Y_\ii )=X_{\ii,r'}$
whence $Y_\ii =E_{i_1^{m_{r',1}}\cdots i_{r-1}^{m_{r',r-1}}i_r^{1+m_{r',r}}\cdots i_{r'-1}^{m_{r',r'-1}}i_1^{m_{r,1}}\cdots i_{r-1}^{m_{r,r-1}}i_r}$.

Furthermore, we need the following
\begin{lemma}\label{lem:comm-rels Y}
$X_{\ii}^{\mathbf a}Y_\ii =q^{-\Lambda(\mathbf a,\mathbf e_r+\mathbf e_{r'})}Y_\ii X_{\ii}^{\mathbf a}$ for all $\mathbf a\in\ZZ_{\ge 0}^m$.
\end{lemma}
\begin{proof}
It suffices to prove the assertion for $\mathbf a=\mathbf e_k$, $1\le k\le m$.
By Corollary~\ref{cor:skew-symm form} and Lemma~\ref{lem:Lambda pairing} we have for $k\not=r,r'$ 
\begin{equation}\label{eq:comm-rel quantum plane}
X_{\ii,k}X_\ii^{\mathbf a}=q^{-\Lambda(\mathbf e_k,\mathbf a)}X_\ii^{\mathbf a}X_{\ii,k}
=q^{(\alpha_\ii^{(k)},|\mathbf a_{<k}|_\ii-|\mathbf a_{>k}|_\ii)}X_{\ii}^{\mathbf a}X_{\ii,k}.
\end{equation}
This immediately yields the assertion for $k<r$ or~$k>r'$. 
If~$r<k<r'$ then 
\begin{multline}\label{eq:n(r,r')><}
(\alpha_\ii^{(k)},|\mathbf n(r,r')_{<k}|_\ii-|\mathbf n(r,r')_{>k}|_\ii)
=(\alpha_\ii^{(k)},\alpha_\ii^{(r)}-\alpha_\ii^{(r')}+s_{i_1}\cdots s_{i_{k-1}}(s_{i_k}(\alpha_i)+\alpha_i))
\\=(\alpha_\ii^{(k)},\alpha_\ii^{(r)}-\alpha_\ii^{(r')})+(\alpha_{i_k},s_{i_k}(\alpha_i)+\alpha_i)
=(\alpha_\ii^{(k)},\alpha_\ii^{(r)}-\alpha_\ii^{(r')}),
\end{multline}
where we used~\eqref{eq:deg-eq} and~\eqref{eq:telescope.a}. Thus, 
$X_{\ii,k}X_{\ii}^{\mathbf n(r,r')}=q^{(\alpha_\ii^{(k)},\alpha_\ii^{(r)}-\alpha_\ii^{(r')})}X_\ii^{\mathbf n(r,r')}X_{\ii,k}$.
Since we also have
$$
X_{\ii,k}X_\ii^{\mathbf e_r+\mathbf e_{r'}}=q^{(\alpha_\ii^{(k)},\alpha_\ii^{(r)}-\alpha_\ii^{(r')})}X_\ii^{\mathbf e_r+\mathbf e_{r'}}X_{\ii,k},
$$
we conclude that the assertion holds in this case. 
Furthermore,
\begin{multline*}
X_{\ii,r'}Y_\ii =q^{\frac12(\alpha_\ii^{(r)},\alpha_\ii^{(r')})} X_{\ii,r'}X_{\ii,r}X_{\ii,r'}-q_i^{-1} X_{\ii,r'}X_\ii^{\mathbf n(r,r')}
\\=q^{\frac12(\alpha_\ii^{(r)},\alpha_\ii^{(r')})}(q^{(\alpha_\ii^{(r)},\alpha_\ii^{(r')})} X_{\mathbf i,r}X_{\mathbf i,r'}
+q^{\frac12(\alpha_\ii^{(r')},\alpha_\ii^{(r)})}(q_{i}-q_{i}^{-1})X_\ii^{\mathbf n(r,r')})X_{\ii,r'}-
q_i^{-1} q^{(\alpha_\ii^{(r')},\alpha_\ii^{(r)}+\alpha_\ii^{(r')})}X_\ii^{\mathbf n(r,r')}X_{\ii,r'}
\\
=q^{(\alpha_\ii^{(r)},\alpha_\ii^{(r')})}Y_\ii  X_{\ii,r'}
%=q^{-\Lambda(\mathbf e_{r'},\mathbf e_r+\mathbf e_{r'})} Y_\ii  X_{\ii,r'}
\end{multline*}
and similarly
$Y_\ii  X_{\ii,r}=q^{(\alpha_\ii^{(r)},\alpha_\ii^{(r')})}X_{\ii,r} Y_\ii$. Since $(\alpha_\ii^{(r)},\alpha_\ii^{(r')})=
\Lambda(\mathbf e_r,\mathbf e_r+\mathbf e_{r'})=-\Lambda(\mathbf e_{r'},\mathbf e_r+\mathbf e_{r'})$ this completes the proof of Lemma~\ref{lem:comm-rels Y}.
\end{proof}

\begin{proposition}\label{prop:trans-matrix}
For all $\mathbf a\in \ZZ_{\ge 0}^m$ we have 
$$
X_\ii^{\mathbf a}=\sum_{k+l=\min(a_r,a_{r'})} q_i^{-k(k+|a_r-a_{r'}|)}\qbinom[q_i^{-2}]{\min(a_r,a_{r'})}{k}
b(\mathbf a-\min(a_r,a_{r'})(\mathbf e_r+\mathbf e_{r'})+k\mathbf n(r,r'),l)
$$
where for $\mathbf n\in\ZZ_{\ge 0}^m$ with $\min(n_r,n'_r)=0$ we set 
$$
b(\mathbf n,l)=q^{\frac12 l\Lambda(\mathbf n,\mathbf e_r+\mathbf e_{r'})}X_\ii^{\mathbf n}Y_\ii^{l}
$$
and $\qbinom[v]{m}{n}\in 1+v \ZZ[v]$ is the Gaussian binomial coefficient defined by
$$
\qbinom[v]{m}{n}=\prod_{t=0}^{n-1} \frac{[m-t]_v}{[t+1]_v},\quad [k]_v=\sum_{l=0}^{k-1} v^l. 
$$
\end{proposition}
\begin{proof}
We need the following 
\begin{lemma}\label{lem:special case trans matrix}
$
\displaystyle X_{\ii}^{m(\mathbf e_r+\mathbf e_{r'})}=\sum_{k+l=m} q_i^{-k^2}\qbinom[q_i^{-2}]{m}{k}X_\ii^{k\mathbf n(r,r')} Y_\ii ^{l}$
for all $m\ge 0$.
\end{lemma}
\begin{proof}
The argument is by induction on~$m$. The case~$m=0$ is obvious. For the inductive step, note that we have, by the definition of~$Y_\ii $
\begin{multline*}
X_{\ii}^{(m+1)(\mathbf e_r+\mathbf e_{r'})}=q^{\frac12(m+1)^2(\alpha_\ii^{(r)},\alpha_\ii^{(r')})}
X_{\ii,r}^m X_{\ii,r}X_{\ii,r'}X_{\ii,r'}^{m}
=q^{(\frac12m^2+m)(\alpha_\ii^{(r)},\alpha_\ii^{(r')})}X_{\ii,r}^m(Y_\ii +q_i^{-1}X_\ii^{\mathbf n(r,r')})X_{\ii,r'}^m
\\=X_\ii^{m(\mathbf e_r+\mathbf e_{r'})}Y_\ii +q_i^{-1-2m}
X_\ii^{\mathbf n(r,r')}X_\ii^{m(\mathbf e_r+\mathbf e_{r'})}.
\end{multline*}
By Corollary~\ref{cor:skew-symm form} we have  $X_\ii^{\mathbf a}X_\ii^{k\mathbf a}=X_\ii^{(k+1)\mathbf a}$ if $\mathbf a\in\sum_{t=r+1}^{r'-1}\ZZ_{\ge 0}\mathbf e_t$, whence
by the induction hypothesis
\begin{align*}
X_\ii^{(m+1)(\mathbf e_r+\mathbf e_{r'})}&=\sum_{k+l=m} q_i^{-k^2}\qbinom[q_i^{-2}]{m}{k}X_\ii^{k\mathbf n(r,r')}Y_\ii ^{l+1}
+\sum_{k+l=m} q_i^{-k^2-1-2m}\qbinom[q_i^{-2}]{m}{k} X_\ii^{(k+1)\mathbf n(r,r')}Y_\ii ^l
\\
&=
\sum_{k+l=m+1} q_i^{-k^2}\Big(\qbinom[q_i^{-2}]{m}{k}+q_i^{-2(m+1-k)}\qbinom[q_i^{-2}]{m}{k-1}\Big)X_\ii^{k\mathbf n(r,r')}Y_\ii ^l
\\&=\sum_{k+l=m+1} q_i^{-k^2}\qbinom[q_i^{-2}]{m+1}{k}X_\ii^{k\mathbf n(r,r')}Y_\ii ^l.\qedhere
\end{align*}
\end{proof}
Using Corollary~\ref{cor:skew-symm form} we can write
$$
X_\ii^{\mathbf a}=q^{\frac12\Lambda(\mathbf a,a_r \mathbf e_r+a_{r'}\mathbf e_{r'})}X_\ii^{\mathbf a-a_r\mathbf e_r-a_{r'}\mathbf e_{r'}}
X_\ii^{a_r\mathbf e_r+a_{r'}\mathbf e_{r'}}
=q^{-\frac12\Lambda(\mathbf a,a_r \mathbf e_r+a_{r'}\mathbf e_{r'})}X_\ii^{a_r\mathbf e_r+a_{r'}\mathbf e_{r'}}
X_\ii^{\mathbf a-a_r\mathbf e_r-a_{r'}\mathbf e_{r'}}.
$$
If $a_r\ge a_{r'}$ then 
\begin{multline*}
X_\ii^{\mathbf a}=q^{\frac12(\Lambda(\mathbf a,a_r \mathbf e_r+a_{r'}\mathbf e_{r'})+\Lambda((a_r-a_{r'})\mathbf e_r,a_{r'}\mathbf e_{r'}))}
X_\ii^{\mathbf a-a_r\mathbf e_r-a_{r'}\mathbf e_{r'}}
X_\ii^{(a_r-a_{r'})\mathbf e_r}X_{\ii}^{a_{r'}(\mathbf e_r+\mathbf e_{r'})}
\\
=q^{\frac12 a_{r'}\Lambda(\mathbf a,\mathbf e_r+\mathbf e_{r'})}X_\ii^{\mathbf a-a_{r'}(\mathbf e_r+\mathbf e_{r'})}
X_{\ii}^{a_{r'}(\mathbf e_r+\mathbf e_{r'})}.
\end{multline*}
Note that $\Lambda(\mathbf e_r+\mathbf e_{r'},\mathbf n(r,r'))=0$. Then for $0\le k\le a'_r$
\begin{multline*}
q^{\frac12 a_{r'}\Lambda(\mathbf a,\mathbf e_r+\mathbf e_{r'})}X_\ii^{\mathbf a-a_{r'}(\mathbf e_r+\mathbf e_{r'})}
X_\ii^{k\mathbf n(r,r')}Y_\ii ^{a_{r'}-k}\\
=q^{\frac12 \Lambda(\mathbf a,a_{r'}(\mathbf e_r+\mathbf e_{r'})-k\mathbf n(r,r'))}X_\ii^{\mathbf a-a_{r'}(\mathbf e_r+\mathbf e_{r'})+
k\mathbf n(r,r')}Y_\ii ^{a_{r'}-k}
\\
=q^{\frac12 k\Lambda(\mathbf a,\mathbf e_r+\mathbf e_{r'}-\mathbf n(r,r'))}b(\mathbf a-a_{r'}(\mathbf e_r+\mathbf e_{r'})+k\mathbf n(r,r'),a_{r'}-k).
\end{multline*}
If $t<r$ or~$t>r'$ then
$$
\Lambda(\mathbf e_t,\mathbf e_r+\mathbf e_{r'}-\mathbf n(r,r'))=\pm(\alpha_i^{(t)},\alpha_\ii^{(r)}+\alpha_\ii^{(r')}-|\mathbf n(r,r')|_\ii)=0
$$
by~\eqref{eq:deg-eq}. For $r<t<r'$ it follows from~\eqref{eq:n(r,r')><} that 
$$
\Lambda(\mathbf e_t,\mathbf e_r+\mathbf e_{r'}-\mathbf n(r,r'))=(\alpha_\ii^{(t)},\alpha_\ii^{(r')}-|\mathbf n(r,r')_{>t}|_\ii
-\alpha_\ii^{(r)}+|\mathbf n(r,r')_{<t}|_\ii)=0.
$$
Since by Lemma~\ref{lem:Lambda pairing}
$$
\Lambda(\mathbf e_r,\mathbf e_r+\mathbf e_{r'}-\mathbf n(r,r'))=(\alpha_\ii^{(r)},\alpha_\ii^{(r')}-\mathbf n(r,r'))=
-(\alpha_\ii^{(r)},\alpha_\ii^{(r)})=-(\alpha_i,\alpha_i)
$$
while 
$$
\Lambda(\mathbf e_{r'},\mathbf e_r+\mathbf e_{r'}-\mathbf n(r,r'))=(\alpha_\ii^{(r')},-\alpha_\ii^{(r)}+\mathbf n(r,r'))=(\alpha_\ii^{(r')},\alpha_\ii^{(r')})
=(\alpha_i,\alpha_i)
$$
we conclude that $\Lambda(\mathbf a,\mathbf e_r+\mathbf e_{r'}-\mathbf n(r,r'))=(\alpha_i,\alpha_i)(a_{r'}-a_r)$.
Thus, by Lemma~\ref{lem:special case trans matrix} we have 
$$
X_\ii^{\mathbf a}=\sum_{k+l=a_{r'}} q_i^{-k(k+a_r-a_{r'})}\qbinom[q_i^{-2}]{a_{r'}}{k} b(\mathbf a-a_{r'}(\mathbf e_r+\mathbf e_{r'})+
k\mathbf n(r,r'),l).
$$

For $a_r\le a_{r'}$ we obtain in a similar way
$$
X_\ii^{\mathbf a}=
q^{-\frac12\Lambda(\mathbf a,a_r \mathbf e_r+a_{r'}\mathbf e_{r'})}X_\ii^{a_r\mathbf e_r+a_{r'}\mathbf e_{r'}}
X_\ii^{\mathbf a-a_r\mathbf e_r-a_{r'}\mathbf e_{r'}}
=q^{-\frac12a_r \Lambda(\mathbf a,\mathbf e_r+\mathbf e_{r'})}
X_\ii^{a_r(\mathbf e_r+\mathbf e_{r'})}
X_\ii^{\mathbf a-a_r(\mathbf e_r+\mathbf e_{r'})}.
$$
Since for $0\le k\le a_r$
\begin{multline*}
q^{-\frac12a_r \Lambda(\mathbf a,\mathbf e_r+\mathbf e_{r'})}
X_\ii^{k\mathbf n(r,r')}Y_\ii^{a_r-k}
X_\ii^{\mathbf a-a_r(\mathbf e_r+\mathbf e_{r'})}
\\
=q^{-\frac12 k\Lambda(\mathbf a,\mathbf e_r+\mathbf e_{r'}-\mathbf n(r,r'))}b(\mathbf a-a_r(\mathbf e_r+\mathbf e_{r'})+k\mathbf n(r,r'),a_r-k),
\end{multline*}
it follows that
$$
X_\ii^{\mathbf a}=\sum_{k+l=a_{r}} q_i^{-k(k+a_{r'}-a_{r})}\qbinom[q_i^{-2}]{a_{r}}{k} b(\mathbf a-a_{r}(\mathbf e_r+\mathbf e_{r'})+
k\mathbf n(r,r'),l).
$$
Proposition~\ref{prop:trans-matrix} is proved.
\end{proof}

By Lemma~\ref{lem:comm-rels Y}, $\overline{\mathbf b(\mathbf n,l)}=\mathbf b(\mathbf n,l)$ provided that 
$\min(n_r,n_{r'})=0$. Then Proposition \ref{prop:trans-matrix} and Theorems \ref{thm:bas-from-Lusztig-lemma}, \ref{thm:inclusion of base} imply that 
$b(\mathbf a-\min(a_r,a'_r)(\mathbf e_r+\mathbf e_{r'}),\min(a_r,a'_r))=b_{\ii,\mathbf a}\in\mathbf B(w)$. Clearly this gives the $b_{\ii,\mathbf a}$
for all $\mathbf a\in\ZZ_{\ge 0}^m$, which completes the proof of Theorem~\ref{thm:single rep basis}.
\end{proof}
\subsection{Bases for type \texorpdfstring{$A_3$}{A\_3}}\label{subs:ex A_3}
Let $w_\circ$ be the longest element in~$W$. 
We have $E_{ij}=T_i(E_j)$, $\{i,j\}=\{1,2\}$ or $\{2,3\}$,
$E_{123}=T_1T_2(E_3)=T_3^{-1}T_2^{-1}(E_1)$,
$E_{321}=T_3T_2(E_1)=T_1^{-1}T_2^{-1}(E_3)$, $E_{132}=T_1T_3(E_2)$, $E_{213}=T_2T_1T_3(E_2)=E_{132}^*=T_1^{-1}T_3^{-1}(E_2)$
and $E_{2132}=Y_{(2,1,3,2)}=E_2 E_{213}-q^{-1}E_{21}E_{23}$ as defined in Theorem~\ref{thm:single rep basis}. 
The following was essentially proved in~\cite{BZ1}, although with a slightly different definition of $\bar\cdot$ and hence with
different powers of~$q$ (see also Theorems~1.4.1 and~3.1.3 in a recent work~\cite{Qin}).
\begin{theorem}
$\mathbf B^{up}=\mathbf B(w_\circ)$ consists of monomials
$$
q^{\frac12 f(\mathbf a)} E_1^{m_1}E_2^{m_2}E_3^{m_3}E_{12}^{m_{12}}E_{21}^{m_{21}}
E_{23}^{m_{23}}E_{32}^{m_{32}}E_{213}^{m_{213}}E_{132}^{m_{132}}E_{123}^{m_{123}}E_{321}^{m_{321}}E_{2132}^{m_{2132}}
$$
where 
\begin{multline*}
f(\mathbf a)=(m_1-m_2)(m_{12}-m_{21})+(m_3-m_2)(m_{32}-m_{23})+(m_1+m_3)(m_{132}-m_{213})\\
+(m_1+m_{12}+m_{21}-m_3-m_{23}-m_{32})(m_{123}-m_{321})
-(m_{12}+m_{32})m_{132}+(m_{21}+m_{23})m_{213},
\end{multline*}
and $\min(m_\alpha,m_\beta)=0$ if  
$E_\alpha$, $E_\beta\notin\{ E_{123},E_{321},E_{2132}\}$ and 
are not connected by an edge in the following graph (see~\cite{BZ1}*{\S9.4, Fig~2})
$$
\xymatrix@!0@R=6ex@C=4ex{
&&&&&&&E_1\ar@{-}[dd]\ar@{-}[lldd]\ar@{-}[rrdd]&&&\\
\\
&&&&&E_{132}\ar@{-}[rr]\ar@{-}[rd]&&E_3\ar@{-}[ld]\ar@{-}[rd]&&E_{213}\ar@{-}[ld]\ar@{-}[ll]\\
&&&&&&E_{32}\ar@{-}[rd]\ar@{-}[rr]&&E_{23}\ar@{-}[ld]\\
&&&&&&&E_2
\\
E_{12}\ar@{-}[rrrrrrrrrrrrrr]
\ar@{-}[rrrrruuu]\ar@{-}[rrrrrrruuuuu]\ar@{-}[rrrrrruu]\ar@{-}[rrrrrrru]&&&&&&&&&&&&&&E_{21}\ar @{-}[llllluuu]\ar @{-}[llllllluuuuu]\ar @{-}[lllllluu]\ar @{-}[lllllllu]}
$$
\end{theorem}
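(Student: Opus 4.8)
The plan is as follows. Since $w_\circ$ is the longest element, $U_q(w_\circ)=U_q(\lie n^+)$, so $\mathbf B^{up}=\mathbf B^{up}\cap U_q(w_\circ)=\mathbf B(w_\circ)$ by Corollary~\ref{cor:Kim}, and it remains to identify $\mathbf B^{up}$ with the displayed set of monomials. By Theorems~\ref{thm:bas-from-Lusztig-lemma} and~\ref{thm:inclusion of base}, after fixing one reduced word $\ii\in R(w_\circ)$ this is the same as showing that the displayed monomials coincide with the bar-invariant lifts $b_{\ii,\mathbf a}$, $\mathbf a\in\ZZ_{\ge0}^6$, of the PBW monomials $X_\ii^{\mathbf a}$ of Lemma~\ref{lem:lus PBW elt}.

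First I would check that each of the twelve ``letters'' lies in $\mathbf B^{up}$. The simple root vectors $E_i$ are there trivially; $E_{12},E_{21},E_{23},E_{32},E_{123},E_{321},E_{132}$ and $E_{213}$ are each of the form $X_{\ii',k}$ for a suitable reduced word $\ii'$ --- for instance $E_{123}=X_{(1,2,3),3}$, $E_{132}=X_{(1,3,2),3}$, $E_{321}=X_{(3,2,1),3}$ and $E_{213}=X_{(2,1,3,2),4}$ --- so that $X_{\ii',k}=b_{\ii',\mathbf e_k}\in\mathbf B^{up}$ by Theorem~\ref{thm:inclusion of base}; and $E_{2132}=Y_{(2,1,3,2)}$ lies in $\mathbf B(s_2s_1s_3s_2)\subset\mathbf B^{up}$ by Theorem~\ref{thm:single rep basis}. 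Second, I would assemble the straightening relations among the twelve letters. Whenever two of them occur, in that order, as $X_{\ii',k},X_{\ii',l}$ with the subword $(i'_k,\dots,i'_l)$ repetition-free, Corollary~\ref{cor:skew-symm form} together with Lemma~\ref{lem:Coxeter}\eqref{lem:Coxeter.a} yields the quantum-plane relation $q^{-\frac12(\alpha,\beta)}E_\beta E_\alpha=q^{\frac12(\alpha,\beta)}E_\alpha E_\beta$; this covers $E_1E_3=E_3E_1$ and all relations not involving a genuine repetition. The relation rewriting an $X_{\ii,r}X_{\ii,r'}$-product as $E_{2132}$ plus a monomial in the lower letters, and the $q$-commutation of $E_{2132}$ past the remaining letters, come from Proposition~\ref{prop:single repetition presentation}, Theorem~\ref{thm:single rep basis} and Lemma~\ref{lem:comm-rels Y}. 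The few remaining relations --- principally $E_{12}E_{32}$ and $E_{32}E_{12}$ expressed through $E_{132}$ and a power of $E_3$, and symmetrically $E_{21}E_{23}$, $E_{23}E_{21}$ through $E_{213}$ and $E_2$ --- I would verify directly from Theorem~\ref{thm:two-forms} applied to a reduced word containing $(1,3,2)$ or $(3,1,2)$, or from the quasi-derivations $\partial_i,\partial_i^{op}$ of Lemma~\ref{lem:partial-prop} combined with non-degeneracy of $\fgfrm{\cdot}{\cdot}$.

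With these relations in hand, and taking $\ii=(1,2,1,3,2,1)$, I would expand every asserted monomial $\mathbf b(\mathbf m)$ --- the product of the letters to powers $\mathbf m$ subject to the stated $\min$ conditions, with the prefactor $q^{\frac12 f(\mathbf m)}$ --- in the PBW basis $\{X_\ii^{\mathbf a}\}$: one obtains $\mathbf b(\mathbf m)=X_\ii^{\mathbf a(\mathbf m)}+\sum_{\mathbf a'\prec\mathbf a(\mathbf m)}c_{\mathbf a'}X_\ii^{\mathbf a'}$ with $c_{\mathbf a'}\in q^{-1}\ZZ[q^{-1}]$, and $\mathbf m\mapsto\mathbf a(\mathbf m)$ is a bijection in each degree $\gamma\in Q^+$, because the number of admissible $\mathbf m$ with $|\mathbf m|=\gamma$, read off from the compatibility graph, equals $\#\{\mathbf a\in\ZZ_{\ge0}^6:|\mathbf a|_\ii=\gamma\}=\dim U_q(\lie n^+)_\gamma$. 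Since each $\mathbf b(\mathbf m)$ is bar-invariant --- which is precisely what the normalization $q^{\frac12 f(\mathbf m)}$ is chosen to guarantee, using bar-invariance of each letter and the relations above --- and lies in $U^\ZZ(\lie n^+)$ by Lemma~\ref{lem:prod in U^Z}, the uniqueness assertion of Theorem~\ref{thm:bas-from-Lusztig-lemma} forces $\mathbf b(\mathbf m)=b_{\ii,\mathbf a(\mathbf m)}$, whence $\mathbf b(\mathbf m)\in\mathbf B^{up}$ by Theorem~\ref{thm:inclusion of base}; as these $\mathbf b(\mathbf m)$ already form a $\kk$-basis of $U_q(\lie n^+)$, they exhaust $\mathbf B(w_\circ)=\mathbf B^{up}$. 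The main obstacle is the second step together with the triangularity claim just made: compiling the complete list of straightening relations with the correct powers of $q$ and checking that iterated straightening of an arbitrary monomial in the twelve letters terminates at a standard monomial with the asserted leading PBW term --- in effect reproducing, inside the present formalism, the type-$A_3$ computation of~\cite{BZ1}, of which the precise shape of $f(\mathbf m)$ and the $\min$ conditions are bookkeeping byproducts.
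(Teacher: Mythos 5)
Your route is genuinely different from the paper's: the paper does not carry out a proof of this theorem at all, instead noting that it ``was essentially proved in~\cite{BZ1}'' (up to the different normalization of $\bar\cdot$ and hence different powers of $q$), with a pointer to~\cite{Qin}. You instead propose to re-derive the statement inside the machinery of the present paper, via the uniqueness in Theorem~\ref{thm:bas-from-Lusztig-lemma}. That is a legitimate and more self-contained plan, and your opening reductions are correct: $U_q(w_\circ)=U_q(\lie n^+)$, so $\mathbf B(w_\circ)=\mathbf B^{up}$ by Corollary~\ref{cor:Kim}; each of the twelve letters is of the form $X_{\ii',k}$ for some reduced $\ii'$ or equals $Y_{(2,1,3,2)}$, hence lies in $\mathbf B^{up}$ by Theorems~\ref{thm:inclusion of base} and~\ref{thm:single rep basis}; and the general tools (Corollary~\ref{cor:skew-symm form}, Proposition~\ref{prop:single repetition presentation}, Theorem~\ref{thm:two-forms}, Lemma~\ref{lem:comm-rels Y}) do supply the raw material for the straightening relations.

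The genuine gap is that you identify, but do not carry out, the entire computational content of the theorem. Your argument turns on three unverified assertions: (i) that the expansion of each proposed monomial $\mathbf b(\mathbf m)=q^{\frac12 f(\mathbf m)}\prod E_\alpha^{m_\alpha}$ in the PBW basis of a fixed $\ii\in R(w_\circ)$ has a unique leading term $X_\ii^{\mathbf a(\mathbf m)}$ with all remaining coefficients in $q^{-1}\ZZ[q^{-1}]$; (ii) that the displayed $f(\mathbf m)$ is precisely the normalization making $\mathbf b(\mathbf m)$ bar-invariant, given the $q$-commutation and straightening rules among the twelve letters; and (iii) that $\mathbf m\mapsto\mathbf a(\mathbf m)$ is a bijection in each degree, equivalently that the number of graph-admissible $\mathbf m$ of degree $\gamma$ equals $\dim U_q(\lie n^+)_\gamma$. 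None of (i)--(iii) is supplied by the cited general statements alone. In particular, (iii) is not a formal consequence of the relations you list: it is itself a nontrivial combinatorial fact (it is exactly the ``string basis'' enumeration of~\cite{BZ1}*{\S9.4}), and several pairs of letters share the same degree (e.g.\ $\deg E_{12}=\deg E_{21}=\alpha_1+\alpha_2$), so the count is delicate. Likewise (ii) requires knowing the $q$-power on the right-hand side of every straightening relation, not merely its existence as in Theorem~\ref{thm:two-forms}. Your closing sentence acknowledges all of this, but honesty about a gap does not close it: without executing the straightening/triangularity/counting computation, the proposal establishes only that the result \emph{could} be proved this way, which is not the same as proving it.
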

We have the following table for the action of the $T_i^{-1}$, $1\le i\le 3$ on the~$E_\alpha$
$$
\begin{array}{c|cccccccccccc}
&E_1&E_2&E_3&E_{12}&E_{21}&E_{23}&E_{32}&E_{132}&E_{213}&E_{123}&E_{321}&E_{2132}\\\hline
T_1^{-1}& &E_{21}&E_3&E_2&&E_{213}&E_{321}&E_{32}&&E_{23}&&E_{2132}\\
T_2^{-1}&E_{12}&&E_{32}&&E_1&E_3&&&E_{132}&E_{123}&E_{321}&\\
T_3^{-1}&E_1&E_{23}&&E_{123}&E_{213}&&E_2&E_{12}&&&E_{21}&E_{2132}\\
\end{array}
$$
where the entry is empty if $T_i^{-1}(E_\alpha)\notin U_q(\lie n^+)$.
Using Theorem~\ref{thm:T_w(B(w'))} we conclude that $\mathbf B(s_1w_\circ)$ (respectively, $\mathbf B(s_2w_\circ)$) consists of monomials of the form 
\begin{multline*}
q^{\frac12(m_2m_{21}+(m_3-m_2)(m_{32}-m_{23})-(m_{21}-m_3-m_{23}-m_{32})m_{321}
-m_3m_{213}+(m_{21}+m_{23})m_{213})}\times\\ E_2^{m_2}E_3^{m_3}E_{21}^{m_{21}}
E_{23}^{m_{23}}E_{32}^{m_{32}}E_{213}^{m_{213}}E_{321}^{m_{321}}E_{2132}^{m_{2132}}
\end{multline*}
and, respectively
\begin{multline*}
q^{\frac12(m_1m_{12}+m_3m_{32}+(m_1+m_{12}-m_3-m_{32})(m_{123}-m_{321})
+(m_1+m_3-m_{12}+m_{32})m_{132})}\times\\ E_1^{m_1}E_3^{m_3}E_{12}^{m_{12}}
E_{32}^{m_{32}}E_{132}^{m_{132}}E_{123}^{m_{123}}E_{321}^{m_{321}}
\end{multline*}
where $\min(m_\alpha,m_\beta)=0$ if $E_\alpha,E_\beta\notin\{E_{123},E_{321},E_{2132}\}$ are not connected by an edge in the following respective graphs
$$
\xymatrix@!0{
&E_3\ar@{-}[ld]\ar@{-}[rd]&&E_{213}\ar@{-}[ld]\ar@{-}[ll]\\
E_{32}\ar@{-}[rd]\ar@{-}[rr]&&E_{23}\ar@{-}[ld]\\
&E_2&&E_{21}\ar@{-}[ll]\ar@{-}[lu]\ar@{-}[uu]}
\qquad 
\xymatrix@!0@C=7ex{&E_1\ar@{-}[rd]\ar@{-}[d]\\E_{12}\ar@{-}[r]\ar@{-}[ru]\ar@{-}[rd]&E_{132}\ar@{-}[r]\ar@{-}[d]&E_3\ar@{-}[dl]\\&E_{32}
}
$$
The basis $\mathbf B(s_3w_\circ)$ is easy to obtain from $\mathbf B(s_1w_\circ)$ using the diagram automorphism of~$U_q(\lie n^+)$ which interchanges 
$E_1$ and~$E_3$, $E_{12}$ and~$E_{32}$, $E_{21}$ and $E_{23}$ and $E_{123}$, $E_{321}$ and fixes all other elements $E_\alpha$.

Thus, $U_q(s_1w_\circ)$ is generated by $E_2$, $E_3$, $E_{21}$ subject to the relations 
\begin{gather*}
[E_i,[E_i,E_j]_q]_{q^{-1}}=0,\quad [E_2,E_{21}]_{q^{-1}}=0,\quad [E_3,[E_3,E_{21}]_q]_{q^{-1}}=0=[E_{21},[E_{21},E_3]_q]_{q^{-1}},
\end{gather*}
where $[x,y]_t=xy-t yx$, $x,y\in U_q(\lie n^+)$, $t\in\kk^\times$ and
$\{i,j\}=\{2,3\}$,
while $U_q(s_2w_\circ)$ is generated by $E_1$, $E_3$, $E_{12}$, $E_{32}$ subject to the relations
$$
[E_1,E_3]=0,\, [E_i,E_{i2}]_{q^{-1}}=0,\, [E_{12},E_{32}]=0,\quad [E_i,[E_i,E_{j2}]_q]_{q^{-1}}=0,\,
[E_{i2},[E_{i2},E_j]_q]_{q^{-1}}=0,
$$
where $\{i,j\}=\{1,3\}$.

Since all elements $w\in W$ with $\ell(w)\le 4$ are either repetition free or with a single repetition, all remaining Schubert cells 
have already been described in~\S\ref{subs:rep-free} and~\S\ref{subs:single rep}. For example, 
$$
\mathbf B(s_2s_1s_3s_2)=\{ 
q^{\frac12(m_2+m_{213})(m_{21}+m_{23})} E_2^{m_2}E_{21}^{m_{21}}
E_{23}^{m_{23}}E_{213}^{m_{213}}E_{2132}^{m_{2132}}\,:\, \min(m_2,m_{213})=0\}
$$
and $U_q(s_2s_1s_3s_2)$ is generated by $E_2$, $E_{21}$, $E_{23}$, $E_{213}$ subject to the relations
$$
[E_2,E_{2i}]_{q^{-1}}=0,\quad [E_{21},E_{23}]=0, \quad [E_{2i},E_{213}]_{q^{-1}}=0,\quad [E_2,E_{213}]=(q^{-1}-q)E_{21}E_{23},\, i\in\{1,3\}
$$
and coincides with the algebra of quantum $2\times 2$-matrices.

\subsection{Bases for type \texorpdfstring{$C_2$}{C\_2}}
We have $E_{12}=T_2^{-1}(E_1)$, $E_{1^22}=T_1(E_2)$, $E_{21}=T_2(E_1)$, $E_{21^2}=T_1^{-1}(E_2)$, $E_{121}=Y_{(1,2,1)}$ and $E_{21^2 2}=Y_{(2,1,2)}$
as defined in Theorem~\ref{thm:single rep basis}.
The following is apparently well-known (and can be deduced for instance from~\cite{Qin}*{Theorems~1.4.1 and~3.1.3}).
\begin{theorem}
$\mathbf B^{up}$ consists of all monomials 
$$
q^{m_1 (m_{1^22}-m_{21^2})+m_2(m_{21}-m_{12})-m_{12}m_{1^22}+m_{21}m_{21^2}} E_1^{m_1}E_2^{m_2}E_{12}^{m_{12}}E_{21}^{m_{21}}E_{1^22}^{m_{1^22}}E_{21^2}^{m_{21^2}}
E_{121}^{m_{121}}E_{21^22}^{m_{21^22}}
$$
where $\min(m_\alpha,m_\beta)=0$ if $E_\alpha,E_\beta\notin\{E_{121},E_{21^22}\}$ 
are not connected by an edge in the following graph 
$$
\xymatrix{ 
E_{12}\ar@{-}[d]\ar@{-}[r]&E_2&E_{21}\ar@{-}[l]\ar@{-}[d]\\
E_{112}&E_1\ar@{-}[r]\ar@{-}[l]&E_{211}
}
$$
\end{theorem}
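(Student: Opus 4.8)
The statement is the $C_2$ analogue of the $A_3$ theorem in \S\ref{subs:ex A_3}, so the strategy is the same: identify a convenient reduced word for $w_\circ$ in type $C_2$, compute all the PBW generators $X_{\ii,k}$ explicitly in terms of the $E_\alpha$ via Theorem~\ref{thm:T_i formula} and Lemma~\ref{lem:Coxeter}\eqref{lem:Coxeter.c}, and then apply Theorems~\ref{thm:bas-from-Lusztig-lemma}, \ref{thm:inclusion of base} together with the explicit analysis of elements with a single repetition from \S\ref{subs:single rep}. Concretely, $w_\circ=s_1s_2s_1s_2$ has length $4$ and the reduced word $\ii=(1,2,1,2)$ has two repetitions, so $w_\circ$ is not directly covered by Theorem~\ref{thm:single rep basis}; however every subword obtained by deleting one repeated letter \emph{is} of single-repetition type, and this is what makes the monomials $E_{121}$ and $E_{21^22}$ appear. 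First I would record, using the table of $T_i^{\pm1}$-action (to be computed as in \S\ref{subs:ex A_3}), the identifications $E_{12}=T_2^{-1}(E_1)$, $E_{1^22}=T_1(E_2)$, $E_{21}=T_2(E_1)$, $E_{21^2}=T_1^{-1}(E_2)$, and define $E_{121}=Y_{(1,2,1)}$, $E_{21^22}=Y_{(2,1,2)}$ via \eqref{eq:Y-defn}; the powers of $q$ in those formulae are extracted from $\Lambda_\ii$ exactly as in Corollary~\ref{cor:skew-symm form}.

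The core computation is to expand the PBW monomial $X_\ii^{\mathbf a}$ for $\ii=(1,2,1,2)$ in the basis $\mathbf B(w_\circ)$. Here I would proceed as in the proof of Proposition~\ref{prop:trans-matrix}: peel off the repeated pairs one at a time. Writing $X_\ii^{\mathbf a}$ with $\mathbf a=(m_1,m_2,m_{21^2}?,\dots)$ — more precisely, using the dictionary between the multi-index $\mathbf a\in\ZZ_{\ge0}^4$ and the eight exponents $m_1,m_2,m_{12},m_{21},m_{1^22},m_{21^2},m_{121},m_{21^22}$ in the statement — one uses the straightening relations \eqref{eq:straight-rels'} for the two "inner" pairs to convert products $X_{\ii,1}^{a_1}X_{\ii,3}^{a_3}$ and $X_{\ii,2}^{a_2}X_{\ii,4}^{a_4}$ into linear combinations, governed by Gaussian binomials as in Lemma~\ref{lem:special case trans matrix}, of the bar-invariant elements $E_1^{m_1}E_{21^2}^{m_{21^2}}\cdots E_{121}^{m_{121}}E_{21^22}^{m_{21^22}}$ with $\min(m_1,m_{21^2})=\min(m_2,m_{12})=0$ (and the further $\min$-conditions read off the graph). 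The upshot is that the transition matrix from the PBW basis $\{X_\ii^{\mathbf a}\}$ to the proposed monomial set is unitriangular over $q^{-1}\ZZ[q^{-1}]$ with respect to a suitable order, so Theorem~\ref{thm:bas-from-Lusztig-lemma} identifies each $b_{\ii,\mathbf a}$ with one of the listed monomials, and Theorem~\ref{thm:inclusion of base} (equivalently Corollary~\ref{cor:Kim}) upgrades this to the equality $\mathbf B^{up}=\mathbf B(w_\circ)$.

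What remains is purely bookkeeping: checking bar-invariance of the candidate monomials (which follows from the commutation relations among $E_1,E_2,E_{12},E_{21},E_{1^22},E_{21^2},E_{121},E_{21^22}$, themselves consequences of \eqref{eq:straight-rels'} and Lemma~\ref{lem:comm-rels Y}), verifying that the quadratic form $m_1(m_{1^22}-m_{21^2})+m_2(m_{21}-m_{12})-m_{12}m_{1^22}+m_{21}m_{21^2}$ is exactly the exponent of $q$ forced by the $\Lambda_\ii$-twists when one collects all the $q^{\frac12\Lambda(\cdot,\cdot)}$ factors, and checking the graph: an edge between $E_\alpha$ and $E_\beta$ should be present precisely when $\Lambda_\ii(\mathbf e_\alpha,\mathbf e_\beta)$ vanishes modulo the pairs that get "absorbed" into $E_{121}$ or $E_{21^22}$, equivalently when $\alpha^{(\cdot)}$ and $\beta^{(\cdot)}$ do not combine to produce a new single repetition. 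I expect the main obstacle to be the last point: getting the $\min$-conditions (the shape of the graph) and the precise power of $q$ in $f(\mathbf a)$ consistent, since a sign error in $\Lambda_\ii$ or a miscount of which pairs $(\alpha,\beta)$ survive with a nontrivial $q$-commutator will change the graph. This is best handled by a direct case check on the $\binom{4}{2}=6$ pairs of indices in $\ii=(1,2,1,2)$ together with the comparison of $\deg$-vectors, using Lemma~\ref{lem:root-comb} to see that the only genuine relations among the $\alpha_\ii^{(k)}$ are $\alpha_\ii^{(1)}+\alpha_\ii^{(3)}$ and $\alpha_\ii^{(2)}+\alpha_\ii^{(4)}$ each being a nonnegative combination of the intermediate roots — exactly the mechanism that produces $E_{121}$ and $E_{21^22}$ and the corresponding pairs of indices excluded from the $\min=0$ constraint.
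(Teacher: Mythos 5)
The paper does not actually prove this theorem; it treats it as ``apparently well-known'' and cites Qin's Theorems~1.4.1 and~3.1.3 (just as the $A_3$ theorem in \S\ref{subs:ex A_3} is attributed to \cite{BZ1} and \cite{Qin}). So your proposal is a genuinely different route: you sketch a self-contained derivation using the paper's own machinery (Theorem~\ref{thm:bas-from-Lusztig-lemma}, Theorem~\ref{thm:inclusion of base}, and the single-repetition analysis of \S\ref{subs:single rep}), whereas the paper gets the result essentially for free by citation. Your route is worth having: it makes the example available without importing Qin's theory of triangular bases, and gives a template for other small-rank longest elements.

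The plan is basically sound, but you should be aware of where the work actually is, since $w_\circ=s_1s_2s_1s_2$ has \emph{two} interleaved repetitions (positions $(1,3)$ and $(2,4)$), so neither Theorem~\ref{thm:single rep basis} nor Proposition~\ref{prop:trans-matrix} applies directly. What makes the double-repetition case tractable here is that the two ``inner'' pairs $(X_{\ii,1},X_{\ii,3})$ and $(X_{\ii,2},X_{\ii,4})$ are interleaved only by PBW generators with distinct simple-reflection labels, so all the cross-commutators are pure $q$-scalars by \eqref{eq:straight-rels'} with empty right-hand side (as in \eqref{eq:Coxeter.pres}); consequently the two ``peel-off'' steps from Lemma~\ref{lem:special case trans matrix} can be carried out independently and the overall transition matrix factors as a product of two unitriangular Gaussian-binomial blocks. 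You allude to this but do not say it explicitly; without it the claim that you can ``peel off the repeated pairs one at a time'' is not obviously well-defined. The other gap you flag yourself with a question mark: the dictionary between $\mathbf a\in\ZZ_{\ge0}^4$ and the eight exponents in the statement. This is resolved by observing that the graph in the theorem is the $6$-cycle $E_{12}\!-\!E_2\!-\!E_{21}\!-\!E_{21^2}\!-\!E_1\!-\!E_{1^22}\!-\!E_{12}$, whose maximal cliques are its edges; so any admissible monomial involves at most two of the six ``small'' generators plus the two central ones $E_{121}$, $E_{21^2 2}$, giving precisely four free exponents and exhibiting the six edges as the six pairs of reduced words $(1,2,1,2)$ and $(2,1,2,1)$ together with their four proper subwords of length $3$. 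With those two points supplied, your unitriangularity argument combined with Theorems~\ref{thm:bas-from-Lusztig-lemma} and~\ref{thm:inclusion of base} (and $U_q(w_\circ)=U_q(\lie n^+)$) does yield the theorem; the remaining verification of the exponent $f(\mathbf a)$ is indeed finite bookkeeping with $\Lambda_\ii$ via Lemma~\ref{lem:Lambda pairing}.
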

All other Schubert cells have already been described in~\S\ref{subs:rep-free} and~\S\ref{subs:single rep}.

\subsection{Bases for bi-Schubert algebras}\label{subs:bi-schub}
Let $\lie g=\lie{sl}_4$. Using the computations from~\S\ref{subs:ex A_3}
we obtain
$$
\mathbf B(s_1w_\circ,s_1w_\circ)=
\{ q^{\frac12(m_3-m_2)(m_{32}-m_{23})}E_2^{m_2}E_3^{m_3}
E_{23}^{m_{23}}E_{32}^{m_{32}}E_{2132}^{m_{2132}}\,:\,\min(m_2,m_3)=0\}
$$
and $U_q(s_1w_\circ,s_1w_\circ)\cong U_q(\lie{sl}_3^+)\tensor \kk[E_{2132}]$,
\begin{multline*}
\mathbf B(s_1w_\circ,s_2w_\circ)=\{ 
q^{\frac12(-m_3(m_{23}+m_{213})-(m_{21}-m_3-m_{23})m_{321}+(m_{21}+m_{23})m_{213})}E_3^{m_3}E_{21}^{m_{21}}
E_{23}^{m_{23}}E_{213}^{m_{213}}E_{321}^{m_{321}}\,:\\\,\min(m_3,m_{21})=0\}
\end{multline*}
and $U_q(s_1w_\circ,s_2w_\circ)$ is generated by $E_3$, $E_{21}$ and $E_{23}$ subject to the relations
$$
[E_3,E_{23}]_q=0,\quad [E_3,[E_3,E_{21}]_q]_{q^{-1}}=0=[E_{21},[E_{21},E_3]_q]_{q^{-1}},
$$
$$
\mathbf B(s_1w_\circ,s_3w_\circ)=\{
q^{\frac12(m_2-m_{321})(m_{21}-m_{32})}E_2^{m_2}E_{21}^{m_{21}}
E_{32}^{m_{32}}E_{321}^{m_{321}}E_{2132}^{m_{2132}}\,:\,\min(m_{21},m_{32})=0\}
$$
and $U_q(s_1w_\circ,s_3w_\circ)$ is generated by $E_2$, $E_{21}$, $E_{32}$, $E_{321}$ subject to the relations 
$$
[E_2,E_{21}]_{q^{-1}}=[E_2,E_{32}]_q=[E_{21},[E_{21},E_{32}]]_{q^2}=[E_{32},[E_{32},E_{21}]]_{q^{-2}}=0
$$
and $[E_2,E_{321}]=[E_{21},E_{321}]_q=[E_{32},E_{321}]_{q^{-1}}=0$,
$$
\mathbf B(s_2w_\circ,s_2w_\circ)=\{q^{\frac12(m_1-m_3)(m_{123}-m_{321})}E_1^{m_1}E_3^{m_3}E_{123}^{m_{123}}E_{321}^{m_{321}}\}
$$
and $U_q(s_2w_\circ,s_2w_\circ)$ is a quantum plane. In particular, all these algebras are PBW in the sense 
of Remark~\ref{rem:intersect sc}\eqref{rem:intersect sc.a}.

\begin{bibdiv}
\begin{biblist}
\bib{BCP}{article}{
    author = {Beck, J.}, author = {Chari, Vyjayanthi}, author = {Pressley, A.},
     title = {An algebraic characterization of the affine canonical basis},
   journal = {Duke Math. J.},
    volume = {99},
      date = {1999},
    number = {3},
     pages = {455\ndash487},
      issn = {0012-7094},
}

\bib{BG-dcb}{article}{
author={Berenstein, Arkady},
author={Greenstein, Jacob},
title={Double canonical bases},
eprint={1411.1391}
}

\bib{BR}{article}{
   author={Berenstein, Arkady},
   author={Rupel, Dylan},
   title={Quantum cluster characters of Hall algebras},
   journal={Selecta Math. (N.S.)},
   volume={21},
   date={2015},
   number={4},
   pages={1121--1176},
}

	\bib{BZ1}{article}{
   author={Berenstein, Arkady},
   author={Zelevinsky, Andrei},
   title={String bases for quantum groups of type $A_r$},
   conference={
      title={I. M. Gel\cprime fand Seminar},
   },
   book={
      series={Adv. Soviet Math.},
      volume={16},
      publisher={Amer. Math. Soc., Providence, RI},
   },
   date={1993},
   pages={51--89},
   review={\MR{1237826}},
}

\bib{BZ}{article}{
author={Berenstein, Arkady},
author={Zelevinsky, Andrei},
title={Triangular bases in quantum cluster algebras},
journal={Int. Math. Res. Not.},
volume={2014},
date={2014},
number={6},
pages={1651--1688}
}

\bib{DKP}{article}{
   author={De Concini, C.},
   author={Kac, V. G.},
   author={Procesi, C.},
   title={Some quantum analogues of solvable Lie groups},
   conference={
      title={Geometry and analysis},
      address={Bombay},
      date={1992},
   },
   book={
      publisher={Tata Inst. Fund. Res., Bombay},
   },
   date={1995},
   pages={41--65},
}

\bib{Kim}{article}{
   author={Kimura, Yoshiyuki},
   title={Quantum unipotent subgroup and dual canonical basis},
   journal={Kyoto J. Math.},
   volume={52},
   date={2012},
   number={2},
   pages={277--331},
   issn={2156-2261},
}

\bib{Kim1}{article}{
   author={Kimura, Yoshiyuki},
   title={Remarks on quantum unipotent subgroup and dual canonical basis},
   eprint={1506.07912},
}

\bib{KimOya}{article}{
author={Kimura, Yoshiyuki},
author={Oya, Hironori},
title={Quantum twists and dual canonical bases},
eprint={1604.07748}
}
\bib{Kas}{article}{
   author={Kashiwara, Masaki},
   title={Global crystal bases of quantum groups},
   journal={Duke Math. J.},
   volume={69},
   date={1993},
   number={2},
   pages={455--485},
}

\bib{Kassel}{book}{
   author={Kassel, Christian},
   title={Quantum groups},
   series={Graduate Texts in Mathematics},
   volume={155},
   publisher={Springer-Verlag, New York},
   date={1995},
}

\bib{KaKh}{article}{
   author={Kaveh, Kiumars},
   author={Khovanskii, A. G.},
   title={Newton-Okounkov bodies, semigroups of integral points, graded
   algebras and intersection theory},
   journal={Ann. of Math. (2)},
   volume={176},
   date={2012},
   number={2},
   pages={925--978},
}
\bib{LS}{article}{
   author={Levendorski{\u\i}, Serge},
   author={Soibelman, Yan},
   title={Algebras of functions on compact quantum groups, Schubert cells
   and quantum tori},
   journal={Comm. Math. Phys.},
   volume={139},
   date={1991},
   number={1},
   pages={141--170},
}
\bib{Lus-root1}{article}{
   author={Lusztig, George},
   title={Quantum groups at roots of $1$},
   journal={Geom. Dedicata},
   volume={35},
   date={1990},
   number={1-3},
   pages={89--113},
   issn={0046-5755},
}
\bib{Lus-fdhopf}{article}{
   author={Lusztig, George},
   title={Finite-dimensional Hopf algebras arising from quantized universal
   enveloping algebra},
   journal={J. Amer. Math. Soc.},
   volume={3},
   date={1990},
   number={1},
   pages={257--296},
   issn={0894-0347},
}
	
\bib{Lus-book}{book}{
   author={Lusztig, George},
   title={Introduction to quantum groups},
   series={Progress in Mathematics},
   volume={110},
   publisher={Birkh\"auser Boston, Inc., Boston, MA},
   date={1993},
}

\bib{Lus-adv}{article}{
   author={Lusztig, George},
   title={Braid group action and canonical bases},
   journal={Adv. Math.},
   volume={122},
   date={1996},
   number={2},
   pages={237--261},
   issn={0001-8708},
}

\bib{Qin}{article}{
author={Qin, Fan},
title={Compare triangular bases of acyclic quantum cluster algebras},
eprint={1606.05604}
}

\bib{T}{article}{
author={Tanisaki, Toshiyuki},
title={Modules over quantized coordinate algebras and PBW-bases},
eprint={1409.7973},
}
\end{biblist}

\end{bibdiv}
\end{document}